\numberwithin{equation}{section}
\newcommand{\ssp}{\hspace{1pt}}
\newcommand{\WQ}{\mathbb{W}}
\newtheorem{proposition}{Proposition}[section]
\newtheorem{lemma}[proposition]{Lemma}
\newtheorem{corollary}[proposition]{Corollary}
\newtheorem{theorem}[proposition]{Theorem}
\newtheorem{conjecture}[proposition]{Conjecture}
\theoremstyle{definition}
\newtheorem{definition}[proposition]{Definition}
\newtheorem{remark}[proposition]{Remark}
\begin{document}
\title{Colored Interacting Particle Systems on the Ring:\\Stationary Measures from Yang--Baxter Equations}

\author{Amol Aggarwal, Matthew Nicoletti, Leonid Petrov}

\date{}

\setcounter{tocdepth}{4}

\maketitle

\begin{abstract}
    Recently, there has been much progress in understanding stationary measures for colored (also called multi-species or multi-type) interacting particle systems, motivated by asymptotic phenomena and rich underlying algebraic and combinatorial structures (such as nonsymmetric Macdonald polynomials).

    In this paper, we present a unified approach to constructing stationary measures for most of the known colored particle systems on the ring and the line, including (1)~the Asymmetric Simple Exclusion Process (multispecies ASEP, or mASEP); (2)~the $q$-deformed Totally Asymmetric Zero Range Process (TAZRP) also known as the $q$-Boson particle system; (3)~the $q$-deformed Pushing Totally Asymmetric Simple Exclusion Process ($q$-PushTASEP). Our method is based on integrable stochastic vertex models and the Yang--Baxter equation. We express the stationary measures as partition functions of new ``queue vertex models'' on the cylinder. The stationarity property is a direct consequence of the Yang--Baxter equation.

    For the mASEP on the ring, a particular case of our vertex model is equivalent to the multiline queues of Martin \cite{martin2020stationary}.
	For the colored $q$-Boson process and the $q$-PushTASEP on the ring, we recover and generalize known stationary measures constructed using multiline queues or other methods by Ayyer--Mandelshtam--Martin \cite{ayyer2022modified}, \cite{Ayyer_2023}, and Bukh--Cox~\cite{bukh2019periodic}. Our proofs of stationarity use the Yang--Baxter equation and bypass the Matrix Product Ansatz (used for the mASEP by Prolhac--Evans--Mallick \cite{Prolhac_2009}).

    On the line and in a quadrant, we use the Yang--Baxter equation to establish a general colored Burke's theorem, which implies that suitable specializations of our queue vertex models produce stationary measures for particle systems on the line. We also compute the colored particle currents in stationarity.
\end{abstract}

\setcounter{tocdepth}{2}
\tableofcontents
\setcounter{tocdepth}{3}

\section{Introduction}
\label{sec:intro}

\subsection{Background}

This work connects stationary measures for colored (also referred to as multi-species or multi-type) systems of interacting particles hopping on a one-dimensional lattice (the ring or the whole line) to solvable lattice models. One of the particle systems we consider is the multi-species Asymmetric Simple Exclusion Process (\emph{mASEP}). On the ring with $N$ sites, the $n$-colored mASEP is a continuous-time Markov process under which each pair of  neighboring particles of colors $0\le i\ne j\le n$ and locations $(k,k+1)$ (mod $N$) swap with rate $1$ if $i<j$, and rate $q\in[0,1)$ if $i>j$ (color $0$ represents holes). See \Cref{fig:mASEP_intro} for an illustration.\footnote{Throughout the paper, we say that an event occurs at rate~$\alpha>0$ if the random time $\zeta$ till the occurrence is exponentially distributed with parameter~$\alpha$, that is, $\mathbb{P}(\zeta>t)=e^{-\alpha t}$ for $t\ge0$.} We also consider the multi-species $q$-TAZRP ($q$-deformed Totally Asymmetric Zero Range Process, also known as the colored stochastic $q$-Boson particle system), and the colored $q$-PushTASEP
($q$-deformed Pushing Totally Asymmetric Simple Exclusion Process). We refer to \Cref{sub:qBoson_process,sec:qPush} for definitions of these models on the ring.
\begin{figure}[htpb]
	\centering
	\includegraphics[width=.6\textwidth]{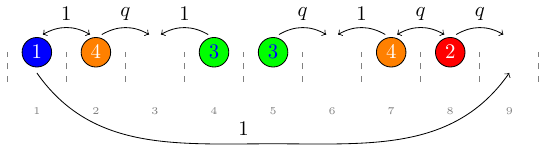}
	\caption{Rates of all possible particle hops in the mASEP on the ring with
		$N=9$ sites and $n=4$ colors (also indicated by numbers to 
		assist the printed version).
The mASEP preserves the number of particles of each type, and has a
unique stationary measure in each ``sector'' determined by
fixing the number $N_m$ of particles of each color $m=1,\ldots,n $.}
	\label{fig:mASEP_intro}
\end{figure}

The stationary measures of these particle systems have recently been the subject of a systematic investigation.
On one hand, their properties and asymptotic behavior highlight interesting physical and probabilistic phenomena, cf. Martin \cite[Section~6]{martin2020stationary}, Ayyer--Linusson \cite{ayyer2017correlations}, and Pahuja \cite{pahuja2023correlations}.
On the other hand, they have a rich underlying algebraic and combinatorial structure (in particular, deep connections to Macdonald symmetric functions and their nonsymmetric counterparts). Let us briefly review the known constructions of stationary measures, and their connections to nonsymmetric Macdonald polynomials.

\medskip

The stationary measure for the single-color mASEP (simply called \emph{ASEP}) on the ring with a given number of particles is uniform among all possible arrangements of these particles. On the line, translation invariant stationary measures are all product Bernoulli (that is, each site is independently a particle with probability $\rho$ or a hole otherwise); see Liggett \cite[Ch.~VIII]{Liggett1985}.

Prolhac--Evans--Mallick \cite{Prolhac_2009} constructed stationary measures for mASEP with an arbitrary number of colors using the \emph{Matrix Product Ansatz}, an algebraic formalism utilizing commutation relations of a family of matrices. For previous partial Matrix Product Ansatz results see the references in \cite{Prolhac_2009}. We discuss the Matrix Product Ansatz approach in the beginning of \Cref{sub:YBE_MPA} in the text. Methods for sampling from the mASEP stationary measures using combinatorial structures known as \emph{multiline diagrams} or \emph{multiline queues} were developed by Angel \cite{angel2006stationary}, Ferrari--Martin \cite{ Ferrari_2007}, and Martin \cite{martin2020stationary}.

\medskip

Connections to Macdonald symmetric and nonsymmetric polynomials served as another inspiration for studying stationary measures of interacting particle systems on the ring. Macdonald polynomials are a centerpiece of the symmetric functions theory and have wide applications to representation theory and geometry; see Macdonald \cite[Ch.~VI]{Macdonald1995}, \cite{marshall1999symmetric}. We do not focus on symmetric and nonsymmetric functions in this paper, but here we mention the necessary background.

A relationship between the mASEP stationary measures and Macdonald polynomials is first observed by Cantini--de Gier--Wheeler~\cite{cantini2015matrix}. Nonsymmetric Macdonald polynomials are constructed via multiline queues by Corteel--Mandelshtam--Williams \cite{corteel2018multiline}. An integrable vertex model for nonsymmetric Macdonald polynomials is given by Borodin--Wheeler \cite{borodin2019nonsymmetric}. One can say that the vertex model construction unifies the two points of view.

\medskip

The stochastic single-color $q$-Boson process (also referred to as the $q$-TAZRP) was introduced by Sasamoto--Wadati \cite{SasamotoWadati1998}. Its dual process, the $q$-TASEP, was extensively studied on the line by Borodin--Corwin \cite{BorodinCorwin2011Macdonald}, Borodin--Corwin--Sasamoto \cite{BorodinCorwinSasamoto2012}, Borodin--Corwin--Petrov--Sasamoto \cite{BorodinCorwinPetrovSasamoto2013}, Barraquand \cite{barraquand2015phase}, and others. On the ring, Wang--Waugh \cite{Wang2015inhomqTASEP} and Liu--Saenz--Wang \cite{liu2019integral} obtained integral formulas for transition probabilities and other observables of the $q$-Boson process.

The multi-species generalization of the $q$-Boson process is due to Takeyama \cite{Takeyama2015} and Kuniba--Maruyama--Okado \cite{kuniba2016multispecies}. Stationary measures for this process on the ring were recently connected to modified Macdonald polynomials by Ayyer--Mandelshtam--Martin \cite{ayyer2022modified}, \cite{Ayyer_2023} (see also the earlier work of Garbali--Wheeler \cite{garbali2020modified} connecting modified Macdonald polynoimals to vertex models). In contrast with the mASEP, the $q$-Boson process has spectral parameters (rapidities) attached to the sites on the ring, and these parameters enter the modified Macdonald polynomials. Ayyer--Mandelshtam--Martin revealed symmetries of the stationary measures in the parameters and utilized them to compute observables in the stationary model.

\medskip

The colored $q$-PushTASEP on the ring is less studied. On the line, it was introduced by Borodin--Wheeler \cite[Section~12.5]{borodin_wheeler2018coloured} as a degeneration of the colored stochastic higher spin six-vertex model. Like the $q$-Boson process, it contains spectral parameters, and also the capacity parameter $\mathsf{P}\in \mathbb{Z}_{\ge1}$ which is the maximum number of particles allowed at each site. A discrete time variant of the colored $q$-PushTASEP (with $q=0$ and $\mathsf{P}=1$) on the ring was introduced (under the name ``frog model'') by Bukh--Cox \cite{bukh2019periodic} in connection with the problem of the longest common subsequence of a random and a periodic word. In particular, Bukh--Cox \cite{bukh2019periodic} constructed and investigated stationary measures of the frog model.

\subsection{Main results and methods}

We unify and generalize existing constructions of stationary measures of the multi-species ASEP, the colored $q$-Boson process, and the colored $q$-PushTASEP on the ring and the line. We prove the stationarity property graphically in all cases, using the Yang--Baxter equation (discussed below in this subsection). We obtain the following results for the three particle systems:
\begin{enumerate}[$\bullet$]
    \item For mASEP, we present a vertex model interpretation of the multiline queues of Martin \cite{martin2020stationary}, which is very close to the Matrix Product Ansatz construction of Prolhac--Evans--Mallick \cite{Prolhac_2009}. Moreover, we recover the main ingredient of the Matrix Product Ansatz proof of the stationarity (the ``hat relation'') directly from the Yang--Baxter equation. On the ring, we add extra parameters to our vertex model that do not affect the stationary measures. We also connect our vertex models with extra parameters to other variants of multiline queues, including the one considered by Martin \cite[Section~7]{martin2020stationary}.

    \item For the colored $q$-Boson process, we present a vertex model construction of stationary measures on the ring and the line. On the ring, the vertex model has extra parameters that do not affect the stationary measures, but they need to be specialized to zero on the line. In a particular case of at most one particle of each color, our vertex model is equivalent to the multiline queues of Ayyer--Mandelshtam--Martin \cite{ayyer2022modified}. On the line, the parameters of the vertex model which survive are in one-to-one correspondence with colored particle densities, and we compute the colored currents in stationarity (as implicit functions of the densities of particles of each color).

    \item For the colored $q$-PushTASEP, we also construct vertex models for the stationary measures on the ring and the line. On the ring, the vertex model carries extra parameters that do not affect the stationary measures. On the line, some extra parameters are lost, and again the remaining ones exactly parameterize colored particle densities. A particular $\mathsf{P}=1$ case of the $q$-PushTASEP has the same stationary measures as the mASEP and the colored stochastic six-vertex model. On the line, we compute the colored currents for the stochastic six-vertex model and the $q$-PushTASEP in stationarity.
\end{enumerate}

To obtain all our results, we utilize \emph{integrable stochastic vertex models}, that is, whose vertex weights have a stochastic normalization and satisfy the Yang--Baxter equation. We start from stochastic $U_q(\widehat{\mathfrak{sl}}_{n+1})$ vertex weights whose explicit form was first obtained by Kuniba--Mangazeev--Maruyama--Okado
\cite{kunibaMangazeev2016stochastic} and which were studied in the stochastic context by
Bosnjak--Manga\-zeev \cite{BosnjakMangazeev2016}, Garbali--de Gier--Wheeler \cite{deGierWheeler2016}, Kuan \cite{Kuan2018}. The vertex model foundation of our work was developed by Borodin--Wheeler \cite{borodin_wheeler2018coloured} who listed various stochastic degenerations of the general $U_q(\widehat{\mathfrak{sl}}_{n+1})$ weights, leading to the mASEP, the colored $q$-Boson process, and the colored $q$-PushTASEP.

In a limit when the number of particles (corresponding to vertical arrows in vertex models) of a given color goes to infinity, the $U_q(\widehat{\mathfrak{sl}}_{n+1})$ stochastic weights degenerate into what we call the \emph{queue vertex weights}. Putting them on the cylinder, we obtain our main object --- the \emph{queue vertex model}. Its normalized partition functions serve as stationary measures for our colored stochastic particle systems on the ring. The stationarity of the normalized partition functions is a direct consequence of the Yang--Baxter equation.

For each particle system, we perform the following sequence of steps:
\begin{enumerate}[\bf1.\/]
\item We define a colored (higher spin) stochastic transfer matrix~$\mathfrak{T}$ on the cylinder. In a Poisson-type limit when the discrete time becomes continuous, large powers of $\mathfrak{T}$ converge to the Markov semigroup (also called the propagator) of the given stochastic process living on configurations of colored particles on the ring.
Each of our processes (the mASEP, the colored $q$-Boson process, and the colored $q$-PushTASEP) preserves the number of particles of each color, while~$\mathfrak{T}$ before the Poisson-type limit may not.

\item We construct a multiparameter family of transfer matrices~$\mathfrak{Q}$ on the cylinder using our queue vertex weights (see \Cref{fig:mult_queue_intro} for an illustration), such that
\begin{enumerate}[(a)\/]
    \item Their matrix elements $\langle \emptyset| \ssp\mathfrak{Q}\ssp |\mathbf{V} \rangle$ (which are, by definition, partition functions --- the sums of weights of all allowed arrow configurations) are nonzero for all terminal states~$\mathbf{V}$. Here~$\langle \emptyset|$ is the empty state (no particles present on the ring), and $\mathbf{V}$ encodes a state with prescribed particle locations and colors.
    \item The Yang--Baxter equation implies that $\mathfrak{Q}\ssp \mathfrak{T} = \mathfrak{T}\ssp \mathfrak{Q}$.
	In detail,
	we glue the cylinder with the output $\mathbf{V}$ to a sequence of stochastic~$R$ matrices,
	as shown in \Cref{fig:twisted_commutation_intro}.
	Their composition is the operator $\mathfrak{T}$.
	Together with the queue weights, these $R$ matrices satisfy the Yang--Baxter equation which allows to commute $\mathfrak{T}$ through the queue weights $\mathfrak{Q}$.
\end{enumerate}
Since $\langle \emptyset| \mathfrak{T}=\langle \emptyset|$, after a degeneration of the parameters of $\mathfrak{Q}$ (corresponding to the Poisson-type limit in $\mathfrak{T}$), the quantity $\langle \emptyset| \ssp\mathfrak{Q}$ viewed as a row vector (or, equivalently, an unnormalized probability distribution on the space of particle configurations on the ring) becomes stationary under the Markov semigroup. By restricting $\langle \emptyset| \ssp\mathfrak{Q}$ to particle configurations with fixed numbers of particles of each color (a \emph{sector}), we obtain a stationary measure for the corresponding particle system. Note that part~(a) above seems to violate the conservation of particles (the higher spin analog of the ice rule), which holds for models of six-vertex type. It is the limit to the queue vertex weights which allows this violation.

\item For each of the particle systems on the ring, we compute the queue weights explicitly (after the corresponding degeneration of the parameters of $\mathfrak{Q}$), and provide conditions on the remaining complex parameters guaranteeing the positivity of the unnormalized weights. Note that the normalized weights are automatically positive by the stationarity property and the Perron--Frobenius theorem.
\end{enumerate}

\begin{figure}[htp]
    \centering
    \includegraphics[width=\textwidth]{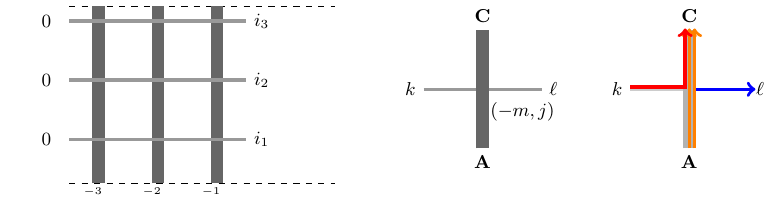}
		\caption{Left: The queue vertex model partition function $\langle \emptyset |\ssp \mathfrak{Q} \ssp | \mathbf{V} \rangle$ on the cylinder with $n=3$ colors and $N=3$ sites (the dotted lines are identified). There are no arrows incoming from the left, and the outgoing terminal state $\mathbf{V}=(i_1,i_2,i_3)$ has $i_1,i_2,i_3\in\{0,1,2,3\}$. Center: The queue vertex weight $\WQ^{(-m)}_{\mathrm{parameters}(m,j)}$ (see \Cref{def:queue_spec_defn} below) is in column $-m$, $1\le m\le N$ at position $j=1,\ldots,N$ in the transfer matrix $\mathfrak{Q}$. Right: An allowed path configuration at a queue vertex with $m=1$. Colors $1,2,3$ are, respectively, blue (densely dotted to assist the printed version), orange (dashed to assist the printed version), and red. We have $\mathbf{A} = (\infty, 0, 2)$, $k = 2$, $\mathbf{C} = (\infty, 1, 2)$, and $\ell = 1$. Infinitely many blue arrows (not depicted) pass through vertically, which allows a blue arrow to exit from the right even though none entered from the left.}
    \label{fig:mult_queue_intro}
\end{figure}
\begin{figure}[htb]
    \centering
    \includegraphics[width=\textwidth]{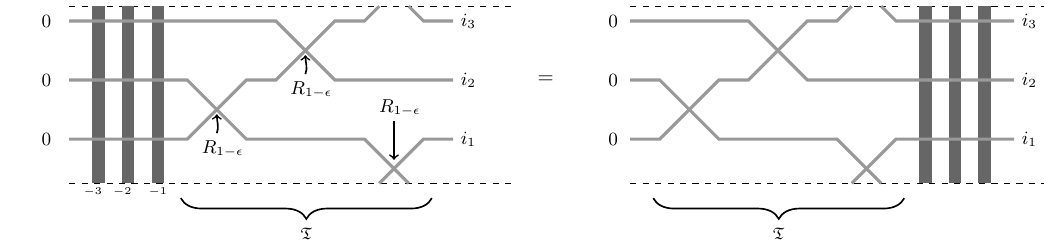}
    \caption{The main commutation relation
	$\mathfrak{Q}\ssp \mathfrak{T} = \mathfrak{T}\ssp \mathfrak{Q}$ leading to stationarity of the mASEP.
	As $\epsilon\to 0$ and under a Poisson-type rescaling of the horizontal direction to
	continuous time,
	the combination of the $R$ matrices with parameters $1-\epsilon$
	turns into the Markov semigroup of the mASEP on the ring.
For the other systems (the colored $q$-Boson process and the colored $q$-PushTASEP), we need a different configuration on the cylinder (given in \Cref{fig:straight,fig:straight_commuted} in the text) and a slightly different Poisson-type limit transition.}
    \label{fig:twisted_commutation_intro}
\end{figure}

We call the weights on the cylinder the \emph{queue vertex weights} because each column $(-m)$ of a cylinder resembles a queueing system for which the ``unused service times'' are assigned the new color $m$. Indeed, for the mASEP and (with certain restrictions) the $q$-Boson process, the output of our vertex model is the same (in distribution) as that of the multiline queues of Martin \cite{martin2020stationary} and Ayyer--Mandelshtam--Martin \cite{ayyer2022modified}, respectively.

For the mASEP, we explain how to set the parameters to exactly match a certain specialization of our stationary measure with the matrix product stationary measure constructed by Prolhac--Evans--Mallick \cite{Prolhac_2009}. We also show how the underlying algebraic apparatus for the Matrix Product Ansatz can be derived from the Yang--Baxter equation. Namely, we realize the product ansatz matrices satisfying the so-called ``hat relation'' (identity \eqref{eq:hat_matrices} in the text) as vertex model partition functions. This highlights the Yang--Baxter structure of the Matrix Product Ansatz for the multispecies ASEP which was previously less transparent, cf. Arita--Ayyer--Mallick--Prolhac \cite{arita2012generalized} (however, it may be derived from Cantini--de Gier--Wheeler \cite{cantini2015matrix}; see also Crampe--Ragoucy--Vanicat \cite{crampe2014integrable} for more connections between the two structures). Furthermore, we describe how more general solutions~$A, D, E$ of the quadratic algebra relations of the Matrix Product Ansatz (formula \eqref{eq:ADE_relations} in the text) can be derived from our queue vertex weights.

\medskip

In \Cref{sec:quarter_plane}, we apply similar vertex model
techniques to construct stationary distributions for the
mASEP, the colored $q$-Boson process, and the colored
$q$-PushTASEP on the line~$\mathbb{Z}$. We start in a
quarter-plane setup $\mathbb{Z}_{\ge0}^{2}$ (see \Cref{fig:qp_stationary_intro}), and use the
Yang--Baxter equation to justify that placing queue vertex weights
on the bottom and the left boundaries produces a random
configuration of colored paths in $\mathbb{Z}_{\ge0}^{2}$.
This random configuration is stationary with respect to
arbitrary shifts of the origin by vectors from
$\mathbb{Z}_{\ge0}^2$. This stationarity may be viewed as a
colored analog of Burke's theorem from queueing theory (in the sense
adopted in the study of percolation and random polymers, cf. \cite{Seppalainen2012}).
Looking at the random configuration of colored paths in the quarter plane
away from the origin, we obtain a translation invariant
stationary distribution on colored paths in the full plane $\mathbb{Z}^2$.
The parameters entering the queue vertex weights
along the boundary correspond to the densities of particles (paths) of each color.
We also compute the relation between the parameters along the bottom
and the left boundaries, which corresponds to slopes of the colored paths,
or currents in the terminology of particle systems.
In the single-color case, the quarter plane construction
involving
single-color queue vertex weights at the left boundary of the quadrant
appeared in Aggarwal \cite{Amol2016Stationary}.
To pass from quarter-plane systems to particle systems on $\mathbb{Z}$,
one has to take a continuous-time limit in which the vertical, horizontal, or diagonal
direction scales to the continuous time.

\medskip

Our method of producing stationary distributions as partition functions is conceptually reminiscent of the Bethe Ansatz. In Bethe Ansatz, eigenvectors of a transfer matrix of a quantum integrable model are constructed by applying certain other transfer matrices (with specially chosen parameters satisfying algebraic equations) to the vacuum vector $\langle \emptyset|$. For stationary measures of our Markov processes, we only need the leading (Perron--Frobenius) eigenvector of~$\mathfrak{T}$ having the eigenvalue $1$. We do not investigate further connections of our constructions to the Bethe Ansatz or the algebraic equations.

\medskip


\subsection{Related work in progress}

While preparing the manuscript, we learned about two related works in progress. One by Corteel--Keating \cite{Corteel_Keating_2023_progress} defines a new particle system with zero-range interactions on the ring. Its stationary distribution is expressed as a queue-like vertex model on the cylinder with fermionic weights related to the algebra $U_q(\widehat{\mathfrak{s l}}(1 | n))$. These weights were recently investigated by Corteel--Gitlin--Keating--Meza \cite{corteel2022vertex} and Aggarwal--Borodin--Wheeler \cite{agg-bor-wh2020-sl1n}. In particular, instead of the Macdonald polynomials, they are related to the Lascoux--Leclerc--Thibon (LLT) polynomials \cite{lascoux1997ribbon}.

Another work in progress by Angel--Ayyer--Martin \cite{Angel_Ayyer_Martin_2023_progress} considers stationary measures for the colored $q$-PushTASEP on the ring (with capacity $\mathsf{P}=1$), and connects them to the multiline queues of Corteel--Mandelshtam--Williams \cite{corteel2018multiline}. The approach to this proof is different from ours, and it relies on properties of symmetric and nonsymmetric Macdonald polynomials and related functions.

\begin{figure}
    \centering
    \includegraphics[scale=.7]{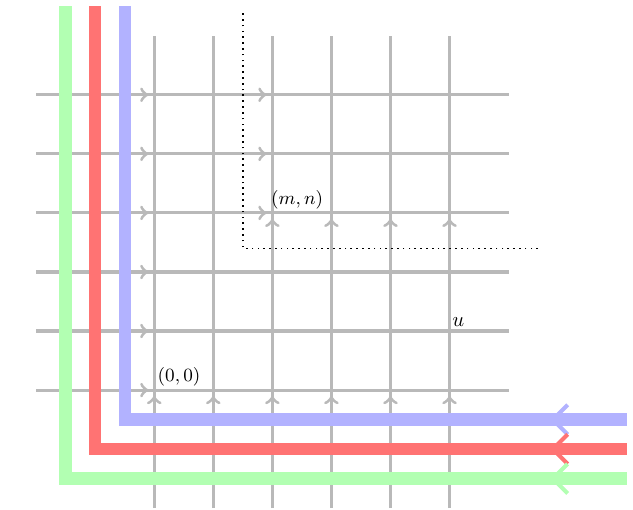}\qquad
    \raisebox{40pt}{\includegraphics[scale=.2]{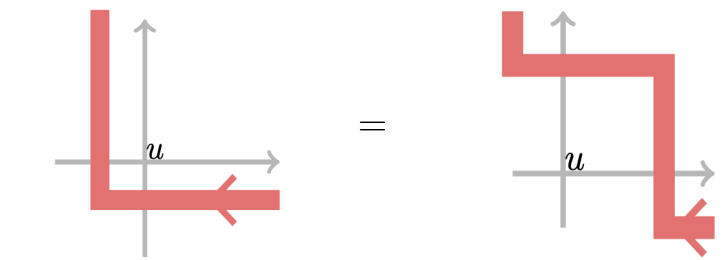}}
		\caption{The construction of the stationary quarter plane configuration for the colored stochastic six-vertex model. The arrow configuration entering the quadrant $\mathbb{Z}_{\geq 0}^2$ is generated by the stochastic vertex model with thick lines along the boundary (the vertex weights here are the \emph{queue vertex weights}), and has the same distribution as the configuration of arrows crossing the boundary of the $(m,n)$-shifted quadrant (the dotted line). Each of the thick lines carries its own color (type) of arrows.
		The configuration of arrows entering from the far right along the thick lines is in equilibrium and has a specific distribution (namely, the stationary distribution of a Markov chain describing the evolution of ``queues in tandem''). Stationarity can be proved by repeated applications of the Yang--Baxter equation, shown on the right. Details of this construction are in \Cref{sec:quarter_plane}.}
    \label{fig:qp_stationary_intro}
\end{figure}

\subsection{Outline}

In \Cref{sec:colored_YBE}, we recall the colored stochastic vertex weights (together with their fully fused version) and the Yang--Baxter equations for them. We define the queue vertex weights, which arise in the limit of the stochastic vertex weights when the number of vertical arrows of a given color goes to infinity. Putting the queue vertex weights on the cylinder, we obtain our main object --- the queue vertex model.

In \Cref{sec:main_vertex_statement}, we employ the Yang--Baxter equation to show that the measure on particle configurations on the ring coming from the queue vertex model is stationary under the twisted and the straight cylinder Markov transition operators. In full generality, these Markov operators are formal (may have negative matrix elements).

In \Cref{sec:ASEP_matrix_products}, we take a scaling limit under which the twisted cylinder Markov operator converges to the infinitesimal generator of the mASEP (an actual, not formal, Markov operator). The corresponding limit of the queue vertex model yields a stationary measure for the mASEP. Under a specialization of the parameters, we identify our queue vertex model with the multiline queue system of Martin \cite{martin2020stationary} and connect our constructions to the Matrix Product Ansatz of Prolhac--Evans--Mallick \cite{Prolhac_2009}. Moreover, a different parameter specialization presumably relates our queue vertex model to the alternative multiline queue system also considered by Martin \cite[Section~7]{martin2020stationary}.

In \Cref{sec:qBoson,sec:qPush}, we treat two other colored particle systems --- the $q$-Boson process (also known as the $q$-TAZRP) and the $q$-PushTASEP. For the $q$-Boson process, in a particular case of at most one particle of each color, we identify our queue vertex model with the multiline queueing system recently introduced by Ayyer--Mandelshtam--Martin \cite{ayyer2022modified}.

In \Cref{sec:quarter_plane}, we consider colored stochastic vertex models in the quadrant and use the Yang--Baxter equation to prove a new colored generalization of Burke's theorem. It implies that when put on the infinite vertical strip instead of the cylinder, our queue vertex models produce stationary distributions for the interacting particle systems (mASEP, colored $q$-Boson process, and $q$-PushTASEP) on the line.
In \Cref{sec:merging_of_colors}, we show that the stationary measures for our colored particle systems on the line respect the operations of color merging (when two or more colors are declared the same). The proof also relies on applying the Yang--Baxter equation to a stochastic vertex model in the quadrant.

\subsection{Acknowledgments}

We are grateful to Alexei Borodin, Kirone Mallick, James Martin, Ananth Sridhar, and Lauren Williams for helpful discussions. Amol Aggarwal was partially supported by a Packard Fellowship for Science and Engineering, a Clay Research Fellowship, by NSF grant DMS-1926686, and by the IAS School of Mathematics. The work of Matthew Nicoletti and Leonid Petrov was partially supported by the NSF grants DMS-1664617 and DMS-2153869, by the 4-VA Virginia Collaborative Research Initiative, and by the Simons Collaboration Grant for Mathematicians 709055.


\section{Colored vertex weights and Yang--Baxter equation}
\label{sec:colored_YBE}

In this section, we collect the necessary formulas for the vertex weights of the colored stochastic vertex model from \cite{borodin_wheeler2018coloured}. Algebraically, this model is powered by the quantum affine Lie algebra $U_q(\widehat{\mathfrak{sl}}_{n+1})$, where $n$ is the number of colors. A stochastic normalization of the $U_q(\widehat{\mathfrak{sl}}_{n+1})$ vertex weights first appeared in \cite{kunibaMangazeev2016stochastic}; see also \cite{BosnjakMangazeev2016}, \cite{deGierWheeler2016}, \cite{Kuan2018}. In certain degenerations, which we recall in \Cref{sec:ASEP_matrix_products,sec:qBoson} below, the colored stochastic vertex model turns into multi-species interacting particle systems. We start with the vertex weights and the Yang--Baxter equation; then, we proceed to the fused weights and define their new \emph{queue specialization}.

\subsection{Vertex weights}
\label{sub:vertex_weights_text}

Fix the number of colors $n\ge1$.
The higher spin
$U_q(\widehat{\mathfrak{sl}}_{n+1})$
stochastic vertex weights
$L_{s,x}(\mathbf{A},k;\mathbf{B},\ell)$
are indexed by the
following data:
\begin{enumerate}[$\bullet$]
	\item The quantum parameter $q\in[0,1)$,
		which is fixed throughout this section;
	\item The spectral parameter $x$ and the spin parameter $s$, which may depend
		on the vertex;
	\item The configurations of incoming and outgoing arrows
		$(\mathbf{A},k;\mathbf{B},\ell)$, where $k,\ell\in\left\{ 0,1,\ldots,n  \right\}$, and
		$\mathbf{A}, \mathbf{B}$ are $n$-tuples
		$(A_1,\ldots,A_n ),(B_1,\ldots,B_n )$, where $A_i,B_j\in \mathbb{Z}_{\ge0}$.
		Here $k\ge1$ represents an arrow of color $k$ entering from the left, and
		$k=0$ corresponds to no arrows entering from the left; similarly, $\ell$ encodes the
		exiting arrows to the right.
		Each $A_i$ is the number of arrows of color $i$ entering from the bottom, and $B_j$ is the number of arrows of color~$j$ exiting from the top.
	\end{enumerate}
The arrow counts $(\mathbf{A},k;\mathbf{B},\ell)$ must satisfy the \emph{arrow conservation property}
(a higher spin analog of the \emph{ice rule}),
with the understanding that all arrows go in the up or right direction. Let $\mathbf{e}_1,\ldots,\mathbf{e}_n$ be the standard basis in $\mathbb{Z}^n$, then, the arrow conservation is
\begin{equation*}
	\mathbf{A}+\mathbf{e}_k\ssp \mathbf{1}_{k\ge1}=
	\mathbf{B}+\mathbf{e}_{\ell}\ssp \mathbf{1}_{\ell\ge1}.
\end{equation*}
Here and throughout the paper,
$\mathbf{1}_{E}$ denotes the indicator
of the event or condition $E$.
For $1\le k,\ell \le n$, define
\begin{equation*}
\mathbf{A}^{+}_{k}
\coloneqq
\mathbf{A} + \mathbf{e}_k,
\hspace{8pt}
\mathbf{A}^{-}_{k}
\coloneqq
\mathbf{A} - \mathbf{e}_k,
\hspace{8pt}
\mathbf{A}^{+-}_{k\ell}
\coloneqq
\mathbf{A} + \mathbf{e}_k - \mathbf{e}_\ell,
\hspace{8pt}
|\mathbf{A}|
\coloneqq
\sum\nolimits_{k=1}^{n} A_k,
\hspace{8pt}
A_{[k,\ell]}
\coloneqq
\sum\nolimits_{i=k}^{\ell} A_k.
\end{equation*}
The values of all the vertex weights $L_{s,x}(\mathbf{A},k;\mathbf{B},\ell)$
are listed in the table
in \Cref{fig:L_weights}.
In
\cite[Section~2]{borodin_wheeler2018coloured},
they are denoted by
$\tilde{L}_{s,x}(\mathbf{A},k;\mathbf{B},\ell)$,
and they differ from
\cite[(2.2.2)]{borodin_wheeler2018coloured}
by the factor $(-s)^{\mathbf{1}_{\ell >0}}$.
However, in this paper, we work with stochastic weights from the beginning, and remove the tilde from the notation.

\begin{figure}[htpb]
	\centering
	\includegraphics[width=.7\textwidth]{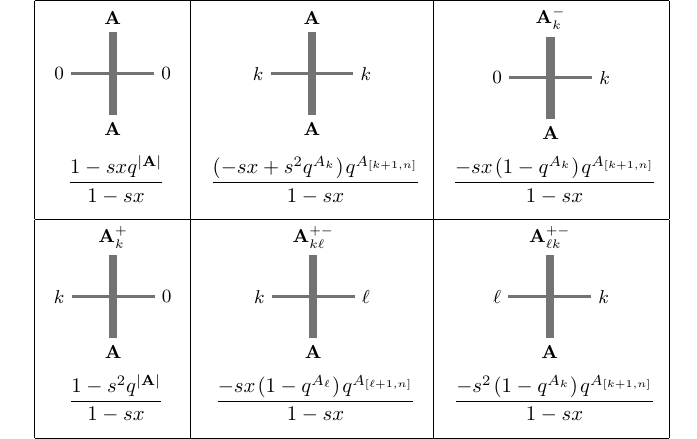}
	\caption{Colored stochastic higher spin
		vertex weights $L_{s,x}$. Here $1\le k<\ell \le n$,
	and all other values of $L_{s,x}$ are zero.}
	\label{fig:L_weights}
\end{figure}

\begin{proposition}
	\label{prop:stochastic}
	The vertex weights $L_{s,x}$ satisfy the sum-to-one property
	\begin{equation}\label{eq:sum_to_one}
		\sum_{\mathbf{B}\in \mathbb{Z}_{\ge0}^{n}}\sum_{\ell=0}^{n}
		L_{s,x}(\mathbf{A},k;\mathbf{B},\ell)=1
	\end{equation}
	for any fixed $\mathbf{A},k$.
	Moreover, if $q\in[0,1)$,
	\begin{equation}
		\label{eq:nonnegativity_of_vertex_weights_condition}
		-sx>0, \qquad -s^2\ge0,\qquad -sx+ s^2\ge0,
	\end{equation}
	then
	all the vertex weights
	$L_{s,x}(\mathbf{A},k;\mathbf{B},\ell)$ are nonnegative.
\end{proposition}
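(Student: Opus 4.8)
The statement splits into two independent claims, and I would treat them separately. For the sum-to-one property \eqref{eq:sum_to_one}, the plan is first to use the arrow conservation property to collapse the double sum. With the incoming data $(\mathbf{A},k)$ fixed, the relation $\mathbf{A}+\mathbf{e}_k\mathbf{1}_{k\ge1}=\mathbf{B}+\mathbf{e}_{\ell}\mathbf{1}_{\ell\ge1}$ determines $\mathbf{B}$ uniquely from the choice of outgoing color $\ell$, so the inner sum over $\mathbf{B}\in\mathbb{Z}_{\ge0}^n$ has a single nonzero term for each $\ell$, and \eqref{eq:sum_to_one} reduces to the finite identity $\sum_{\ell=0}^{n}L_{s,x}(\mathbf{A},k;\mathbf{B}(\ell),\ell)=1$. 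I would then split into the cases $k=0$ and $1\le k\le n$, read off the admissible weights from \Cref{fig:L_weights}, bring them to a common denominator (of the form $1-sx$), and check that the numerators sum to that denominator. The only genuine content is a rational-function identity in $s,x,q$: the colors present in $\mathbf{A}$ organize the admissible $\ell$ into blocks (those smaller than, equal to, and larger than $k$), and the partial sums within each block collapse by elementary geometric-type cancellation in the variable $q$.

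For the nonnegativity claim, the first and cheapest observation is that the hypothesis $-sx>0$ forces $1-sx>1>0$, so every denominator appearing in \Cref{fig:L_weights} is strictly positive and it suffices to verify that each numerator is nonnegative. I would catalog the finitely many nonzero weight types and, for each, display its numerator as a product of manifestly nonnegative atoms built from two sources: (i) the $q$-factors $q^{m}\ge0$ and $1-q^{m}\ge0$, valid because $q\in[0,1)$ and all exponents lie in $\mathbb{Z}_{\ge0}$; and (ii) the three inequalities \eqref{eq:nonnegativity_of_vertex_weights_condition}, which supply $-sx>0$, $-s^2\ge0$, and $-sx+s^2\ge0$. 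For the weights whose numerator is a single monomial in $s,x$ times a $q$-factor — for instance those proportional to $-s^2(1-q^{A_i})$ or to $(-sx)\ssp q^{m}$ — nonnegativity is then immediate.

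The delicate step, which I expect to be the main obstacle, is the handful of weights whose numerators are honest differences rather than single monomials; these are exactly where the third hypothesis $-sx+s^2\ge0$ is needed. This hypothesis is genuinely independent of the other two: since $-s^2\ge0$ means $s^2\le0$, the quantity $-sx+s^2$ is a strictly positive term plus a nonpositive term and so is not determined in sign by $-sx>0$ alone. The weights requiring it should be those with $1\le k,\ell\le n$, where a colored arrow both enters and exits and the numerator mixes an $-sx$ term with an $s^2$ term carried by a $q$-power. For each such weight the task is to group the numerator so that the combination $-sx+s^2$ (possibly decorated by nonnegative factors $q^{m}$ and $1-q^{m'}$) appears explicitly as a factor, reducing it to a product of nonnegative pieces. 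Checking that this regrouping succeeds uniformly over all colored configurations, and that no further sign conditions are secretly required, is the bookkeeping that carries the real weight of the proof.
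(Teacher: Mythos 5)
Your proposal is correct, but it is considerably more self-contained than the paper's own proof, which consists essentially of two citations: the sum-to-one identity \eqref{eq:sum_to_one} is quoted from \cite[Proposition~2.5.1]{borodin_wheeler2018coloured}, and the nonnegativity under \eqref{eq:nonnegativity_of_vertex_weights_condition} is declared straightforward, meaning it is to be read off the table in \Cref{fig:L_weights}. Your plan for the first half is exactly the standard direct proof of the cited result: arrow conservation forces $\mathbf{B}=\mathbf{A}+\mathbf{e}_k\mathbf{1}_{k\ge1}-\mathbf{e}_\ell\mathbf{1}_{\ell\ge1}$ (the term being absent when this leaves $\mathbb{Z}_{\ge0}^n$), all weights share the denominator $1-sx$, and the identity reduces to the telescoping $\sum_{\ell=a}^{b}(1-q^{A_\ell})\ssp q^{A_{[\ell+1,n]}}=q^{A_{[b+1,n]}}-q^{A_{[a,n]}}$ applied over the blocks $\ell<k$, $\ell=k$, $\ell>k$. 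What your route buys is a transparent accounting of where each hypothesis acts, which the citation hides; what the paper's route buys is brevity.

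One adjustment to your anticipated bookkeeping in the nonnegativity half. The swap weights with $k\ne\ell$ do \emph{not} need the third hypothesis: their numerators are single monomials times $q$-factors, namely $(-sx)(1-q^{A_\ell})\ssp q^{A_{[\ell+1,n]}}$ when the larger color exits and $(-s^2)(1-q^{A_k})\ssp q^{A_{[k+1,n]}}$ when the smaller one does, so $-sx>0$ and $-s^2\ge0$ already suffice there. The only weight requiring $-sx+s^2\ge0$ is the colored pass-through $L_{s,x}(\mathbf{A},k;\mathbf{A},k)$, with numerator $q^{A_{[k+1,n]}}\bigl(s^2q^{A_k}-sx\bigr)$, and there the combination does not factor out once $A_k\ge1$; instead one exhibits the numerator as a nonnegative \emph{sum},
\begin{equation*}
	s^2q^{A_k}-sx \;=\; (-sx+s^2)+(-s^2)\bigl(1-q^{A_k}\bigr),
\end{equation*}
a nonnegative linear combination of your atoms rather than a product of them (the product form you predicted holds only in the sub-case $A_k=0$). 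With that replacement, together with your observation that $-sx>0$ makes the common denominator satisfy $1-sx>1$, your case check closes and no hidden sign conditions arise.
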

\begin{proof}
	The sum-to-one property is
	\cite[Proposition~2.5.1]{borodin_wheeler2018coloured}.
	The nonnegativity under conditions \eqref{eq:nonnegativity_of_vertex_weights_condition}
	is straightforward.
\end{proof}

Conditions \eqref{eq:nonnegativity_of_vertex_weights_condition} mean that $s$ and $x$ must be purely imaginary numbers. Observe that the weights $L_{s,x}$ contain $s^2$ and $-sx$ as two independent parameters, and they are more convenient to formulate the nonnegativity.

\medskip

A natural point of view is to interpret the vertex weights~$L_{s, x}(\mathbf{A}, k; \mathbf{B}, \ell)$ as matrix elements of a linear operator~$\mathbb{C}^{n+1} \otimes V \rightarrow \mathbb{C}^{n+1} \otimes V $, where $\mathbb{C}^{n+1}$ has the standard basis~$\{| j \rangle \}_{j=0}^n$, and~$V$ has the basis~$\{| \mathbf{A} \rangle \}_{\mathbf{A} \in \mathbb{Z}_{\geq 0}^n}$. For either space, we denote vectors of the dual basis by~$\langle \mathbf{v} |$, and for tensor products of vectors (or dual vectors), we use the notation~$|\mathbf{v}, \mathbf{A} \rangle \coloneqq | \mathbf{v} \rangle \otimes | \mathbf{A} \rangle$. For the tensor product~$\mathbb{C}^{n+1} \otimes V $ we use the basis~$\{|j, \mathbf{A} \rangle \}_{j \in\{ 0,\dots, n\}, \mathbf{A} \in \mathbb{Z}_{\geq 0}^n}$, and for its dual we use the basis~$\{\langle j, \mathbf{A} | \}_{j \in\{ 0,\dots, n\}, \mathbf{A} \in \mathbb{Z}_{\geq 0}^n}$. In these bases the operator~$\mathscr{L}_{s, x}$ corresponding to~$L_{s, x}$ acts as
\begin{align}
  \langle k, \mathbf{A} | \ssp \mathscr{L}_{s, x} \ssp  |\ell, \mathbf{B} \rangle = L_{s, x}(\mathbf{A}, k; \mathbf{B}, \ell). \label{eq:L_mat}
\end{align}
In this way, pairs of dual and primal basis vectors of~$\mathbb{C}^{n+1} \otimes V$ with nonzero~$\mathscr{L}_{s, x} $ matrix entries correspond precisely to the allowed local configurations of paths at a vertex, displayed in \Cref{fig:L_weights}.

\begin{remark}[Finite-spin reduction]
	\label{rmk:finite_spin}
	For generic~$s \in \mathbb{C}$, the operator $\mathscr{L}_{s, x} $ has infinitely many nonzero matrix entries, so any number of paths can occupy the vertical edges. If, on the other hand,~$s = q^{-\frac{\mathsf{M}}{2}}$ for some~$\mathsf{M} \in \mathbb{Z}_{\ge1}$, then let us set by definition~$L_{s, x}(\mathbf{A}, k; \mathbf{B}, \ell) = 0$ unless~$ |\mathbf{A}|, |\mathbf{B}| \leq \mathsf{M}$. Note that $L_{s,x}(\mathbf{A},k;\mathbf{A}_k^+,0)$ vanishes automatically if $|\mathbf{A}|=\mathsf{M}$, so vertices with $|\mathbf{A}|>\mathsf{M}$ or $|\mathbf{B}|>\mathsf{M}$ cannot be created from the stochastic evolution started from a configuration where all $|\mathbf{A}|\le \mathsf{M}$. We see that for $s=q^{-\mathsf{M}/2}$, at most~$\mathsf{M}$ vertical paths may occupy the vertical edges. In this case,~$\mathscr{L}_{s, x} $ acts in the finite-dimensional subspace~$\mathbb{C}^{n+1} \otimes V_\mathsf{M}$, with $V_\mathsf{M} \subset V$ spanned by $\{|\mathbf{A}\rangle\}_{|\mathbf{A}| \leq \mathsf{M}}$.
\end{remark}

\subsection{Yang--Baxter equation}
\label{sub:YBE_text}

Let us define the following cross vertex weights $R_{z}(i,j;k,\ell)$, originally due to~\cite{Jimbo:1985ua} and \cite{bazhanov1985trigonometric}:
\begin{equation}
	\label{eq:R_matrix_nonfused}
	\begin{split}
		&\hspace{6pt}
		\left.
		R_z(i,i;i,i)
		\coloneqq
		1,
		\quad
		i \in \{0,1,\dots,n \},
		\right.
		\\[5pt]
		&
		\left.
		\begin{array}{ll}
		R_z(j,i;j,i)
		\coloneqq
		\dfrac{q(1-z)}{1-qz},
		&
		\quad
		R_z(i,j;i,j)
		\coloneqq
		\dfrac{1-z}{1-qz}
		\\[12pt]
		R_z(j,i;i,j)
		\coloneqq
		\dfrac{1-q}{1-qz},
		&
		\quad
		R_z(i,j;j,i)
		\coloneqq
		\dfrac{(1-q)z}{1-qz}
		\end{array}
		\right\}
		\quad
		i,j \in \{0,1,\dots,n \},
		\quad  i<j.
	\end{split}
\end{equation}
These weights also satisfy the sum-to-one property: $\displaystyle\sum\nolimits_{k,\ell=0}^{n} R_{z}(i,j;k,\ell)=1$ for any $i,j$.
They are nonnegative if $0\le z\le 1$.

The vertex weights~$R_{z}(i,j;k,\ell)$ can also be regarded as matrix elements of an operator~$\mathscr{R}_z$ acting in $\mathbb{C}^{n+1} \otimes \mathbb{C}^{n+1}$, namely, $\langle j,i | \mathscr{R}_z | \ell,k \rangle = R_{z}(i,j;k,\ell)$.

One can check that $R_z$ is the spin-$\frac{1}{2}$ reduction of $L_{s,x}$ (as in \Cref{rmk:finite_spin}):
\begin{equation}
	\label{eq:R_matrix_spin_one_half_reduction}
	R_{z}(i,j;k,\ell)=
	L_{q^{-1/2},\ssp z^{-1}q^{-1/2}}(\mathbf{e}_i \mathbf{1}_{i\ge1},j;\mathbf{e}_k \mathbf{1}_{k\ge 1},\ell),\qquad
	i,j,k,\ell\in\left\{ 0,1,\ldots,n  \right\}.
\end{equation}
In the right-hand side, if~$i=0$, then $\mathbf{e}_i \mathbf{1}_{i\ge1}=( 0,\ldots,0  )$ ($n$ zeroes), which corresponds to no arrows at an edge.

\medskip

The weights $L_{s,x}$ and $R_{z}$ satisfy the following Yang--Baxter equation:
\begin{proposition}
	[{\cite[(2.3.1) and Corollary B.4.3]{borodin_wheeler2018coloured}}]
	\label{prop:YBE_nonfused_RLL}
	For any fixed $i_1,i_2,j_1,j_2\in \left\{ 0,1,\ldots,n  \right\}$
	and $\mathbf{A}, \mathbf{B}\in \mathbb{Z}_{\ge0}^{n}$, we have
	\begin{equation}
		\label{eq:YBE_nonfused_RLL}
		\begin{split}
			&\sum_{\mathbf{K}\in \mathbb{Z}_{\ge0}^{n}}
			\ssp
			\sum_{k_1,k_2=0}^{n}
			L_{s,y}(\mathbf{A},i_2;\mathbf{K},k_2)\ssp
			L_{s,x}(\mathbf{K},i_1;\mathbf{B},k_1)\ssp
			R_{y/x}(k_2,k_1;j_2,j_1)
			\\&\hspace{40pt}=
			\sum_{\mathbf{K}\in \mathbb{Z}_{\ge0}^{n}}
			\ssp
			\sum_{k_1,k_2=0}^{n}
			R_{y/x}(i_2,i_1;k_2,k_1)\ssp
			L_{s,y}(\mathbf{K},k_2;\mathbf{B},j_2)\ssp
			L_{s,x}(\mathbf{A},k_1;\mathbf{K},j_1).
		\end{split}
	\end{equation}
	See \Cref{fig:YBE_nonfused_RLL} for an illustration.  Note that the summations in both sides of \eqref{eq:YBE_nonfused_RLL} are actually finite.
\end{proposition}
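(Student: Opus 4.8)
The plan is to deduce the higher-spin $RLL$ relation \eqref{eq:YBE_nonfused_RLL} from the fundamental (spin-$\tfrac12$) Yang--Baxter equation by \emph{fusion}, rather than attempting a direct case-by-case check; the latter is impractical because the vertical space $V$ is infinite-dimensional. First I would recast \eqref{eq:YBE_nonfused_RLL} as an operator identity on $\mathbb{C}^{n+1}\otimes\mathbb{C}^{n+1}\otimes V$, where tensor factor $1$ carries the horizontal line $(i_2,k_2,j_2)$ with spectral parameter $y$, factor $2$ carries $(i_1,k_1,j_1)$ with parameter $x$, and factor $3$ is the vertical line $(\mathbf{A},\mathbf{K},\mathbf{B})$. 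Tracking the intermediate sums through the convention \eqref{eq:L_mat}, the two sides of \eqref{eq:YBE_nonfused_RLL} become the matrix elements of $\mathscr{L}^{(1,3)}_{s,y}\,\mathscr{L}^{(2,3)}_{s,x}\,\mathscr{R}^{(1,2)}_{y/x}$ and $\mathscr{R}^{(1,2)}_{y/x}\,\mathscr{L}^{(2,3)}_{s,x}\,\mathscr{L}^{(1,3)}_{s,y}$ between $\langle i_2,i_1,\mathbf{A}|$ and $|j_2,j_1,\mathbf{B}\rangle$, so the proposition is the intertwining relation $\mathscr{L}^{(1,3)}_{s,y}\mathscr{L}^{(2,3)}_{s,x}\mathscr{R}^{(1,2)}_{y/x} = \mathscr{R}^{(1,2)}_{y/x}\mathscr{L}^{(2,3)}_{s,x}\mathscr{L}^{(1,3)}_{s,y}$.

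Next I would establish the purely fundamental case, in which $V$ is also a copy of $\mathbb{C}^{n+1}$ and all three operators are $R$-matrices: the $RRR$ relation $\mathscr{R}^{(1,3)}\mathscr{R}^{(2,3)}\mathscr{R}^{(1,2)}_{y/x} = \mathscr{R}^{(1,2)}_{y/x}\mathscr{R}^{(2,3)}\mathscr{R}^{(1,3)}$ with the multiplicative spectral parameters matching those induced by \eqref{eq:R_matrix_nonfused}. Because the weights in \eqref{eq:R_matrix_nonfused} depend on the colors only through their coincidences and the order relation $i<j$, every matrix entry of this identity involves at most three distinct color values (the three line-colors are permuted but their multiset is conserved). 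Hence the verification reduces to the rank-one and rank-two computations (the $U_q(\widehat{\mathfrak{sl}}_2)$ and $U_q(\widehat{\mathfrak{sl}}_3)$ cases), which are finite and entirely explicit.

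Then I would upgrade to higher spin by fusion, using the key input \eqref{eq:R_matrix_spin_one_half_reduction}: the cross weight $R_z$ is precisely the spin-$\tfrac12$ reduction of $L_{s,x}$ in the sense of \Cref{rmk:finite_spin}. Consequently, for $s=q^{-\mathsf{M}/2}$ the operator $\mathscr{L}_{s,x}$ is realized as a product of $\mathsf{M}$ fundamental $R$-matrices stacked along the vertical direction and restricted to the $q$-symmetric subspace of $(\mathbb{C}^{n+1})^{\otimes \mathsf{M}}$. The projector onto this subspace is a polynomial in fundamental $R$-matrices evaluated at the distinguished spectral values where $\mathscr{R}$ degenerates to a rescaled projector, and the $RRR$ relation of the previous step guarantees that this projector is compatible with the auxiliary-space factor $\mathscr{R}^{(1,2)}_{y/x}$. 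Applying the fundamental equation $\mathsf{M}$ times to drag $\mathscr{R}^{(1,2)}_{y/x}$ through the fused vertical column, and then reinstating the symmetrizers on both sides, produces \eqref{eq:YBE_nonfused_RLL} for every $s=q^{-\mathsf{M}/2}$, $\mathsf{M}\in\mathbb{Z}_{\ge1}$.

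Finally, to reach generic $s$ I would invoke rationality. For fixed external data $(\mathbf{A},\mathbf{B},i_1,i_2,j_1,j_2)$ the arrow conservation property makes both sides of \eqref{eq:YBE_nonfused_RLL} finite sums whose number of terms, and whose degrees in $s$, are bounded independently of $\mathsf{M}$ (this is exactly the finiteness remarked after the statement); two rational functions of $s$ of bounded degree that agree on the infinite set $\{q^{-\mathsf{M}/2}\}_{\mathsf{M}\ge1}$ must coincide identically. I expect the fusion step to be the main obstacle: one has to verify carefully that the $q$-symmetrizing projector is genuinely assembled from fundamental $R$-matrices at the degeneration points and that it passes through $\mathscr{R}_{y/x}$ via the fundamental relation. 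The colored $U_q(\widehat{\mathfrak{sl}}_{n+1})$ structure adds only bookkeeping, since the full coloring is already carried by the fundamental $R$-matrix, and the concluding rationality argument is what legitimately extends the result from the fused spins to all $s$.
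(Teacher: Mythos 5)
Your proposal is correct and follows essentially the same route as the actual proof of this statement: the paper itself gives no argument, quoting \cite{borodin_wheeler2018coloured}, and the derivation there of (2.3.1) and Corollary~B.4.3 is precisely your scheme --- verify the fundamental colored $RRR$ equation by using color conservation and the order-only dependence of the weights \eqref{eq:R_matrix_nonfused} to reduce to at most three distinct colors (rank $\le 2$), fuse $\mathsf{M}$ fundamental vertical lines (with spectral parameters in geometric progression $x, qx, \dots, q^{\mathsf{M}-1}x$, a detail you elide) to obtain $L_{q^{-\mathsf{M}/2},x}$, and extend to generic $s$ by rationality of both sides in $s$ together with agreement at the infinitely many points $s=q^{-\mathsf{M}/2}$. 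The only substantive difference is that \cite{borodin_wheeler2018coloured} implements fusion stochastically via $q$-exchangeable arrow configurations rather than via Jimbo-type symmetrizer projectors, but for commuting $\mathscr{R}_{y/x}$ through the fused column the two implementations are interchangeable.
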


\begin{figure}[htpb]
	\centering
	\includegraphics[width=.9\textwidth]{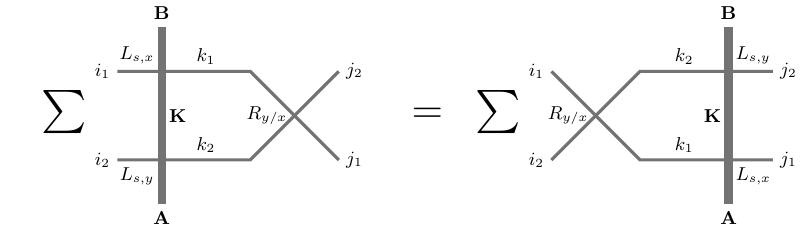}
	\caption{An illustration of the Yang--Baxter equation
	\eqref{eq:YBE_nonfused_RLL} in \Cref{prop:YBE_nonfused_RLL},
	where the sums in both sides
	are taken over all $k_1,k_2\in \left\{ 0,1,\ldots,n  \right\}$
	and $\mathbf{K}\in \mathbb{Z}_{\ge0}^n$.}
	\label{fig:YBE_nonfused_RLL}
\end{figure}

\subsection{Fused weights}
\label{sub:fusion_text}

The vertex weights $L_{s,x}(\mathbf{A},k;\mathbf{B},\ell)$
are higher spin (that is, they allow multiple arrows per edge) in
the vertical direction. For $s=q^{-1/2}$, they reduce to
the fundamental $R$-matrix $R_z(a,k;b,\ell)$, where
$a,k,b,\ell\in\left\{ 0,1,\ldots,n  \right\}$; see
\eqref{eq:R_matrix_spin_one_half_reduction}.  The inverse
procedure for constructing $L_{s,x}$ from $R_z$ is called
\emph{fusion} and dates back to
\cite{KulishReshSkl1981yang}.
In the uncolored case, it was put into
probabilistic context in \cite{CorwinPetrov2015arXiv},
\cite{BorodinPetrov2016inhom}.
The colored fusion is described in, e.g.,
\cite[Appendix~B]{borodin_wheeler2018coloured}.
The formula for the fully fused stochastic colored vertex weights
$W_{x,\mathsf{L},\mathsf{M}}(\mathbf{A},\mathbf{B};\mathbf{C},\mathbf{D})$
coming from
$U_q(\widehat{\mathfrak{sl}}_{n+1})$
is obtained in \cite{BosnjakMangazeev2016};
see also
\cite[Appendix~C]{borodin_wheeler2018coloured}.
Here we recall the stochastic vertex weights
fused in both the horizontal and vertical directions.

We need the standard $q$-Pochhammer symbols notation:
\begin{equation*}
	(a ; q)_k\coloneqq (1-a)(1-a q) \ldots(1-a q^{k-1}), \quad k \in \mathbb{Z}_{\geq 0},
\end{equation*}
and $(z ; q)_{\infty}\coloneqq \prod_{i=0}^{\infty}\left(1-z q^i\right)$ is a convergent infinite product because $q\in[0,1)$.

For $\mathbf{A},\mathbf{B}\in \mathbb{Z}_{\ge0}^{n}$
such that $A_i\le B_i$ for all $i$, define
\begin{equation}
	\label{eq:capital_Phi_coloreq_sqW}
	\Phi(\mathbf{A}, \mathbf{B} ; x, y)\coloneqq
	\frac{(x ; q)_{|\mathbf{A}|}(y / x ; q)_{|\mathbf{B}-\mathbf{A}|}}{(y ; q)_{|\mathbf{B}|}}\ssp
	(y / x)^{|\mathbf{A}|}
\ssp
q^{\sum_{1\le i<j\le n}\left(B_i-A_i\right) A_j}
\prod_{i=1}^n
\frac{(q;q)_{B_i}}{(q;q)_{A_i}(q;q)_{B_i-A_i}}.
\end{equation}
For any fixed $\mathbf{B}\in \mathbb{Z}_{\ge0}^{n}$, we have
\begin{equation}
	\label{eq:capital_Phi_coloreq_sum_to_one}
	\sum\nolimits_{\mathbf{A}\in \mathbb{Z}_{\ge0}^n
	\colon A_i\le B_i\textnormal{ for all $i$}}
	\Phi(\mathbf{A}, \mathbf{B} ; x, y)=1.
\end{equation}

With this notation,
if $\mathbf{A},\mathbf{B},\mathbf{C},\mathbf{D}
\in\mathbb{Z}_{\ge0}^{n}$, we define the vertex weights
\begin{equation}
	\label{eq:fully_fused_stochastic_weights}
	\begin{split}
		&W_{x,\mathsf{L},\mathsf{M}}(\mathbf{A},\mathbf{B};\mathbf{C},\mathbf{D})
		\coloneqq
		\mathbf{1}_{\mathbf{A}+\mathbf{B}=\mathbf{C}+\mathbf{D}}
		\cdot
		x^{|\mathbf{D}-\mathbf{B}|}
		\ssp
		(q^{\mathsf{L}})^{|\mathbf{A}|}
		\ssp
		(q^{\mathsf{M}})^{-|\mathbf{D}|}
		\\
		&\hspace{60pt}\times
		\sum\nolimits_{\mathbf{P}}
		\Phi(\mathbf{C}-\mathbf{P}, \mathbf{C}+\mathbf{D}-\mathbf{P} ; q^{\mathsf{L}-\mathsf{M}} x, q^{-\mathsf{M}} x )
		\ssp
		\Phi(
		\mathbf{P}, \mathbf{B} ; q^{-\mathsf{L}} / x, q^{-\mathsf{L}}
		).
	\end{split}
\end{equation}
The sum in \eqref{eq:fully_fused_stochastic_weights}
is finite and
is taken over all $\mathbf{P}\in \mathbb{Z}_{\ge0}^{n}$
such that $0\le P_i\le \min\left( B_i,C_i \right)$
for all $i$.

\begin{remark}
	\label{rmk:finite_spin_reduction_fully_fused}
	The parameters $\mathsf{L},\mathsf{M}$
	enter the vertex weights
	\eqref{eq:fully_fused_stochastic_weights}
	only through the powers $q^{\mathsf{L}},q^{\mathsf{M}}$.
	Moreover, the weights depend on
	$q^{\mathsf{L}}$ and $q^{\mathsf{M}}$ in a rational manner.
	Specializing $\mathsf{L},\mathsf{M}$, or both to
	positive integers leads to finite-spin reduction as in
	\Cref{rmk:finite_spin}.
	The integers $\mathsf{L}$ and
	$\mathsf{M}$ correspond to the horizontal and the
	vertical edge capacities, respectively.
	Outside of the
	finite-spin specializations, we may view
	$q^{\mathsf{L}},q^{\mathsf{M}}$
	as independent complex parameters of the weights.
\end{remark}

The weights
$W_{x,\mathsf{L},\mathsf{M}}$
\eqref{eq:fully_fused_stochastic_weights}
satisfy a version of the Yang--Baxter equation,
see formulas (C.1.2)--(C.1.3) in
\cite{borodin_wheeler2018coloured}.
Graphically, this equation is
similar to the one illustrated in
\Cref{fig:YBE_nonfused_RLL},
but the weights $L_{s,x},L_{s,y}$, and $R_{y/x}$
must be replaced with, respectively,
$W_{\frac{x}{z},\mathsf{L},\mathsf{N}}$,
$W_{\frac{y}{z},\mathsf{M},\mathsf{N}}$,
and
$W_{\frac{x}{y},\mathsf{L},\mathsf{M}}$.
The summation in the Yang--Baxter equation for the $W$'s
goes over triples of elements from $\mathbb{Z}_{\ge0}^n$.

The weights \eqref{eq:fully_fused_stochastic_weights} sum to one~\cite[(C.1.5)]{borodin_wheeler2018coloured},
\begin{equation}
	\label{eq:fully_fused_stochastic_weights_sum_to_one}
	\sum\nolimits_
	{\mathbf{C},\mathbf{D}\in \mathbb{Z}_{\ge 0}^n}
	W_{x,\mathsf{L},\mathsf{M}}(
	\mathbf{A},\mathbf{B};\mathbf{C},\mathbf{D}
	)=1,
	\qquad \mathbf{A},\mathbf{B}\in \mathbb{Z}_{\ge0}^n,
\end{equation}
and reduce to the weights $L_{s,x}$ (spin-$\frac{1}{2}$
in the horizontal direction and higher spin in the vertical direction)
and $R_z$
(spin-$\frac{1}{2}$ in both directions)
from \Cref{sub:vertex_weights_text,sub:YBE_text}
as follows~\cite[Proposition C.1.4 and formula (C.2.2)]{borodin_wheeler2018coloured}:
\begin{equation}
	\label{eq:fully_fused_reduction_to_L}
	\begin{split}
		R_{z}(i,j;k,\ell)
		&=
		W_{z^{-1},1,1}
		(\mathbf{e}_i\mathbf{1}_{i\ge1},
		\mathbf{e}_j\mathbf{1}_{j\ge1};
		\mathbf{e}_k\mathbf{1}_{k\ge1},
		\mathbf{e}_\ell\mathbf{1}_{\ell\ge1}
		),
		\\
		L_{s,x}(\mathbf{A},b;\mathbf{C},d)
		&=
		W_{x/s,1,\mathsf{N}}
		(
			\mathbf{A},\mathbf{e}_b\mathbf{1}_{b\ge1};
			\mathbf{C},\mathbf{e}_d\mathbf{1}_{d\ge1}
		)
		\big\vert_{q^{\mathsf{N}}=s^{-2}},
	\end{split}
\end{equation}
where
$i,j,k,\ell,b,d\in \left\{ 0,1,\ldots,n  \right\}$
and $\mathbf{A},\mathbf{C}\in \mathbb{Z}_{\ge0}^{n}$.
In the second line in \eqref{eq:fully_fused_reduction_to_L}, the parameter $s=q^{-\mathsf{N}/2}$ is real for $q\in[0,1)$ and $\mathsf{N}\in \mathbb{Z}_{\ge1}$. However, for the parameters $(s,x)$ to satisfy \eqref{eq:nonnegativity_of_vertex_weights_condition}, we let $\mathsf{N}$ not to be an integer, and treating $q^\mathsf{N}$ as a generic complex parameter, see \Cref{rmk:finite_spin_reduction_fully_fused}.

\subsection{Queue specialization}
\label{sub:mqueue_spec}

Here we define a procedure which we call the
\emph{queue specialization} of the
fully fused stochastic colored vertex weights
\eqref{eq:fully_fused_stochastic_weights}.
This specialization depends on an integer $1\le m\le n$ and on three parameters
$u,s_1,s_2$, and proceeds in the following manner:
\begin{enumerate}[$\bullet$]
	\item
		First, encode the parameters $\mathsf{L}$ and $\mathsf{M}$
		through $s_1,s_2\in \mathbb{C}$ as
		$q^{-\mathsf{L}}=s_1^2$, $q^{-\mathsf{M}}=s_2^2$,
		and let the spectral parameter be $x=s_1s_2^{-1}u$.
		This is just a change of variables (as in \Cref{rmk:finite_spin_reduction_fully_fused}),
		which will be useful in
		subsequent computations.
	\item After that, take the limit of
		$W_{s_1s_2^{-1}u,\mathsf{L},\mathsf{M}}
		(\mathbf{A},\mathbf{B};\mathbf{C},\mathbf{D})$
		as $A_m,C_m\to+\infty$ such that all other coordinates
		of $\mathbf{A},\mathbf{C}$, as well as the whole tuples
		$\mathbf{B},\mathbf{D}$ are fixed,
		and $A_m-C_m=D_m-B_m$ is also fixed.\footnote{Throughout
		this limit, we assume that $0<q<1$, and later we
		will specialize the limit to $q=0$ where needed.
		In other words, $q^0$ should be treated as $1$ before
		and after the limit.} The latter condition follows from the
		arrow conservation.
\end{enumerate}

\begin{lemma}
	\label{lemma:queue_spec_exists}
	The limit of
	the vertex weights
	$W_{s_1s_2^{-1}u,\mathsf{L},\mathsf{M}}
	(\mathbf{A},\mathbf{B};\mathbf{C},\mathbf{D})$
	described above exists. It is given by
 	\begin{equation}
	\label{eq:queue_spec_fully_fused}
	\begin{split}
		&
		\lim_{A_m,C_m\to +\infty}
		W_{s_1s_2^{-1}u,\mathsf{L},\mathsf{M}}
		(\mathbf{A},\mathbf{B};\mathbf{C},\mathbf{D})=
		\mathbf{1}_{\mathbf{A}+\mathbf{B}=\mathbf{C}+\mathbf{D}}
		\cdot
		\mathbf{1}_{D_1=\ldots=D_{m-1}=0}
		\cdot
		\frac{(s_1^{-1}s_2 u; q)_{\infty}}{(s_1s_2u ; q)_{\infty}}
		\\&\hspace{25pt}\times
		\sum_{\mathbf{P}}
		\frac{(s_1s_2/u ; q)_{|\mathbf{P}|}
		(s_1u/s_2 ; q)_{|\mathbf{B}-\mathbf{P}|}}
		{(s_1^2 ; q)_{|\mathbf{B}|}}\ssp
		q^{\sum_{1\le i<j\le n}\left(B_i-P_i\right) P_j}
		\prod_{i=1}^n
		\frac{(q;q)_{B_i}}{(q;q)_{P_i}(q;q)_{B_i-P_i}}
		\\&\hspace{25pt}\times
		\Bigl(\frac{s_1s_2}{u} \Bigr)^{|\mathbf{B}|-|\mathbf{P}|}
		\Bigl(\frac{us_2}{s_1} \Bigr)^{|\mathbf{D}|}
		\ssp
		q^{\sum_{m\le i<j\le n} D_i (C_j-P_j)}
		\ssp
		\frac{
		(s_1^2 ; q)_{|\mathbf{D}|}}
		{(q;q)_{D_m}}
		\prod_{i=m+1}^n
		\frac{(q;q)_{C_i-P_i+D_i}}{(q;q)_{C_i-P_i}(q;q)_{D_i}},
	\end{split}
\end{equation}
where the sum is over $\mathbf{P}\in \mathbb{Z}_{\ge0}^n$
with $0\le P_i\le \min (B_i,C_i)$ for all $i$.
\end{lemma}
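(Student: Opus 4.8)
The plan is to substitute the queue parameters directly into the definition \eqref{eq:fully_fused_stochastic_weights} and take the limit term by term in the (finite) sum over $\mathbf{P}$. Under $q^{-\mathsf{L}}=s_1^2$, $q^{-\mathsf{M}}=s_2^2$, $x=s_1s_2^{-1}u$, the four arguments entering the two $\Phi$ factors become $q^{\mathsf{L}-\mathsf{M}}x=s_1^{-1}s_2u$, $q^{-\mathsf{M}}x=s_1s_2u$, $q^{-\mathsf{L}}/x=s_1s_2/u$, and $q^{-\mathsf{L}}=s_1^2$, while the overall prefactor $x^{|\mathbf{D}-\mathbf{B}|}(q^{\mathsf{L}})^{|\mathbf{A}|}(q^{\mathsf{M}})^{-|\mathbf{D}|}$ becomes $(s_1s_2^{-1}u)^{|\mathbf{D}|-|\mathbf{B}|}\ssp s_1^{-2|\mathbf{A}|}\ssp s_2^{2|\mathbf{D}|}$. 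The index set of the sum stabilizes: since $B_m$ is fixed, $\min(B_m,C_m)=B_m$ for $C_m$ large, so the summation runs over the fixed finite box $\prod_i\{0,\dots,\min(B_i,C_i)\}$ and it suffices to take the limit of each summand separately.

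The factor $\Phi(\mathbf{P},\mathbf{B};s_1s_2/u,s_1^2)$ does not involve $\mathbf{A},\mathbf{C}$, hence equals its own limit; expanding its definition (with the internal ratio $y/x=s_1u/s_2$) produces exactly the second line of \eqref{eq:queue_spec_fully_fused} together with a leftover monomial $(s_1u/s_2)^{|\mathbf{P}|}$ that I set aside to combine at the end. The heart of the argument is the other factor $\Phi(\mathbf{C}-\mathbf{P},\mathbf{C}+\mathbf{D}-\mathbf{P};s_1^{-1}s_2u,s_1s_2u)$, whose first and second arguments have total sizes $|\mathbf{C}|-|\mathbf{P}|$ and $|\mathbf{C}|+|\mathbf{D}|-|\mathbf{P}|$ tending to infinity while their difference $|\mathbf{D}|$ stays fixed. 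I would track its three types of factors separately. First, the $q$-Pochhammer symbols $(s_1^{-1}s_2u;q)_{|\mathbf{C}|-|\mathbf{P}|}$ and $(s_1s_2u;q)_{|\mathbf{C}|+|\mathbf{D}|-|\mathbf{P}|}$ converge, for $|q|<1$, to $(s_1^{-1}s_2u;q)_{\infty}$ and $(s_1s_2u;q)_{\infty}$, giving the ratio prefactor, while $(s_1^2;q)_{|\mathbf{D}|}$ is already finite. Second, in the product $\prod_i(q;q)_{C_i+D_i-P_i}/[(q;q)_{C_i-P_i}(q;q)_{D_i}]$, the $i=m$ factor tends to $1/(q;q)_{D_m}$ (numerator and denominator both converging to $(q;q)_{\infty}$), the factors with $i<m$ become $1$ once $D_i=0$ is forced, and the factors with $i>m$ are unchanged, reproducing the last product in \eqref{eq:queue_spec_fully_fused}.

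Third, and most delicately, the $q$-exponent is $\sum_{1\le i<j\le n}D_i(C_j-P_j)$; its terms with $j=m$ equal $(C_m-P_m)\sum_{i<m}D_i$, which diverge as $C_m\to+\infty$ unless $D_1=\dots=D_{m-1}=0$, so $q$ raised to this quantity tends to zero off that locus. This is precisely the indicator $\mathbf{1}_{D_1=\dots=D_{m-1}=0}$ in the limit, and under it the exponent collapses to $\sum_{m\le i<j\le n}D_i(C_j-P_j)$ with the surviving $C_j$ ($j>m$) fixed, matching \eqref{eq:queue_spec_fully_fused}. The only remaining divergent piece is the monomial $(s_1^2)^{|\mathbf{C}-\mathbf{P}|}=(s_1^2)^{|\mathbf{C}|-|\mathbf{P}|}$ coming from the $(y/x)^{|\mathbf{A}|}$ factor of this $\Phi$, since here $y/x=s_1^2$.

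Finally I collect all monomials in $s_1,s_2,u$: the prefactor, the leftover $(s_1u/s_2)^{|\mathbf{P}|}$ from the second $\Phi$, and $(s_1^2)^{|\mathbf{C}|-|\mathbf{P}|}$ from the first. The only potential divergence is in $|\mathbf{A}|$ and $|\mathbf{C}|$, entering solely through $s_1^{-2|\mathbf{A}|+2|\mathbf{C}|}$; by arrow conservation $\mathbf{A}+\mathbf{B}=\mathbf{C}+\mathbf{D}$ one has $|\mathbf{A}|-|\mathbf{C}|=|\mathbf{D}|-|\mathbf{B}|$, so this equals the finite $s_1^{2|\mathbf{B}|-2|\mathbf{D}|}$ and the limit exists. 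A routine tally of the exponents of $s_1$, $s_2$, and $u$ then yields $(s_1s_2/u)^{|\mathbf{B}|-|\mathbf{P}|}(us_2/s_1)^{|\mathbf{D}|}$, exactly the third line of \eqref{eq:queue_spec_fully_fused}, and assembling this with the two $\Phi$-limits produces the claimed formula. I expect the main obstacle to be this last bookkeeping together with the third step above: one must verify that the sole source of divergence is the $s_1$-power neutralized by arrow conservation, and that the $q$-exponent divergence is captured \emph{exactly} by the stated indicator, with no additional constraints on $\mathbf{B},\mathbf{C},\mathbf{D}$ slipping in.
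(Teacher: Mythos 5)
Your proposal is correct and follows essentially the same route as the paper's proof: fix $\mathbf{P}$ in the stabilized finite box, note that $\Phi(\mathbf{P},\mathbf{B};s_1s_2/u,s_1^2)$ is unaffected by the limit, extract the indicator $\mathbf{1}_{D_1=\ldots=D_{m-1}=0}$ from the divergent terms $D_i(C_m-P_m)$ in the $q$-exponent, and use arrow conservation $|\mathbf{A}|-|\mathbf{C}|=|\mathbf{D}|-|\mathbf{B}|$ to neutralize the only divergent monomial $s_1^{-2|\mathbf{A}|+2|\mathbf{C}|}$. Your exponent tally $s_1^{|\mathbf{B}|-|\mathbf{D}|-|\mathbf{P}|}s_2^{|\mathbf{B}|+|\mathbf{D}|-|\mathbf{P}|}u^{|\mathbf{D}|-|\mathbf{B}|+|\mathbf{P}|}$ matches the third line of \eqref{eq:queue_spec_fully_fused}, so the bookkeeping checks out.
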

\begin{proof}
Set~$x = s_1 s_2^{-1} u$.
 Since $\mathbf{B}$ stays fixed,
	the second factor
	\begin{equation*}
		\Phi(
		\mathbf{P}, \mathbf{B} ; q^{-\mathsf{L}} / x, q^{-\mathsf{L}}
		)
		=\Phi(
		\mathbf{P}, \mathbf{B} ; s_1s_2 / u, s_1^2
		)
	\end{equation*}
	in the sum in
	\eqref{eq:fully_fused_stochastic_weights}
	is also fixed. Moreover, the
	summation multi-index $\mathbf{P}$
	belongs to a fixed finite set where $P_i\le B_i$ for all $i$.
	Thus, it remains to consider the limit of
	\begin{equation}
		\label{eq:queue_spec_proof}
		\begin{split}
			&(q^{\mathsf{L}})^{|\mathbf{A}|}\ssp
			\Phi(\mathbf{C}-\mathbf{P}, \mathbf{C}+\mathbf{D}-\mathbf{P} ; q^{\mathsf{L}-\mathsf{M}} x, q^{-\mathsf{M}} x )
			\\&\hspace{20pt}=
			(q^{\mathsf{L}})^{|\mathbf{A}|}\ssp
			\frac{(q^{\mathsf{L}-\mathsf{M}} x ; q)_{|\mathbf{C}-\mathbf{P}|}(q^{-\mathsf{L}} ; q)_{|\mathbf{D}|}}
			{(q^{-\mathsf{M}} x ; q)_{|\mathbf{C}-\mathbf{P}+\mathbf{D}|}}\ssp
			(q^{-\mathsf{L}})^{|\mathbf{C}-\mathbf{P}|}
			q^{\sum_{1\le i<j\le n} D_i (C_j-P_j)}
			\prod_{i=1}^n
			\frac{(q;q)_{C_i-P_i+D_i}}{(q;q)_{C_i-P_i}(q;q)_{D_i}}.
			\\&\hspace{20pt}=
			s_1^{-2|\mathbf{D}-\mathbf{B}+\mathbf{P}|}\ssp
			\frac{(s_1^{-1}s_2 u; q)_{|\mathbf{C}-\mathbf{P}|}
			(s_1^2 ; q)_{|\mathbf{D}|}}
			{(s_1s_2u ; q)_{|\mathbf{C}-\mathbf{P}+\mathbf{D}|}}\ssp
			q^{\sum_{1\le i<j\le n} D_i (C_j-P_j)}
			\prod_{i=1}^n
			\frac{(q;q)_{C_i-P_i+D_i}}{(q;q)_{C_i-P_i}(q;q)_{D_i}}.
		\end{split}
	\end{equation}
	We have $C_m-P_m\to +\infty$, so \eqref{eq:queue_spec_proof}
	converges to
	zero unless $D_i=0$ for all $i<m$ (as~$q \in [0, 1)$).
	This leads to the indicator $\mathbf{1}_{D_1=\ldots=D_{m-1}=0}$ in \eqref{eq:queue_spec_fully_fused}.
	Next, if $D_i=0$ for all $i<m$,
	then all other factors in \eqref{eq:queue_spec_proof}
	behave well, and the desired limit of the
	whole vertex weight exists.
	Taking the limit as~$C_m \rightarrow +\infty$,
    we immediately obtain \eqref{eq:queue_spec_fully_fused}.
	This completes the proof.
\end{proof}

\begin{definition}[Queue specialization of the vertex weights]
	\label{def:queue_spec_defn}
	Let $1\le m\le n$ and $u,s_1,s_2\in \mathbb{C}$.
	We denote the limiting vertex weights \eqref{eq:queue_spec_fully_fused} in
	\Cref{lemma:queue_spec_exists} by
	$\WQ_{s_1,s_2,u}^{(-m)}(\mathbf{A},\mathbf{B};\mathbf{C},\mathbf{D})$,
	and call them
	the \emph{queue specialization} of the
	fully fused stochastic colored vertex weights.

	The term ``queue specialization''
	comes from connections with multiline queues described (in two different degenerations) in
	\Cref{sub:mqueues_martin_matching,sub:qBoson_multiline} below.
	The label $(-m)$ will be
	useful when we later place the vertices
	$\WQ_{s_1,s_2,u}^{(-m)}$ on a lattice.
\end{definition}

\begin{remark}
	\label{rmk:queue_spec_fully_fused_abuse_notation}
	In the queue vertex weights
	$\WQ_{s_1,s_2,u}^{(-m)}(\mathbf{A},\mathbf{B};\mathbf{C},\mathbf{D})$,
	we abuse the notation of the tuples $\mathbf{A},\mathbf{C}\in \mathbb{Z}_{\ge0}^n$ by setting $A_m,C_m=+\infty$.
	That is,
	the tuples with infinitely many arrows of color $m$ are
	not elements of $\mathbb{Z}_{\ge0}^n$.
	However,
	for the uniformity of notation, we will still sometimes
	treat $\mathbf{A},\mathbf{C}$ as elements of $\mathbb{Z}_{\ge0}^n$,
	while explicitly stating that $A_m,C_m=+\infty$.
\end{remark}

\begin{remark}
	\label{rmk:queue_weights_independence_of_1_to_m_1}
	From \eqref{eq:queue_spec_fully_fused}
	we see that for fixed $B_1,\ldots,B_{m-1}\ge0 $,
	the weights
	$\WQ_{s_1,s_2,u}^{(-m)}(\mathbf{A},\mathbf{B};\mathbf{C},\mathbf{D})$
	are \emph{independent} of $A_1,\ldots,A_{m-1},C_1,\ldots, C_{m-1}$
	provided that $C_i=A_i+B_i$ for all $i<m$.
	For example, we can set
	$A_i=0$ and $C_i=B_i$ for all $i<m$.
	Note also that since
	$A_m,C_m=+\infty$, we may have $D_m>0$ even if $B_m=0$. The latter
	property is essential for our constructions.
\end{remark}

The next lemma
states the independence of
the queue vertex weights under
$B_m$, too, provided that no lower colors are present:

\begin{lemma}
	\label{lemma:queue_indep_of_Bm}
	Let $B_1=\ldots=B_{m-1}=0$. Then the queue vertex weight $\WQ_{s_1,s_2,u}^{(-m)}(\mathbf{A},\mathbf{B};\mathbf{C},\mathbf{D})$ is independent of $B_m$.
\end{lemma}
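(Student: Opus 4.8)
The plan is to prove the lemma by a direct computation from the explicit queue-weight formula \eqref{eq:queue_spec_fully_fused}, isolating the dependence on $B_m$ and resumming it away with a terminating $q$-Chu--Vandermonde identity. First I would invoke the hypothesis $B_1=\dots=B_{m-1}=0$: since the summation multi-index $\mathbf{P}$ in \eqref{eq:queue_spec_fully_fused} obeys $0\le P_i\le \min(B_i,C_i)$, it forces $P_1=\dots=P_{m-1}=0$, so the sum collapses to one over $P_m,P_{m+1},\dots,P_n$. Because $C_m=+\infty$ (\Cref{rmk:queue_spec_fully_fused_abuse_notation}), the constraint on the new variable is simply $0\le P_m\le B_m$.

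Next I would split the summand into its $P_m$-dependent and $P_m$-independent parts. A short inspection of \eqref{eq:queue_spec_fully_fused} shows that, once $B_{<m}=P_{<m}=0$, the index $P_m$ survives only in the ``incoming'' ($\mathbf{B}$-side) factors: the Pochhammer symbols $(s_1s_2/u;q)_{|\mathbf{P}|}$ and $(s_1u/s_2;q)_{|\mathbf{B}-\mathbf{P}|}$, the $i=m$ part $q^{(B_m-P_m)\sum_{j>m}P_j}$ of the $q$-power, the $q$-binomial $(q;q)_{B_m}/\bigl((q;q)_{P_m}(q;q)_{B_m-P_m}\bigr)$, and the factor $(s_1s_2/u)^{-P_m}$. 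It appears in none of the ``outgoing'' ($\mathbf{D}$-side) factors, since $\sum_{m\le i<j}D_i(C_j-P_j)$ and $\prod_{i=m+1}^n(\cdots)$ never touch the index $m$. Writing $\sigma=\sum_{j>m}P_j$ and $\tau=\sum_{j>m}(B_j-P_j)$ and using $(a;q)_{j+k}=(a;q)_j\ssp(aq^j;q)_k$, the inner sum over $P_m$ reduces, up to an overall $P_m$-independent prefactor $(s_1s_2/u;q)_\sigma\ssp(s_1u/s_2;q)_\tau$, to
\begin{equation*}
\sum_{P_m=0}^{B_m}\frac{(q;q)_{B_m}}{(q;q)_{P_m}\ssp(q;q)_{B_m-P_m}}\ssp(\alpha;q)_{P_m}\ssp(\beta;q)_{B_m-P_m}\ssp q^{(B_m-P_m)\sigma}\ssp\alpha^{-P_m},\qquad \alpha=\tfrac{s_1s_2}{u}q^{\sigma},\ \ \beta=\tfrac{s_1u}{s_2}q^{\tau}.
\end{equation*}

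I would then evaluate this by the terminating $q$-Chu--Vandermonde (equivalently $q$-binomial) summation
\begin{equation*}
\sum_{j=0}^{N}\frac{(q;q)_N}{(q;q)_j\ssp(q;q)_{N-j}}\ssp(\alpha;q)_j\ssp(\beta;q)_{N-j}\ssp\alpha^{-j}=\frac{(\alpha\beta;q)_N}{\alpha^{N}},
\end{equation*}
after first absorbing $q^{(B_m-P_m)\sigma}$ via $\alpha^{-P_m}q^{(B_m-P_m)\sigma}=\alpha^{-P_m}_{\mathrm{eff}}\ssp q^{B_m\sigma}$ (the stray $q^{B_m\sigma}$ then cancels against $\alpha^{-B_m}=(s_1s_2/u)^{-B_m}q^{-\sigma B_m}$). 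The crucial point is that $\alpha\beta=s_1^2\ssp q^{\sigma+\tau}=s_1^2\ssp q^{\,|\mathbf{B}|-B_m}$, so the inner sum evaluates to $(s_1^2q^{|\mathbf{B}|-B_m};q)_{B_m}\ssp(u/(s_1s_2))^{B_m}=\tfrac{(s_1^2;q)_{|\mathbf{B}|}}{(s_1^2;q)_{|\mathbf{B}|-B_m}}(u/(s_1s_2))^{B_m}$. Multiplying this by the two $B_m$-dependent prefactors of \eqref{eq:queue_spec_fully_fused} that remained outside the $P_m$-sum, namely $1/(s_1^2;q)_{|\mathbf{B}|}$ and $(s_1s_2/u)^{|\mathbf{B}|-\sigma}$, and using $|\mathbf{B}|-B_m=\sum_{j>m}B_j$, the result collapses to $\tfrac{(s_1s_2/u)^{\sum_{j>m}B_j-\sigma}}{(s_1^2;q)_{\sum_{j>m}B_j}}$, in which $B_m$ has completely disappeared. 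Since neither $\mathbf{1}_{\mathbf{A}+\mathbf{B}=\mathbf{C}+\mathbf{D}}$ nor $\mathbf{1}_{D_1=\dots=D_{m-1}=0}$ involves $B_m$ (recall $A_m,C_m=+\infty$), this establishes the claimed independence.

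The main obstacle I anticipate is the bookkeeping of the $q$-powers coupling $P_m$ to the remaining variables $P_{m+1},\dots,P_n$: the symbols $(s_1s_2/u;q)_{|\mathbf{P}|}$ and $(s_1u/s_2;q)_{|\mathbf{B}-\mathbf{P}|}$ do not factor over coordinates, so one must check that after the $q^{\sigma}$- and $q^{\tau}$-shifts the inner sum is \emph{exactly} of Chu--Vandermonde type and that the residual powers recombine to make $\alpha\beta$ (hence the summed contribution) free of $B_m$. This is also precisely where the hypothesis $B_{<m}=0$ enters: were some $B_i>0$ for $i<m$, the cross term $(B_i-P_i)P_m$ in $q^{\sum_{i<j}(B_i-P_i)P_j}$ would couple $P_m$ to the lower colors and destroy the clean summation.
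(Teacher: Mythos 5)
Your proof is correct and follows essentially the same route as the paper: after noting $P_1=\dots=P_{m-1}=0$, the paper also isolates the $(B_m,P_m)$-dependent part of \eqref{eq:queue_spec_fully_fused} and evaluates the inner sum over $P_m$ by the single-color ($n=1$) sum-to-one identity \eqref{eq:capital_Phi_coloreq_sum_to_one}, which is precisely your terminating $q$-Chu--Vandermonde summation in disguise, arriving at the same $B_m$-free expression $(s_1s_2/u;q)_{\sigma}\ssp(s_1u/s_2;q)_{\tau}\ssp(s_1s_2/u)^{\tau}/(s_1^2;q)_{\sigma+\tau}$. The only cosmetic difference is that the paper first uses \Cref{rmk:queue_weights_independence_of_1_to_m_1} to set $A_i=C_i=D_i=0$ for $i<m$ and shifts the Pochhammer arguments by $q^{p'}$, $q^{b'-p'}$, $q^{b'}$ rather than tracking the $q^{\sigma}$-powers via your $\alpha$, $\beta$ substitution.
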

\begin{proof}
	By \Cref{rmk:queue_weights_independence_of_1_to_m_1},
	we may set $A_i=C_i=D_i=0$ for all $i<m$.
	Set, for simplicity, $B_m=b$, $B_{m+1}+\ldots+B_n=b' $, $P_m=p$, $P_{m+1}+\ldots+P_n =p'$, and $D_m=d$.
	Then the part of
	\eqref{eq:queue_spec_fully_fused}
	which depends only on $b$ and $p$ has the form
	\begin{equation*}
		\begin{split}
			&\sum_{p=0}^{b}
			\frac{(s_1s_2/u;q)_{p+p'}(s_1u/s_2;q)_{b+b'-p-p'}}
			{(s_1^2;q)_{b+b'}}
			\ssp
			\frac{(q;q)_{b}}{(q;q)_p(q;q)_{b-p}}
			\ssp
			(s_1s_2/u)^{b-p} q^{(b-p)p'}
			\\&=
			\frac{(s_1s_2/u;q)_{p'}
			(s_1u/s_2;q)_{b'-p'}
		}{(s_1^2;q)_{b'}}
			\sum_{p=0}^{b}
			\frac{(q^{p'} s_1s_2/u;q)_{p}
			(q^{b'-p'} s_1u/s_2; q)_{b-p}
			}
			{(q^{b'} s_1^2;q)_{b}}
			\ssp
			\frac{(q;q)_{b}}{(q;q)_p(q;q)_{b-p}}
			\ssp
			(q^{p'} s_1s_2/u )^{b-p}
			\\&=
			\frac{(s_1s_2/u;q)_{p'}
			(s_1u/s_2;q)_{b'-p'}
		}{(s_1^2;q)_{b'}}.
		\end{split}
	\end{equation*}
	In the last equality we used the
	sum-to-one property
	\eqref{eq:capital_Phi_coloreq_sum_to_one}
	for the single-color case $n=1$ (with~$x = q^{b'-p'} s_1u/s_2$, $y = q^{b'} s_1^2$, and~$\mathbf{B} = b$).
	We see that the resulting expression does not depend on $b$,
	as desired.
\end{proof}

\begin{proposition}
	\label{prop:queue_YBE}
	For any $m\in\{1,\ldots,n\}$, the queue vertex weights
	$\WQ_{s_1,s_0,\frac{u_1}{u_0}}^{(-m)}$,
	$\WQ_{s_2,s_0,\frac{u_2}{u_0}}^{(-m)}$
	\eqref{eq:queue_spec_fully_fused},
	and the fused cross vertex weight
	$W_{\frac{s_1u_1}{s_2u_2},\mathsf{L},\mathsf{M}}$
	\eqref{eq:fully_fused_stochastic_weights},
	where $q^{-\mathsf{L}}=s_1^2$ and $q^{-\mathsf{M}}=s_2^2$,
	satisfy the Yang--Baxter equation given in
	\Cref{fig:YBE_fused_queue}. In symbols, for all fixed~$\mathbf{A},\mathbf{I}_1, \mathbf{I}_2,\mathbf{B}, \mathbf{J}_1,\mathbf{J}_2$
	with $A_m,B_m=+\infty$, we have
 	\begin{equation}
	\begin{split}
		&
		\sum_{\mathbf{K}_1,\mathbf{K}_2,\mathbf{K}_3}
\WQ_{s_2,s_0,\frac{u_2}{u_0}}^{(-m)}(\mathbf{A}, \mathbf{I}_2 ; \mathbf{K}_3, \mathbf{K}_2)
    \WQ_{s_1,s_0,\frac{u_1}{u_0}}^{(-m)}(\mathbf{K}_3, \mathbf{I}_1; \mathbf{B}, \mathbf{K}_1)
     W_{\frac{s_1u_1}{s_2u_2},\mathsf{L},\mathsf{M}}
     (\mathbf{K}_2, \mathbf{K}_1; \mathbf{J}_2, \mathbf{J}_1) \\
  		&\hspace{5pt}
		=
     \sum_{\mathbf{K}_1,\mathbf{K}_2,\mathbf{K}_3}
     W_{\frac{s_1u_1}{s_2u_2},\mathsf{L},\mathsf{M}}
     (\mathbf{I}_2, \mathbf{I}_1; \mathbf{K}_2, \mathbf{K}_1)
     \WQ_{s_1,s_0,\frac{u_1}{u_0}}^{(-m)}(\mathbf{A}, \mathbf{K}_1; \mathbf{K}_3, \mathbf{J}_1)
\WQ_{s_2,s_0,\frac{u_2}{u_0}}^{(-m)}(\mathbf{K}_3, \mathbf{K}_2 ; \mathbf{B}, \mathbf{J}_2).
\end{split}
 	\end{equation}
\end{proposition}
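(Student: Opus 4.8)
The plan is to obtain the queue Yang--Baxter equation as a limit of the \emph{fully fused} Yang--Baxter equation for the weights $W_{x,\mathsf{L},\mathsf{M}}$ (the fused analog of \Cref{prop:YBE_nonfused_RLL}, obtained from \eqref{eq:YBE_nonfused_RLL} by the weight substitution described after \eqref{eq:fully_fused_stochastic_weights}), taken under exactly the limit transition $A_m,C_m\to+\infty$ that defines the queue specialization in \Cref{lemma:queue_spec_exists}. First I would match parameters. Writing $q^{-\mathsf{L}}=s_1^2$, $q^{-\mathsf{M}}=s_2^2$, $q^{-\mathsf{N}}=s_0^2$ and choosing the three rapidities of the fused equation so that $x=s_1u_1$, $y=s_2u_2$, $z=s_0u_0$, the three fused weights become
\[
W_{x/z,\mathsf{L},\mathsf{N}}=W_{\frac{s_1u_1}{s_0u_0},\mathsf{L},\mathsf{N}},\qquad
W_{y/z,\mathsf{M},\mathsf{N}}=W_{\frac{s_2u_2}{s_0u_0},\mathsf{M},\mathsf{N}},\qquad
W_{x/y,\mathsf{L},\mathsf{M}}=W_{\frac{s_1u_1}{s_2u_2},\mathsf{L},\mathsf{M}}.
\]
The last weight is precisely the cross weight in the statement, while the first two have spectral parameters $s_1s_0^{-1}(u_1/u_0)$ and $s_2s_0^{-1}(u_2/u_0)$, common vertical capacity parameter $s_0$, and horizontal capacity parameters $s_1,s_2$; hence by \Cref{lemma:queue_spec_exists} their $A_m,C_m\to+\infty$ limits are exactly $\WQ^{(-m)}_{s_1,s_0,u_1/u_0}$ and $\WQ^{(-m)}_{s_2,s_0,u_2/u_0}$.

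With this identification I would arrange the fused equation as the hexagon of \Cref{fig:YBE_fused_queue}, so that $W_{y/z,\mathsf{M},\mathsf{N}}$ and $W_{x/z,\mathsf{L},\mathsf{N}}$ are stacked on a common vertical line carrying the color configurations $\mathbf{A}\to\mathbf{K}_3\to\mathbf{B}$, while the cross weight $W_{x/y,\mathsf{L},\mathsf{M}}$ crosses the two horizontal lines and does not touch the vertical color-$m$ edges. Then I would send $A_m,B_m\to+\infty$; their difference is automatically fixed by global color-$m$ arrow conservation as $A_m-B_m=(J_1)_m+(J_2)_m-(I_1)_m-(I_2)_m$. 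Restricted to the shared vertical line this is exactly the queue limit for each of the two stacked weights, whereas the cross weight is constant in $A_m,B_m$ and is unaffected.

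The one point requiring care is interchanging the limit with the triple sum over $\mathbf{K}_1,\mathbf{K}_2,\mathbf{K}_3$, and this is where arrow conservation does the work. From the cross vertex $\mathbf{K}_2+\mathbf{K}_1=\mathbf{J}_2+\mathbf{J}_1$, so $\mathbf{K}_1,\mathbf{K}_2$ range over a finite set independent of $A_m,B_m$; the non-$m$ components of $\mathbf{K}_3$ are then bounded in terms of the fixed non-$m$ data via conservation at the two stacked vertices; and the color-$m$ component $(K_3)_m=A_m+(I_2)_m-(K_2)_m$ is uniquely determined and tends to $+\infty$ together with $A_m$. Thus only finitely many summands are nonzero, uniformly in $A_m$, and the limit may be taken termwise: each stacked weight converges to the corresponding queue weight by \Cref{lemma:queue_spec_exists} (with the constraints $A_m-(K_3)_m=(K_2)_m-(I_2)_m$ and $(K_3)_m-B_m=(K_1)_m-(I_1)_m$ held fixed), while the cross weight stays put. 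Passing to the limit on both sides yields the claimed identity, with the middle index $\mathbf{K}_3$ now carrying $(K_3)_m=+\infty$ in the sense of \Cref{rmk:queue_spec_fully_fused_abuse_notation}. I expect the main (and essentially only) obstacle to be this uniform-finiteness bookkeeping, together with verifying that the terms violating the new indicator $\mathbf{1}_{D_1=\ldots=D_{m-1}=0}$ in the queue weight genuinely vanish in the limit rather than persisting.
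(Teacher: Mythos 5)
Your proposal is correct and takes essentially the same route as the paper: the paper's entire proof of \Cref{prop:queue_YBE} is that the identity is ``the queue specialization of the Yang--Baxter equation for the fully fused stochastic weights, taken along the vertical line carrying the parameters $z=s_0u_0$ and $\mathsf{N}$ with $q^{-\mathsf{N}}=s_0^2$,'' which is exactly your parameter matching $x=s_1u_1$, $y=s_2u_2$, $z=s_0u_0$ followed by the limit of \Cref{lemma:queue_spec_exists} along the shared vertical line. Your extra bookkeeping (arrow conservation forcing the sums over $\mathbf{K}_1,\mathbf{K}_2,\mathbf{K}_3$ to be finite uniformly in $A_m$, so the limit may be taken termwise) is a correct filling-in of a step the paper leaves implicit.
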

\begin{proof}
	This is the queue specialization
	of the Yang--Baxter equation
	\cite[(C.1.2)]{borodin_wheeler2018coloured}
	for the fully fused stochastic weights.
	The queue specialization is taken
	along
	the vertical line carrying the parameters
	$z=s_0u_0$ and $\mathsf{N}$ with $q^{-\mathsf{N}}=s_0^2$.
\end{proof}

\begin{figure}[htpb]
	\centering
	\includegraphics[width=.9\textwidth]{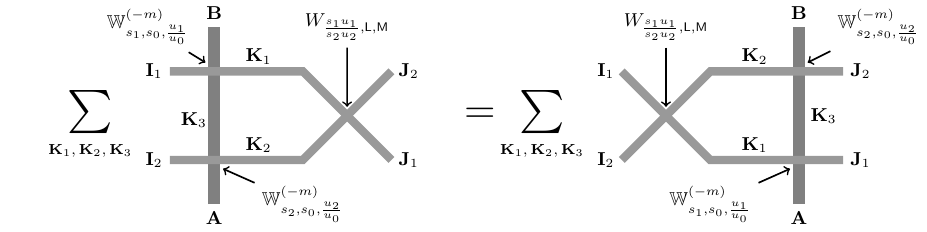}
	\caption{Yang--Baxter equation
		for the queue specialization. The
		sums in both sides
		are taken over all
		$\mathbf{K}_1,\mathbf{K}_2,\mathbf{K}_3\in \mathbb{Z}_{\ge0}^n$,
		and the inputs and outputs
	$\mathbf{I}_1,\mathbf{I}_2,\mathbf{J}_1,\mathbf{J}_2,\mathbf{A},\mathbf{B}\in \mathbb{Z}_{\ge0}^n$ are fixed.}
	\label{fig:YBE_fused_queue}
\end{figure}

Recall that $0\le q<1$.
Let us define the following two subsets of the
parameters $(s_1,s_2,u)$ in the queue vertex weights:
\begin{equation}
	\label{eq:queue_general_nonnegative_conditions}
	\parbox{.88\textwidth}{\begin{enumerate}[$\bullet$]
			\item
			(higher horizontal spin)
			$s_1, s_2 \in [-1,1]$ such that $0 \leq s_1 s_2 < u < \min(\frac{s_1}{s_2}, \frac{s_2}{s_1}, \frac{1}{s_1 s_2})$.
			The number of paths on a horizontal edge is not restricted.

			\item
			(finite horizontal spin)
			$s_1 = q^{-\frac{\mathsf{L}}{2}}$ for some $\mathsf{L}
			\in\mathbb{Z}_{\ge1}$ and $u = q^{\frac{\mathsf{L}}{2}} u'$,
			with purely imaginary
			$u',s_2$ satisfying
			$s_2 u'  \leq  s_2^2 \leq 0$
			and
			$s_2/u'\ge q^{\mathsf{L}}$.
			The number of paths on a horizontal edge is at most $\mathsf{L}$.
		\end{enumerate}}
\end{equation}
These subsets present convenient sufficient nonnegativity conditions:

\begin{proposition}\label{prop:queue_positivity}
	Under \eqref{eq:queue_general_nonnegative_conditions},
	the queue vertex weights
	$\WQ^{(-m)}_{s_1, s_2, u}$
	\eqref{eq:queue_spec_fully_fused}
	are nonnegative, and
	\begin{equation}
		\label{eq:queue_general_sum_to_one}
		\sum_{C_{m+1},C_{m+2},\ldots,C_n,
		D_{m},D_{m+1},\ldots,D_n=0}^{\infty}
		\WQ^{(-m)}_{s_1, s_2, u}(\mathbf{A}, \mathbf{B}; \mathbf{C}, \mathbf{D})=1,
	\end{equation}
	where
	$\mathbf{A},\mathbf{B}$ are
	fixed, $A_m,C_m=+\infty$,
	and $C_i=A_i+B_i$, $D_i=0$ for all $i<m$.
\end{proposition}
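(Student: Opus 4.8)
The plan is to prove the two assertions separately, deriving nonnegativity directly from the closed form \eqref{eq:queue_spec_fully_fused} and obtaining the sum-to-one identity \eqref{eq:queue_general_sum_to_one} by passing to the $A_m,C_m\to+\infty$ limit in the fully fused sum-to-one property \eqref{eq:fully_fused_stochastic_weights_sum_to_one}. Throughout, the sum over $\mathbf{P}$ in \eqref{eq:queue_spec_fully_fused} is finite (the constraints $0\le P_i\le\min(B_i,C_i)$ cut it down), so the only genuinely infinite summations to control are the two infinite $q$-Pochhammer products in the prefactor and, for the sum-to-one, the sum over $D_m$.

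For nonnegativity I would inspect \eqref{eq:queue_spec_fully_fused} factor by factor. In the \textbf{higher horizontal spin} regime of \eqref{eq:queue_general_nonnegative_conditions}, the inequalities $0\le s_1s_2<u<\min(\tfrac{s_1}{s_2},\tfrac{s_2}{s_1},\tfrac{1}{s_1s_2})$ are exactly what is needed to place every relevant $q$-Pochhammer argument in $(-\infty,1)$: one checks $s_1s_2/u\in[0,1)$, $s_1u/s_2<1$, $s_2u/s_1<1$, $s_1s_2u<1$, and $s_1^2\in[0,1)$, whence each factor $(a;q)_k=\prod_{j=0}^{k-1}(1-aq^j)$ appearing in \eqref{eq:queue_spec_fully_fused} (including the two infinite products, positive since their arguments lie in $[0,1)$) is positive, while the powers $(s_1s_2/u)^{|\mathbf{B}|-|\mathbf{P}|}$, $(us_2/s_1)^{|\mathbf{D}|}$, the $q$-powers, and the $q$-binomial-type products are manifestly nonnegative. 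Hence every summand of the $\mathbf{P}$-sum is nonnegative and so is the total weight. The \textbf{main obstacle} is the \textbf{finite horizontal spin} regime, where $s_1=q^{-\mathsf{L}/2}$ forces $s_1^2=q^{-\mathsf{L}}>1$ and the parameters $u',s_2$ are purely imaginary; now several arguments exceed $1$ or are negative, so individual factors carry signs. Here one must track signs rather than argue termwise: the factor $(q^{-\mathsf{L}};q)_k$ is nonzero only for $0\le k\le\mathsf{L}$ (the finite-spin truncation of \Cref{rmk:finite_spin}, so that $|\mathbf{B}|,|\mathbf{D}|\le\mathsf{L}$) and then has sign $(-1)^k$, and these signs must be paired with the even/odd powers of the imaginary quantities $u,s_2$ coming from $(us_2/s_1)^{|\mathbf{D}|}$ and $(s_1s_2/u)^{|\mathbf{B}|-|\mathbf{P}|}$, and with the signs of $(s_1s_2/u;q)$ and $(s_1u/s_2;q)$. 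The conditions $s_2u'\le s_2^2\le0$ and $s_2/u'\ge q^{\mathsf{L}}$ are precisely calibrated so that all these signs cancel; the cleanest route is to observe that these are the images, under the change of variables $q^{-\mathsf{L}}=s_1^2$, $q^{-\mathsf{M}}=s_2^2$, $x=s_1s_2^{-1}u$, of the nonnegativity region of the fully fused weights $W_{x,\mathsf{L},\mathsf{M}}$ \eqref{eq:fully_fused_stochastic_weights} (cf. \Cref{prop:stochastic} and \eqref{eq:nonnegativity_of_vertex_weights_condition}), and then to conclude that the queue weights, being limits of nonnegative numbers by \Cref{lemma:queue_spec_exists}, are nonnegative.

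For the sum-to-one identity \eqref{eq:queue_general_sum_to_one} I would take $A_m=N$ finite and apply \eqref{eq:fully_fused_stochastic_weights_sum_to_one} to $W_{s_1s_2^{-1}u,\mathsf{L},\mathsf{M}}(\mathbf{A},\mathbf{B};\cdot,\cdot)$, rewriting the sum over $\mathbf{C},\mathbf{D}$ as a sum over $\mathbf{D}$ alone via arrow conservation $\mathbf{C}=\mathbf{A}+\mathbf{B}-\mathbf{D}$, so that the identity reads $\sum_{\mathbf{D}}W_N=1$ for every $N$. As $N\to+\infty$, by the computation in \Cref{lemma:queue_spec_exists} each term with some $D_i>0$, $i<m$, tends to $0$ and the surviving terms converge to the weights \eqref{eq:queue_spec_fully_fused}; simultaneously the range of $D_m$ expands from $\{0,\dots,N+B_m\}$ to all of $\mathbb{Z}_{\ge0}$, while the ranges of $D_i$ ($i>m$) stay finite. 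Thus the left-hand side of \eqref{eq:queue_general_sum_to_one} is exactly $\lim_{N\to\infty}\sum_{\mathbf{D}}W_N$, and it remains to justify interchanging this limit with the (now possibly infinite) sum. In the finite horizontal spin regime $|\mathbf{D}|\le\mathsf{L}$ truncates the $D_m$-sum, so the interchange is automatic there. In the higher horizontal spin regime the nonnegativity part pays off: the geometric factor $(us_2/s_1)^{D_m}$ with base $us_2/s_1\in[0,1)$ (from $u<s_1/s_2$), times the bounded factors $1/(q;q)_{D_m}$ and $(s_1^2;q)_{|\mathbf{D}|}$, furnishes a summable bound uniform in $N$, so dominated convergence gives $\sum_{\mathbf{D}}\WQ^{(-m)}=1$. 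An alternative, purely algebraic verification mirrors the proof of \Cref{lemma:queue_indep_of_Bm}: one carries out the $D_n,C_n,\dots,D_m$ summations one color at a time using the single-color sum-to-one \eqref{eq:capital_Phi_coloreq_sum_to_one}, the telescoping collapsing the whole expression to $1$; I would keep this as an independent check of the limit computation.
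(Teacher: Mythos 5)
Your overall architecture matches the paper's proof exactly: nonnegativity in the higher horizontal spin regime by inspecting \eqref{eq:queue_spec_fully_fused} factor by factor (all Pochhammer arguments in $[0,1)$, all sign-carrying powers combining to a nonnegative total), and the sum-to-one identity \eqref{eq:queue_general_sum_to_one} obtained as the $A_m,C_m\to+\infty$ limit of \eqref{eq:fully_fused_stochastic_weights_sum_to_one}, with the interchange of limit and sum justified by the geometric decay in $(us_2/s_1)^{|\mathbf{D}|}$ (base in $[0,1)$ since $u<s_1/s_2$) in the higher spin case, and automatic in the finite spin case because $|\mathbf{D}|\le\mathsf{L}$ truncates the sum. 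These parts are correct, and your telescoping alternative via \eqref{eq:capital_Phi_coloreq_sum_to_one} (mirroring \Cref{lemma:queue_indep_of_Bm}) is a reasonable independent check that the paper does not include.

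The gap is in your preferred route for nonnegativity in the finite horizontal spin regime. You propose to recognize the conditions in \eqref{eq:queue_general_nonnegative_conditions} as the image of "the nonnegativity region of the fully fused weights $W_{x,\mathsf{L},\mathsf{M}}$," citing \Cref{prop:stochastic} and \eqref{eq:nonnegativity_of_vertex_weights_condition}, and then conclude via \Cref{lemma:queue_spec_exists} that limits of nonnegative numbers are nonnegative. But \eqref{eq:nonnegativity_of_vertex_weights_condition} is a nonnegativity condition for the spin-$\frac{1}{2}$-horizontal weights $L_{s,x}$ only; no nonnegativity region for the general fully fused $W_{x,\mathsf{L},\mathsf{M}}$ is established anywhere in the paper (or is standard enough to cite without proof), so the crucial pre-limit nonnegativity is exactly what remains to be shown. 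Promoting \eqref{eq:nonnegativity_of_vertex_weights_condition} to the fused weights would itself require the fusion decomposition of $W_{x,\mathsf{L},\mathsf{M}}$ into nonnegative combinations of products of $\mathsf{L}$ spin-$\frac{1}{2}$ weights whose spectral parameters run through a geometric progression $x,qx,\dots,q^{\mathsf{L}-1}x$ — and then one must verify the hypotheses across the entire progression, which is where conditions like $s_2/u'\ge q^{\mathsf{L}}$ enter nontrivially (compare \Cref{prop:qpush_weights_nonnegative}, where the analogous hypothesis picks up a factor $q^{\mathsf{P}-1/2}$ for precisely this reason). The paper avoids all of this and argues directly on the formula \eqref{eq:queue_spec_fully_fused}: the prefactor collapses to $(s_2u';q)_{\mathsf{L}}^{-1}\ge 0$ because $s_2u'\le 0$; the sign $(-1)^{|\mathbf{D}|}$ of $(s_1^2;q)_{|\mathbf{D}|}$ is canceled by $(us_2/s_1)^{|\mathbf{D}|}=(q^{\mathsf{L}}s_2u')^{|\mathbf{D}|}$; the powers $(s_1s_2/u)^{|\mathbf{B}|-|\mathbf{P}|}$ are nonnegative; and the ratio $(s_1s_2/u;q)_{|\mathbf{P}|}(s_1u/s_2;q)_{|\mathbf{B}-\mathbf{P}|}(s_1^2;q)_{|\mathbf{B}|}^{-1}$ is nonnegative for $|\mathbf{B}|\le\mathsf{L}$ since $s_1s_2/u=q^{-\mathsf{L}}s_2/u'\ge 1$, $s_1u/s_2=u'/s_2\ge 1$, and $s_1^2=q^{-\mathsf{L}}\ge 1$. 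Your second paragraph gestures at exactly this sign bookkeeping ("the factor $(q^{-\mathsf{L}};q)_k$ has sign $(-1)^k$, paired with even/odd powers of the imaginary quantities"), but then asserts rather than verifies that the conditions are "precisely calibrated so that all these signs cancel." Carrying out that direct verification, as the paper does, is the missing step; as written, your argument for this regime rests on an unproved citation.
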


\begin{proof}
	Let us first consider the higher horizontal
	spin case of \eqref{eq:queue_general_nonnegative_conditions}.
	One can check that all arguments of the $q$-Pochhammer symbols in
	\eqref{eq:queue_spec_fully_fused}
	are in $[0,1)$. Moreover, the total sign coming from the powers
	$
	s_1^{|\mathbf{B}|-|\mathbf{D}|-|\mathbf{P}|}
	s_2^{|\mathbf{B}|+|\mathbf{D}|-|\mathbf{P}|}
	u^{|\mathbf{D}|-|\mathbf{B}|+|\mathbf{P}|}
	$
	is always nonnegative. Thus, we get the nonnegativity
	of the queue vertex weights.

	To see that they sum to one, we need to take the limit
	of the sum-to-one identity
	\eqref{eq:fully_fused_stochastic_weights_sum_to_one}
	for the fully fused weights
	$W_{x,\mathsf{L},\mathsf{M}}$
	as $A_m,C_m\to+\infty$.
	One can check that under our conditions
	\eqref{eq:queue_general_nonnegative_conditions}, the weight
	$W_{x,\mathsf{L},\mathsf{M}}
	(\mathbf{A},\mathbf{B};\mathbf{C},\mathbf{D})$
	(with $x=s_1u/s_2$,
	$q^{-\mathsf{L}}=s_1^2$,
	$q^{-\mathsf{M}}=s_2^2$)
	decays exponentially fast when $|\mathbf{D}|\to+\infty$
	due to the presence of the power $(us_2/s_1)^{|\mathbf{D}|}$.
	Thus, since our queue specialization
	requires $|\mathbf{D}|$ to stay fixed,
	this decay ensures that
	identity \eqref{eq:fully_fused_stochastic_weights_sum_to_one}
	yields \eqref{eq:queue_general_sum_to_one}.

	Let us now consider the finite horizontal spin
	case of
	\eqref{eq:queue_general_nonnegative_conditions}.
	In this case, $|\mathbf{D}|$ must stay finite
	(cf. \Cref{rmk:finite_spin,rmk:finite_spin_reduction_fully_fused}), so the sum-to-one property \eqref{eq:queue_general_sum_to_one}
	is automatic from~\eqref{eq:fully_fused_stochastic_weights_sum_to_one}.
	To see the nonnegativity, observe that since~$s_1 = q^{-\frac{\mathsf{L}}{2}} \geq 1$, the term $(s_1^2;q)_{|\mathbf{D}|}$ produces the sign $(-1)^{|\mathbf{D}|}$, which is compensated by $(us_2/s_1)^{|\mathbf{D}|}$. The other combined powers
	$
	(s_1s_2/u)^{|\mathbf{B}|-|\mathbf{P}|}
	$
	are always nonnegative.
	Next, we have
	\begin{equation*}
		\frac{(s_1^{-1}s_2 u; q)_{\infty}}{(s_1s_2u ; q)_{\infty}}
		=
		\frac{1}{(s_2u';q)_{\mathsf{L}}}\ge 0.
	\end{equation*}
	In the remaining $q$-Pochhammer symbols, we have
	\begin{equation*}
		s_1s_2/u=q^{-\mathsf{L}}s_2/u'\ge1,\qquad
		s_1u/s_2=u'/s_2\ge1,\qquad
		s_1^2=q^{-\mathsf{L}}\ge1.
	\end{equation*}
	Thus, the
	quantities
	$(s_1s_2/u ; q)_{|\mathbf{P}|}
	(s_1u/s_2 ; q)_{|\mathbf{B}-\mathbf{P}|}
	(s_1^2 ; q)_{|\mathbf{B}|}^{-1}$
	are nonnegative for all $\mathbf{P}$
	provided that $|\mathbf{B}|\le \mathsf{L}$.
	This completes the proof.
\end{proof}


\subsection{Queue vertex model on the cylinder}
\label{sub:mqueue_states}

Let us fix the number $n$ of colors, and another integer $N\ge1$ which is the size of the cylinder.
In this subsection we define a linear operator $\mathfrak{Q}$
whose matrix elements
$\langle \emptyset | \mathfrak{Q} | \mathbf{V} \rangle$
are partition functions coming
from the queue vertex weights on the
cylinder $\{-n, \dots, -1\} \times \mathbb{Z}/N \mathbb{Z}$.
The vertices on the cylinder are indexed by $(-m,j)$, $m=1,\ldots,n $, $j=1,\ldots,N $, see
\Cref{fig:queue_state} for an illustration. No paths enter from the left.
The paths exiting horizontally from the right are encoded by the integer tuples $\mathbf{V}(1),\dots, \mathbf{V}(N) \in \mathbb{Z}_{\ge0}^n$.
Define the vector space for the states on the cylinder,
\begin{equation}
	\label{eq:queue_state_space_higher_spin}
	V^{\otimes N} \coloneqq
	\mathop{\mathrm{Span}}
	\left( \{ \big| \mathbf{V}\big\rangle =\big| (\mathbf{V}(1),\dots, \mathbf{V}(N)) \big\rangle : \mathbf{V}(j) \in \mathbb{Z}_{\geq 0}^n \  \forall j  =1,\dots, N\}  \right),
\end{equation}
and similarly let $\langle \mathbf{V} |$ denote the dual basis in $V^{\otimes N}$.
We also need the subspace
$V^{\otimes N}_{\mathrm{full}}$ of $V^{\otimes N}$ spanned by the vectors
$| \mathbf{V} \rangle$ satisfying
\begin{equation*}
	\sum\nolimits_{j=1}^{N}\mathbf{V}(j)_k > 0
	\quad
	\textnormal{for all $k=1,\ldots,n $}
\end{equation*}
(above,~$\mathbf{V}(j)_k$ denotes the~$k$-th coordinate of~$\mathbf{V}(j)$).
In words, each state
$| \mathbf{V} \rangle\in V^{\otimes N}_{\mathrm{full}}$
must contain at least one arrow of each of the $n$ colors.
\begin{definition}\label{def:queue_state_weights}
Fix complex parameters
\begin{equation}
	\label{eq:queue_state_weights_parameters}
	\mathbf{u} =(u_1,\dots, u_N) \in \mathbb{C}^N,
	\quad
	\mathbf{s}^{(h)} = (s^{(h)}_1,\dots, s^{(h)}_N),
	\quad
	\mathbf{v} = (v_1,\dots,v_n),
	\quad
	\mathbf{s}^{(v)} = (s^{(v)}_1,\dots, s^{(v)}_n),
\end{equation}
The linear operator
$\mathfrak{Q}=\mathfrak{Q}
( \mathbf{u}; \mathbf{s}^{(h)}; \mathbf{v}; \mathbf{s}^{(v)})$
on $V^{\otimes N}$,
called the \emph{queue transfer matrix},
is defined via its matrix elements
$\langle \mathbf{V}' | \mathfrak{Q} | \mathbf{V} \rangle$
as follows.
First, if $| \mathbf{V} \rangle\notin V^{\otimes N}_{\mathrm{full}}$,
we set
$\langle \mathbf{V}' | \mathfrak{Q} | \mathbf{V} \rangle=0$.
Otherwise,
$\langle \mathbf{V}' | \mathfrak{Q} | \mathbf{V} \rangle$
is the partition function of the queue vertex weights on the cylinder
$\{-n,\ldots,-1 \}\times \mathbb{Z}/N\mathbb{Z}$
with the following data:
\begin{enumerate}[$\bullet$]
	\item
		The entering arrow configurations
		$\mathbf{V}'(j)$ along the left horizontal edges $(-n-1,j)\to(-n,j)$,
		$j=1,\ldots,N $.
	\item
		The terminal arrow configurations
		$\mathbf{V}(j)$ along the right horizontal edges $(-1,j)\to(0,j)$,
		$j=1,\ldots,N $.
	\item
		Queue vertex weights
		$\WQ^{(-m)}_{s^{(h)}_j, s^{(v)}_m, u_j / v_{m}}$
		at each vertex $(-m,j)$ in the cylinder,
		where $j=1,\ldots,N $, $m=1,\ldots,n $, and
		$u_j,s_j^{(h)}$ and $v_m,s^{(v)}_m$ are the horizontal and the
		vertical parameters, respectively.
\end{enumerate}
In detail, the partition function
$\langle \mathbf{V}' | \mathfrak{Q} | \mathbf{V} \rangle$
is equal to the sum
\begin{equation*}
	\sum_{
		\mathbf{M}
	}
	\ssp
	\sum_{\mathscr{C}\in \mathcal{P}_{\mathbf{M},\mathbf{V}',\mathbf{M},\mathbf{V}}}
	\ssp
	\prod_{m=1}^{n}\prod_{j=1}^{N}
	\WQ^{(-m)}_{s^{(h)}_j, s^{(v)}_m, u_j / v_{m}}(\mathbf{A}(m,j),\mathbf{B}(m,j);\mathbf{C}(m,j),\mathbf{D}(m,j)),
\end{equation*}
where
$\mathbf{M}=(\mathbf{M}(-n),\dots,\mathbf{M}(-1))$
encodes the paths winding around the cylinder. The sum over~$\mathbf{M}$, by definition, has
\begin{equation}
	\label{eq:queue_state_trace_conditions}
	\mathbf{M}(-m)_m=+\infty
	\quad\textnormal{and}\quad
	\mathbf{M}(-m)_i=0,\ i<m,
	\quad \textnormal{for all}\ m=1,\ldots,n.
\end{equation}
For each $\mathbf{M}$, the sum over $\mathscr{C}$ runs over all
path configurations in the rectangle with the boundary
conditions
${\mathbf{M},\mathbf{V}',\mathbf{M},\mathbf{V}}$
at the bottom, left, top, and right, respectively.
The tuples
$\mathbf{A}(m,j)$, $\mathbf{B}(m,j)$, $\mathbf{C}(m,j)$, and $\mathbf{D}(m,j)$
encode the arrow configurations at each vertex $(-m,j)$ of the rectangle.

See \Cref{fig:queue_state} for an example when~$\mathbf{V}' = (\mathbf{0}, \dots, \mathbf{0})$.
\end{definition}

\begin{figure}[htpb]
	\centering
	\includegraphics[width=.5\textwidth]{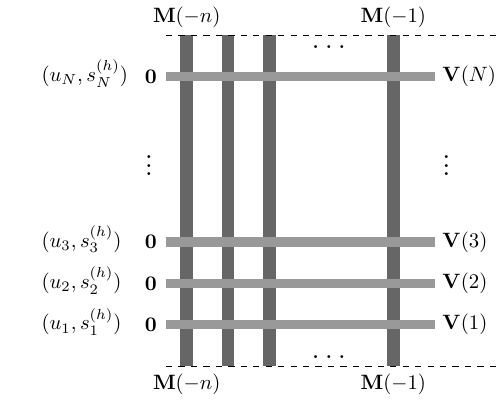}
	\caption{A vertex model on the cylinder whose partition function (indexed by the tuples $\mathbf{V}(1),\ldots,\mathbf{V}(N) $) is equal to $\langle \emptyset | \mathfrak{Q} | \mathbf{V} \rangle$. We identify the top and the bottom boundaries (dashed lines), and
		sum over all possible tuples $\mathbf{M}(-n),\dots,\mathbf{M}(-1)\in \mathbb{Z}^{n}_{\ge0}$
	encoding the paths which wind around the cylinder an arbitrary number of times. The left vector $\langle \emptyset |$ is empty, which corresponds to no paths entering from the left.}
	\label{fig:queue_state}
\end{figure}

Observe that the partition function $\langle \mathbf{V}' | \mathfrak{Q} | \mathbf{V} \rangle$ in \Cref{def:queue_state_weights} involving the sum over $\mathbf{M}(-n),\ldots,\mathbf{M}(-1)$  cannot be interpreted as a probability in a stochastic vertex model because we are summing over input path configurations at vertices, as well as over the output ones. Moreover, this sum may even be divergent. Therefore, we need to make sure that the quantities $\langle \mathbf{V}' | \mathfrak{Q} | \mathbf{V} \rangle$ are well-defined:

\begin{lemma}
	\label{lemma:queue_vertex_well_defined}
	For any
	$| \mathbf{V} \rangle\in V^{\otimes N}_{\mathrm{full}}$,
	the sum over
	$\mathbf{M}(-n),\ldots,\mathbf{M}(-1)$ in
	\Cref{def:queue_state_weights} is convergent for any $|q|<1$.
\end{lemma}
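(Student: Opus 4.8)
The plan is to fix the winding data $\mathbf{M}$, bound the corresponding (finite) summand of the partition function by a constant times a power of $|q|$ proportional to the total winding, and then sum the resulting geometric series over $\mathbf{M}$. Write $\nu_c\coloneqq\sum_{j=1}^{N}\mathbf{V}(j)_c$ for the number of color-$c$ arrows in the terminal state; the hypothesis $|\mathbf{V}\rangle\in V^{\otimes N}_{\mathrm{full}}$ says precisely that $\nu_c\ge1$ for every color $c$. The free summation variables are the winding counts $\mathbf{M}(-m)_c$ with $c>m$, and I write $\lVert\mathbf{M}\rVert\coloneqq\sum_{m=1}^{n}\sum_{c>m}\mathbf{M}(-m)_c$ for their total.

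First I would extract two consequences of arrow conservation $\mathbf{A}+\mathbf{B}=\mathbf{C}+\mathbf{D}$ (the first indicator in \eqref{eq:queue_spec_fully_fused}) combined with the vertical periodicity $\mathbf{C}(m,j-1)=\mathbf{A}(m,j)$ of the cylinder. Summing conservation around column $-m$ gives $\sum_j\mathbf{B}(m,j)_c=\sum_j\mathbf{D}(m,j)_c$ for each color $c\ne m$, and telescoping these horizontal fluxes rightward to column $-1$, where color $c$ exits into $\mathbf{V}$, shows that for $c>m$ the flux across column $-m$ equals $\nu_c$, and likewise that $\sum_j\mathbf{D}(m,j)_m=\nu_m\ge1$. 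Thus: (i) the color-$c$ occupation of the vertical edges of column $-m$ changes by at most $\nu_c$ around the cylinder, so $\mathbf{C}(m,j)_c\ge\mathbf{M}(-m)_c-\nu_c$ at every row $j$; and (ii) in every admissible configuration at least one vertex $(-m,j_0)$ of column $-m$ emits a color-$m$ arrow rightward, i.e.\ $\mathbf{D}(m,j_0)_m\ge1$. Property (ii) is exactly where fullness is used: were color $m$ absent from $\mathbf{V}$ there would be no such emission, and a freely circulating color-$c$ loop in column $-m$ would be unsuppressed, causing divergence.

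The decay is supplied by the factor $q^{\sum_{m\le a<b\le n}D_a(C_b-P_b)}$ in \eqref{eq:queue_spec_fully_fused} (with $a,b$ colors and $m$ the column index). Every term of this exponent is nonnegative, so for $|q|<1$ it suffices to keep the $a=m$ terms at the emission vertex $(-m,j_0)$ of (ii): there the exponent is at least $\mathbf{D}(m,j_0)_m\sum_{b>m}\bigl(\mathbf{C}(m,j_0)_b-P_b\bigr)\ge\sum_{b>m}(\mathbf{M}(-m)_b-2\nu_b)$, using (i) and $P_b\le\mathbf{B}(m,j_0)_b\le\nu_b$. Hence the weight at $(-m,j_0)$ is at most $|q|^{\sum_{b>m}\mathbf{M}(-m)_b}$ times a constant. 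All remaining factors are bounded uniformly in $\mathbf{M}$: every exponent of $s_1,s_2,u$ and every $q$-Pochhammer subscript other than the large vertical counts is controlled by the horizontal fluxes and hence bounded by $\sum_c\nu_c$, while the only symbols whose subscripts grow with $\mathbf{M}$ enter through ratios $(q;q)_{C_b-P_b+D_b}/(q;q)_{C_b-P_b}$, which stay bounded because $|(q;q)_k|$ lies between two positive constants for $|q|<1$ and $D_b\le\nu_b$. Taking the product of these bounds over the columns $m=1,\dots,n-1$ yields $\prod_{m,j}\bigl|\WQ^{(-m)}\bigr|\le C_1\,|q|^{\lVert\mathbf{M}\rVert}$ with $C_1$ independent of $\mathbf{M}$.

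Finally, for fixed $\mathbf{M}$ the inner sum over configurations $\mathscr{C}$ is finite: once the (finitely many) admissible horizontal occupations are chosen, each bounded by the fluxes $\nu_c$, all vertical occupations are determined by $\mathbf{M}$ and conservation, so the number of configurations is bounded independently of $\mathbf{M}$ (a polynomial bound would already suffice). Thus the contribution of a fixed $\mathbf{M}$ is at most $C_2\,|q|^{\lVert\mathbf{M}\rVert}$, and summing over the $\binom{n}{2}$ independent variables $\mathbf{M}(-m)_c$ gives $\sum_{\mathbf{M}}C_2\,|q|^{\lVert\mathbf{M}\rVert}=C_2\,(1-|q|)^{-\binom{n}{2}}<\infty$. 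I expect the main difficulty to be the decay extraction of the previous paragraph together with property (ii): one must check that fullness forces a color-$m$ emission in column $-m$ in \emph{every} admissible configuration, and that by the uniform lower bound (i) the winding counts $\mathbf{M}(-m)_b$ are all still present at that vertex, so that the single factor $q^{\sum_{b>m}D_m(C_b-P_b)}$ tames all colors $b>m$ simultaneously.
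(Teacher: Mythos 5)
Your proof is correct and takes essentially the same approach as the paper's: fullness forces each column $(-m)$ to emit at least one color-$m$ arrow horizontally, and the factor $q^{\sum_{m\le i<j\le n}D_i(C_j-P_j)}$ at such an emission vertex then yields geometric decay $|q|^{\sum_{b>m}\mathbf{M}(-m)_b}$ in the winding counts, with all remaining factors bounded uniformly in $\mathbf{M}$. Your write-up simply makes quantitative what the paper states tersely (the flux bounds $\mathbf{C}(m,j)_c\ge\mathbf{M}(-m)_c-\nu_c$, the uniform control of the $\mathbf{P}$-sum, and the final $\binom{n}{2}$-fold geometric series).
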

\begin{proof}
	By \Cref{lemma:queue_spec_exists}, paths of colors $i<m$ cannot
	leave the column $(-m)$.
	The fact that the configuration
	$| \mathbf{V} \rangle\in V^{\otimes N}_{\mathrm{full}}$
	contains at least one arrow of each color implies that
	each column $(-m)$ must horizontally emit at least one path of color $m$.
    Recall again that after we fix the entering horizontal arrows
    $\mathbf{B}$ at a lattice site, the
	summation multi-index $\mathbf{P}$ in each vertex weight
	belongs to a fixed finite set where $P_i\le B_i$ for all $i$.
    Furthermore, with~$\mathbf{V}$ fixed, the size of tuples
    $\mathbf{B},\mathbf{D}$ at each vertex is bounded from above. Thus, the factor
    $q^{-D_i P_j}$ in each vertex weight is bounded from above.
	Therefore, the presence of the factors
	$q^{\sum_{m\le i<j\le n} D_i C_j}$ in
	the vertex weights~\eqref{eq:queue_spec_fully_fused}
	implies that
	the weight of the whole column $(-m)$ with fixed
	winding path counts $\mathbf{M}(-m)_i$, $i>m$, contains
	the factor $q^{d\sum_{i=m+1}^n \mathbf{M}(-m)_i}$
	for some $d\ge 1$. This implies that for any fixed $m$,
	the
	sum over the quantities
	$\mathbf{M}(-m)_i$, $i>m$, is finite. This completes the proof.
\end{proof}

\begin{remark}
	\label{rmk:queue_vertex_not_well_defined_for_not_full}
	The condition that
	$| \mathbf{V} \rangle\in V^{\otimes N}_{\mathrm{full}}$
	is essential for
	\Cref{lemma:queue_vertex_well_defined}.
	Indeed, for $n=3$, we have
	\begin{equation*}
		\WQ^{(-2)}_{s_1,s_2,u}
		(\mathbf{A},\mathbf{B};\mathbf{C},\mathbf{D})=
		\frac{(s_2u/s_1;q)_\infty}{(s_1s_2u;q)_{\infty}},
	\end{equation*}
	where $\mathbf{A}=\mathbf{C}=(0,\infty,k)$ and $\mathbf{B}=\mathbf{D}=(0,0,0)$.
	This expression is independent of $k$. For $N=1$, we need to sum it over all $k$, which leads to divergence.
	However, if $\mathbf{D}=(0,d,0)$, the weight
	$\WQ^{(-2)}_{s_1,s_2,u}
	(\mathbf{A},\mathbf{B};\mathbf{C},\mathbf{D})$
	contains the power $q^{kd}$, eliminating this problem of divergence.
\end{remark}

The partition functions
$\langle \mathbf{V}' | \mathfrak{Q} | \mathbf{V} \rangle$
are essentially independent of the entrance
state $\langle\mathbf{V}'|$:
\begin{proposition}
	\label{prop:independence_of_v_prime}
	Let $| \mathbf{V} \rangle\in V^{\otimes N}_{\mathrm{full}}$. If the entering configuration $\mathbf{V}'$ contains at least one path of color strictly less than $n$, then $\langle \mathbf{V}' | \mathfrak{Q} | \mathbf{V} \rangle=0$. Otherwise, if $\mathbf{V}'$ contains only paths of color $n$, then we have $\langle \mathbf{V}' | \mathfrak{Q} | \mathbf{V} \rangle= \langle \emptyset | \mathfrak{Q} | \mathbf{V} \rangle$.
\end{proposition}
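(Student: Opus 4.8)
The plan is to track the horizontal flow of each color across the cylinder, exploiting the one structural feature of the queue weights that singles out the leftmost column $(-n)$. Recall from \Cref{lemma:queue_spec_exists} that every weight $\WQ^{(-m)}_{s_1,s_2,u}(\mathbf{A},\mathbf{B};\mathbf{C},\mathbf{D})$ carries the indicator $\mathbf{1}_{D_1=\ldots=D_{m-1}=0}$, so no path of color strictly below $m$ may exit a vertex of column $(-m)$ to the right. Since $\mathbf{V}'$ feeds in only along the left edge of the leftmost column $(-n)$, the proposition is entirely a statement about how that input propagates through column $(-n)$, and I would handle its two halves separately.

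For the first claim I would show that no admissible configuration lets a color $i<n$ enter column $(-n)$ from the left. Fix $i<n$ and use arrow conservation $\mathbf{A}+\mathbf{B}=\mathbf{C}+\mathbf{D}$ at each vertex $(n,j)$: since the indicator forces $D_i(n,j)=0$, we get $C_i(n,j)=A_i(n,j)+B_i(n,j)$. Combined with the vertical matching $C_i(n,j)=A_i(n,j+1)$ between consecutive rows, this is the recursion $A_i(n,j+1)=A_i(n,j)+B_i(n,j)$, so the color-$i$ vertical flux is nondecreasing in $j$. Identifying the top and bottom of the cylinder gives $A_i(n,1)=C_i(n,N)=A_i(n,N+1)$, all equal to the seam value $\mathbf{M}(-n)_i$, which the trace condition \eqref{eq:queue_state_trace_conditions} sets to $0$ for $i<n$. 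Telescoping the recursion around the ring then yields $\sum_{j=1}^{N}B_i(n,j)=A_i(n,N+1)-A_i(n,1)=0$, and nonnegativity of the arrow counts forces every $B_i(n,j)=0$. Hence $\mathbf{V}'(j)_i=B_i(n,j)=0$ for all $i<n$, so if $\mathbf{V}'$ carries any path of color $<n$ there are no admissible configurations and $\langle\mathbf{V}'|\mathfrak{Q}|\mathbf{V}\rangle=0$.

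For the second claim, suppose $\mathbf{V}'$ carries only color $n$, so that the input to column $(-n)$ is $B_i(n,j)=0$ for $i<n$ and $B_n(n,j)=\mathbf{V}'(j)_n$ --- exactly the hypothesis of \Cref{lemma:queue_indep_of_Bm} with $m=n$. I would factor the partition function across the interface between columns $(-n)$ and $(-(n-1))$: the total is a sum over the winding data $\mathbf{M}$, over the (absolutely convergent, by \Cref{lemma:queue_vertex_well_defined}) configurations in columns $(-(n-1)),\ldots,(-1)$, and over the color-$n$ output profile $\mathbf{D}(n,\cdot)$ of column $(-n)$, weighted by $\prod_{j=1}^{N}\WQ^{(-n)}(\mathbf{A}(n,j),\mathbf{B}(n,j);\mathbf{C}(n,j),\mathbf{D}(n,j))$ times the partition function of the remaining columns fed by $\mathbf{D}(n,\cdot)$. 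Because $A_n(n,j)=C_n(n,j)=+\infty$, conservation in color $n$ is automatic for any finite $B_n$ and $D_n$, so the set of admissible vertical fluxes and output profiles of column $(-n)$ does not depend on $\mathbf{V}'$; and by \Cref{lemma:queue_indep_of_Bm} each factor $\WQ^{(-n)}$ is literally unchanged when $B_n(n,j)$ is reset from $\mathbf{V}'(j)_n$ to $0$. This produces a value-preserving bijection between the terms of the $\mathbf{V}'$-sum and those of the $\emptyset$-sum, giving $\langle\mathbf{V}'|\mathfrak{Q}|\mathbf{V}\rangle=\langle\emptyset|\mathfrak{Q}|\mathbf{V}\rangle$.

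The main obstacle I anticipate is making the telescoping argument airtight on the cylinder in the presence of the infinite color-$n$ reservoir: the argument must be confined to colors $i<n$ (which have finite --- in fact zero --- flux through the seam) so that conservation and the wraparound identity $A_i(n,1)=C_i(n,N)$ can be summed without meeting the $+\infty$ entries, and one must confirm that \eqref{eq:queue_state_trace_conditions} indeed pins the relevant seam values to $0$. A secondary point requiring care is the legitimacy of the rearrangements in the second part, namely that the defining sum converges absolutely --- guaranteed by \Cref{lemma:queue_vertex_well_defined} once $|\mathbf{V}\rangle\in V^{\otimes N}_{\mathrm{full}}$ --- so that the term-by-term matching is valid.
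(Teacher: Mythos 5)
Your proposal is correct and follows essentially the same route as the paper: the vanishing claim rests on the indicator $\mathbf{1}_{D_1=\ldots=D_{m-1}=0}$ from \Cref{lemma:queue_spec_exists} (your conservation telescoping around the cylinder, using that \eqref{eq:queue_state_trace_conditions} pins $\mathbf{M}(-n)_i=0$ for $i<n$, is just a rigorous rendering of the paper's informal remark that colors $\ne n$ cannot wind indefinitely in column $(-n)$), and the second claim is exactly the paper's one-line appeal to \Cref{lemma:queue_indep_of_Bm} with $m=n$, which you spell out as a weight-preserving term-by-term bijection.
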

\begin{proof}
	\Cref{lemma:queue_spec_exists}
	guarantees that no paths of color strictly less than $n$
	leave column $(-n)$. Since we cannot have paths of color
    not equal to
	$n$ infinitely wind in the column $(-n)$,
	the partition function
	$\langle \mathbf{V}' | \mathfrak{Q} | \mathbf{V} \rangle$
	must vanish if
	$\mathbf{V}'$ contains at least one path of color
    strictly less than $n$.
	This establishes the first claim.
	The second claim immediately follows from
	\Cref{lemma:queue_indep_of_Bm}.
\end{proof}

\begin{remark}[Trace formula for queue partition functions]
	\label{rmk:product_trace_formula}
	The queue vertex model partition function can be interpreted through a product of the following $N$ operators, where $N$ is the size of the ring in the cross-section of the cylinder:
	\begin{equation}
		\label{eq:product_trace_formula}
		\langle \emptyset | \mathfrak{Q} | \mathbf{V} \rangle=
		\mathop{\mathrm{Trace}}\nolimits^\bullet
		\left( \mathscr{X}_{\mathbf{V}(1)}(u_1,s_1^{(h)})\cdots \mathscr{X}_{\mathbf{V}(N)}(u_N,s_N^{(h)})
		\right).
	\end{equation}
	Here each $\mathscr{X}_{\mathbf{V}}(u,s)$, $\mathbf{V}\in \mathbb{Z}_{\ge0}^n$,
	acts in the $n$-fold tensor product
	$V_{-n}\otimes \ldots \otimes V_{-1} $, where $V_{-m}$ has basis
	$|\mathbf{M}(-m)\rangle$,
	$\mathbf{M}(-m)\in \mathbb{Z}_{\ge0}^{n}$,
	$m=1,\ldots,n $.
	The matrix elements of $\mathscr{X}_{\mathbf{V}}(u,s)$ are partition functions
	of the queue weights
	on the lattice $\{1\}\times \{-n,\ldots,-1 \}$.
	See \Cref{fig:product_trace_formula} for an illustration.
	The operation $\mathop{\mathrm{Trace}}\nolimits^\bullet$ in \eqref{eq:product_trace_formula} means that we restrict the summation to the tuples~$\mathbf{M}$ satisfying \eqref{eq:queue_state_trace_conditions}. One can turn $\mathop{\mathrm{Trace}}\nolimits^\bullet$ into a genuine trace by suitably modifying the definition of the spaces $V_{-m}$.
\end{remark}

\begin{figure}[htpb]
	\centering
	\includegraphics[width=.45\textwidth]{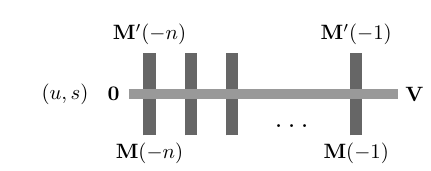}
	\caption{The matrix element
		$\large\langle \mathbf{M}(-n),\ldots,\mathbf{M}(-1)  \large| \ssp \mathscr{X}_{\mathbf{V}}(u,s)\ssp
		\large|  \mathbf{M}'(-n),\ldots,\mathbf{M}'(-1) \large\rangle$
		of one of the operators $\mathscr{X}_{\mathbf{V}}(u,s)$
	entering the trace formula \eqref{eq:product_trace_formula}.
	At the vertex indexed by $-m$, $m=1,\ldots,n $, we have the queue vertex weight
	$\WQ^{(-m)}_{s, s^{(v)}_m, u / v_{m}}$.}
	\label{fig:product_trace_formula}
\end{figure}

\section{Stationarity of the queue vertex model}
\label{sec:main_vertex_statement}

In this section, we establish two general stationarity
properties of the queue vertex model on the cylinder.
Specifically, we construct two Markov operators which, when
applied to the empty state $\langle\emptyset|$ on the
cylinder, commute with the queue transfer matrix
$\mathfrak{Q}$ from \Cref{def:queue_state_weights}. Both of
these commutation relations follow directly from the
Yang--Baxter equation.

In this section we work with \emph{formal} Markov
operators, that is, we do not assume that their matrix elements
are nonnegative. The matrix elements only need to satisfy the corresponding sum-to-one properties.
In the future sections we specify the ranges of parameters making the matrix elements nonnegative.

\subsection{Twisted cylinder Markov operator}
\label{sub:twisted}

Recall that $n$ is the number of colors, and $N$
is the size of the ring in the vertical cross-section of
the cylinder $\{-n,\ldots,-1 \}\times \mathbb{Z}/N\mathbb{Z}$
carrying the queue transfer matrix
$\mathfrak{Q}=\mathfrak{Q}
( \mathbf{u}; \mathbf{s}^{(h)}; \mathbf{v}; \mathbf{s}^{(v)})$ of \Cref{def:queue_state_weights}.
For our first relation, we take the spin-$\frac{1}{2}$ specialization
in the horizontal direction.
That is,
the horizontal spin parameters
in \eqref{eq:queue_state_weights_parameters}
are all equal to $q^{-1/2}$:
\begin{equation*}
	\mathbf{s}^{(h)} = (s^{(h)}_1,\dots, s^{(h)}_N)
	=
	\mathbf{s}^{(h)}_{\frac12}
	\coloneqq (q^{-1/2},\ldots,q^{-1/2} ).
\end{equation*}
We can take each tensor component in the space $V^{\otimes N}$ \eqref{eq:queue_state_space_higher_spin} to be $V=\mathbb{C}^{n+1}$.

We need some extra notation. Let
$R_{z}$ be the~$R$-matrix \eqref{eq:R_matrix_nonfused},
and denote
by $\check{\mathscr{R}}_z$ the operator in $V\otimes V$ with matrix elements
(see the end of \Cref{sub:vertex_weights_text}
for basis vector notations)
\begin{equation}
	\label{eq:swapping_R_check}
	\langle
		i,j
		| \ssp
		\check{\mathscr{R}}_z
		\ssp
		|
		\ell,k
		\rangle
		\coloneqq
		R_{z}(i,j;k,\ell).
\end{equation}
When $\check{\mathscr{R}}_z$ acts on the $k$-th and the $\ell$-th tensor components of $V^{\otimes N}$, we denote it by
$\check{\mathscr{R}}_z^{(k\ell)}$.

\begin{definition}\label{def:twisted_cyl}
	Fix two spectral parameters $u,u_1$.
	Let the \emph{twisted cylinder Markov operator}
	$\mathfrak{T}(u,u_1)$ be
	the linear operator on
	$V^{\otimes N}$
	defined as
	\begin{equation}
		\label{eq:twisted_cylinder_Markov_operator}
		\mathfrak{T}(u, u_1)
		\coloneqq
		\check{\mathscr{R}}_{u_1 u^{-1}}^{(1 2)}
		\ssp
		\check{\mathscr{R}}_{u_1 u^{-1}}^{(2 3)}
		\cdots
		\check{\mathscr{R}}_{u_1 u^{-1}}^{(N1)},
	\end{equation}
	Pictorially,~$\mathfrak{T}(u, u_1)$ is given in
	\Cref{fig:twisted}.
	The product in \eqref{eq:twisted_cylinder_Markov_operator} is interpreted as a product of Markov operators,
	that is, we first apply
	$\check{\mathscr{R}}_{u_1 u^{-1}}^{(1 2)}$ to a fixed configuration of arrows on the cylinder, get a random
	configuration, then apply $\check{\mathscr{R}}_{u_1 u^{-1}}^{(2 3)}$ to the new configuration, and so on.
\end{definition}

\begin{remark}
	\label{rmk:twisted_cylinder_Markov_operator_R_check}
	The swapping of the indices in the operator $\check{\mathscr{R}}_z$ compared to the $R$-matrix $R_z$ (see \eqref{eq:swapping_R_check}) is \emph{purely notational}. We employ it for the following convenience. When \Cref{fig:twisted} is read from left to right, the space $V^{\otimes N}$ encoding configurations on the ring stays the same after every single crossing of the strands. In particular, passing to $\check{\mathscr{R}}_z$ does not affect the random mechanism: the crosses act in \Cref{fig:twisted} in the same way as in the diagram of the Yang--Baxter equation in \Cref{fig:YBE_fused_queue}.
\end{remark}

\begin{figure}[tpb]
	\centering
	\includegraphics[width=.64\textwidth]{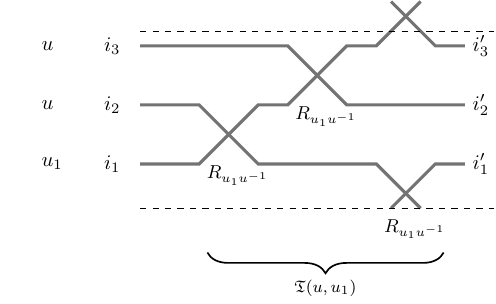}
	\caption{The configuration of vertices whose partition function is the
	matrix element of the twisted cylinder Markov operator $\langle \mathbf{V} | \mathfrak{T}(u, u_1) | \mathbf{V}' \rangle$,
	where $\mathbf{V}=(i_1,i_2,i_3)$, $\mathbf{V}'=(i_1',i_2',i_3')$, and $i_k,i_k'\in \{0,\ldots,n \}$. The size of the ring is $N=3$. After all the $N$ crossings, the spectral parameters attached to the strands on the right are the same as on the left. Note that at each crossing, we use the vertex weights $R_{u_1u^{-1}}$ in the same way as in the Yang--Baxter equation in \Cref{fig:YBE_nonfused_RLL}; see also \Cref{rmk:twisted_cylinder_Markov_operator_R_check} for a discussion of the notation.}
	\label{fig:twisted}
\end{figure}

\begin{figure}[tpb]
	\includegraphics[height=.19\textwidth]{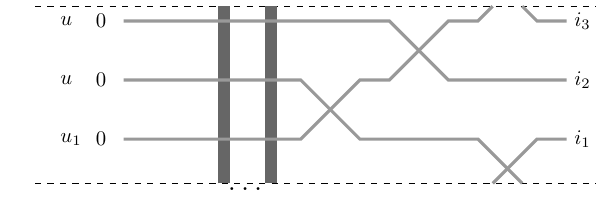}
	\\[15pt]
	\includegraphics[height=.19\textwidth]{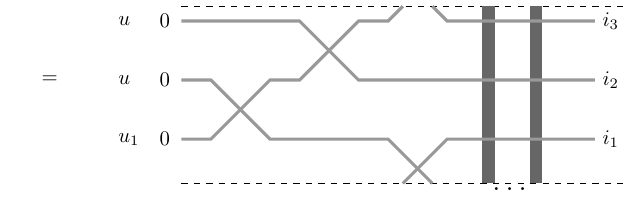}
	\includegraphics[height=.19\textwidth]{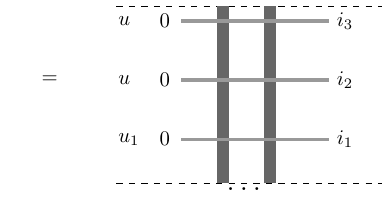}
	\caption{The commutation in the proof of \Cref{thm:twisted_st} for~$N=3$. Here~$i_1,i_2,i_3 \in \{0,1,\dots, n\}$ are arbitrary. }
	\label{fig:twisted_commutation}
\end{figure}

Iterating the sum-to-one property of the $R$-matrix \eqref{eq:R_matrix_nonfused}, we see that for any initial configuration
$\mathbf{V}' = (i_1,\ldots,i_N)$, where $i_k \in \{0,\ldots,n\}$, we have
\begin{equation}
	\label{eq:twisted_cylinder_Markov_operator_sum_to_one}
	\sum_{\mathbf{V}=(i_1',\ldots,i_N')\in \{0,1,\ldots,n \}^N}
	\langle \mathbf{V}' | \mathfrak{T}(u, u_1) | \mathbf{V} \rangle=1.
\end{equation}
If the matrix elements of $\mathfrak{T}(u, u_1)$ are nonnegative, then it is a Markov operator. However, considering $\mathfrak{T}(u, u_1)$ as a formal Markov operator with the sum-to-one property suffices for this section.

\medskip
We have the following stationarity of the queue vertex model under the action of the twisted cylinder Markov operator:

\begin{theorem}\label{thm:twisted_st}
	Let the parameters
	$u,u_1$ of the twisted cylinder Markov operator~$\mathfrak{T}$ \eqref{eq:twisted_cylinder_Markov_operator},
	as well as the parameters
	$\mathbf{v} = (v_1,\dots, v_n)$
	and $\mathbf{s}^{(v)} = (s^{(v)}_1,\dots, s^{(v)}_n)$
	in the queue transfer matrix~$\mathfrak{Q}$ of \Cref{def:queue_state_weights}
	be arbitrary.
	Denote
	$\mathbf{u} \coloneqq (u_1,u,\dots, u)$.
	Then we have
	\begin{equation}\label{eq:twisted_st}
		\big\langle \emptyset \big|
		\ssp
		\mathfrak{Q}
		(\mathbf{u}; \mathbf{s}_{\frac{1}{2}}^{(h)}; \mathbf{v}; \mathbf{s}^{(v)})
		\ssp
		\mathfrak{T}(u, u_1) = \big\langle \emptyset \big| \ssp \mathfrak{Q}
		(\mathbf{u}; \mathbf{s}_{\frac{1}{2}}^{(h)}; \mathbf{v}; \mathbf{s}^{(v)})
		.
	\end{equation}
\end{theorem}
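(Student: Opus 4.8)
The plan is to establish the row-vector identity \eqref{eq:twisted_st} geometrically, by transporting each of the $N$ crossings constituting $\mathfrak{T}(u,u_1)$ from the right (output) boundary of the queue vertex model across to the left (input) boundary, one crossing at a time, at each stage invoking the queue Yang--Baxter equation of \Cref{prop:queue_YBE}. The specialization $\mathbf{s}^{(h)}=\mathbf{s}_{\frac{1}{2}}^{(h)}$ is exactly what makes this possible: with every horizontal line carrying the spin-$\tfrac12$ parameter $q^{-1/2}$ one has $\mathsf{L}=\mathsf{M}=1$, so the fused cross weight $W_{\frac{s_1u_1}{s_2u_2},1,1}$ appearing in \Cref{prop:queue_YBE} collapses, via the reduction \eqref{eq:fully_fused_reduction_to_L}, to precisely the $R$-matrix out of which the factors $\check{\mathscr{R}}_{u_1u^{-1}}$ of \eqref{eq:twisted_cylinder_Markov_operator} are built. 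Moreover, since $\mathbf{u}=(u_1,u,\dots,u)$, at every crossing the two horizontal spectral parameters that meet are the single value $u_1$ and the common value $u$, so their ratio is always $u_1/u$, matching the spectral parameter of each $\check{\mathscr{R}}_{u_1u^{-1}}$. I would pin down the orientation/inversion bookkeeping relating $\tfrac{s_1u_1}{s_2u_2}$ to $u_1u^{-1}$ once, at the first crossing, and then reuse it verbatim.

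I would process the crossings in their order of application to $\langle\emptyset|\,\mathfrak{Q}$, namely $\check{\mathscr{R}}^{(12)},\check{\mathscr{R}}^{(23)},\dots,\check{\mathscr{R}}^{(N1)}$. Taking the crossing currently adjacent to $\mathfrak{Q}$, say on horizontal lines $k,k+1$, I would commute it leftward column by column: at each column $(-m)$ the two queue vertices in rows $k,k+1$ together with the crossing on their right form the configuration on the left-hand side of the Yang--Baxter equation in \Cref{fig:YBE_fused_queue}, and \Cref{prop:queue_YBE} replaces it by the crossing placed to the left of the same two queue weights with their spectral and spin labels interchanged. Running $m$ from $1$ to $n$ carries the crossing through the whole cylinder and interchanges the spectral parameters of lines $k$ and $k+1$ throughout $\mathfrak{Q}$. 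When the crossing reaches the entrance boundary it acts on the two horizontal entrance edges, both empty under $\langle\emptyset|$; since $R_z(0,0;0,0)=1$ by \eqref{eq:R_matrix_nonfused}, it emits empty edges with weight one and may be discarded. Thus one full pull-through simply transposes two adjacent entries of the spectral vector $\mathbf{u}$ and restores the left boundary to $\langle\emptyset|$, as illustrated in \Cref{fig:twisted_commutation}.

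It then remains to track the net effect on $\mathbf{u}=(u_1,u,\dots,u)$. The lone ``defect'' $u_1$ is carried by successive transpositions from line $1$ to line $2$, then to line $3$, and so on around the ring, the final wrap-around crossing $\check{\mathscr{R}}^{(N1)}$ returning it from line $N$ to line $1$; because every other parameter equals the common value $u$, after all $N$ pull-throughs the spectral vector is restored exactly to $(u_1,u,\dots,u)=\mathbf{u}$. The vertical parameters $\mathbf{v},\mathbf{s}^{(v)}$ are inert throughout, and every crossing has been absorbed into the empty entrance state, yielding \eqref{eq:twisted_st}. I would stress that the homogeneity of $\mathbf{u}$ away from the single entry $u_1$ is essential: for a generic $\mathbf{u}$ the pull-throughs would permute the columns nontrivially and the identity would fail.

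The delicate step I expect is the final wrap-around crossing $\check{\mathscr{R}}^{(N1)}$, whose two horizontal lines are adjacent only across the identified top/bottom seam of the cylinder. For this crossing the internal vertical edge summed over by the Yang--Baxter move is one of the winding states $\mathbf{M}(-m)$ subject to \eqref{eq:queue_state_trace_conditions}, with $\mathbf{M}(-m)_m=+\infty$. One must verify that commuting the crossing past the seam is compatible with this constraint and with the $A_m=C_m=+\infty$ structure of the queue columns; this holds because both vertices involved already carry $A_m=C_m=+\infty$ in the queue specialization, so the move neither creates nor destroys the infinite color-$m$ winding. The Yang--Baxter relation itself is a finite identity, and \Cref{lemma:queue_vertex_well_defined} guarantees convergence of all the winding sums, so no analytic difficulty arises beyond this bookkeeping.
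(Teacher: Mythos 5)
Your proposal is correct and follows essentially the same route as the paper's proof: pulling each crossing of $\mathfrak{T}(u,u_1)$ through the queue columns via the Yang--Baxter equation of \Cref{prop:queue_YBE} (with the spin-$\frac12$ reduction \eqref{eq:fully_fused_reduction_to_L} identifying the fused cross weight with $\check{\mathscr{R}}_{u_1u^{-1}}$), using the vertical periodicity of the cylinder for the wrap-around crossing $\check{\mathscr{R}}^{(N1)}$, and absorbing crossings into $\langle\emptyset|$ via $\langle\emptyset|\ssp\check{\mathscr{R}}^{(ij)}_{u_1u^{-1}}=\langle\emptyset|$. The only cosmetic difference is that you discard each crossing at the empty left boundary as it arrives, whereas the paper first establishes the full commutation $\mathfrak{Q}\ssp\mathfrak{T}=\mathfrak{T}\ssp\mathfrak{Q}$ and applies $\langle\emptyset|$ at the end --- these are trivially equivalent.
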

\begin{proof}
	We apply the Yang--Baxter equation specialized to the queue vertex weights; see
	\Cref{prop:queue_YBE}. In this equation, the cross vertex weights have the form
	$
	W_{\frac{u}{u_1},1,1}
	$,
	which are encoded by the operator $\check{\mathscr{R}}_{u_1u^{-1}}$; see
	\eqref{eq:fully_fused_reduction_to_L}.
	We illustrate the argument diagrammatically for~$N = 3$ in \Cref{fig:twisted_commutation}.

	Applying the Yang--Baxter equation
	successively at columns~$-n, -n+1, \dots,-2, -1$
	of the queue vertex model, we get the intertwining
	relation
	\begin{equation}\label{eq:comm1}
		\mathfrak{Q}((u_1, u, \dots,
		u);\mathbf{s}_{\frac{1}{2}}^{(h)}; \mathbf{v}; \mathbf{s}^{(v)})\ssp
		\check{\mathscr{R}}_{u_{1} u^{-1}}^{(1 2)} = \check{\mathscr{R}}_{u_1 u^{-1}}^{(1
		2)}\ssp\mathfrak{Q}((u, u_1, \dots, u);
		\mathbf{s}_{\frac{1}{2}}^{(h)}; \mathbf{v} ; \mathbf{s}^{(v)} )
	\end{equation}
	Continuing inductively with
	$\check{\mathscr{R}}_{u_{1} u^{-1}}^{(2 3)}, \ldots, \check{\mathscr{R}}_{u_{1} u^{-1}}^{(N-1,N)}$
	turns the horizontal spectral parameter sequence in $\mathfrak{Q}$ into $(u, \dots, u, u_1)$. Finally, after applying the last operator $\check{\mathscr{R}}_{u_{1} u^{-1}}^{(N,1)}$, we get back to the original sequence. Here we used the periodicity in the vertical direction of the vertex model defining~$\mathfrak{Q}$, which is crucial to complete the commutation.
	Therefore,
	\begin{equation*}
		\mathfrak{Q}
		(\mathbf{u}; \mathbf{s}_{\frac{1}{2}}^{(h)}; \mathbf{v}; \mathbf{s}^{(v)})
		\ssp
		\mathfrak{T}(u, u_1)
		=
		\mathfrak{T}(u, u_1)
		\ssp
		\mathfrak{Q}
		(\mathbf{u}; \mathbf{s}_{\frac{1}{2}}^{(h)}; \mathbf{v}; \mathbf{s}^{(v)}).
	\end{equation*}
 	It remains to observe that~$ \langle \emptyset | \check{\mathscr{R}}_{u_1 u^{-1}}^{(i,j)} =
	\langle \emptyset | $ for all~$i,j$. This completes the proof.
\end{proof}


\subsection{Straight cylinder Markov operator}
\label{sub:straight}

Here we define another Markov operator on the cylinder of size $N+1$ which commutes with the queue transfer matrix on this cylinder
when applied to the empty configuration $\emptyset$ at the left boundary of the cylinder.
It acts in the space $\mathbb{C}^{n+1} \otimes V^{\otimes N}$,
where the first factor~$\mathbb{C}^{n+1}$ is an auxiliary space which we identify with the index~$0$ in the superscripts.
The tensor components of the space $V^{\otimes N}$ are indexed by~$j=1,2,\dots, N$, and have basis vectors $|\mathbf{V}(j)\rangle$, where $\mathbf{V}(j) \in \mathbb{Z}_{\geq 0}^n$
encodes the arrow configuration on site $j$. That is, in contrast with \Cref{sub:twisted},
we return to the higher spin setting.
Later in \Cref{sec:qBoson,sec:qPush} we will take limits in which
the marginal process corresponding to the factor~$V^{\otimes N}$ will lead to the colored $q$-Boson or the colored $q$-PushTASEP
on the ring of size $N$.

Recall the Markov operator $\mathscr{L}_{s,z}$
acting in $\mathbb{C}^{n+1}\otimes V$; see
\eqref{eq:L_mat}.
\begin{definition}\label{def:straight_cyl}
	Fix spectral parameters~$(x,\mathbf{u})=(x,u_1,u_2,\dots, u_N)\in \mathbb{C}^{N+1}$
	and horizontal spin parameters $\mathbf{s}^{(h)}=(s_1^{(h)},\dots, s_N^{(h)})\in \mathbb{C}^N$.
	The
	\emph{straight cylinder Markov operator}
	denoted by $\mathfrak{S}=\mathfrak{S}(x,\mathbf{u};\mathbf{s}^{(h)})$
	acts in $\mathbb{C}^{n+1}\otimes V^{\otimes N}$
	as follows:
	\begin{equation}
		\label{eq:straight_cylinder_Markov_operator}
		\mathfrak{S}(x,\mathbf{u};\mathbf{s}^{(h)}) \coloneqq
		\mathscr{L}^{(0N)}_{s^{(h)}_N,xu_{N}^{-1}}\ssp
		\mathscr{L}^{(0,N-1)}_{s^{(h)}_{N-1},xu_{N-1}^{-1}}
		\ldots
		\mathscr{L}^{(01)}_{s^{(h)}_1,xu_{1}^{-1}}.
	\end{equation}
	Here by
	$\mathscr{L}^{(0,j)}_{s,z}$
	we denote the operator
	$\mathscr{L}_{s,z}$
	acting on $\mathbb{C}^{n+1}\otimes V^{\otimes N}$
	in the auxiliary space $\mathbb{C}^{n+1}$ and the $j$-th tensor component of
	$V^{\otimes N}$.
	The operator $\mathfrak{S}$ is illustrated in
	\Cref{fig:straight}.
\end{definition}

\begin{remark}
	\label{rmk:labels_of_axes}
	In \Cref{def:straight_cyl} and throughout this subsection,
	we index the tensor factors of the space
	$\mathbb{C}^{n+1} \otimes V^{\otimes N}$
	by $0,1,\ldots,N$, but put the horizontal strand corresponding to the $0$-th factor
	on top  of the cylinder in \Cref{fig:straight,fig:straight_commuted}.
	This way of indexing is consistent with the action of the operators
	$\mathscr{L}_{s,z}$
	in $\mathbb{C}^{n+1}\otimes V$ in
	\eqref{eq:straight_cylinder_Markov_operator}, while the
	location of the $0$-th strand on top just below the cylinder's cut is convenient for illustrations.
\end{remark}

\begin{figure}
	\centering
	\includegraphics[width=.5\textwidth]{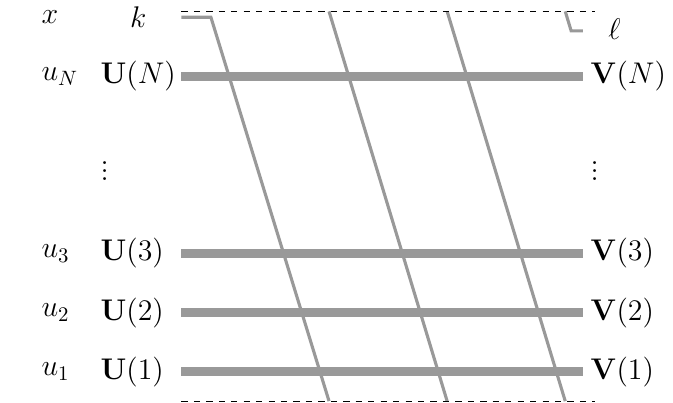}
	\caption{The straight cylinder Markov operator~$\mathfrak{S}=\mathfrak{S}(x,\mathbf{u};\mathbf{s}^{(h)})$,
	applied three times. The line with spectral parameter~$x$ has spin parameter~$q^{-\frac{1}{2}}$, so edges along this line can only be occupied by at most one path
	(hence $k,\ell\in \{0,1,\ldots, n \}$ ).
	The partition function of the displayed configuration is the matrix element
	$\big\langle k, \mathbf{U} \big|\ssp \mathfrak{S}^3 \ssp \big|\ell, \mathbf{V} \big\rangle$.}
	\label{fig:straight}
\end{figure}

\begin{figure}[htb]
	\centering
	\includegraphics[scale=.75]{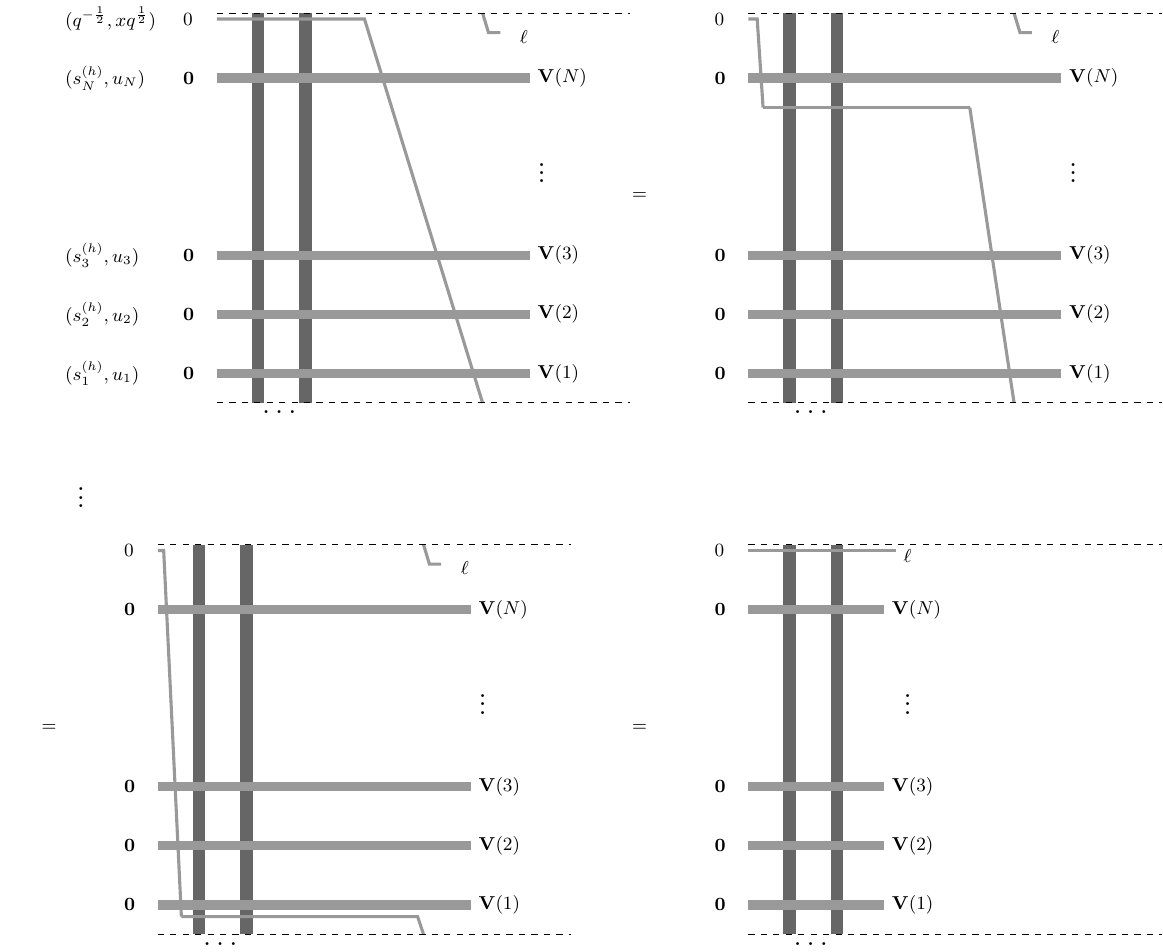}
	\caption{An illustration of the proof of \Cref{thm:straight_st}.
	Each move is justified by the Yang--Baxter equation (\Cref{prop:queue_YBE}).
	As usual, the equality of pictures means the equality of the corresponding partition functions with the fixed boundary conditions.}
	\label{fig:straight_commuted}
\end{figure}

The operator $\mathfrak{S}(x,\mathbf{u};\mathbf{s}^{(h)})$ satisfies the sum-to-one property similarly to \eqref{eq:twisted_cylinder_Markov_operator_sum_to_one}, and thus is a formal
Markov operator.
We have the following stationarity of the queue vertex model under $\mathfrak{S}$:
\begin{theorem}\label{thm:straight_st}
	Let us take the parameters
	$\mathbf{u},\mathbf{s}^{(h)},\mathbf{v},\mathbf{s}^{(v)}$ as in
	\eqref{eq:queue_state_weights_parameters}, and consider the
	queue vertex transfer matrix on the ring of size $N+1$ with the parameters
	\begin{equation*}
		\mathfrak{Q}=\mathfrak{Q}\left( (xq^{\frac12},\mathbf{u});(q^{-\frac{1}{2}},\mathbf{s}^{(h)})
		; \mathbf{v};\mathbf{s}^{(v)}\right).
	\end{equation*}
	Then we have
	\begin{equation*}
		\big\langle \emptyset \big|\ssp \mathfrak{Q}\ssp \mathfrak{S}
		=
		\big\langle \emptyset \big|\ssp\mathfrak{Q},
	\end{equation*}
	where $\mathfrak{S}=\mathfrak{S}(x,\mathbf{u};\mathbf{s}^{(h)})$
	has parameters compatible with those in $\mathfrak{Q}$.
\end{theorem}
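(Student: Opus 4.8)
The plan is to follow the proof of \Cref{thm:twisted_st} almost verbatim, with the fundamental crossings $\check{\mathscr{R}}$ replaced by the higher-spin operators $\mathscr{L}$ that assemble into $\mathfrak{S}$. The engine is once more the queue Yang--Baxter equation of \Cref{prop:queue_YBE}, specialized so that the fused cross weight has horizontal capacity $\mathsf{L}=1$, that is, $s_1=q^{-1/2}$; by the reduction \eqref{eq:fully_fused_reduction_to_L} such a cross weight is exactly an $\mathscr{L}$-weight. Concretely, I would take queue weight~$1$ in \Cref{prop:queue_YBE} to be the site-$0$ weight $\WQ^{(-m)}_{q^{-1/2},\,s^{(v)}_m,\,xq^{1/2}/v_m}$ and queue weight~$2$ to be the site-$j$ weight $\WQ^{(-m)}_{s^{(h)}_j,\,s^{(v)}_m,\,u_j/v_m}$ (both carrying the column-$m$ vertical data $s^{(v)}_m,v_m$). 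Then the cross weight $W_{s_1u_1/(s_2u_2),\,1,\,\mathsf{M}}$ has spectral parameter $x/(s^{(h)}_ju_j)$ and coincides with $\mathscr{L}_{s^{(h)}_j,\,xu_j^{-1}}$, the $j$-th factor of $\mathfrak{S}$. This is precisely the role of the shifted spectral parameter $xq^{1/2}$ at site~$0$: it forces $s_1u_1=x$, so that the crossings read off the operators constituting $\mathfrak{S}$.

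Equipped with this dictionary, I would read $\langle\emptyset|\,\mathfrak{Q}\,\mathfrak{S}$ as the partition function of the queue vertex model on the $(N+1)$-cylinder with the $\mathscr{L}$-row of $\mathfrak{S}$ glued to its right boundary, the $x$-line playing the part of the (extended) site-$0$ strand, as in \Cref{fig:straight_commuted}. Applying \Cref{prop:queue_YBE} repeatedly, I drag this $x$-line leftward through the columns $-n,-n+1,\dots,-1$, exactly as the twisted proof drags $\check{\mathscr{R}}$; each application moves one crossing past one queue vertex and, by the usual train argument, exchanges the vertical order of the site-$0$ and site-$j$ strands at that column. Once the $x$-line has traversed all $n$ columns and wound once around the cylinder, the vertical periodicity of $\mathfrak{Q}$ closes the computation and restores the original assignment of parameters to the sites (the straight analogue of the closing $\check{\mathscr{R}}^{(N1)}$ step). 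This yields the intertwining $\mathfrak{Q}\,\mathfrak{S}=\mathfrak{S}'\,\mathfrak{Q}$, where $\mathfrak{S}'$ is again a product of $\mathscr{L}$-weights, now sitting at the left (input) boundary.

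To conclude, I would invoke the boundary identity that replaces $\langle\emptyset|\,\check{\mathscr{R}}_z=\langle\emptyset|$ from the twisted proof: by arrow conservation, $L_{s,x}(\mathbf{0},0;\mathbf{B},\ell)=\mathbf{1}_{\mathbf{B}=\mathbf{0},\,\ell=0}$, so that $\langle\emptyset|\,\mathscr{L}^{(0j)}=\langle\emptyset|$ for every $j$, whence $\langle\emptyset|\,\mathfrak{S}'=\langle\emptyset|$. Combining this with the intertwining gives $\langle\emptyset|\,\mathfrak{Q}\,\mathfrak{S}=\langle\emptyset|\,\mathfrak{S}'\,\mathfrak{Q}=\langle\emptyset|\,\mathfrak{Q}$, as required.

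I expect the principal difficulty to be the periodicity closure together with the correct bookkeeping of the auxiliary site-$0$ strand: one must verify that dragging the $x$-line once around the $(N+1)$-cylinder genuinely returns $\mathfrak{Q}$ to itself, keeping in mind that here site~$0$ is spin-$\tfrac12$ while the sites it crosses are higher spin, so the train argument mixes the two kinds of edges. A secondary technical point is that every configuration here carries the infinite sums over winding tuples $\mathbf{M}$; I would appeal to \Cref{lemma:queue_vertex_well_defined} to guarantee absolute convergence, so that the Yang--Baxter rearrangements may be carried out term-by-term inside the partition function.
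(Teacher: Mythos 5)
Your proposal is correct and follows essentially the same route as the paper's proof: you identify the $\mathscr{L}$-factors of $\mathfrak{S}$ with the $\mathsf{L}=1$ (i.e.\ $s_1=q^{-1/2}$) fused cross weights via \eqref{eq:fully_fused_reduction_to_L}, correctly read off that the shifted parameter $xq^{\frac12}$ forces $s_1u_1=x$ so the cross in \Cref{prop:queue_YBE} is exactly $\mathscr{L}_{s_j^{(h)},xu_j^{-1}}$, drag each crossing through all $n$ queue columns, close the cycle using vertical periodicity after $N$ steps, and absorb the crossings into the empty left boundary via arrow conservation. The only cosmetic differences are that the paper tracks the strand swaps explicitly with permutation operators $P^{(0,N)}$ applied step by step to $\langle\emptyset|$ rather than packaging everything as a single intertwining $\mathfrak{Q}\ssp\mathfrak{S}=\mathfrak{S}'\ssp\mathfrak{Q}$, and your explicit appeal to \Cref{lemma:queue_vertex_well_defined} for the winding sums is a point the paper leaves implicit.
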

\begin{proof}
	We employ the Yang--Baxter equation of
	\Cref{prop:queue_YBE}. Let us match our parameters to this equation.
	The queue vertex model at rows $0$ and $j$ has vertex weights
	\begin{equation*}
		\WQ^{(-m)}_{q^{-1/2},s_m^{(v)},\frac{xq^{1/2}}{v_m}}
		,\qquad
		\WQ^{(-m)}_{s_j^{(h)},s_m^{(v)},\frac{u_j}{v_m}},
	\end{equation*}
	respectively. The
	operator $\mathfrak{S}$
	has the vertex weight
	$L_{s_j^{(h)},xu_j^{-1}}$,
	which, by \eqref{eq:fully_fused_reduction_to_L},
	is the same as the fused weight
	$W_{x u_j^{-1}/s_j^{(h)},1,\mathsf{M}}$, where $q^{-\mathsf{M}/2}=s_j^{(h)}$.
	We see that these three weights indeed satisfy the Yang--Baxter
	equation illustrated in \Cref{fig:YBE_fused_queue}.

	Applying this
	Yang--Baxter equation
	with parameters $(v_1,s_1^{(v)}), \ldots,(v_n,s_n^{(v)}) $
	(that is,
	at the columns $-1,\ldots,-n$ of the queue vertex model),
	we obtain the following relation
	between
	operators acting
	in the space~$\mathbb{C}^{n+1} \otimes V^{\otimes N}$,
	applied
	to the empty configuration $\emptyset$ at the left boundary of the cylinder:
	\begin{equation}
		\label{eq:straight_stat_proof_1}
		\begin{split}
			&\big\langle \emptyset \big|\ssp
			\mathfrak{Q}\left( (xq^{\frac12},u_1,\ldots,u_N );(q^{-\frac{1}{2}},s_1^{(h)},\ldots,s_N^{(h)} )
			; \mathbf{v};\mathbf{s}^{(v)}\right)
			\mathscr{L}^{(0N)}_{s_N^{(h)},xu_{N}^{-1}}
			\\&\hspace{10pt}
			=
			\big \langle \emptyset \big|\ssp
			P^{(0,N)}\ssp
			\mathfrak{Q}\left( (u_N,u_1,\ldots,u_{N-1},xq^{\frac12} );(s_N^{(h)},s_1^{(h)},\ldots,s_{N-1}^{(h)},
				q^{-\frac{1}{2}})
			; \mathbf{v};\mathbf{s}^{(v)}\right)
			P^{(0,N)}
			.
		\end{split}
	\end{equation}
	Here the permutation operator $P^{(0,N)}$
	swaps the $0$-th and the $N$-th
	tensor components in the space
	$\mathbb{C}^{n+1}\otimes V\otimes \ldots\otimes V$,
	and is required
	since the operator $\mathfrak{Q}$ in the right-hand side
	acts in $V\otimes \ldots\otimes V\otimes \mathbb{C}^{n+1} $.
	Identity \eqref{eq:straight_stat_proof_1} represents
	the first step
	of the transformations illustrated in
	\Cref{fig:straight_commuted}.
	In the next step,
	the action of
	$\mathscr{L}^{(0,N-1)}_{s^{(h)}_{N-1},xu_{N-1}^{-1}}$
	results in the following identity:
	\begin{equation*}
		\begin{split}
			&
			\big \langle \emptyset \big|\ssp
			P^{(0,N)}\ssp
			\mathfrak{Q}\left( (u_N,u_1,\ldots,u_{N-1},xq^{\frac12} );(s_N^{(h)},s_1^{(h)},\ldots,s_{N-1}^{(h)},
				q^{-\frac{1}{2}})
			; \mathbf{v};\mathbf{s}^{(v)}\right)
			P^{(0,N)}
			\mathscr{L}^{(0,N-1)}_{s^{(h)}_{N-1},xu_{N-1}^{-1}}
			\\&\hspace{5pt}
			=
			\big \langle \emptyset \big|\ssp
			P^{(N,N-1)}P^{(0,N)}
			\mathfrak{Q}\left( (u_N,u_1,\ldots,u_{N-2},xq^{\frac12},u_{N-1} );(s_N^{(h)},s_1^{(h)},\ldots,s_{N-2}^{(h)},
				q^{-\frac{1}{2}},s_{N-1}^{(h)})
			; \mathbf{v};\mathbf{s}^{(v)}\right)
			\\
			&\hspace{.79\textwidth}
			\times
			P^{(0,N)}P^{(N,N-1)}
			.
		\end{split}
	\end{equation*}
	Iterating the action of the other operators $\mathscr{L}$,
	after $N$
	total steps the horizontal parameter
	sequences $\mathbf{u}$ and $\mathbf{s}^{(h)}$
	return back to their original states
	$(xq^{\frac12},\mathbf{u});(q^{-\frac{1}{2}},\mathbf{s}^{(h)})$.
    Here we employed the periodicity of the vertex model defining~$\mathfrak{Q}$
	to complete the commutation.
	This establishes the desired stationarity relation.
\end{proof}

\section{Multi-species ASEP from twisted cylinder}
\label{sec:ASEP_matrix_products}

In this section, we take a continuous-time limit of the twisted cylinder Markov operator
$\mathfrak{T}$ defined in \Cref{sub:twisted},
and recover the known descriptions of the stationary distribution
of the multi-species TASEP and ASEP on the ring
from \cite{FerrariMartin2005}, \cite{Prolhac_2009}, and \cite{martin2020stationary}.

\subsection{Multi-species ASEP on the ring}
\label{sub:mASEP}

Recall that $n$ is the number of particle species (also called ``types'' or ``colors''), and
$N$ is the size of the ring $\mathbb{Z}/N\mathbb{Z}$.
The state space of the multi-species ASEP
consists of particle configurations
$\eta=(\eta_1,\ldots,\eta_N )$
on the ring, where $\eta_k\in \left\{ 0,1,\ldots,n  \right\}$ encodes the type of the particle at
site $k$.
The type $0$ corresponds to the empty site.

For a configuration $\eta$ and each pair of neighboring sites
$(k,k+1)$ (including $(N,1)$ for $k=N$),
denote by $\eta^{k,k+1}$ the configuration
$(\eta_1,\ldots,\eta_{k+1},\eta_k ,\ldots,\eta_N )$.
That is, $\eta^{k,k+1}$ is obtained from $\eta$ by swapping the types
at sites $k$ and $k+1$.

\begin{definition}
	\label{def:mASEP}
	The \emph{multi-species ASEP} (\emph{mASEP}) is a continuous
	time Markov chain on the space of particle configurations on the ring, with the
	following transition rates:
	\begin{equation}
		\label{eq:ASEP_rates}
		\text{Rate}(\eta \rightarrow \eta^{k,k+1}) =
		\begin{cases}
			q, &  \eta_k > \eta_{k+1}; \\
			1, &  \eta_k < \eta_{k+1},
		\end{cases}
	\end{equation}
	where $k$ runs over $1,2,\ldots,N$, and in \eqref{eq:ASEP_rates}
	we assume that $\eta_k\ne \eta_{k+1}$.
	The multi-species ASEP depends on a single parameter $q\in[0,1)$.
\end{definition}

\begin{remark}
	\label{rmk:types_vs_colors}
	We use the ordering of colors in which color $n$
	has the highest priority (to move from site $j+1$ to site $j$
	on the ring), and color $1$ has the lowest priority.
	Often in the literature on multi-type interacting
	particle systems, e.g., in
	\cite{martin2020stationary}, a
	reverse convention is used, in which type $1$ has the
	highest priority. In \Cref{sub:mqueues_martin_matching} below, when this distinction
	becomes relevant, we recast all the necessary definitions
	from the existing literature using our color ordering conventions.
\end{remark}

Observe that mASEP preserves the number of particles of each type. We denote these type counts by
\begin{equation}
	\label{eq:mASEP_type_counts}
	N_m\coloneqq \sum\nolimits_{j=1}^N
	\mathbf{1}_{\eta_j = m},
	\qquad
	m=1,\ldots,n.
\end{equation}
We have $N_1+\ldots+N_n\le N $. In
addition, throughout this section, we assume that $N_m\ge1$
for all $m=1,\ldots,n $. This assumption is very natural
since if there are no particles of a given type, then the
evolution of the $n$-type mASEP is the same as that of a
$(n-1)$-type mASEP, where the missing type is removed
entirely. Note also that at the level of queue vertex
models, a violation of the assumption that  $N_m\ge1$ for
all $m$ leads to convergence issues when summing over
windings around the cylinder; see
\Cref{lemma:queue_vertex_well_defined} and
\Cref{rmk:queue_vertex_not_well_defined_for_not_full}.

\begin{definition}[mASEP stationary distribution]
	\label{def:mASEP_stationary_distribution}
	When restricted to a \emph{sector} (namely, the subset of the state space) with fixed type counts $(N_1,\ldots,N_n )$, mASEP becomes an irreducible continuous-time Markov chain on a finite state space. Therefore, it admits a \emph{unique stationary distribution} in each sector.
	We denote this distribution by
	$\mathop{\mathrm{Prob}}^{\mathrm{mASEP}}\nolimits_{N_1,\ldots,N_n }(\eta)$.
\end{definition}

It is natural to encode the states $\eta$ as basis vectors
in $(\mathbb{C}^{n+1})^{\otimes N}$:
\begin{equation*}
	|\eta\rangle =
	|\eta_1,\ldots,\eta_N \rangle =
	|\eta_1 \rangle\otimes \ldots \otimes |\eta_N \rangle.
\end{equation*}

Since all jumps under mASEP are nearest neighbor,
the infinitesimal generator of mASEP can be
written as a sum of local rate matrices as follows.
Consider two possible configurations of particles~$i i'$,~$j
j'$ at adjacent lattice sites, say,~$k$ and $k+1$. Define an
operator~$\mathscr{M}_{loc}$ in~$(\mathbb{C}^{n+1})^{\otimes 2}$
such that
\begin{multline}
	\label{eq:Mloc}
	\langle i,i'
	| \mathscr{M}_{loc} | j,j' \rangle =  \big( \mathscr{M}_{loc} \big)_{i i', j j'}  \coloneqq  \mathbf{1}_{(i, i') = (j', j)}  \left( \text{jump rate $i i' \rightarrow j j'$} \right)
  \\
-  \mathbf{1}_{(i, i') = (j, j')}  \left( \text{jump rate $i i' \rightarrow j' j$} \right).
\end{multline}
The matrix element \eqref{eq:Mloc}
is nonzero if and only if $(i,i')=(j,j')$ or $(i,i')=(j',j)$.
The infinitesimal Markov generator of mASEP then has the form
\begin{equation}
	\label{eq:mASEP_generator}
	\mathfrak{M}_{\mathrm{mASEP}}=
	\sum_{l=1}^{N}
	\big( \mathscr{M}_{loc}\big)^{l,l+1},
\end{equation}
where	$\big( \mathscr{M}_{loc}\big)^{l,l+1}$ acts
as~$\mathscr{M}_{loc}$ on tensor factors of sites~$l, l+1$, and
as the identity on all other factors.
Denote by $\{\mathfrak{P}_{\mathrm{mASEP}}(t)\}_{t\in \mathbb{R}_{\ge0}}$, the continuous-time Markov semigroup generated by \eqref{eq:mASEP_generator}. The passage from the infinitesimal generator to this semigroup is straightforward, as the process lives on a finite state space.
See the left side of \Cref{fig:ASEP_dynamics} for an illustration of the process.

\medskip

The next statement identifies mASEP as a Poisson-type
limit of the twisted cylinder Markov operators.
\begin{proposition}
	\label{prop:mASEP_from_R}
	For any $u\in \mathbb{R}$, we have
	the convergence of Markov operators in $(\mathbb{C}^{n+1})^{\otimes N}$:
	\begin{equation}
		\label{eq:limit_twisted_to_mASEP}
		\lim_{\epsilon\to0}
		\mathfrak{T}(u,\ssp u(1-\epsilon))^{\lfloor (1-q) t/\epsilon \rfloor }
		=
		\mathfrak{P}_{\mathrm{mASEP}}(t),
		\qquad t\in \mathbb{R}_{\ge0}.
	\end{equation}
	Here
	$\mathfrak{T}$ is the twisted cylinder Markov
	operator \eqref{eq:twisted_cylinder_Markov_operator}.
	See the right side of \Cref{fig:ASEP_dynamics} for an illustration
	of the limiting jump rates.
\end{proposition}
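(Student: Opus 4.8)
The plan is to read \eqref{eq:limit_twisted_to_mASEP} as a standard Poisson-type (linearization) limit, which in this finite-dimensional setting reduces to a first-order Taylor expansion of the crossing weights followed by an elementary matrix-exponential limit. Since the weights $R_z$ depend only on the ratio $z=u_1/u$, substituting $u_1=u(1-\epsilon)$ gives $z=1-\epsilon$ and the parameter $u$ drops out of $\mathfrak{T}$ entirely; this already explains why the statement holds for every $u\in\mathbb{R}$. The entire computation lives in the finite-dimensional space $(\mathbb{C}^{n+1})^{\otimes N}$, so no functional-analytic subtleties arise.

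First I would expand a single crossing $\check{\mathscr{R}}_{1-\epsilon}$ about $z=1$. Evaluating \eqref{eq:R_matrix_nonfused} at $z=1$ yields $R_1(j,i;i,j)=R_1(i,j;j,i)=1$ and $R_1(j,i;j,i)=R_1(i,j;i,j)=0$ for $i<j$, together with $R_1(i,i;i,i)=1$; via \eqref{eq:swapping_R_check} this shows $\check{\mathscr{R}}_1=\mathrm{Id}$, consistent with \Cref{rmk:twisted_cylinder_Markov_operator_R_check}. Differentiating the four nontrivial weights and using $1-qz=(1-q)+q\epsilon$, I obtain
\begin{equation*}
	\check{\mathscr{R}}_{1-\epsilon}=\mathrm{Id}+\epsilon\,\mathscr{D}+O(\epsilon^2),
	\qquad
	\mathscr{D}\big|_{(|i,j\rangle,\,|j,i\rangle)}
	=\frac{1}{1-q}\begin{pmatrix}-1&1\\ q&-q\end{pmatrix},
\end{equation*}
with $\mathscr{D}\,|c,c\rangle=0$ and no mixing between the two-dimensional blocks indexed by the unordered pairs $\{i,j\}$, $i<j$ (the matrix is displayed in the ordered basis $(|i,j\rangle,|j,i\rangle)$, whose $(\alpha,\beta)$-entry is $\langle\alpha|\,\mathscr{D}\,|\beta\rangle$). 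Comparing with the local rate matrix \eqref{eq:Mloc}, whose only nontrivial block is $\mathscr{M}_{loc}=\left(\begin{smallmatrix}-1&1\\ q&-q\end{smallmatrix}\right)$ on the same block (swap rate $1$ when $\eta_k<\eta_{k+1}$ and $q$ when $\eta_k>\eta_{k+1}$), I arrive at the key identity $(1-q)\,\mathscr{D}=\mathscr{M}_{loc}$, which pins down the time-rescaling factor $1-q$ appearing in \eqref{eq:limit_twisted_to_mASEP}.

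Next I would multiply the $N$ crossings. Each factor equals $\mathrm{Id}+\epsilon\,\mathscr{D}^{(k,k+1)}+O(\epsilon^2)$, and every product of two distinct $\mathscr{D}^{(k,k+1)}$ is uniformly $O(\epsilon^2)$, so the order of the noncommuting factors is irrelevant at first order and
\begin{equation*}
	\mathfrak{T}(u,u(1-\epsilon))
	=\mathrm{Id}+\epsilon\sum_{k=1}^{N}\mathscr{D}^{(k,k+1)}+O(\epsilon^2)
	=\mathrm{Id}+\frac{\epsilon}{1-q}\,\mathfrak{M}_{\mathrm{mASEP}}+O(\epsilon^2),
\end{equation*}
by \eqref{eq:mASEP_generator}. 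Setting $m=\lfloor(1-q)t/\epsilon\rfloor$, so that $m\epsilon\to(1-q)t$ and $m\epsilon^2\to0$, the elementary finite-dimensional limit gives $\mathfrak{T}(u,u(1-\epsilon))^m\to\exp(t\,\mathfrak{M}_{\mathrm{mASEP}})=\mathfrak{P}_{\mathrm{mASEP}}(t)$; one clean route is to take matrix logarithms near the identity, $m\log\mathfrak{T}=\frac{m\epsilon}{1-q}\mathfrak{M}_{\mathrm{mASEP}}+O(m\epsilon^2)\to t\,\mathfrak{M}_{\mathrm{mASEP}}$, and then exponentiate. The floor is harmless, since the discarded fractional power tends to $\mathrm{Id}$.

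The limits themselves are routine; the genuinely delicate point is the bookkeeping in the middle step. One must check both that $\check{\mathscr{R}}_1$ is literally the identity (rather than the swap operator), so that the zeroth-order term is trivial, and that the first-order block reproduces $\mathscr{M}_{loc}$ with exactly the normalizing constant $1-q$---any other constant would produce the wrong time scaling. Finally, I would note that for $0<\epsilon\le1$ the weights $R_{1-\epsilon}$ are nonnegative, so every $\mathfrak{T}(u,u(1-\epsilon))$ is a bona fide Markov operator and the limit is a genuine Markov semigroup, justifying the phrase ``convergence of Markov operators'' in the statement.
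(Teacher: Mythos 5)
Your proposal is correct and follows essentially the same route as the paper's proof outline: expand the weights of $\check{\mathscr{R}}_{1-\epsilon}$ to first order in $\epsilon$, identify $(1-q)\,\partial_\epsilon\big|_{\epsilon=0}\check{\mathscr{R}}_{1-\epsilon}=\mathscr{M}_{loc}$ exactly as in \eqref{eq:Mloc_as_derivative}, deduce $\mathfrak{T}(u,u(1-\epsilon))=\mathrm{Id}+\frac{\epsilon}{1-q}\,\mathfrak{M}_{\mathrm{mASEP}}+O(\epsilon^2)$, and exponentiate under the Poisson rescaling $m=\lfloor(1-q)t/\epsilon\rfloor$. Your explicit checks that $\check{\mathscr{R}}_1=\mathrm{Id}$, that the block computation reproduces $\mathscr{M}_{loc}$ with the factor $1-q$, and the matrix-logarithm step simply fill in details the paper delegates to the cited references, so the two arguments coincide in substance.
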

Recall that $q\in[0,1)$.
One readily sees that for any $u\in \mathbb{R}$ and
$\epsilon\in(0,1)$, both sides of the limiting relation
\eqref{eq:limit_twisted_to_mASEP}
are Markov operators with nonnegative
matrix elements.
\begin{proof}[Proof outline of \Cref{prop:mASEP_from_R}]
	This is a standard limit of
	the stochastic six-vertex model leading to the ASEP,
	see
	\cite{BCG6V},
	\cite{Aggarwal2017convergence}
	and also \cite[Section~12.3]{borodin_wheeler2018coloured} for its colored version.
	In short, in the regime $\epsilon\to0$, all paths want to follow a ``staircase'' motion, with occasional deviations that occur in continuous time according to independent exponential clocks. Because of how the cross vertices are organized to form the twisted cylinder Markov operator (\Cref{fig:twisted}), the staircase motion corresponds to particles staying in place.
	For convenience, let us reproduce the main computation.

	First, consider the limit of the local operators
	$\check{\mathscr{R}}_{z}$ \eqref{eq:swapping_R_check},
	where $z=u_1u^{-1}=1-\epsilon$ because $u_1=u(1-\epsilon)$.
	We have for the matrix elements \eqref{eq:R_matrix_nonfused}:
	\begin{equation*}
		\frac{1-z}{1-qz}=\frac{\epsilon}{1-q}+O(\epsilon^2),
		\qquad
		1-\frac{1-z}{1-qz}=1- \frac{\epsilon}{1-q} + O(\epsilon^2),
	\end{equation*}
	and similarly for $q\frac{1-z}{1-qz}$ and
	$1-q\frac{1-z}{1-qz}$. Therefore, the local
	infinitesimal generator \eqref{eq:Mloc} of mASEP has the following form:
	\begin{equation}
		\label{eq:Mloc_as_derivative}
		\mathscr{M}_{loc}
		=
		(1-q)
		\left.\frac{\partial}{\partial\epsilon}\right|_{\epsilon=0}
		\check{\mathscr{R}}_{1-\epsilon}.
	\end{equation}
	See \Cref{fig:ASEP_dynamics}, right, for an illustration of
	how we interpret the operators
	$\check{\mathscr{R}}_{1-\epsilon}$ in terms of the hopping rates.

	In the $\epsilon\to0$ limit, the product of the operators $\check{\mathscr{R}}_{1-\epsilon}$ over all pairs of neighboring lattice sites (which is equal to the twisted cylinder operator \eqref{eq:twisted_cylinder_Markov_operator}) behaves as~$Id + \frac{1}{1-q} \epsilon \; \mathfrak{M}_{\mathrm{mASEP}}$, where~$Id$ is the identity matrix, and $\mathfrak{M}_{\mathrm{mASEP}}$ is defined in \eqref{eq:mASEP_generator}. This leads to the desired statement about the convergence to the mASEP Markov semigroup.
\end{proof}

\begin{figure}[htb]
    \centering
	\includegraphics[width=.8\textwidth]{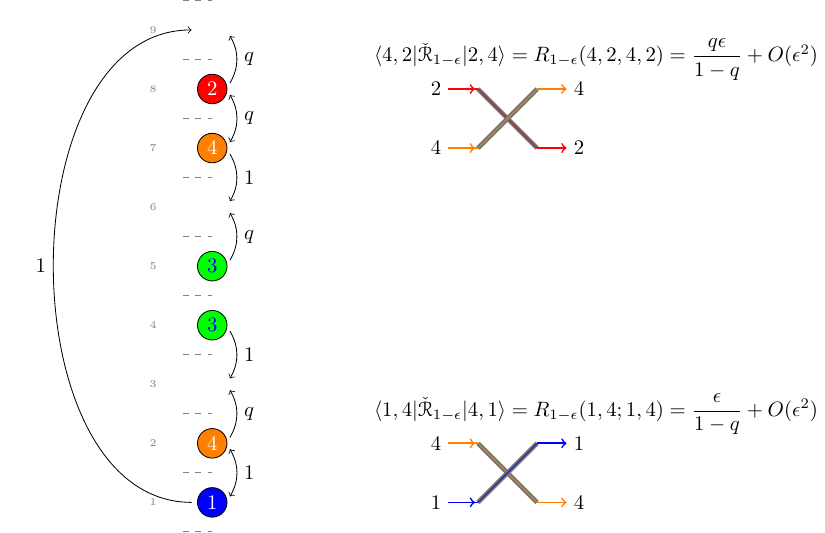}
	\caption{Left: An illustration of a state in mASEP on a ring of size~$N = 9$, with all possible jump rates indicated.
		Right: The interpretation of the mASEP hopping rates as limits of the
		operators $\check{\mathscr{R}}_{1-\epsilon}$.
		Throughout the figure, the colors (types) of the particles are indicated by the numbers to assist the printed version.}
    \label{fig:ASEP_dynamics}
\end{figure}

\subsection{Vertex model for the mASEP stationary distribution}
\label{sub:vertex_models_mASEP}

Let us now apply \Cref{thm:twisted_st}
to mASEP. Recall that by
$\langle \emptyset |
\ssp
\mathfrak{Q}
\ssp
|
\mathbf{V}
\rangle
$
we denote the partition function of the queue
vertex model on the cylinder introduced in
\Cref{def:queue_state_weights}.
For mASEP, the queue transfer matrix
$\mathfrak{Q} =
\mathfrak{Q}(\mathbf{u};\mathbf{s}^{(h)}_{\frac12}
;\mathbf{v};\mathbf{s}^{(v)})$
has the parameters
\begin{equation*}
	\mathbf{u}=(u,\ldots,u ),
	\qquad
	\mathbf{s}^{(h)}_{\frac12}=(q^{-1/2},\ldots,q^{-1/2} ),
\end{equation*}
where $\mathbf{v},\mathbf{s}^{(v)}\in \mathbb{C}^n$ are arbitrary.
The $N$-tuple
$
\mathbf{V}=\mathbf{V}_\eta\coloneqq
(\mathbf{e}_{\eta_1}
	\mathbf{1}_{\eta_1 \ge 1},\dots,
	\mathbf{e}_{\eta_N}
\mathbf{1}_{\eta_N \ge 1})
$
encodes the same information as
the mASEP state $\eta=(\eta_1,\ldots,\eta_N )$. Indeed, this is because each of the
subsets $\mathbf{V}_\eta(j)\subset \mathbb{Z}_{\ge0}^n$, $j=1,\ldots,N$, must have at
most one element thanks to the spin-$\frac12$ reduction coming from
$\mathbf{s}^{(h)}_{\frac12}$.

We split this subsection into two parts. First, we show in \Cref{prop:queue_mASEP_stationary} that the normalized partition functions of the queue vertex model produce the mASEP stationary distribution.
Then we present in \Cref{prop:positivity_parameter_dependence} a slightly modified vertex model for which all partition functions on the cylinder with right boundary $\eta$ are positive without normalization.

\begin{proposition}
	\label{prop:queue_mASEP_stationary}
	With the above notation and for generic complex parameters $u,\mathbf{v},\mathbf{s}^{(v)}$,
	the queue vertex model partition function with boundary $\eta$
	and vertex weights
	$\WQ_{q^{-1/2},s_m^{(v)},u/v_m}^{(-m)}$
	is proportional to the stationary probability for the multi-species ASEP on the ring:
	\begin{equation}
		\label{eq:queue_mASEP_stationary_proport}
		\mathop{\mathrm{Prob}}^{\mathrm{mASEP}}\nolimits_{N_1,\ldots,N_n }(\eta)
		=
		\frac{1}{Z^{\mathrm{mASEP}}_{N_1,\ldots,N_n }(u ;\mathbf{v};\mathbf{s}^{(v)})}\ssp
		\langle \emptyset | \ssp \mathfrak{Q}(\mathbf{u};\mathbf{s}^{(h)}_{\frac12} ;\mathbf{v};\mathbf{s}^{(v)}) \ssp | \mathbf{V}_\eta \rangle.
	\end{equation}
	The normalizing constant
	$Z^{\mathrm{mASEP}}_{N_1,\ldots,N_n }(u ;\mathbf{v};\mathbf{s}^{(v)})$
	may depend on the type counts $(N_1,\ldots,N_n)$ \eqref{eq:mASEP_type_counts}, but not on the state $\eta$ within the sector determined by $(N_1,\ldots,N_n)$.
\end{proposition}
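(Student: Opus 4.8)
The plan is to deduce the stationarity of the unnormalized row vector $\langle \emptyset | \mathfrak{Q}_0$, where $\mathfrak{Q}_0 \coloneqq \mathfrak{Q}((u,\dots,u);\mathbf{s}^{(h)}_{\frac12};\mathbf{v};\mathbf{s}^{(v)})$, by transporting the exact commutation of \Cref{thm:twisted_st} through the Poisson-type limit of \Cref{prop:mASEP_from_R}, and then to pin it down using uniqueness of the stationary distribution on each sector. First I would apply \Cref{thm:twisted_st} with $u_1 = u(1-\epsilon)$, so that $\mathbf{u} = (u(1-\epsilon),u,\dots,u)$ and the theorem reads $\langle \emptyset | \mathfrak{Q}_\epsilon \ssp \mathfrak{T}(u,u(1-\epsilon)) = \langle \emptyset | \mathfrak{Q}_\epsilon$ with $\mathfrak{Q}_\epsilon \coloneqq \mathfrak{Q}(\mathbf{u};\mathbf{s}^{(h)}_{\frac12};\mathbf{v};\mathbf{s}^{(v)})$. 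Iterating gives $\langle \emptyset | \mathfrak{Q}_\epsilon \ssp \mathfrak{T}(u,u(1-\epsilon))^{M} = \langle \emptyset | \mathfrak{Q}_\epsilon$ for every $M \ge 0$. Because of the horizontal spin-$\tfrac12$ reduction $\mathbf{s}^{(h)}_{\frac12}$, the terminal states are confined to $\mathbf{V}_\eta$ with $\eta \in \{0,\dots,n\}^N$, so $\langle \emptyset | \mathfrak{Q}_\epsilon$ is a covector on the finite-dimensional space $(\mathbb{C}^{n+1})^{\otimes N}$. Only column $j=1$ of the model carries $u_1$, so by \eqref{eq:queue_spec_fully_fused} each component $\langle \emptyset | \mathfrak{Q}_\epsilon | \mathbf{V}_\eta \rangle$ is continuous at $\epsilon=0$; the geometric decay of the winding sum from \Cref{lemma:queue_vertex_well_defined}, uniform for $\epsilon$ near $0$, lets me interchange $\lim_{\epsilon\to0}$ with that sum, whence $\langle \emptyset | \mathfrak{Q}_\epsilon \to \langle \emptyset | \mathfrak{Q}_0$. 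Taking $M = \lfloor (1-q)t/\epsilon \rfloor$, \Cref{prop:mASEP_from_R} yields $\mathfrak{T}(u,u(1-\epsilon))^M \to \mathfrak{P}_{\mathrm{mASEP}}(t)$, and continuity of matrix multiplication in finite dimensions then gives
\begin{equation*}
	\langle \emptyset | \ssp \mathfrak{Q}_0 \ssp \mathfrak{P}_{\mathrm{mASEP}}(t) = \langle \emptyset | \ssp \mathfrak{Q}_0, \qquad t \ge 0.
\end{equation*}

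Next I would restrict to a sector. The indicator $\mathbf{1}_{\mathbf{A}+\mathbf{B}=\mathbf{C}+\mathbf{D}}$ in every queue weight \eqref{eq:queue_spec_fully_fused} conserves colored arrows, forcing $\sum_{j} \mathbf{V}_\eta(j)_m = N_m$ with $N_m$ as in \eqref{eq:mASEP_type_counts}; since $\mathfrak{P}_{\mathrm{mASEP}}(t)$ also preserves the type counts $(N_1,\dots,N_n)$, the displayed identity decouples over sectors. Hence the restriction $\pi$ of $\langle \emptyset | \mathfrak{Q}_0$ to the sector with fixed counts $(N_1,\dots,N_n)$ is a complex left $1$-eigenvector of the stochastic operator $\mathfrak{P}_{\mathrm{mASEP}}(t)$ on that finite sector. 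By \Cref{def:mASEP_stationary_distribution} the mASEP is irreducible there, so Perron--Frobenius makes the eigenvalue $1$ simple and its left eigenspace one-dimensional over $\mathbb{C}$. Therefore $\pi$ is a scalar multiple of the unique stationary probability $\mathop{\mathrm{Prob}}^{\mathrm{mASEP}}\nolimits_{N_1,\dots,N_n}$, with proportionality constant $Z^{\mathrm{mASEP}}_{N_1,\dots,N_n}(u;\mathbf{v};\mathbf{s}^{(v)}) = \sum_{\eta} \langle \emptyset | \mathfrak{Q}_0 | \mathbf{V}_\eta \rangle$ depending only on the sector, which is precisely \eqref{eq:queue_mASEP_stationary_proport}.

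The one substantive point left, and what I expect to be the main obstacle, is to certify that $Z^{\mathrm{mASEP}} \ne 0$ so that the normalization in \eqref{eq:queue_mASEP_stationary_proport} is legitimate; equivalently, that $\pi \not\equiv 0$ on the sector. I would argue this for generic $u,\mathbf{v},\mathbf{s}^{(v)}$ by noting that each $\langle \emptyset | \mathfrak{Q}_0 | \mathbf{V}_\eta \rangle$ is analytic in these parameters (a locally uniformly convergent sum of analytic terms, again by \Cref{lemma:queue_vertex_well_defined}), so $Z^{\mathrm{mASEP}}$ is either identically zero or nonzero generically. To exclude the former I would specialize to the nonnegativity regime of \Cref{prop:queue_positivity} (the finite horizontal spin case with $\mathsf{L}=1$, matching $s^{(h)}_j = q^{-1/2}$), where all queue weights are nonnegative and the model produces at least one terminal configuration of strictly positive weight, so $Z^{\mathrm{mASEP}} > 0$ there and hence cannot vanish identically. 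Once nonvanishing is secured, dividing by $Z^{\mathrm{mASEP}}$ completes the proof; as noted in the introduction, strict positivity of the resulting normalized measure is then automatic from irreducibility and Perron--Frobenius.
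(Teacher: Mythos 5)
Your proposal is correct and takes essentially the same route as the paper's proof: iterate \Cref{thm:twisted_st} with $u_1=u(1-\epsilon)$, pass to the Poisson-type limit of \Cref{prop:mASEP_from_R} (justifying the exchange of the limit with the winding sums by uniform geometric bounds, exactly as the paper does), and identify $\langle \emptyset|\ssp\mathfrak{Q}$ as a left $1$-eigenvector of $\mathfrak{P}_{\mathrm{mASEP}}(t)$, unique up to scalar in each sector by irreducibility. The only difference is presentational: the paper folds the nonvanishing of $Z^{\mathrm{mASEP}}$ into its definition of ``generic'' parameters, whereas you supply the analyticity-plus-positivity argument for it, which is essentially the content the paper defers to \Cref{prop:positivity_parameter_dependence}.
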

In \Cref{prop:queue_mASEP_stationary}, by ``generic'' we mean that the parameters must ensure that $\langle \emptyset | \ssp \mathfrak{Q} \ssp | \mathbf{V}_\eta \rangle $ is finite and nonzero for all $\eta$ with type counts $N_m\ge 1$, $m=1,\ldots,n$.
For fixed $N$ and $n$, genericity is ensured by excluding zero sets of finitely many polynomials in
$u,\mathbf{v}$, and $\mathbf{s}^{(v)}$ from the parameter space. Later in \Cref{prop:positivity_parameter_dependence}, we present concrete conditions on the parameters producing stationary measures for all $N$ and $n$.
\begin{proof}[Proof of \Cref{prop:queue_mASEP_stationary}]
	Iterating \Cref{thm:twisted_st}, we have
	\begin{equation*}
		\big\langle \emptyset \big|
		\ssp
		\mathfrak{Q}
		\bigl( (u_1,u,\ldots,u ); \mathbf{s}_{\frac{1}{2}}^{(h)}; \mathbf{v}; \mathbf{s}^{(v)}\bigr)
		\ssp
		\mathfrak{T}(u, u_1)^{\lfloor (1-q)t / \epsilon \rfloor } = \big\langle \emptyset \big| \ssp
		\mathfrak{Q}
		\bigl( (u_1,u,\ldots,u ); \mathbf{s}_{\frac{1}{2}}^{(h)}; \mathbf{v}; \mathbf{s}^{(v)}\bigr).
	\end{equation*}
	Setting $u_1=u(1-\epsilon)$ and sending $\epsilon\to0$
	turns the power of the twisted cylinder Markov operator
	in the left-hand side into
	$\mathfrak{P}_{\mathrm{mASEP}}(t)$;
	see \Cref{prop:mASEP_from_R}. The vertex weights in the queue vertex model with horizontal spin $\frac12$ are given in \Cref{fig:ASEP_weights} (recall that $\mathfrak{Q}$ involves the weights
	$\WQ_{q^{-1/2},s_m^{(v)},u/v_m}^{(-m)}$).
	These vertex weights are continuous in the spectral parameter $u$ for generic parameters.
	Therefore, we can simply take the limit $u_1\to u$ in the queue transfer matrix, which implies that
	$\big\langle \emptyset \big|
	\ssp
	\mathfrak{Q}
	\bigl( (u,u,\ldots,u ); \mathbf{s}_{\frac{1}{2}}^{(h)}; \mathbf{v}; \mathbf{s}^{(v)}\bigr)$
	is the left (row) eigenvector of the Markov semigroup $\mathfrak{P}_{\mathrm{mASEP}}(t)$ with eigenvalue $1$ (where here the exchange of the limit and the implicit summation in both sides of the above equation is justified by the fact that the summation
    defining each partition function is uniformly bounded by
    a geometric series, and the sum over $\mathbf{V}_{\eta}$ on the left hand side
 has a bounded number of terms, independently of $\epsilon$).
	Thus, it is proportional to the row vector representing the stationary distribution of mASEP, as desired.
\end{proof}

\begin{figure}[htbp]
	\centering
	\includegraphics[width=.8\textwidth]{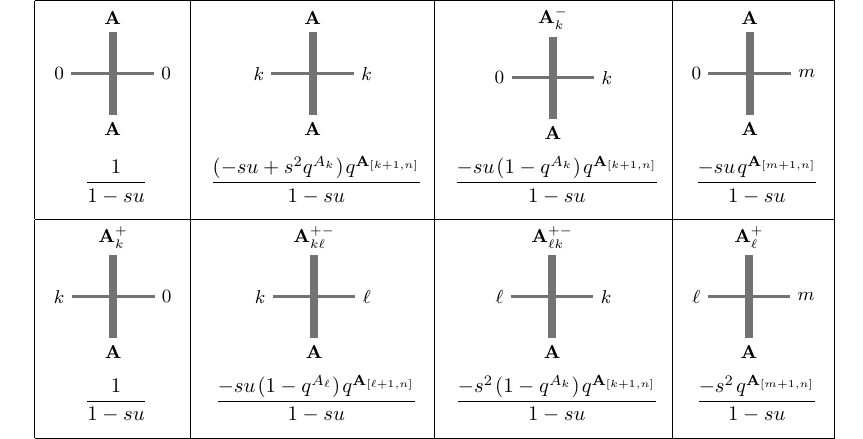}
	\caption{Weights
	$\WQ_{q^{-1/2},s,uq^{1/2}}^{(-m)}(\mathbf{A},k;\mathbf{C},\ell)$ entering
	the spin-$\frac12$
	queue vertex model
	which represents the
	stationary distribution of mASEP. Here
	$m<k<\ell\le n$, and it suffices to consider
	only vertices with no colors $\le m$ entering from the left.
	Recall that $A_m=+\infty$, so the vertices in the last column
	also satisfy the arrow conservation property.}
	\label{fig:ASEP_weights}
\end{figure}

\begin{remark}
	\label{rmk:queue_mASEP_stationary_remark_independent_of_parameters}
	\Cref{prop:queue_mASEP_stationary} expresses the stationary probabilities of mASEP, a system depending on a single parameter $q\in[0,1)$, as normalized partition functions
	$\langle \emptyset | \ssp \mathfrak{Q} \ssp | \mathbf{V}_\eta \rangle/
	Z^{\mathrm{mASEP}}_{N_1,\ldots,N_n }$
	of the queue vertex model. The latter in addition depends on the parameters $u,\mathbf{v}$, and $\mathbf{s}^{(v)}$. However, by \eqref{eq:queue_mASEP_stationary_proport}, we see that while $u,\mathbf{v},\mathbf{s}^{(v)}$ enter the weights of the queue vertex model, these parameters do not affect the normalized partition functions.
	This observation is not obvious, and we can claim the cancellation
	of the additional parameters
	only because we have the connections to mASEP which depends only on one parameter $q$.
\end{remark}

\begin{definition}\label{def:mASEP_gauge_weights}
For $s\ne 0$ let us define the \emph{mASEP gauge transformation} of the queue vertex weights:
\begin{equation}
	\label{eq:ASEP_gauge}
	\WQ_{q^{-1/2},s,uq^{1/2}}^{(-m),\mathrm{mASEP}_+}(\mathbf{A},k;\mathbf{C},\ell)
	\coloneqq
	(-1/s)^{\mathbf{1}_{\ell\ge1}}\ssp
	\WQ_{q^{-1/2},s,uq^{1/2}}^{(-m)}(\mathbf{A},k;\mathbf{C},\ell).
\end{equation}
That is, we
remove the factor $(-s)$
from the weights in the right three columns
in \Cref{fig:ASEP_weights}.
The resulting weights make sense for $s=0$, too.
The notation ``$+$'' in~\eqref{eq:ASEP_gauge} indicates that
we will impose conditions on the parameters under which
these weights are nonnegative.

Also, denote by
\begin{equation}
	\label{eq:ASEP_gauge_normalized}
	\frac{\langle \emptyset | \ssp \mathfrak{Q}^{\mathrm{mASEP}_+} \ssp | \mathbf{V}_\eta \rangle}{
Z^{\mathrm{mASEP}_+}_{N_1,\ldots,N_n }}
\end{equation}
the corresponding normalized partition function of the queue vertex model on the cylinder with the right boundary $\eta$.
\end{definition}

\begin{remark}
	The weights
	\eqref{eq:ASEP_gauge}
	are the queue limits
	(as in \Cref{sub:mqueue_spec}) of
	the non-stochastic higher spin colored vertex weights
	defined in
	\cite[(2.2.2)]{borodin_wheeler2018coloured}.
\end{remark}

\begin{proposition}
	\label{prop:positivity_parameter_dependence}
	Fix the type counts $(N_1,\ldots,N_n)$ with $N_m\ge1$ for all $m$. Let the parameters satisfy
        \begin{equation}
		\label{eq:positivity_parameter_dependence}
		0 \le s_m^{(v)} < \frac{u}{v_m q^{1/2}}, \qquad s_m^{(v)} \frac{u}{v_m q^{1/2}} < 1,
		\qquad
		m = 1,\ldots,n.
	\end{equation}
	Then
	\begin{equation*}
		\mathop{\mathrm{Prob}}^{\mathrm{mASEP}}\nolimits_{N_1,\ldots,N_n }(\eta)
		=
		\frac{
		\langle \emptyset | \ssp \mathfrak{Q}^{\mathrm{mASEP}_+} \ssp | \mathbf{V}_\eta \rangle}{
		Z^{\mathrm{mASEP}_+}_{N_1,\ldots,N_n }},
	\end{equation*}
	is the mASEP stationary distribution,
	and
	$\langle \emptyset | \ssp \mathfrak{Q}^{\mathrm{mASEP}_+} \ssp | \mathbf{V}_\eta \rangle>0$ for all $\eta$.
\end{proposition}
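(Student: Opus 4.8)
The plan is to reduce both assertions to \Cref{prop:queue_mASEP_stationary} by analyzing the mASEP gauge transformation \eqref{eq:ASEP_gauge} globally. The crucial observation is that, although the factor $(-1/s_m^{(v)})^{\mathbf 1_{\ell\ge1}}$ is attached locally at each vertex, the product of these factors over any admissible configuration with right boundary $\eta$ depends only on the sector $(N_1,\dots,N_n)$, and not on $\eta$ or on the configuration itself. Fix $\eta$ with all $N_m\ge1$, so that $\mathbf V_\eta\in V^{\otimes N}_{\mathrm{full}}$ and the partition function is finite by \Cref{lemma:queue_vertex_well_defined}. In column $-m$ the accumulated gauge factor is $(-1/s_m^{(v)})^{E_m}$, where $E_m$ counts the rows $j$ whose right-outgoing horizontal edge at $(-m,j)$ is occupied. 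By \Cref{lemma:queue_spec_exists} (the indicator $\mathbf 1_{D_1=\dots=D_{m-1}=0}$) a color-$c$ path can occupy such an edge only when $m\le c$, while color-$c$ arrows enter solely from the infinite reservoir of column $-c$; since horizontal motion is strictly rightward (there is no horizontal wrapping) and exactly $N_c$ color-$c$ paths exit on the right, each of them crosses the vertical cut to the right of column $-m$ exactly once when $m\le c$ and never when $m>c$. In the spin-$\tfrac12$ horizontal reduction each edge carries at most one path, so $E_m=\sum_{c=m}^{n}N_c$ for every admissible configuration. Hence
\[
  \langle\emptyset|\,\mathfrak Q^{\mathrm{mASEP}_+}\,|\mathbf V_\eta\rangle
  \;=\; K_{N_1,\dots,N_n}\,\langle\emptyset|\,\mathfrak Q\,|\mathbf V_\eta\rangle,
  \qquad
  K_{N_1,\dots,N_n}:=\prod_{m=1}^{n}\bigl(-1/s_m^{(v)}\bigr)^{\sum_{c=m}^{n}N_c},
\]
an identity valid whenever all $s_m^{(v)}\neq0$.

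Next I would establish positivity. Under \eqref{eq:positivity_parameter_dependence} a direct inspection of the explicit spin-$\tfrac12$ weights of \Cref{fig:ASEP_weights}, with the factor $(-s)$ removed as in \eqref{eq:ASEP_gauge}, shows that each gauge weight $\WQ^{(-m),\mathrm{mASEP}_+}$ is nonnegative; the two inequalities $0\le s_m^{(v)}<u/(v_m q^{1/2})$ and $s_m^{(v)}u/(v_m q^{1/2})<1$ are exactly what turns every entry into a product of nonnegative factors (this is the real-parameter, gauge-transformed counterpart of the finite horizontal spin case of \Cref{prop:queue_positivity}). To upgrade nonnegativity of $\langle\emptyset|\,\mathfrak Q^{\mathrm{mASEP}_+}\,|\mathbf V_\eta\rangle$ to strict positivity for every $\eta$, I would exhibit, for each $\eta$ in the sector, one admissible configuration whose vertex weights are all strictly positive under \eqref{eq:positivity_parameter_dependence} --- for instance the configuration routing the paths of each color $c$ straight across at their exit rows from the reservoir in column $-c$, and checking that the finitely many vertex types it uses are positive. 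This argument invokes only nonnegativity of the weights together with a single strictly positive term, so it applies on the entire region \eqref{eq:positivity_parameter_dependence}, including the boundary $s_m^{(v)}=0$, and yields the second claim $\langle\emptyset|\,\mathfrak Q^{\mathrm{mASEP}_+}\,|\mathbf V_\eta\rangle>0$.

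With strict positivity in hand the stationarity follows for $s_m^{(v)}>0$. Indeed, the displayed gauge identity then gives $\langle\emptyset|\,\mathfrak Q\,|\mathbf V_\eta\rangle = K_{N_1,\dots,N_n}^{-1}\langle\emptyset|\,\mathfrak Q^{\mathrm{mASEP}_+}\,|\mathbf V_\eta\rangle\neq0$ for all $\eta$, so this choice of parameters lies in the generic locus of \Cref{prop:queue_mASEP_stationary}. Since $K_{N_1,\dots,N_n}$ is a constant on the sector, normalizing $\langle\emptyset|\,\mathfrak Q^{\mathrm{mASEP}_+}\,|\mathbf V_\eta\rangle$ and $\langle\emptyset|\,\mathfrak Q\,|\mathbf V_\eta\rangle$ over the sector yields the same measure, and by \Cref{prop:queue_mASEP_stationary} this common measure is $\mathop{\mathrm{Prob}}^{\mathrm{mASEP}}\nolimits_{N_1,\dots,N_n}(\eta)$.

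The remaining case $s_m^{(v)}=0$, permitted by \eqref{eq:positivity_parameter_dependence}, is where the only genuine difficulty lies, since there $K_{N_1,\dots,N_n}$ is singular and the gauge identity degenerates. Here I would pass to the limit: each $\langle\emptyset|\,\mathfrak Q^{\mathrm{mASEP}_+}\,|\mathbf V_\eta\rangle$ depends continuously (indeed analytically on the sector) on the parameters down to $s_m^{(v)}=0$, the normalized ratios are identically equal to $\mathop{\mathrm{Prob}}^{\mathrm{mASEP}}\nolimits_{N_1,\dots,N_n}(\eta)$ for all $s_m^{(v)}>0$, and the total sector partition function remains strictly positive at $s_m^{(v)}=0$ by the configuration exhibited above. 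Letting $s_m^{(v)}\to0$ therefore preserves the normalized identity and the strict positivity, completing the proof. The main obstacle is thus not the Yang--Baxter input, which is already packaged in \Cref{prop:queue_mASEP_stationary}, but rather controlling the gauge normalization and establishing strict positivity uniformly over the closed parameter region \eqref{eq:positivity_parameter_dependence}.
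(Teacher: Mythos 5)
Your proposal is correct and, for the gauge-factor and weight-nonnegativity steps, runs parallel to the paper's proof: the paper likewise absorbs the sector-dependent factor $\prod_{m=1}^{n}(-1/s_m^{(v)})^{N_m+\ldots+N_n}$ into the normalizing constant (your crossing-count derivation of $E_m=\sum_{c\ge m}N_c$ is a nice explicit justification of what the paper asserts without proof) and then reads off nonnegativity of the gauged weights directly from \Cref{fig:ASEP_weights}. Where you genuinely diverge is the strict-positivity step when some $s_m^{(v)}=0$. The paper's argument is soft: it only verifies that $\langle \emptyset | \ssp \mathfrak{Q}^{\mathrm{mASEP}_+} \ssp | \mathbf{V}_\eta \rangle\ne0$ for a \emph{single} $\eta$, then notes that $\sum_\eta \langle \emptyset | \ssp \mathfrak{Q}^{\mathrm{mASEP}_+} \ssp | \mathbf{V}_\eta \rangle\langle \mathbf{V}_\eta|$ is a nonzero eigenvalue-$1$ row eigenvector of the irreducible semigroup $\mathfrak{P}_{\mathrm{mASEP}}(t)$, so by Perron--Frobenius it is proportional to the stationary vector and hence has \emph{all} components positive --- positivity for every $\eta$ comes for free from irreducibility. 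You instead exhibit, uniformly in $\eta$, the ``straight-across'' configuration (color-$c$ emission from the reservoir in column $-c$ at the rows where $\eta$ carries color $c$, pass-through elsewhere, no winding, so no stored higher colors and no $q$-factors) and argue all its finitely many vertex types are positive on the closed region \eqref{eq:positivity_parameter_dependence}; this is more elementary and avoids Perron--Frobenius, but it shifts the burden onto two things you assert rather than prove: (i) the positivity check of those vertex weights at $s_m^{(v)}=0$ (true --- these are exactly the pass-through, fresh-emission, and empty vertices, which stay positive since Martin's original algorithm realizes pass-through with probability one --- but it is the analogue of the verification the paper also omits), and (ii) the continuity of the infinite winding sums in $s_m^{(v)}$ down to $0$, needed to transport stationarity from $s_m^{(v)}>0$ by taking limits of the normalized ratios; this does hold by the uniform geometric bound of \Cref{lemma:queue_vertex_well_defined} (the decay factors $q^{d\sum_i \mathbf{M}(-m)_i}$ are independent of $s_m^{(v)}$ near $0$), and it is the same kind of justification the paper invokes in proving \Cref{prop:queue_mASEP_stationary}, but it deserves a sentence. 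Net: your route buys a constructive, $\eta$-uniform positivity proof at the cost of an explicit weight check plus a limiting argument, while the paper's buys both positivity and the $s_m^{(v)}=0$ case in one stroke from irreducibility.
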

Note that conditions
\eqref{eq:positivity_parameter_dependence}
are written for~$u/v_m$ entering the vertex weights as
$\WQ_{q^{-1/2},s_m^{(v)},u/v_m}^{(-m),\mathrm{mASEP}_+}$,
while for displaying the weights in \Cref{fig:ASEP_weights}
it was convenient to
mulitply the spectral parameter by $q^{1/2}$.

\begin{proof}[Proof of \Cref{prop:positivity_parameter_dependence}]
	Assume first that $s_m^{(v)}>0$ for all $m$.
	Replacing the queue
	vertex weights
	with their gauge transformed versions
	\eqref{eq:ASEP_gauge} multiplies the partition
	function
	$\langle \emptyset | \ssp \mathfrak{Q} \ssp | \mathbf{V}_\eta \rangle$
	by
	\begin{equation*}
		\prod\nolimits_{m=1}^{n}(-1/s_m^{(v)})^{N_m+\ldots+N_n },
	\end{equation*}
	which depends only on the sector, but not on the configuration
	$\eta$. Therefore, the gauge transformation may be incorporated into the
	normalizing constant $Z^{\mathrm{mASEP}_+}_{N_1,\ldots,N_n }$.
	One readily sees that
	under conditions \eqref{eq:positivity_parameter_dependence}
	and when $s_m^{(v)}>0$ for all $m$,
	all vertex weights \eqref{eq:ASEP_gauge} are positive
	(see \Cref{fig:ASEP_weights}).
	This completes the proof in the case
	when all the $s_m^{(v)}$'s are positive.

	Setting
	some (or all) of the $s_m^{(v)}$'s
	to zero in the weights
	\eqref{eq:ASEP_gauge}
	is allowed, and leads to a well-defined partition function
	$\langle \emptyset | \ssp \mathfrak{Q}^{\mathrm{mASEP}_+} \ssp | \mathbf{V}_\eta \rangle$.
	In this partition function, some of the vertex weights
	in \Cref{fig:ASEP_weights} vanish.
	To show that $\langle \emptyset | \ssp \mathfrak{Q}^{\mathrm{mASEP}_+} \ssp | \mathbf{V}_\eta \rangle$ is still positive and not merely nonnegative for all $\eta$,
	first notice that there exists $\eta$ for which
	$\langle \emptyset | \ssp \mathfrak{Q}^{\mathrm{mASEP}_+} \ssp | \mathbf{V}_\eta \rangle\ne 0$
	(this verification is straightforward, and we omit it).
	Next, observe that
	\begin{equation}
		\label{eq:ASEP_gauge_positive_Perron_Frobenius}
		\sum\nolimits_{\eta} \langle \emptyset | \ssp \mathfrak{Q}^{\mathrm{mASEP}_+} \ssp | \mathbf{V}_\eta \rangle \langle \mathbf{V}_\eta |
	\end{equation}
	is a nonzero
	left (row) eigenvector with eigenvalue~$1$
	of the
	mASEP semigroup
	$\mathfrak{P}_{\mathrm{mASEP}}(t)$ corresponding to an
	irreducible continuous-time Markov process on a finite state space.
	Therefore,
	\eqref{eq:ASEP_gauge_positive_Perron_Frobenius}
	is proportional to the
	Perron--Frobenius eigenvector of
	$\mathfrak{P}_{\mathrm{mASEP}}(t)$, which has all components positive.
	This completes the proof.
\end{proof}

\subsection{Matching to multiline queues}
\label{sub:mqueues_martin_matching}

\subsubsection{Original multiline queues}
\label{subsub:mqueue_matching}

First, we connect the queue vertex model
$\mathfrak{Q}^{\mathrm{mASEP}_+}$
with special parameters with \emph{multiline queues}
introduced by Martin \cite{martin2020stationary}.
He showed that the output of the latter produces the
stationary distribution of mASEP on the ring.

Setting $u=q^{1/2}$ and
$s_m^{(v)}=0$, $v_m=1$ for all $m=1,\ldots,N $ in the vertex weights
\eqref{eq:ASEP_gauge} leads to the weights given in
\Cref{fig:mqueue_vertex_weights} which we
denote by
\begin{equation}
	\label{eq:WQ_mqueue}
	\WQ^{(-m),\mathrm{mq}}
	\coloneqq
	\WQ_{q^{-1/2},0,q^{1/2}}^{(-m),\mathrm{mASEP}_+}.
\end{equation}
By \Cref{prop:positivity_parameter_dependence}, these weights
produce positive partition functions of the queue vertex
model on the cylinder.

Let us connect
the queue vertex model $\mathfrak{Q}^{\mathrm{mASEP}_+}$
with these particular parameters
to multiline queue
diagrams. These diagrams were defined in \cite[Sections~1.1
and~3.6]{martin2020stationary}, and the vertex model
interpretation follows from formula (3.9) in
\cite{martin2020stationary}. For the reader's convenience,
in the rest of \Cref{subsub:mqueue_matching} we reproduce
the main definitions and the matching of queues to our
vertex models.

\begin{figure}[htbp]
	\centering
	\includegraphics[width=.75\textwidth]{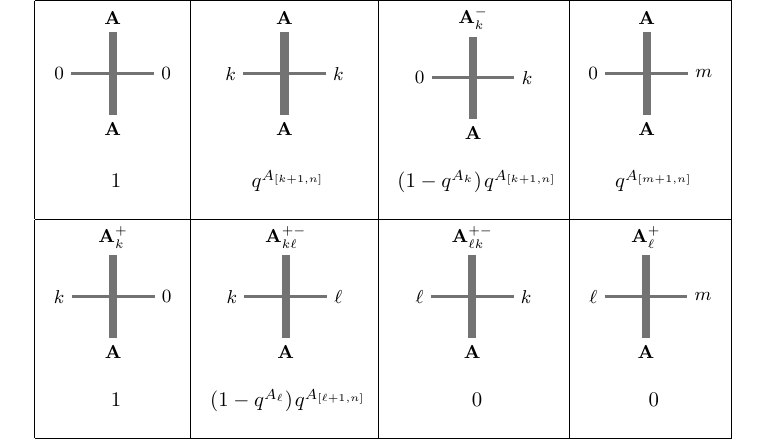}
	\caption{Weights
	$\WQ^{(-m),\mathrm{mq}}$
	\eqref{eq:WQ_mqueue}.
	In \Cref{prop:martin_matching} we match them to
	probabilities under
	multiline queues of \cite{martin2020stationary}.
	Here
	$m<k<\ell\le n$, and recall (\Cref{rmk:types_vs_colors}) that
	our ordering of colors is reversed
	compared to particle types in \cite{martin2020stationary}.}
	\label{fig:mqueue_vertex_weights}
\end{figure}

Let us recast \cite[Algorithm~2]{martin2020stationary}
(called \emph{the Martin algorithm} in what follows)
in the language
of one column of a vertex model on the cylinder.
Fix $m=1,\ldots,n $,
type counts $N_m,N_{m+1},\ldots,N_n $,
and
assume that
we have an arbitrary fixed configuration
$\eta$ of paths of colors strictly larger than $m$
entering the column $(-m)$ from the left.
The configuration $\eta$ has $N_i$ paths of color
$i$, $i>m$.
The Martin algorithm (for color $m$)
samples
a random new configuration $\eta'$ of paths
exiting the column $(-m)$ to the right.
The configuration $\eta'$ has $N_i$ paths of color $i \ge m$; it is constructed sequentially, through the following three steps.
\begin{enumerate}[\bf1.\/]
	\item
		Start with the empty configuration $\eta'=\{0,\ldots,0 \}$
		($N$ zeros).
		In addition, sample a uniformly random subset $\mathcal{J}\subset
		\{1,\ldots,N \}$ of sites on the cylinder
		of cardinality $N_m+\ldots+N_n $.
		Next, we randomly
		update $\eta'$
		such that in the end
		$\eta_j'>0$ if and only if $j \in \mathcal{J}$.
	\item
		For each color $i=n,n-1,\ldots,m+1 $ (in this order),
		let $a_1^i<\ldots<a^i_{N_i}$ be the locations of paths of color $i$ in $\eta$.
		\begin{enumerate}
	\item[\bf2a.\/] For $j=1,\ldots,N_i$,
			if $\eta'_{a_j^i}=0$ and~$a_j^i \in \mathcal{J}$, set $\eta'_{a_j^i}=i$
			(if a path of color $i$ can come straight through, it does so).
		\item[\bf2b.\/] Otherwise, the $j$-th path of color $i$ starts from $\eta_{a_j^i}$ and randomly chooses an exit site among yet unoccupied sites in $\mathcal{J}$ as follows. Let $a_j^i<p_1<p_2<\ldots<p_l$ (here,~$p < p'$ means that as we read upwards starting from~$p$, possibly               wrapping around in the vertical direction, we observe~$p'$ before getting back to~$a_{j}^i$), where $(p_1,\ldots,p_l )$ are all sites in $\mathcal{J}$ for which at this point we have $\eta'_{p_t}=0$, $t=1,\ldots,l $. Then, set $\eta'_{p_t}=i$ with probability proportional to $q^{t-1}$.
		\item[\bf2b'.\/]
			Equivalently, instead of step \textbf{2b}, one can think that the color $i$ path starting from site $a_j^i$ goes up the cylinder and sequentially with probability $1-q$ picks an unoccupied site from $\mathcal{J}$ to exit, or with probability $q$ skips this site (\emph{accepts} or \emph{declines the service}, in queueing terminology). The path continues the motion up the cylinder until its exit, and can go around the cylinder an arbitrary number of times. If the path exits at $p_t$, then after normalization, this produces the same probability proportional to $q^{t-1}$.
		\end{enumerate}
		\item Once all paths of all colors strictly larger than $m$ are processed, we have $N_m$ unoccupied sites in $\mathcal{J}$ left. We set $\eta'_j=m$ for all these remaining sites.
\end{enumerate}

To obtain the mASEP stationary distribution
$\mathop{\mathrm{Prob}}^{\mathrm{mASEP}}\nolimits_{N_1,\ldots,N_n }$
(\Cref{def:mASEP_stationary_distribution}),
one needs to apply the Martin algorithm
for color $n$ with input $\eta(0)=\emptyset$,
and get a random output $\eta(1)$.
Then apply the algorithm for color $n-1$
with input $\eta(1)$, get an output $\eta(2)$, and so on.
The final output $\eta(n)$ of the algorithm for color $1$
is the random configuration distributed according to the mASEP
stationary distribution.

\begin{figure}[htpb]
	\centering
	\includegraphics[width=.4\textwidth]{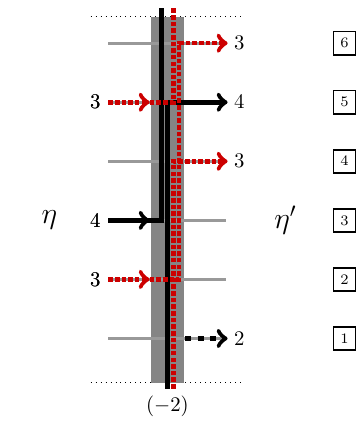}
	\caption{The Martin algorithm from \cite{martin2020stationary}
		with $N=6$, $n=4$, in the column $(-m)$, where $m=2$.
		The set $\mathcal{J}$ is $\left\{ 1,4,5,6 \right\}$.
		Given this $\mathcal{J}$, the conditional probability of the configuration in the figure (according to the description with \textbf{2b'}) is
		proportional to~$q^5$ (the color~$4$ path skips five possibilities)
		times~$q^2$ (the top color~$3$ path skips two possibilities)
		times~$1$ (the bottom color~$3$ path selects the first available possibility).
		Colors of the arrows are indicated by the numbers
		and by different dashing to assist the printed version.
		The boxed numbers on the right indicate the ring sites.}
	\label{fig:martin_algorithm}
\end{figure}

The following statement matches the output of the Martin algorithm to vertex models and essentially coincides with \cite[Theorem~3.4]{martin2020stationary}. For convenience, we reproduce it here.

\begin{proposition}
	\label{prop:martin_matching}
	In each sector determined by the fixed type counts $(N_1,\ldots,N_n)$,
	the output~$\eta$ of the Martin algorithm
	has the same distribution as the output of our
	queue vertex model on the cylinder with the
	weights $\WQ^{(-m),\mathrm{mq}}$ given in
	\Cref{fig:mqueue_vertex_weights}.
\end{proposition}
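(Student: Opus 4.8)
The plan is to prove the statement one column at a time, matching each column of the queue vertex model to one step of the Martin algorithm, and then to invoke the common sequential structure of the two constructions. First I would observe that, by the column factorization of the partition function (\Cref{rmk:product_trace_formula}), reading the cylinder from the empty left boundary $\langle\emptyset|$ rightward, the full kernel $\langle\emptyset|\,\mathfrak{Q}^{\mathrm{mASEP}_+}\,|\mathbf V_\eta\rangle$ decomposes as the composition of the single-column transfer operations for $m=n,n-1,\dots,1$, where the horizontal output of column $(-m)$ is fed as the horizontal input of column $(-m{+}1)$. This is precisely the order in which the Martin algorithm processes the colors (color $n$ first on input $\emptyset$, then $n-1$, and so on). Hence it suffices to prove the single-column claim: the Markov kernel $\eta\mapsto\eta'$ defined by column $(-m)$ with weights $\WQ^{(-m),\mathrm{mq}}$ coincides with the color-$m$ step of the Martin algorithm. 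One must first check that the column defines a genuine stochastic kernel: convergence of the sum over windings is \Cref{lemma:queue_vertex_well_defined}, while nonnegativity and normalization follow from \Cref{prop:positivity_parameter_dependence} specialized to $u=q^{1/2}$, $s_m^{(v)}=0$, $v_m=1$.

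Next I would read a single column from bottom to top (up the cylinder). The vertical edges carry the paths ``in transit'' of colors $\ge m$, with an infinite reservoir of color $m$ (since $A_m=C_m=+\infty$), while the horizontal edges carry the single incoming color $k>m$ (from $\eta$) and the single outgoing color $\ell\ge m$ (into $\eta'$); by \Cref{lemma:queue_spec_exists} no color $<m$ ever leaves the column, consistent with the algorithm producing only colors $\ge m$. I would then identify the set of exit sites $\mathcal J=\{j:\ell_j\ge m\}$, of cardinality $N_m+\dots+N_n$, with the random subset $\mathcal J$ of the algorithm, and the sites carrying outgoing color $m$ with the leftover sites filled in step \textbf{3}. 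Vertices where an incoming path passes straight to the right correspond to step \textbf{2a}, while a path that turns upward, travels along the vertical edges, and exits later corresponds to steps \textbf{2b}/\textbf{2b'}.

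The core of the argument is then the explicit weight comparison, specializing \eqref{eq:queue_spec_fully_fused} to the parameters of \eqref{eq:WQ_mqueue} (equivalently, reading off \Cref{fig:mqueue_vertex_weights}). The key point is that each time an in-transit color-$i$ path passes a vacant selected site without exiting, the weights contribute a factor of $q$, so that a path skipping $t-1$ available sites before exiting accumulates weight proportional to $q^{t-1}$; summing the geometric series $1+q+\cdots+q^{l-1}$ over the $l$ available sites supplies the normalization and reproduces exactly the ``accept/decline service'' probabilities of step \textbf{2b'}. The specialization $s_m^{(v)}=0$ makes the color-$m$ reservoir contribute the leftover placement with no extra weighting, which, combined with the geometric routing of the higher colors, yields the uniform distribution over $\mathcal J$ (the factor $1/\binom{N}{N_m+\dots+N_n}$). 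The main obstacle I anticipate is the bookkeeping of the cross-color $q$-powers $q^{\sum_{m\le i<j}D_i(C_j-P_j)}$ appearing in \eqref{eq:queue_spec_fully_fused}: one has to verify that, summed over the windings, they reproduce the highest-priority-first color ordering of the geometric choices in the Martin algorithm and introduce no spurious interactions between paths of different colors. Once this bookkeeping is carried out, the single-column kernels agree term by term, and the reduction in the first paragraph completes the proof.
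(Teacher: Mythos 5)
Your architecture — reduce to a single column via the composition structure of \Cref{rmk:product_trace_formula}, then match the column $(-m)$ kernel to the color-$m$ step of the Martin algorithm — is the natural frame, and your geometric-series computation is correct for one in-transit path: summing over windings turns the weight $q^{t-1}$ into the normalized choice probability of step \textbf{2b}. But your concluding claim that ``the single-column kernels agree term by term'' is precisely the step that fails, and the paper itself flags this in \Cref{rmk:queue_martin_matching_subtle}. The elementary outcomes of the Martin algorithm are \emph{particle-level} histories: the particles of a given color $i>m$ are processed sequentially in position order, each making its own accept/decline choices, which implicitly fixes a bijection between color-$i$ inputs and outputs together with a winding number for each individual particle. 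The vertex-model configurations are \emph{occupation-level}: same-colored paths are indistinguishable, and the column only records the wrapping counts $\mathbf{M}(-m)$. These two families of ``terms'' are not in a weight-preserving bijection (in the example of \Cref{fig:martin_algorithm}, the $2!$ input--output matchings for color $3$ on the queueing side collapse onto the same vertex configurations), so no term-by-term identification exists. What is needed is a lumping/intertwining argument showing that the particle-level process projects onto the occupation-level (queue-length) process with exactly the product weights — this is the ``intricate argument with compatible queue-length processes'' of \cite[Section~4.2]{martin2020stationary}, and the ``bookkeeping of cross-color $q$-powers'' you defer does not substitute for it: the interleaving of several same-colored trajectories is where the whole difficulty lives.

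For comparison, the paper's proof does not attempt your self-contained derivation at all: it matches the weight table $\WQ^{(-m),\mathrm{mq}}$ of \Cref{fig:mqueue_vertex_weights} against the weights $w_i(Q\,|\,A,S)$ of \cite[(3.9)]{martin2020stationary} and then invokes \cite[Theorem~3.4]{martin2020stationary}, which already packages the particle-to-occupation reduction; the only new content is the finite weight comparison. A secondary inaccuracy in your first paragraph: \Cref{prop:positivity_parameter_dependence} gives positivity and finiteness of the whole-cylinder partition function, not that a single column (which includes the trace over windings) acts as a normalized stochastic kernel on the arrival stream $\eta$; the correct statement in Martin's framework is that the queue-length chains in tandem are run in stationarity, and the normalization is a sector-dependent constant that must be controlled, not assumed. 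To repair your proof you would either have to reproduce Martin's compatibility argument within the vertex-model language or, as the paper does, cite his Theorem~3.4 and reduce the proposition to the weight match.
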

\begin{proof}[Idea of proof]
	This follows by matching the vertex weights in \Cref{fig:mqueue_vertex_weights} to the weights $w_i(Q | A, S)$ given in \cite[(3.9)]{martin2020stationary}. The translation from the queueing language to vertex models is straightforward and we omit it.
\end{proof}

In \cite{martin2020stationary}, the stationarity of the output $\eta$ of the Martin algorithm
under mASEP follows from the Matrix Product Ansatz. The connection between the algorithm and the Matrix Product Ansatz is essentially equivalent to \Cref{prop:martin_matching}. We link queue vertex models to Matrix Product Ansatz in \Cref{sub:YBE_MPA} below.

\begin{remark}
	\label{rmk:queue_martin_matching_subtle}
	While the Martin algorithm and the queue vertex model produce the same output~$\eta$ (in distribution, in each sector), it remains unclear whether one can define appropriate ``states'' of the queueing system under the Martin algorithm such that these states are in a weight-preserving bijection (possibly up to a common proportionality constant) with states of the queue vertex model. Indeed, tracking each particle's choices as in step \textbf{2b'} of the Martin algorithm with the input as in \Cref{fig:martin_algorithm} involves
	the following information:
	\begin{enumerate}[$\bullet$]
		\item Track how many times the path of color $4$ wraps around the cylinder.
		\item Pick a bijection between the color $3$ inputs and outputs (there are $2!$ choices in \Cref{fig:martin_algorithm}).
		\item For each of the two bijections, track how many times each of the two color $3$ paths wraps around the cylinder.
	\end{enumerate}
	Under the queue vertex model, we do not choose a bijection,
	and the wrapping arrows of colors $m+1,\ldots,n$ are encoded by the tuple~$\mathbf{M}(-m)$. Here
	$\mathbf{M}(-m)_k$ is the total number of arrows of color $k$ that wrap around the ring (that is, go between the sites $N$ and $1$).

	To show that summing over all data in the queueing system produces the desired distribution of the output $\eta$, one seems to require the intricate argument with compatible queue-length processes; see \cite[Section~4.2]{martin2020stationary} for details.
 \end{remark}

\subsubsection{Alternative multiline queues and an interpolation}
\label{subsub:mqueue_matching_alternative}

Let us consider a different specialization of the
queue vertex model
$\mathfrak{Q}^{\mathrm{mASEP}_+}$
introduced in
\Cref{sub:vertex_models_mASEP}.
Namely, set
$u=1$ and
$s = s_m^{(v)}=q$, $v_m=1$ for all $m=1,\ldots,N $
in~\eqref{eq:ASEP_gauge},
and clear the common denominators
$1-q$. The
resulting weights
$
\WQ^{(-m),\mathrm{mq\,alt}}
\coloneqq
(1-q)\ssp
\WQ_{q^{-1/2},q,q^{1/2}}^{(-m),\mathrm{mASEP}_+}
$
are given in
\Cref{fig:alt_mqueue_weights}.
By \Cref{prop:positivity_parameter_dependence}, they
lead to positive partition functions on the cylinder.

\begin{figure}[htpb]
	\centering
	\includegraphics[width=.75\textwidth]{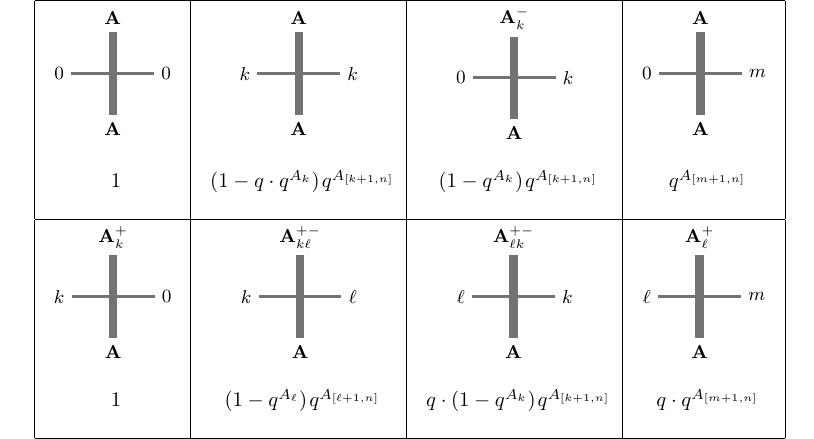}
	\caption{Weights $\WQ^{(-m),\mathrm{mq\,alt}}$
	for the alternative multiline queue model.}
	\label{fig:alt_mqueue_weights}
\end{figure}


Just as by \Cref{prop:martin_matching},
the weights $\WQ^{(-m),\mathrm{mq}}$ from \Cref{fig:mqueue_vertex_weights}
produce the same output $\eta$ (in distribution) as the Martin algorithm,
the new weights $\WQ^{(-m),\mathrm{mq\,alt}}$
should be related to the
\emph{alternative multiline
queue model}
introduced in \cite[Section~7]{martin2020stationary}.

By definition, the alternative
algorithm for color $m$
consists of the same steps as the algorithm described in
\Cref{subsub:mqueue_matching} above, except step~\textbf{2a}.
Instead, if $\eta'_{a^i_j}=0$
and~$a^i_j\in \mathcal{J}$, then the entering
color $i$ path ($i>m$) from site $a^i_j$
still has probability $q$
to go up the cylinder and not exit immediately through
this site.

\begin{proposition}
	\label{prop:Martin_conjecture}
	In each sector determined by the fixed type counts $(N_1,\ldots,N_n)$, the output~$\eta$ of the alternative multiline queue algorithm has the same distribution as the output of the queue vertex model on the cylinder with the weights $\WQ^{(-m),\mathrm{mq\,alt}}$ given in \Cref{fig:alt_mqueue_weights}.
\end{proposition}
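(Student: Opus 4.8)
The plan is to prove the statement along the same lines as \Cref{prop:martin_matching}: reduce the comparison to a single column $(-m)$ of the queue vertex model and match, arrow configuration by arrow configuration, the weights $\WQ^{(-m),\mathrm{mq\,alt}}$ of \Cref{fig:alt_mqueue_weights} to the one-step transition probabilities of the alternative algorithm for color $m$. First I would recast the alternative algorithm as a single-column operation exactly as in \Cref{subsub:mqueue_matching}: paths of colors $>m$ enter from the left encoded by $\eta$, the color-$m$ service slots are the sites of the uniformly random set $\mathcal{J}$, and the output $\eta'$ records the exit site of each path of color $\ge m$. The sole modification is step~\textbf{2a}: a color-$i$ path ($i>m$) whose entry site lies in $\mathcal{J}$ and is still unoccupied no longer exits there deterministically, but declines with probability $q$ and otherwise proceeds upward precisely as in step~\textbf{2b'}. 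Thus in the alternative algorithm every color-$i$ path applies the uniform memoryless rule ``accept with probability $1-q$, decline with probability $q$'' at every available slot of $\mathcal{J}$ it meets, including its own starting site.

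Next I would compare the resulting single-column probabilities with the weights obtained from \eqref{eq:queue_spec_fully_fused} at $u=1$, $s=s_m^{(v)}=q$, $v_m=1$ (after clearing the common factor $1-q$), displayed in \Cref{fig:alt_mqueue_weights}. These differ from the weights $\WQ^{(-m),\mathrm{mq}}$ of \Cref{fig:mqueue_vertex_weights} only at the vertices describing a straight-through passage, and the discrepancy is exactly the extra $q$ versus $1-q$ factor created by dropping the deterministic step~\textbf{2a}. Matching each vertex type is then a routine bookkeeping computation identical in spirit to the one underlying \cite[(3.9)]{martin2020stationary}, which I would carry out case by case.

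The main obstacle is to show that the sum over the winding data $\mathbf{M}(-m)$ in the vertex model reproduces the output probabilities of the alternative algorithm, despite the absence of a naive weight-preserving bijection between the two families of internal states, precisely the subtlety flagged in \Cref{rmk:queue_martin_matching_subtle}. The vertex model sums over the windings $\mathbf{M}(-m)$, whereas the queueing description involves a choice of bijection between the inputs and outputs of each color together with wrapping counts. The geometric factors $q^{\sum_{m\le i<j\le n} D_i (C_j-P_j)}$ in \eqref{eq:queue_spec_fully_fused} are what render the winding sum convergent and reproduce the $q^{t-1}$ weighting of step~\textbf{2b'}. I would resolve this exactly as in the standard case, invoking the compatible queue-length process argument of \cite[Section~4.2]{martin2020stationary}; since the alternative rule is memoryless at every slot (including the entry site), the associated queue-length process only simplifies, so the argument transfers without essential change.

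Finally, combining the matching with \Cref{prop:positivity_parameter_dependence} --- which already identifies the output of the queue vertex model with weights $\WQ^{(-m),\mathrm{mq\,alt}}$ as the mASEP stationary distribution --- shows that the alternative multiline queue algorithm of \cite[Section~7]{martin2020stationary} is stationary for the mASEP, now via the Yang--Baxter equation rather than the Matrix Product Ansatz. I note also the shortcut that, were one to already know the alternative algorithm to be stationary, the proposition would follow immediately from \Cref{prop:positivity_parameter_dependence} and uniqueness of the stationary measure in each sector; the direct matching above has the advantage of being self-contained and of yielding that stationarity as a consequence.
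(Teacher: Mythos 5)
Your proposal is correct and follows essentially the same route as the paper: the paper's own argument is likewise only an ``idea of proof'' that reinterprets the randomness of the alternative algorithm via geometrically distributed ``marks'' (choices of how many services are rejected before acceptance, made beforehand) and then invokes the compatible queue-length process argument of \cite[Section~4.2]{martin2020stationary}, with the vertical paths of the queue vertex model playing the role of the queue-length process --- exactly the resolution you propose for the winding/bijection subtlety of \Cref{rmk:queue_martin_matching_subtle}. Your additional observations (the case-by-case weight matching at the modified straight-through vertices, and the hypothetical shortcut via uniqueness of the sector-wise stationary measure) are consistent with, and slightly more explicit than, what the paper records.
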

\begin{proof}[Idea of proof]
	This is established very
	similarly to
	\cite[Section~4.2]{martin2020stationary},
	by using the notion of compatibility of a queue-length process
	with ``marks''. Here marks encode geometrically distributed choices of how many times
	a service is rejected at a site before being accepted.
	These choices may be
	done beforehand. The queue-length
	process corresponds to the vertical paths in the queue vertex model.
	If one interprets the randomness in the alternative model in terms of
	marks, then one can see the vertex weights $\WQ^{(-m),\mathrm{mq\,alt}}$.
	We omit the technical details of this proof.
\end{proof}
\begin{corollary}[{Conjecture from \cite[Section~7]{martin2020stationary}}]
	The output $\eta$
	of the alternative multiline queue algorithm
	is stationary under the mASEP dynamics on the ring.
\end{corollary}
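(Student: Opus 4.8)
The plan is to obtain the corollary as an immediate consequence of \Cref{prop:Martin_conjecture} and \Cref{prop:positivity_parameter_dependence}, since the genuine content lies in those two results. First I would invoke \Cref{prop:Martin_conjecture}: in each sector determined by fixed type counts $(N_1,\ldots,N_n)$, the output $\eta$ of the alternative multiline queue algorithm is equidistributed with the output of the queue vertex model on the cylinder carrying the weights $\WQ^{(-m),\mathrm{mq\,alt}}$ of \Cref{fig:alt_mqueue_weights}. It therefore remains only to identify the law of the latter with the mASEP stationary distribution.

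Next I would unwind the definition $\WQ^{(-m),\mathrm{mq\,alt}} = (1-q)\,\WQ_{q^{-1/2},q,q^{1/2}}^{(-m),\mathrm{mASEP}_+}$. The lattice $\{-n,\ldots,-1\}\times\mathbb{Z}/N\mathbb{Z}$ has exactly $nN$ vertices, and every admissible path configuration (and every winding tuple $\mathbf{M}$) contributes one vertex weight at each of these sites; hence multiplying each weight by the nonzero constant $(1-q)$ rescales every partition function $\langle \emptyset | \mathfrak{Q} | \mathbf{V}_\eta \rangle$ by the common factor $(1-q)^{nN}$. This $\eta$-independent factor cancels against the normalization, so the output of the $\WQ^{(-m),\mathrm{mq\,alt}}$ vertex model is governed by the normalized partition function
\[
	\frac{\langle \emptyset | \mathfrak{Q}^{\mathrm{mASEP}_+} | \mathbf{V}_\eta \rangle}{Z^{\mathrm{mASEP}_+}_{N_1,\ldots,N_n}}
\]
with the parameter choice $u=1$, $s_m^{(v)}=q$, $v_m=1$ for all $m$.

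The only computation to perform is checking that this parameter choice meets the positivity hypotheses \eqref{eq:positivity_parameter_dependence}. Using the convention (recorded after \Cref{prop:positivity_parameter_dependence}) in which those inequalities are written for $u/v_m$, and recalling $q\in[0,1)$, I would verify $0\le q < q^{-1/2} = u/(v_m q^{1/2})$ (since $q<1\le q^{-1/2}$) and $s_m^{(v)}\,u/(v_m q^{1/2}) = q\cdot q^{-1/2} = q^{1/2} < 1$. With both inequalities confirmed, \Cref{prop:positivity_parameter_dependence} identifies the displayed normalized partition function with $\mathop{\mathrm{Prob}}^{\mathrm{mASEP}}\nolimits_{N_1,\ldots,N_n}(\eta)$, and combining this with \Cref{prop:Martin_conjecture} shows that $\eta$ follows the mASEP stationary law in each sector.

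I do not expect a real obstacle at the level of the corollary: the difficulty has been packaged into \Cref{prop:Martin_conjecture} (the probabilistic coupling of the alternative queueing dynamics to the vertex model through compatibility of queue-length processes with marks) and into \Cref{prop:positivity_parameter_dependence} (the Yang--Baxter-based stationarity of the normalized queue partition function). The single subtle point is to be sure that neither the $(1-q)$ rescaling nor the $q^{1/2}$ shift between the figure's spectral-parameter convention and that of \eqref{eq:positivity_parameter_dependence} alters the normalized measure, and both are harmless precisely because they are configuration-independent.
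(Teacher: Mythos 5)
Your proposal is correct and takes essentially the same route as the paper, whose proof of this corollary likewise just combines \Cref{prop:Martin_conjecture} with the general stationarity and positivity results (\Cref{prop:queue_mASEP_stationary,prop:positivity_parameter_dependence}, resting on the Yang--Baxter relation of \Cref{thm:twisted_st}); you merely spell out the harmless $(1-q)^{nN}$ rescaling and the parameter check that the paper leaves implicit. One cosmetic remark: since the weight actually used is $\WQ^{(-m),\mathrm{mASEP}_+}_{q^{-1/2},\ssp q,\ssp q^{1/2}}$, the quantity $u/v_m$ in the convention of \Cref{prop:positivity_parameter_dependence} equals $q^{1/2}$ rather than $1$, but the inequalities \eqref{eq:positivity_parameter_dependence} hold under either reading of the spectral-parameter convention, so your conclusion is unaffected.
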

\begin{proof}
	We already know that
	the output of the queue vertex model with the weights $\WQ^{(-m),\mathrm{mq\,alt}}$ is  stationary under the mASEP dynamics on the ring, thanks to our general \Cref{prop:queue_mASEP_stationary,prop:positivity_parameter_dependence} which ultimately rely on the Yang--Baxter equation for the twisted cylinder (\Cref{thm:twisted_st}).
	Applying \Cref{prop:Martin_conjecture}, we arrive at the desired statement.
\end{proof}

\begin{figure}[htpb]
	\centering
	\includegraphics[width=.75\textwidth]{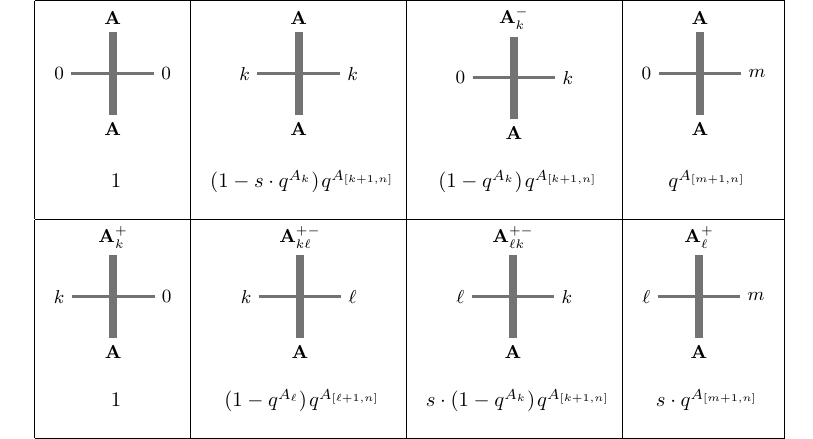}
	\caption{Weights $(1-s)\ssp\WQ_{q^{-1/2},s,q^{1/2}}^{(-m),\mathrm{mASEP}_+}$
	interpolating between the vertex weights related to the original and the alternative multiline queues of \cite{martin2020stationary}.}
	\label{fig:interpolating_mqueue_weights}
\end{figure}

\medskip

If we do not specialize the parameter $s$ to $0$ or $q$, we obtain a family of queue vertex weights depending on $q$ and $s$ (see \Cref{fig:interpolating_mqueue_weights}). The output of the vertex model with these weights produces the mASEP stationary distribution (this again follows from \Cref{prop:queue_mASEP_stationary,prop:positivity_parameter_dependence}). The $(q,s)$-dependent weights should be related to a new multiline queue model that interpolates between the original and the alternative multiline queues:

\begin{definition}[Interpolating multiline queues]
	\label{def:interpolating_mqueue}
	Let $s\in[0,1)$, and modify the Martin algorithm (for a given color $m$) by changing step \textbf{2a} as follows. If a color $i$ path ($i>m$) enters at $a^i_j$ and $\eta'_{a^i_j}=0$ (that is, a service is immediately available), then the path exits (accepts the service) with probability~$1-s$. With probability~$s$, the path turns up the cylinder and skips every successive available exit (service) with probability~$q$
	(as prescribed by step \textbf{2b'}).
	All other parts of the algorithm remain the same.

	For $q=0$ (when mASEP becomes the multi-species TASEP), the interpolating model produces a multiline queue model with \emph{random service assignment}. Note that for $q=0$, both the original and the alternative multiline queues become the same and are deterministic. For two colors, this deterministic model was
	constructed in \cite{angel2006stationary} to describe the stationary distribution of the two-color TASEP. It was generalized to $n$ colors in \cite{Ferrari_2007}.
\end{definition}

Similarly to the argument in \cite[Section~4.2]{martin2020stationary}, it should be possible to identify the output of the interpolating multiline queues with that of the $(q,s)$-dependent queue vertex model on the cylinder. In \Cref{sub:YBE_MPA} below, we outline a possible connection of the latter with the Matrix Product Ansatz.

\subsection{Connection to Matrix Product Ansatz}
\label{sub:YBE_MPA}

Prior to the multiline queue realization of the
mASEP stationary distribution
$\mathop{\mathrm{Prob}}^{\mathrm{mASEP}}\nolimits_{N_1,\ldots,N_n }$
in \cite{martin2020stationary},
Prolhac--Evans--Mallick
\cite{Prolhac_2009}
showed that
$\mathop{\mathrm{Prob}}^{\mathrm{mASEP}}\nolimits_{N_1,\ldots,N_n }$
can be expressed in a \emph{matrix product form}.
For processes on the ring, this expression has the same
format as our general
trace formula \eqref{eq:product_trace_formula},
see \eqref{eq:product_trace_formula_ASEP} below,
and includes matrices $\mathscr{X}^{\mathrm{MPA}}_{m}$, $m=0,1,\ldots,n $,
indexed by available colors.
Matrix product ansatz representations
for stationary probabilities of stochastic
interacting particle systems
date back to
\cite{Derrida1993solution}.
In the single-species case, the
stationary distribution on the ring is uniform,
and so the Matrix Product Ansatz becomes
nontrivial only for ASEP on an open interval,
in which particles can hop in and out at the endpoints.
This case was considered in
\cite{Derrida1993solution}.
For the two- and three-species ASEP on the ring, the
matrix product approach was employed, respectively, in
\cite{Derrida1993shock}
and
\cite{MallickMallickRajewsky1999}.
A full multi-species solution on the ring appeared about ten years
later in
\cite{Prolhac_2009}.
See also
\cite{BlytheEvansSolverGuide2007}
for an earlier survey of Matrix Product Ansatz
applications to particle systems.

In the multi-species case, the matrices
$\mathscr{X}^{\mathrm{MPA}}_{m}$ entering the product ansatz
are constructed by recursive tensoring from a few
single-species building blocks $A,D$, and $E$ satisfying
quadratic relations
\begin{equation}
	\label{eq:ADE_relations}
	AD-qDA=
	EA-qAE=(1-q)A,
	\qquad
	ED-qDE=(1-q)(E+D).
\end{equation}
The matrices $A,D,E$, as well as the $\mathscr{X}_m^{\mathrm{MPA}}$'s,
are infinite-dimensional, and
their products, as well
as the trace of $A$ times a finite product of $D$ and $E$
matrices must be well-defined.
The tensoring construction of
$\mathscr{X}^{\mathrm{MPA}}_{m}$ resembles the process of
horizontally stacking the vertices in columns
$-n,-n+1,\ldots,-1 $ as in \Cref{fig:product_trace_formula}.
We refer to \cite[(24)--(33)]{Prolhac_2009} or
\cite[Section~2]{martin2020stationary} for details on the tensoring
construction, and omit them here.

Once the matrix product probability distribution is defined
through the trace in an appropriate space as
\begin{equation}
	\label{eq:product_trace_formula_ASEP}
	\mathop{\mathrm{Prob}}^{\mathrm{mASEP}}\nolimits_{N_1,\ldots,N_n }
	(\eta)=
	\frac{\mathop{\mathrm{Trace}}
	\left( \mathscr{X}^{\mathrm{MPA}}_{\eta_1}\cdots \mathscr{X}^{\mathrm{MPA}}_{\eta_N}
	\right)}{Z^{\mathrm{MPA}}_{N_1,\ldots,N_n }}
	,
	\qquad \eta=(\eta_1,\ldots,\eta_N ),
\end{equation}
one must independently check that it is stationary
under the mASEP dynamics.
A key property in the argument is the existence of
the so-called \emph{hat matrices} $\widehat{\mathscr{X}}_m^{\mathrm{MPA}}$,
$m=0,1,\ldots,N$,
satisfying quadratic relations
\cite[(68)]{Prolhac_2009}:
\begin{equation}
	\label{eq:hat_matrices}
	\sum_{i,i'=0}^{n}
	\mathscr{X}_i^{\mathrm{MPA}}
	\mathscr{X}_{i'}^{\mathrm{MPA}}
	\big( \mathscr{M}_{loc} \big)_{i i', j j'}
	=
	\mathscr{X}_j^{\mathrm{MPA}}
	\widehat{\mathscr{X}}_{j'}^{\mathrm{MPA}}
	-
	\widehat{\mathscr{X}}_{j}^{\mathrm{MPA}}
	\mathscr{X}_{j'}^{\mathrm{MPA}}
	,
\end{equation}
where $\mathscr{M}_{loc}$ are the local infinitesimal rates
of the mASEP, see
\eqref{eq:Mloc}. These hat matrices are also constructed in \cite{Prolhac_2009} by recursive tensoring procedures.
Note that our notation differs from
\cite{Prolhac_2009}
by a transposition
(e.g. comparing~\eqref{eq:hat_matrices} with formula (66) in \cite{Prolhac_2009}). See also \cite{arita2012generalized} for a more general family of matrices
$\mathscr{X}_j^{\mathrm{MPA}}$,
$\widehat{\mathscr{X}}_{j}^{\mathrm{MPA}}$
satisfying \eqref{eq:hat_matrices}.

\medskip

Let us explain how the construction of the hat matrices,
and identity \eqref{eq:hat_matrices} for vertex model partition functions
directly
follow from the Yang--Baxter equation.
We take a concrete realization of the
Matrix Product Ansatz matrices
$\mathscr{X}_m^{\mathrm{MPA}}$
using our vertex models.

Namely, let $\mathscr{X}_j^{\mathrm{MPA}}(u)$, $j=0,1,\ldots,n$,
be operators in $V_{-n}\otimes\ldots\otimes V_{-1}$,
where $V_{-m}$ has basis
$|\mathbf{M}(-m)\rangle$,
$\mathbf{M}(-m)\in \mathbb{Z}_{\ge0}^{n}$,
$m=1,\ldots,n $. By definition, the matrix elements of
$\mathscr{X}^{\mathrm{MPA}}_j$ are partition functions
(with weights $\WQ_{q^{-1/2},s,u}$)
on the one-row lattice $\{1\}\times \{-n,\ldots,-1 \}$
with boundary conditions $0$ and $j$ on the left and right,
respectively.
See
\Cref{fig:product_trace_formula}
for an illustration.

Let $\epsilon>0$ be small.
By the Yang--Baxter equation (\Cref{prop:queue_YBE}),
the matrices $\mathscr{X}^{\mathrm{MPA}}_i(u)$
and
$\mathscr{X}^{\mathrm{MPA}}_{i'}(u(1-\epsilon))$
satisfy the following identity involving the
elements of $R_{1-\epsilon}$ \eqref{eq:R_matrix_nonfused}:
\begin{equation}
	\label{eq:hat_eqn_from_R}
	\sum_{i,i'=0}^{n}
	\mathscr{X}^{\mathrm{MPA}}_i(u)
	\ssp
	\mathscr{X}^{\mathrm{MPA}}_{i'}(u(1-\epsilon))
	\cdot
	R_{1-\epsilon}(i,i';j',j)=
	\mathscr{X}^{\mathrm{MPA}}_{j}(u(1-\epsilon))
	\ssp
	\mathscr{X}^{\mathrm{MPA}}_{j'}(u).
\end{equation}
As in the proof of
\Cref{prop:mASEP_from_R},
let us differentiate
\eqref{eq:hat_eqn_from_R} with respect to $\epsilon$
at $\epsilon=0$.
Denote
\begin{equation*}
	\widehat{\mathscr{X}}^{\mathrm{MPA}}_j(u)\coloneqq
	(1-q)\ssp u\ssp \frac{\partial}{\partial u}\ssp \mathscr{X}^{\mathrm{MPA}}_j(u),\qquad j=0,1,\ldots,n .
\end{equation*}
\begin{proposition}
	\label{prop:hat_identity}
	The matrices
	$\mathscr{X}_i^{\mathrm{MPA}}(u)$,
	$\widehat{\mathscr{X}}_j^{\mathrm{MPA}}(u)$
	defined above with the help of the queue vertex weights
	$\WQ_{q^{-1/2},s,u}$
	satisfy the hat matrix identity \eqref{eq:hat_matrices}.
\end{proposition}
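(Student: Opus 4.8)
The plan is to obtain \eqref{eq:hat_matrices} by differentiating the Yang--Baxter consequence \eqref{eq:hat_eqn_from_R} in $\epsilon$ at $\epsilon = 0$, in exact parallel with the passage from $\check{\mathscr{R}}_{1-\epsilon}$ to $\mathscr{M}_{loc}$ carried out in the proof of \Cref{prop:mASEP_from_R}. All matrix elements of the operators $\mathscr{X}_j^{\mathrm{MPA}}(u)$ are single-row partition functions and hence rational in $u$, so I would first note that the $\epsilon$-derivative may be taken entrywise and therefore commutes with the formal operator products in $V_{-n}\otimes \cdots \otimes V_{-1}$; this disposes of any convergence concern before the computation begins.

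Next I would record the two elementary facts that drive the differentiation. On one hand, writing $z = 1-\epsilon$ and applying the chain rule gives $\frac{\partial}{\partial\epsilon}\big|_{\epsilon=0}\mathscr{X}_{j}^{\mathrm{MPA}}(u(1-\epsilon)) = -u\ssp\frac{\partial}{\partial u}\mathscr{X}_{j}^{\mathrm{MPA}}(u) = -\tfrac{1}{1-q}\ssp\widehat{\mathscr{X}}_j^{\mathrm{MPA}}(u)$, straight from the definition of the hat matrices. On the other hand, comparing the convention \eqref{eq:swapping_R_check} with the matrix elements of $\mathscr{M}_{loc}$ and invoking \eqref{eq:Mloc_as_derivative}, I would record $\frac{\partial}{\partial\epsilon}\big|_{\epsilon=0} R_{1-\epsilon}(i,i';j',j) = \tfrac{1}{1-q}\ssp(\mathscr{M}_{loc})_{ii',jj'}$.

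The main computation is then the product rule applied to the left-hand side of \eqref{eq:hat_eqn_from_R}, which carries two $\epsilon$-dependent factors (the shifted argument in $\mathscr{X}_{i'}^{\mathrm{MPA}}$ and the weight $R_{1-\epsilon}$). In the term where $R_{1-\epsilon}$ is differentiated, the undifferentiated product $\mathscr{X}_i^{\mathrm{MPA}}\ssp\mathscr{X}_{i'}^{\mathrm{MPA}}$ pairs with $\tfrac{1}{1-q}(\mathscr{M}_{loc})_{ii',jj'}$ and, after summation over $i,i'$, yields exactly $\tfrac{1}{1-q}\sum_{i,i'}\mathscr{X}_i^{\mathrm{MPA}}\ssp\mathscr{X}_{i'}^{\mathrm{MPA}}(\mathscr{M}_{loc})_{ii',jj'}$. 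In the term where $\mathscr{X}_{i'}^{\mathrm{MPA}}(u(1-\epsilon))$ is differentiated, the surviving factor $R_{1-\epsilon}$ is evaluated at $\epsilon = 0$; since $R_1(i,i';j',j) = \mathbf{1}_{(j',j)=(i',i)}$ is the permutation, this collapses the double sum and produces $-\tfrac{1}{1-q}\mathscr{X}_j^{\mathrm{MPA}}\ssp\widehat{\mathscr{X}}_{j'}^{\mathrm{MPA}}$. Differentiating the right-hand side of \eqref{eq:hat_eqn_from_R} (only its first factor depends on $\epsilon$) gives $-\tfrac{1}{1-q}\widehat{\mathscr{X}}_j^{\mathrm{MPA}}\ssp\mathscr{X}_{j'}^{\mathrm{MPA}}$. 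Clearing the common prefactor $\tfrac{1}{1-q}$ and moving the $\mathscr{M}_{loc}$ term to the other side reproduces \eqref{eq:hat_matrices} verbatim.

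The only delicate point --- and the one I would treat as the main obstacle --- is the bookkeeping of index conventions: one must check that the arguments $(i,i';j',j)$ of $R_{1-\epsilon}$ in \eqref{eq:hat_eqn_from_R} match, under \eqref{eq:swapping_R_check}, precisely the entries $(\mathscr{M}_{loc})_{ii',jj'}$ that appear in \eqref{eq:hat_matrices}, and that the two occurrences of the chain-rule factor $-u$ combine with the $(1-q)$ in the definition of $\widehat{\mathscr{X}}^{\mathrm{MPA}}$ so that every $\pm\tfrac{1}{1-q}$ prefactor cancels consistently. Once these conventions are aligned the argument is routine, and no input beyond \eqref{eq:hat_eqn_from_R}, \eqref{eq:Mloc_as_derivative}, and the definition of $\widehat{\mathscr{X}}^{\mathrm{MPA}}$ is required.
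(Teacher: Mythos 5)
Your proposal is correct and follows essentially the same route as the paper's proof: differentiate \eqref{eq:hat_eqn_from_R} in $\epsilon$ at $\epsilon=0$, use $R_1(i,i';j',j)=\mathbf{1}_{i=j}\mathbf{1}_{i'=j'}$ to collapse the term where $\mathscr{X}_{i'}^{\mathrm{MPA}}$ is differentiated, and use \eqref{eq:Mloc_as_derivative} to identify the derivative of $R_{1-\epsilon}$ with $(1-q)^{-1}\mathscr{M}_{loc}$, after which the $(1-q)^{-1}$ prefactors cancel to give \eqref{eq:hat_matrices}. Your sign and index bookkeeping matches the paper's conventions exactly, so there is nothing to add.
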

Our realization of the
matrices $\mathscr{X}_j^{\mathrm{MPA}}$,
$\widehat{\mathscr{X}}_{j}^{\mathrm{MPA}}$
satisfying the hat relation \eqref{eq:hat_matrices}
as vertex model partition functions highlights the Yang--Baxter
structure of the Matrix Product Ansatz for the multispecies ASEP
which was previously less transparent (see, however, \cite{cantini2015matrix}
and \cite{crampe2014integrable} for earlier connections between the two
structures).
\begin{proof}[Proof of \Cref{prop:hat_identity}]
	The $\epsilon$-derivative at $\epsilon=0$ of
	the right-hand side of
	\eqref{eq:hat_eqn_from_R} is
	equal to $$- (1-q)^{-1}\widehat{\mathscr{X}}_j^{\mathrm{MPA}}(u)
	\mathscr{X}_{j'}^{\mathrm{MPA}}(u).$$
	In the left-hand side, differentiating $\mathscr{X}_{i'}^{\mathrm{MPA}}$
	and noticing that
	$R_{1}(i,i';j',j)=\mathbf{1}_{i=j}\mathbf{1}_{i'=j'}$,
	we obtain $- (1-q)^{-1} \mathscr{X}^{\mathrm{MPA}}_j(u)
	\widehat{\mathscr{X}}^{\mathrm{MPA}}_{j'}(u)$.
	This yields the second summand in the right-hand
	side of \eqref{eq:hat_matrices}.
	Finally, differentiating $R_{1-\epsilon}$ and using
	\eqref{eq:Mloc_as_derivative},
	we recover the local infinitesimal
	rates of mASEP, also multiplied by $(1-q)^{-1}$.
	This completes the proof.
\end{proof}

\begin{figure}
    \centering
		\includegraphics[scale=.8]{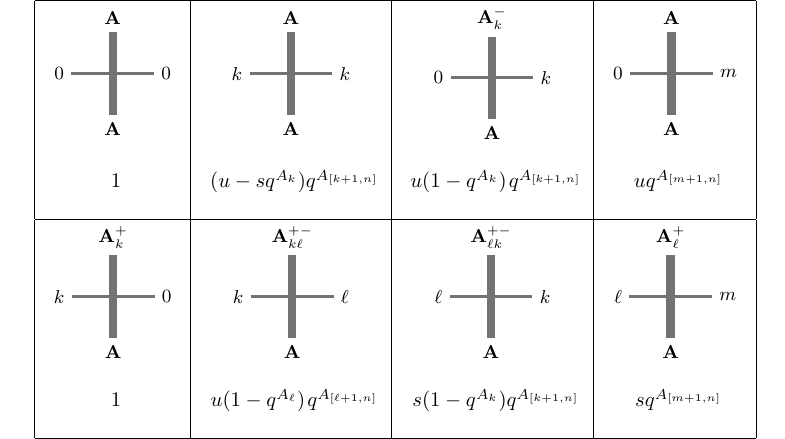}
    \caption{Table of weights $\WQ_{q^{-1/2},s,uq^{1/2}}^{(-m),\mathrm{mASEP}_+}$ \eqref{eq:ASEP_gauge} with common denominators $(1-su)$ cleared, which are used in the construction of the matrices $A, D, E$
	satisfying \eqref{eq:ADE_relations}.}
    \label{fig:Lm_weights_nodenom}
\end{figure}

Let us also
note that one can ``recognize'' the
matrix product building blocks $A,D,E$ in the vertex
weights of the queue vertex model on the cylinder.
This allows to insert two extra parameters into the matrices.
Namely,
in \cite{martin2020stationary}, two examples of $A,D,E$
are presented, for the original and for the alternative
multiline queue models
discussed in \Cref{sub:mqueues_martin_matching} above.
Both of these examples satisfy the relations
\eqref{eq:ADE_relations}.
In these two examples, the matrices $A, D , E$ are equal to the $n = 2$ species versions of the corresponding $\mathscr{X}^{\mathrm{MPA}}$ matrices. Informed by this, and now keeping track of the parameters $u$ and $s$ in the vertex weights,
let us define
\begin{equation}
	\label{eq:ADE_12}
	\begin{split}
		A&
		\coloneqq
		\left(
		\begin{array}{ccccc}
		1      & s      & 0        & \ldots \\
		0      & q      & q s      & \ldots \\
		0      & 0      & q^2      & \ldots \\
		\vdots & \vdots & \vdots &  \ddots \\
		\end{array}
		\right),
		\qquad
		D\coloneqq
		u^{-1}
		\left(
		\begin{array}{ccccc}
		u-s & 0 & 0           & \ldots \\
		1-q & u- sq & 0       & \ldots \\
		0 & 1-q^2 & u-sq^2    & \ldots \\
		\vdots & \vdots & \vdots  & \ddots \\
		\end{array}
		\right),
		\\
		E&\coloneqq
		\left(
		\begin{array}{ccccc}
		1 & u & 0 &  \ldots \\
		0 & 1 & u &  \ldots \\
		0 & 0 & 1 &  \ldots \\
		\vdots & \vdots & \vdots & \ddots \\
		\end{array}
		\right).
	\end{split}
\end{equation}
These matrices can be obtained as one-row partition functions of the two-color queue-specialized vertex model as shown in Figure~\ref{fig:ADE}. The relevant vertex weights $\WQ_{q^{-1/2},s,uq^{1/2}}^{(-m),\mathrm{mASEP}_+}$ are shown in Figure~\ref{fig:Lm_weights_nodenom}.
We use the same parameter $u$ in both columns.

 \begin{figure}[h]
    \centering
		\includegraphics[scale=.9]{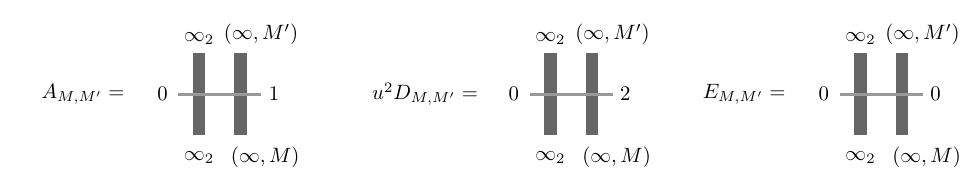}
    \caption{The $(M,M')$ martrix elements of the matrices $A, D, E$ are obtained from the queue-specialized vertex model. The symbol $\infty_2$ represents $(0, +\infty)$ (infinitely many color $2$ paths, and no color $1$ paths). Similarly, $(\infty, M)$ means a path configuration with $M$ color $2$ paths and $+\infty$ color $1$ paths.}
    \label{fig:ADE}
\end{figure}

One can directly check that the matrices \eqref{eq:ADE_12}
satisfy
\eqref{eq:ADE_relations} for all $u,s$. Alternatively, relations
\eqref{eq:ADE_relations}
can be obtained from the Yang--Baxter equation, satisfied by $A, u^2 D, E$ and the $R$ matrix, in the same way as described in the proof of Proposition \ref{prop:hat_identity}.
When $u=1$ and $s=0$, the matrices~\eqref{eq:ADE_12}
become the matrix product
building blocks of \cite{Prolhac_2009}, and lead to
the
original multiline queue model
\cite[(2.5)]{martin2020stationary}.
For $u=1$ and $s=q$, they
correspond to the alternative model from
\cite[Section~7]{martin2020stationary}.
However, for the alternative queue, it is not known whether the
recursive tensoring of the matrices $A,D,E$ (with $u=1$, $s=q$)
like in \cite{Prolhac_2009}
produces a matrix
product ansatz
for mASEP. We do not pursue this question here.

We remark that
matrices similar to
\eqref{eq:ADE_12}
have occurred in the product
ansatz for the open ASEP on a bounded interval
(for example, see \cite{Derrida1993solution}).
There, the extra parameters are tied to the
boundary rates.

\section{Colored stochastic $q$-Boson process from straight cylinder}
\label{sec:qBoson}

In this section we consider a
specialization of the straight cylinder
Markov transition operator leading to the
colored stochastic $q$-Boson process on the ring
\cite{Takeyama2015},
\cite[Section~12.4]{borodin_wheeler2018coloured}.
It is also called the
multi-species totally asymmetric zero range process
(mTAZRP)
in \cite{ayyer2022modified}.
The corresponding specialization
of the queue vertex model
will allow us to recover the stationary distribution of the $q$-Boson process.
In the case of at most one particle of each color,
we also match path configurations
in the vertex model representing this stationary distribution
to states of a multiline queue
considered in \cite[Section~8]{ayyer2022modified}.

\subsection{Colored $q$-Boson process on the ring}
\label{sub:qBoson_process}

Let us fix the size of the ring $N$ and the number of
colors $n$. Also let us fix
the type counts $(N_1,\ldots,N_n )$, where $N_i\ge1$
stands for the number of particles of color $i$
in the system.
The state space of the colored
stochastic $q$-Boson process consists of configurations
of particles at sites of the ring, where at each site
there can be an arbitrary number of particles.
The configurations are encoded by
\begin{equation*}
	\mathbf{V}=(\mathbf{V}(1),\ldots,\mathbf{V}(N) ),
	\qquad \mathbf{V}(j)\in \mathbb{Z}_{\ge0}^n.
\end{equation*}
Here $\mathbf{V}(j)_i$ denotes the number of particles of
color $i$ at site $j$, and
$N_i=\sum_{j=1}^{N}\mathbf{V}(j)_i$.

\begin{definition}
	\label{def:cqBoson}
	The \emph{stochastic colored $q$-Boson process}
	depends on parameters
	$q\in[0,1)$ and
	$u_1,\ldots,u_N>0 $, and evolves in continuous
	time as follows.
	A particle
	of color $i$ hops
	from site $k$ to site $k-1$
	(cyclically mod $N$)
	according to an independent exponential
	clock
	with rate
	\begin{equation*}
		u_k^{-1}(1-q^{\mathbf{V}(k)_i})\ssp q^{\mathbf{V}(k)_{[i+1,n]}}.
	\end{equation*}
	Here we used the usual notation
	$\mathbf{V}(k)_{[i+1,n]}=\sum_{r=i+1}^n\mathbf{V}(k)_r$.
	Denote by
	$\mathfrak{P}_{\mathrm{qBos}}(t)$,
	$t\in \mathbb{R}_{\ge0}$,
	the continuous-time Markov semigroup of this stochastic process.
\end{definition}
The colored $q$-Boson process evolution is of \emph{zero range} kind, that is,
the jump from site $k$ depends only on the
state of the system at site $k$.
In \cite{ayyer2022modified} it is referred to as the \emph{multi-species
totally asymmetric zero range process}, or \emph{mTAZRP}.

The $q$-Boson process
preserves the type counts
$(N_1,\ldots,N_n )$.
For a fixed vector of type counts, this continuous-time Markov chain
evolves on a finite state space and
is clearly irreducible. Thus, it has a
unique stationary distribution.
We denote it by
$\mathop{\mathrm{Prob}}^{\mathrm{qBos}}\nolimits_{N_1,\ldots,N_n}(\mathbf{V})$.

\medskip

Following
\cite[Section~12.4.3]{borodin_wheeler2018coloured},
we can identify
$\mathfrak{P}_{\mathrm{qBos}}(t)$
as a certain Poisson-type continuous-time
limit of the straight cylinder
formal Markov operator
$\mathfrak{S}(x,\mathbf{u};\mathbf{s}^{(h)})$
defined in \Cref{sub:straight}.
Recall that $\mathfrak{S}$
has $N+1$ sites on the ring.
However, in the degeneration
to the $q$-Boson process, the
distinguished
site
corresponding to spectral parameter $x$
(cf.~\Cref{fig:straight})
will be empty with probability $1$, and the dynamics can be restricted
to $N$ sites.

Fix small $\epsilon>0$, and set the horizontal spin parameters of $\mathfrak{S}$ to $s_j^{(h)}=\epsilon$. Also let $x=-1$. With this specialization, the matrix elements of the operators $\mathscr{L}_{s_j^{(h)},xu_j^{-1}}$ entering the definition of~$\mathfrak{S}$ (formula \eqref{eq:straight_cylinder_Markov_operator}, see also \Cref{fig:L_weights}) become as in \Cref{fig:qboson_specialization}. The operator $\mathfrak{S}$ is a formal Markov operator acting on states of the form~$(c, \mathbf{V})$, where~$\mathbf{V}\in \mathbb{Z}_{\ge0}^{n}$ is a state of the color $q$-Boson process, and $c \in\left\{ 0,1,\ldots,n  \right\}$ corresponds to the auxiliary line (i.e., the one with the spectral parameter $x$ and the spin parameter $q^{-1/2}$; see \Cref{fig:straight}).

\begin{figure}[htbp]
	\centering
	\includegraphics[width=.8\textwidth]{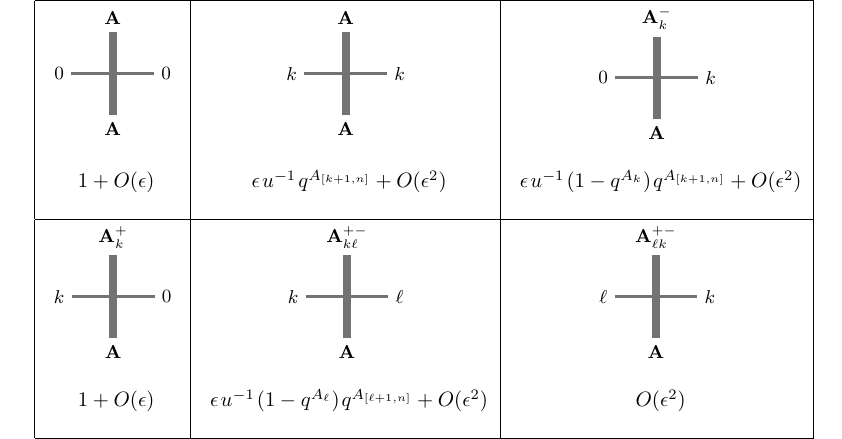}
	\caption{
	The weights
	$\mathscr{L}_{\epsilon,-u_j^{-1}}$ employed in the
	approximation of the colored $q$-Boson process,
	Taylor expanded to $O(\epsilon^2)$
	or $1+O(\epsilon)$ depending on whether they go to $0$
	or $1$ as $\epsilon\to0$.
	Here, as usual, $1\le k<\ell \le n$.}
 \label{fig:qboson_specialization}
\end{figure}

\begin{proposition}
	\label{prop:limit_to_cqboson}
	Fix $t\in \mathbb{R}_{\ge0}$.
	With the parameter specialization as above, have
	\begin{equation*}
		\lim
		_{\epsilon\to0}\,
		\langle 0, \mathbf{V}|
		\ssp
		\mathfrak{S}(-1,\mathbf{u};(\epsilon,
		\ldots,\epsilon ))^{\lfloor t/\epsilon \rfloor}
		\ssp
		|0, \mathbf{V}'\rangle
		=
		\langle \mathbf{V}|
		\ssp
		\mathfrak{P}_{\mathrm{qBos}}(t)
		\ssp
		|\mathbf{V}'\rangle,
		\qquad \mathbf{V},\mathbf{V}'\in \mathbb{Z}_{\ge0}^n.
	\end{equation*}
	Moreover, for any $c\ge 1$ we have
	$
	\lim
	_{\epsilon\to0}
	\langle 0, \mathbf{V}|
	\ssp
	\mathfrak{S}(-1,\mathbf{u};(\epsilon,
	\ldots,\epsilon ))^{\lfloor t/\epsilon \rfloor}
	\ssp
	|c, \mathbf{V}'\rangle
	=0$.
\end{proposition}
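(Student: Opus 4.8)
The plan is to treat this as a Poisson-type continuous-time limit of a discrete stochastic vertex model, in direct analogy with \Cref{prop:mASEP_from_R} and following the scheme of \cite[Section~12.4.3]{borodin_wheeler2018coloured}. The genuinely new features are the cylindrical (periodic) geometry and the auxiliary spin-$\tfrac12$ line, indexed by~$0$, whose occupation must be tracked. I would organize the whole argument around the state of that line, splitting $\mathbb{C}^{n+1}\otimes V^{\otimes N}$ via the projections $\Pi_0$ (auxiliary empty, $c=0$) and $\Pi_+$ (auxiliary carrying, $c\ge1$), and analyzing a single application of $\mathfrak{S}$ in this $2\times2$ block form before passing to the power $\lfloor t/\epsilon\rfloor$.

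First I would record the first-order expansion of the local operators $\mathscr{L}_{\epsilon,-u_j^{-1}}$ from \Cref{fig:qboson_specialization}: ``empty auxiliary passes through, site unchanged'' has weight $1-O(\epsilon)$; ``empty auxiliary picks up a color-$i$ particle at site $k$'' has weight $\epsilon\,u_k^{-1}(1-q^{\mathbf{V}(k)_i})\,q^{\mathbf{V}(k)_{[i+1,n]}}+O(\epsilon^2)$, which is exactly the $q$-Boson rate of \Cref{def:cqBoson}; while ``carrying auxiliary deposits'' has weight $1-O(\epsilon)$ and ``carrying auxiliary passes through'' is $O(\epsilon)$. Composing these around the ring yields the block structure
\begin{equation*}
  \Pi_0\mathfrak{S}\Pi_0=\mathrm{Id}+\epsilon\,\mathfrak{G}_0+O(\epsilon^2),\qquad
  \Pi_0\mathfrak{S}\Pi_+=\epsilon\,\mathfrak{B}+O(\epsilon^2),\qquad
  \Pi_+\mathfrak{S}\Pi_0=\mathfrak{C}+O(\epsilon),\qquad
  \Pi_+\mathfrak{S}\Pi_+=O(\epsilon),
\end{equation*}
where $\mathfrak{G}_0$ collects all $q$-Boson hops $k\to k-1$ whose pick-up and deposit occur within a single sweep (every hop except the one crossing the cut of the cylinder), and the seam hop is split between a pick-up at the last visited site ($\mathfrak{B}$) and a deposit at the first visited site ($\mathfrak{C}$). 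Since the $L$-weights conserve arrows, $\mathfrak{S}$ preserves the total color counts (auxiliary included), so everything happens in a finite-dimensional sector. The second claim is then immediate: with $p_M=\mathbb{P}(c_M\ge1\mid c_0=0)$, conditioning on the last step gives $p_M\le \mathbb{P}(0\to+)+p_{M-1}\,\mathbb{P}(+\to+)\le C\epsilon(1+p_{M-1})$, so $p_M\le 2C\epsilon$ for all $M$ by induction (as $p\le1$); each element $\langle 0,\mathbf{V}|\mathfrak{S}^{\lfloor t/\epsilon\rfloor}|c,\mathbf{V}'\rangle$ with $c\ge1$ is bounded by $p_{\lfloor t/\epsilon\rfloor}=O(\epsilon)\to0$.

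For the first (main) claim I would eliminate the fast auxiliary-carrying excursions by stochastic complementation: the chain watched only at the times $c=0$ has one-step kernel $\tilde{\mathfrak{S}}=\Pi_0\mathfrak{S}\Pi_0+(\Pi_0\mathfrak{S}\Pi_+)(\mathrm{Id}-\Pi_+\mathfrak{S}\Pi_+)^{-1}(\Pi_+\mathfrak{S}\Pi_0)$. Using $(\mathrm{Id}-\Pi_+\mathfrak{S}\Pi_+)^{-1}=\mathrm{Id}+O(\epsilon)$, this equals $\mathrm{Id}+\epsilon\,(\mathfrak{G}_0+\mathfrak{B}\mathfrak{C})+O(\epsilon^2)$, where $\mathfrak{B}\mathfrak{C}$ reconstitutes precisely the seam hop at the correct $q$-Boson rate (the deposit map $\mathfrak{C}$ has row-sums tending to~$1$, so no rate is lost), giving $\mathfrak{G}_0+\mathfrak{B}\mathfrak{C}=\mathfrak{G}_{\mathrm{qBos}}$, the generator of \Cref{def:cqBoson} with $\mathfrak{P}_{\mathrm{qBos}}(t)=e^{t\mathfrak{G}_{\mathrm{qBos}}}$. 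It remains to compare $\Pi_0\mathfrak{S}^{\lfloor t/\epsilon\rfloor}\Pi_0$ with $\tilde{\mathfrak{S}}^{\,M'}$: the number $M'$ of $c=0$ steps differs from $\lfloor t/\epsilon\rfloor$ by the number of carrying steps, which is $O(\epsilon\cdot t/\epsilon)=O(t)$, and each such extra step contributes only $\mathrm{Id}+O(\epsilon)$, hence is negligible. Finite-dimensionality then gives $(\mathrm{Id}+\epsilon\,\mathfrak{G}_{\mathrm{qBos}}+O(\epsilon^2))^{\lfloor t/\epsilon\rfloor}\to e^{t\mathfrak{G}_{\mathrm{qBos}}}=\mathfrak{P}_{\mathrm{qBos}}(t)$. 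The hard part will be making this singular (fast--slow) reduction rigorous: because the block $\Pi_+\mathfrak{S}\Pi_0$ is of order~$1$, the operator $\mathfrak{S}$ is not globally of the form $\mathrm{Id}+O(\epsilon)$, so the naive matrix-exponential limit does not apply directly and one must genuinely average out the brief excursions of the auxiliary line across the cut and control the step-count discrepancy uniformly in $\epsilon$.
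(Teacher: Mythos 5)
Your proposal is correct and follows essentially the same route as the paper: the paper's proof of \Cref{prop:limit_to_cqboson} consists precisely of the observations you formalize --- the $O(\epsilon)$ entries in \Cref{fig:qboson_specialization} are the $q$-Boson rates, the auxiliary line becomes occupied only with probability $O(\epsilon)$ and discharges instantaneously, and pass-through vertices of type $(\mathbf{A},k;\mathbf{A},k)$ disappear in the limit --- with your block decomposition, Schur complement $\tilde{\mathfrak{S}}$, and excursion/step-count control supplying the rigor that the paper leaves at the level of the standard six-vertex-to-ASEP scheme (cf.\ \Cref{prop:mASEP_from_R}); your explicit reconstitution of the seam hop via $\mathfrak{B}\mathfrak{C}$ is a detail the paper does not even spell out. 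One caveat: at this specialization ($s_j^{(h)}=\epsilon$, $x=-1$) the operator $\mathfrak{S}$ has \emph{negative} matrix elements, as the paper notes immediately after the proposition, so your bounds phrased as probabilities ($p_M$, ``conditioning on the last step'') are not literally licensed. This is cosmetic within your own framework: arrow conservation confines everything to a finite-dimensional sector, so one replaces $p_M$ by operator norms of the blocks $\Pi_0\ssp\mathfrak{S}^M\Pi_+$, for which the same $O(\epsilon)$-entry estimates and the exact (signed) sum-to-one identity yield the identical recursion, while the Neumann series defining $\tilde{\mathfrak{S}}$ and the excursion decomposition of $\Pi_0\ssp\mathfrak{S}^M\Pi_0$ are purely algebraic identities valid regardless of signs.
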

Not all matrix elements of $\mathfrak{S}$ are nonnegative before the $\epsilon\to0$ limit. This is not a problem because the limiting semigroup $\mathfrak{P}_{\mathrm{qBos}}(t)$ is a nonnegative Markov semigroup, and the stationarity result (which we prove in \Cref{prop:qBos_stationary} below) is a purely algebraic statement.
\begin{proof}[Proof of \Cref{prop:limit_to_cqboson}]
	Both statements follow from the expansions
	in \Cref{fig:qboson_specialization},
	after the identification of the vertices in the cylinder
	(in \Cref{fig:straight})
	with the stochastic $q$-Boson transitions via
	$$
	\begin{tikzpicture}[baseline=(current bounding box.center),scale=0.6]
		\draw[white!45!black,line width=1.5pt,->] (-5.2,1) --++ (.4,-2);
		\draw[white!45!black,line width=4pt,->] (-6,0) -- (-4,0);
		\node[left] at (-6,0) {\small $\mathbf{A}$};
		\node[right] at (-4,0) {\small $\mathbf{C}$};
		\node[above] at (-5.2,1) {\small $k$};
		\node[below] at (-4.8,-1) {\small $\ell$};
		\node[left] at (-2,0) {\small $=$};
		\draw[white!45!black,line width=1.5pt,->] (-1,0) -- (1,0);
		\draw[white!45!black,line width=4pt,->] (0,-1) -- (0,1);
		\node[left] at (-1,0) {\small $k$};
		\node[right] at (1,0) {\small $\ell$};
		\node[below] at (0,-1) {\small $\mathbf{A}$};
		\node[above] at (0,1) {\small $\mathbf{C}$};
		\node[above] at (2.2,0) { \phantom{.} };
	 \end{tikzpicture},
	$$
	where in the right-hand side the time is continuous and
	increases in the upward direction.

	We see that the auxiliary line may become occupied with probability
	$O(\epsilon)$, and then instantaneously becomes free again with
	probability $1+O(\epsilon)$.
	This means that the vertices of the type $(\mathbf{A},k;\mathbf{A},k)$
	are not present in the limit.
	All other probabilities of order $O(\epsilon)$ in
	\Cref{fig:qboson_specialization}
	give rise to the corresponding colored $q$-Boson transitions,
	which leads to the first claim.

	To get the second claim, observe that
	with probability going to $1$ in the limit as $\epsilon\to0$,
	the auxiliary line is not occupied.
	This completes the proof.
\end{proof}

\subsection{Vertex model for the $q$-Boson stationary distribution}
\label{sub:qBoson_stationary}

The convergence of \Cref{prop:limit_to_cqboson} together
with the general stationarity result
(\Cref{thm:straight_st}) allows us to express the stationary
distribution
$\mathop{\mathrm{Prob}}^{\mathrm{qBos}}\nolimits_{N_1,\ldots,N_n}(\mathbf{V})$
of the colored $q$-Boson process as a vertex model partition
function.

To get a queue vertex model on the cylinder
with nonnegative vertex weights, we
take a certain limit in the
vertical parameters $\mathbf{v},\mathbf{s}^{(v)}$.
As a first step, let us consider the following
degeneration of the queue vertex weights
$\WQ_{s_1,s_2,u}^{(-m)}$ \eqref{eq:queue_spec_fully_fused}:

\begin{lemma}
	\label{lemma:qboson_weights_degeneration}
	We have
	\begin{equation}
		\label{eq:q_boson_weights_degeneration_vertex_s_dependent}
		\begin{split}
			&
			( u; q)_{\infty}^{-1}\cdot
			\lim_{s_1\to 0}
			\WQ_{s_1,s,us_1/s}^{(-m)}(\mathbf{A},\mathbf{B};\mathbf{C},\mathbf{D})
			=
			\mathbf{1}_{\mathbf{A}+\mathbf{B}=\mathbf{C}+\mathbf{D}}
			\cdot
			\mathbf{1}_{D_1=\ldots=D_{m-1}=0}
			\\&\hspace{20pt}\times
			\sum_{\mathbf{P}}
			(s^2/u ; q)_{|\mathbf{P}|}
			(s^2)^{|\mathbf{B}|-|\mathbf{P}|}
			u^{|\mathbf{D}|-|\mathbf{B}|+|\mathbf{P}|}
			\ssp
			q^{\sum_{1\le i<j\le n}\left(B_i-P_i\right) P_j}
			\prod_{i=1}^n
			\frac{(q;q)_{B_i}}{(q;q)_{P_i}(q;q)_{B_i-P_i}}
			\\&\hspace{25pt}\times
			\ssp q^{\sum_{m\le i<j\le n} D_i (C_j-P_j)}
			\ssp
			\frac{
			1}
			{(q;q)_{D_m}}
			\prod_{i=m+1}^n
			\frac{(q;q)_{C_i-P_i+D_i}}{(q;q)_{C_i-P_i}(q;q)_{D_i}},
		\end{split}
	\end{equation}
	where
	the sum is over $\mathbf{P}\in \mathbb{Z}_{\ge0}^n$ with $0\le P_i\le \min (B_i,C_i)$ for all $i$,
	and
	\begin{equation}
		\label{eq:q_boson_weights_degeneration_vertex}
		\begin{split}
			&
			( u; q)_{\infty}^{-1}\cdot
			\lim_{s\to0}\Bigl(\ssp\lim_{s_1\to 0}
				\WQ_{s_1,s,us_1/s}^{(-m)}(\mathbf{A},\mathbf{B};\mathbf{C},\mathbf{D})
			\Bigr)
			=
			\mathbf{1}_{\mathbf{A}+\mathbf{B}=\mathbf{C}+\mathbf{D}}
			\cdot
			\mathbf{1}_{D_1=\ldots=D_{m-1}=0}\cdot
			\prod_{i=1}^n\mathbf{1}_{B_i\le C_i}
			\\&\hspace{130pt}\times
			u^{|\mathbf{D}|}\ssp
			q^{\sum_{m\le i<j\le n} D_i (C_j-B_j)}
			\ssp
			\frac{
			1}
			{(q;q)_{D_m}}
			\prod_{i=m+1}^n
			\frac{(q;q)_{C_i-B_i+D_i}}{(q;q)_{C_i-B_i}(q;q)_{D_i}}.
		\end{split}
	\end{equation}
\end{lemma}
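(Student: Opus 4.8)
The plan is to derive both identities directly from the explicit formula \eqref{eq:queue_spec_fully_fused} for $\WQ_{s_1,s_2,u}^{(-m)}$ by substituting the queue specialization and taking the two limits in sequence; no structural input is needed beyond careful bookkeeping of the arguments of the $q$-Pochhammer symbols and the powers. Throughout, I write $\tilde u = u s_1/s$ for the spectral parameter fed into \eqref{eq:queue_spec_fully_fused}, with $s_2 = s$, so as to distinguish it from the new parameter $u$ appearing in the statement of the lemma.

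First I would record how each factor of \eqref{eq:queue_spec_fully_fused} transforms. One computes $s_1^{-1} s_2 \tilde u = u$ and $s_1 s_2 \tilde u = s_1^2 u$, so the prefactor becomes $(u;q)_\infty / (s_1^2 u;q)_\infty$; after multiplying by $(u;q)_\infty^{-1}$ and sending $s_1 \to 0$, the surviving factor $(s_1^2 u;q)_\infty^{-1}$ tends to $1$. Inside the sum one finds $s_1 s_2/\tilde u = s^2/u$ and $s_1 \tilde u/s_2 = s_1^2 u/s^2$, so $(s_1 \tilde u/s_2;q)_{|\mathbf{B}-\mathbf{P}|} \to 1$, while $(s_1^2;q)_{|\mathbf{B}|}$ and $(s_1^2;q)_{|\mathbf{D}|}$ both tend to $1$. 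The two monomials combine as $(s_1 s_2/\tilde u)^{|\mathbf{B}|-|\mathbf{P}|} (\tilde u s_2/s_1)^{|\mathbf{D}|} = (s^2)^{|\mathbf{B}|-|\mathbf{P}|}\, u^{|\mathbf{D}|-|\mathbf{B}|+|\mathbf{P}|}$. Since the sum over $\mathbf{P}$ ranges over the fixed finite set $\{0 \le P_i \le \min(B_i,C_i)\}$, the limit passes term-by-term through the sum, and collecting the surviving factors yields exactly \eqref{eq:q_boson_weights_degeneration_vertex_s_dependent}.

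For the second identity I would then send $s \to 0$ in \eqref{eq:q_boson_weights_degeneration_vertex_s_dependent}. The only $s$-dependence is through $(s^2/u;q)_{|\mathbf{P}|}$, which tends to $1$, and the monomial $(s^2)^{|\mathbf{B}|-|\mathbf{P}|}$. Because $P_i \le B_i$ forces $|\mathbf{P}| \le |\mathbf{B}|$, every term with $|\mathbf{P}| < |\mathbf{B}|$ vanishes in the limit, so the unique possible surviving term is $\mathbf{P} = \mathbf{B}$. This term is admissible precisely when $B_i \le \min(B_i,C_i)$ for all $i$, i.e. when $B_i \le C_i$ for all $i$, which produces the indicator $\prod_i \mathbf{1}_{B_i \le C_i}$. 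Substituting $\mathbf{P} = \mathbf{B}$ then trivializes the combinatorial factors: $q^{\sum_{i<j}(B_i-P_i)P_j} = 1$, $\prod_i (q;q)_{B_i}/((q;q)_{P_i}(q;q)_{B_i-P_i}) = 1$, the power becomes $u^{|\mathbf{D}|}$, and $C_i - P_i = C_i - B_i$ throughout, giving exactly \eqref{eq:q_boson_weights_degeneration_vertex}.

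The computation presents no genuine obstacle; the only points demanding care are the algebraic simplification of the $q$-Pochhammer arguments under the substitution $s_2 = s$, $\tilde u = u s_1/s$, and, in the second limit, the identification of $\mathbf{P} = \mathbf{B}$ as the unique surviving term together with the admissibility constraint $B_i \le C_i$ that generates the new indicator. The interchange of limit and summation is immediate in both steps, since the index set over $\mathbf{P}$ is finite and independent of the parameters being sent to zero.
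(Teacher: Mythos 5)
Your proposal is correct and matches the paper's own proof essentially line for line: both substitute $s_2=s$ and spectral parameter $us_1/s$ into \eqref{eq:queue_spec_fully_fused}, observe that $(s_1^2u;q)_\infty$, $(s_1^2u/s^2;q)_{|\mathbf{B}-\mathbf{P}|}$, $(s_1^2;q)_{|\mathbf{B}|}$, and $(s_1^2;q)_{|\mathbf{D}|}$ tend to $1$ as $s_1\to0$, and then let $s\to0$ so that the factor $(s^2)^{|\mathbf{B}|-|\mathbf{P}|}$ kills all terms except $\mathbf{P}=\mathbf{B}$, whose admissibility yields the indicator $\prod_i\mathbf{1}_{B_i\le C_i}$. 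Your explicit justification of the term-by-term limit via finiteness of the $\mathbf{P}$-range is a minor point the paper leaves implicit; otherwise the two arguments coincide.
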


\begin{proof}
	We have from \eqref{eq:queue_spec_fully_fused}:
	\begin{equation*}
		\begin{split}
			&
			\WQ_{s_1,s,us_1/s}^{(-m)}(\mathbf{A},\mathbf{B};\mathbf{C},\mathbf{D})
			=
			\mathbf{1}_{\mathbf{A}+\mathbf{B}=\mathbf{C}+\mathbf{D}}
			\cdot
			\mathbf{1}_{D_1=\ldots=D_{m-1}=0}
			\cdot
			\frac{( u; q)_{\infty}}{(s_1^2 u ; q)_{\infty}}
			\\&\hspace{20pt}\times
			\sum_{\mathbf{P}}
			\frac{(s^2/u ; q)_{|\mathbf{P}|}
			(s_1^2u/s^2 ; q)_{|\mathbf{B}-\mathbf{P}|}}
			{(s_1^2 ; q)_{|\mathbf{B}|}}\ssp
			\ssp
			q^{\sum_{1\le i<j\le n}\left(B_i-P_i\right) P_j}
			\prod_{i=1}^n
			\frac{(q;q)_{B_i}}{(q;q)_{P_i}(q;q)_{B_i-P_i}}
			\\&\hspace{25pt}\times
			(s^2)^{|\mathbf{B}|-|\mathbf{P}|}
			u^{|\mathbf{D}|-|\mathbf{B}|+|\mathbf{P}|}
			\ssp q^{\sum_{m\le i<j\le n} D_i (C_j-P_j)}
			\ssp
			\frac{
			(s_1^2 ; q)_{|\mathbf{D}|}}
			{(q;q)_{D_m}}
			\prod_{i=m+1}^n
			\frac{(q;q)_{C_i-P_i+D_i}}{(q;q)_{C_i-P_i}(q;q)_{D_i}}.
		\end{split}
	\end{equation*}
	Sending $s_1\to0$
	immediately leads to \eqref{eq:q_boson_weights_degeneration_vertex_s_dependent}.
	Further letting
	$s\to0$, we see that
	$\mathbf{P}=\mathbf{B}$, for otherwise the factor
	$(s^2)^{|\mathbf{B}|-|\mathbf{P}|}$ vanishes. This
	eliminates the summation over $\mathbf{P}$ and produces
	the desired
	expression~\eqref{eq:q_boson_weights_degeneration_vertex}
	together with the indicator that $B_i\le C_i$ for all $i$.
\end{proof}

We denote the right-hand side of
\eqref{eq:q_boson_weights_degeneration_vertex_s_dependent}
by
$\WQ_{s,u}^{(-m),\mathrm{qBos}}(\mathbf{A},\mathbf{B};\mathbf{C},\mathbf{D})$.
The right-hand side of
\eqref{eq:q_boson_weights_degeneration_vertex} is
the $s=0$ degeneration of \eqref{eq:q_boson_weights_degeneration_vertex_s_dependent}.
The weights
$\WQ_{s,u}^{(-m),\mathrm{qBos}}$
are
nonnegative
for $q\in[0,1)$ and $u>s^2\ge 0$.

\begin{definition}
	\label{def:qBoson_queue_vertex_model}
Fix parameters
$\mathbf{u}=(u_1,\ldots,u_N )$,
$\mathbf{y}=(y_1,\ldots,y_n)$,
and
$\mathbf{s}^{(v)}=(s_1^{(v)},\ldots,s_n^{(v)} )$
such that
\begin{equation}
	\label{eq:s_u_y_conditions_qBoson}
	0\le (s_m^{(v)})^2<u_iy_m,\qquad i=1,\ldots,N ,\quad m=1,\ldots,n .
\end{equation}
Let
$\mathfrak{Q}^{\mathrm{qBos}}(\mathbf{u};\mathbf{y};\mathbf{s}^{(v)})$
denote the queue transfer matrix
on the $n\times N$ cylinder
as in \Cref{fig:queue_state},
where the vertex weight at each site $(-m,j)$ is
$\WQ_{s_m^{(v)},u_jy_m}^{(-m),\mathrm{qBos}}$.
\end{definition}

The vertex model of \Cref{def:qBoson_queue_vertex_model} has nonnegative weights. Note that its partition functions
$\langle \emptyset |\ssp\mathfrak{Q}^{\mathrm{qBos}}(\mathbf{u};\mathbf{y};\mathbf{s}^{(v)})\ssp
|\mathbf{V}\rangle$
involve infinite sums over paths winding around the cylinder.
Similarly to \Cref{lemma:queue_vertex_well_defined},
we see that
these sums are
convergent when
$\mathbf{V}$
has all types appearing at least once.

\begin{proposition}\label{prop:qBos_stationary}
	For any type counts
	$(N_1,\ldots,N_n )$, $N_i\ge1$,
	and the parameters $\mathbf{y},\mathbf{s}^{(v)}$
	satisfying \eqref{eq:s_u_y_conditions_qBoson}, the
	stationary
	distribution
	of the colored $q$-Boson process
	with parameters $\mathbf{u}$
	has the form
	\begin{equation}
		\label{eq:qBoson_stationary_proposition}
		\mathop{\mathrm{Prob}}^{\mathrm{qBos}}\nolimits_{N_1,\ldots,N_n}(\mathbf{V})
		=
		\frac{\langle\emptyset | \ssp
			\mathfrak{Q}^{\mathrm{qBos}}(\mathbf{u};\mathbf{y};\mathbf{s}^{(v)})
		\ssp | \mathbf{V}\rangle}{Z_{N_1,\ldots,N_n }^{\mathrm{qBos}}}.
	\end{equation}
	The normalizing constant
	$Z_{N_1,\ldots,N_n }^{\mathrm{qBos}}$
	depends on the parameters and the type counts,
	but not on the state
	$\mathbf{V}$
	within the sector determined by $(N_1,\ldots,N_n)$.
\end{proposition}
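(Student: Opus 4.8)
The plan is to mirror the proof of \Cref{prop:queue_mASEP_stationary}, replacing the twisted cylinder stationarity by the straight cylinder stationarity of \Cref{thm:straight_st} and using the degeneration of the queue weights recorded in \Cref{lemma:qboson_weights_degeneration}. Concretely, I would apply \Cref{thm:straight_st} with the $q$-Boson specialization $x=-1$, $\mathbf{s}^{(h)}=(\epsilon,\dots,\epsilon)$ of \Cref{sub:qBoson_process}, and, crucially, with the vertical spectral parameters chosen $\epsilon$-dependently as $v_m \coloneqq s_m^{(v)}/(y_m\epsilon)$ (keeping $\mathbf{y}$ and $\mathbf{s}^{(v)}$ fixed). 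For each fixed $\epsilon>0$ this is an instance of \Cref{thm:straight_st}, so it yields the exact identity $\langle\emptyset|\,\mathfrak{Q}_\epsilon\,\mathfrak{S}_\epsilon=\langle\emptyset|\,\mathfrak{Q}_\epsilon$, which iterates to $\langle\emptyset|\,\mathfrak{Q}_\epsilon\,\mathfrak{S}_\epsilon^{\lfloor t/\epsilon\rfloor}=\langle\emptyset|\,\mathfrak{Q}_\epsilon$ with $\mathfrak{S}_\epsilon=\mathfrak{S}(-1,\mathbf{u};(\epsilon,\dots,\epsilon))$.

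Both sides then have to be taken to their $\epsilon\to 0$ limits. On the left, \Cref{prop:limit_to_cqboson} turns $\mathfrak{S}_\epsilon^{\lfloor t/\epsilon\rfloor}$ into the $q$-Boson semigroup $\mathfrak{P}_{\mathrm{qBos}}(t)$ on the block with empty auxiliary line (and annihilates transitions out of that block). On both sides I must identify the limit of $\mathfrak{Q}_\epsilon$. For the $N$ genuine rows $j=1,\dots,N$, the vertex at $(-m,j)$ carries the weight $\WQ^{(-m)}_{\epsilon,\,s_m^{(v)},\,u_jy_m\epsilon/s_m^{(v)}}$, which is exactly the family $\WQ^{(-m)}_{s_1,s,us_1/s}$ of \Cref{lemma:qboson_weights_degeneration} with $s_1=\epsilon$, $s=s_m^{(v)}$, $u=u_jy_m$; hence as $\epsilon\to 0$ it converges, after multiplication by $(u_jy_m;q)_\infty$, to $\WQ^{(-m),\mathrm{qBos}}_{s_m^{(v)},\,u_jy_m}$. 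The product of the correction factors $\prod_{m,j}(u_jy_m;q)_\infty$ is independent of the terminal state $\mathbf{V}$ and is absorbed into the normalization $Z^{\mathrm{qBos}}_{N_1,\dots,N_n}$. Thus the genuine rows assemble into $\mathfrak{Q}^{\mathrm{qBos}}(\mathbf{u};\mathbf{y};\mathbf{s}^{(v)})$ of \Cref{def:qBoson_queue_vertex_model}, whose weights are nonnegative under \eqref{eq:s_u_y_conditions_qBoson}.

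The remaining, and I expect hardest, ingredient is the auxiliary row $j=0$, whose weights $\WQ^{(-m)}_{q^{-1/2},\,s_m^{(v)},\,-q^{1/2}/v_m}$ have spectral parameter $-q^{1/2}y_m\epsilon/s_m^{(v)}\to 0$. Since the matrix element $\langle\emptyset|\mathfrak{Q}_\epsilon|0,\mathbf{V}\rangle$ has empty horizontal input and output along this spin-$\tfrac12$ row, I would show that as the spectral parameter tends to $0$ the only surviving local configurations are pure vertical pass-throughs with weight tending to $1$, so that row $0$ becomes transparent and the $(N+1)$-site partition function collapses (up to a $\mathbf{V}$-independent scalar) onto the $N$-site partition function $\langle\emptyset|\mathfrak{Q}^{\mathrm{qBos}}|\mathbf{V}\rangle$; this also matches the vanishing of the non-empty-auxiliary matrix elements from \Cref{prop:limit_to_cqboson}. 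Alongside this I must justify interchanging the $\epsilon\to0$ limit with the infinite sums over windings: as in \Cref{prop:queue_mASEP_stationary}, the $q$-powers $q^{\sum_{m\le i<j\le n}D_iC_j}$ in the weights are independent of $\epsilon$, so the winding sums are dominated by a single geometric series uniformly in small $\epsilon$ (using $\mathbf{V}\in V^{\otimes N}_{\mathrm{full}}$ as in \Cref{lemma:queue_vertex_well_defined}), legitimizing the passage to the limit.

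Passing to the limit yields $\langle\emptyset|\,\mathfrak{Q}^{\mathrm{qBos}}\,\mathfrak{P}_{\mathrm{qBos}}(t)=\langle\emptyset|\,\mathfrak{Q}^{\mathrm{qBos}}$, so the row vector $\sum_{\mathbf{V}}\langle\emptyset|\mathfrak{Q}^{\mathrm{qBos}}|\mathbf{V}\rangle\,\langle\mathbf{V}|$ is a left eigenvector of the $q$-Boson semigroup with eigenvalue $1$. Restricting to a sector with fixed type counts $(N_1,\dots,N_n)$, $N_i\ge1$, where the partition functions are finite, convergent (\Cref{lemma:queue_vertex_well_defined}) and nonnegative (by \eqref{eq:s_u_y_conditions_qBoson}), and checking that they are not all zero, the simplicity of the Perron--Frobenius eigenvalue of the irreducible semigroup $\mathfrak{P}_{\mathrm{qBos}}(t)$ together with the uniqueness of the stationary distribution in each sector forces \eqref{eq:qBoson_stationary_proposition}; strict positivity of the partition functions is then a consequence, exactly as in \Cref{prop:positivity_parameter_dependence}.
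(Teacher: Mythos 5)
Your proposal is correct and follows essentially the same route as the paper's own proof: the same $\epsilon$-dependent specialization $x=-1$, $\mathbf{s}^{(h)}=(\epsilon,\dots,\epsilon)$, $v_m=s_m^{(v)}/(\epsilon y_m)$ in \Cref{thm:straight_st}, the same use of \Cref{lemma:qboson_weights_degeneration} with the $(u_jy_m;q)_\infty$ factors absorbed into the normalization, the same collapsing of the auxiliary spin-$\frac12$ row via the vanishing spectral parameter, and the same combination of \Cref{prop:limit_to_cqboson} with the limit interchange argument from \Cref{prop:queue_mASEP_stationary}. The only cosmetic difference is that you iterate the stationarity identity before passing to the limit, whereas the paper writes out the chain of equalities directly; the content is identical.
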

\begin{proof}
	We use \Cref{thm:straight_st}
	(in particular, recall the queue vertex model on the cylinder interacting with the
	straight cylinder Markov operator as illustrated in
	\Cref{fig:straight_commuted}).
	Let us choose the parameters of the queue vertex model
	\begin{equation*}
		\mathfrak{Q}=\mathfrak{Q}\left( (xq^{\frac12},\mathbf{u});(q^{-\frac{1}{2}},\mathbf{s}^{(h)})
		; \mathbf{v};\mathbf{s}^{(v)}\right)
	\end{equation*}
	as
	\begin{equation*}
		x=-1,\qquad s_j^{(h)}=\epsilon\to 0,
		\qquad
		v_m=\frac{s_m^{(v)}}{\epsilon\ssp y_m}
	\end{equation*}
	for all $1\le j\le N$, $1\le m\le n$.
	By \Cref{lemma:qboson_weights_degeneration},
	sending $\epsilon\to0$
	turns the weight at each site
	$(-m,j)$
	of this queue vertex model on the cylinder
	into
	$(u_jy_m;q)_\infty\WQ_{s_m^{(v)},u_j y_m}^{(-m),\mathrm{qBos}}$.
	The overall factor $\prod_{j=1}^{N}\prod_{m=1}^n(u_jy_m;q)_{\infty}$
	is absorbed into the normalizing constant, and thus we can ignore it.

	At the sites $(-m,0)$, before the limit we have the weights
	$\WQ_{q^{-1/2},s_m^{(v)},-\epsilon q^{1/2}y_m/s_m^{(v)}}^{(-m)}$.
	Up to re\-para\-met\-rization, these are the same weights as in \Cref{fig:ASEP_weights}.
	Sending $\epsilon\to0$ (that is, $-su\to 0$ in the notation \Cref{fig:ASEP_weights}),
	we see that
	\begin{equation*}
		\WQ_{q^{-1/2},s_m^{(v)},-\epsilon q^{1/2}y_m/v_m^{(v)}}^{(-m)}
		(\mathbf{A},0;\mathbf{A},0)\to 1,
		\qquad
		\WQ_{q^{-1/2},s_m^{(v)},-\epsilon q^{1/2}y_m/v_m^{(v)}}^{(-m)}
		(\mathbf{A},0;\mathbf{A}_k^{-},k)\to 0.
	\end{equation*}
	Since the auxiliary line
	(i.e., the one with the spin parameter $q^{-1/2}$)
	begins as initially unoccupied, these convergences imply that this auxiliary line remains unoccupied in the cylindrical queue vertex model under this limit (more specifically, any configuration in which it is occupied has weight
	going to $0$ as $\epsilon\to0$).
	Therefore, we can remove this auxiliary line from the model on the cylinder
	as follows:
	\begin{equation}
		\label{eq:going_from_Nplus1_to_N_in_the_model}
		\lim_{\epsilon\to0}\ssp
		\big\langle 0,\emptyset \big|
		\ssp
		\mathfrak{Q}\left( (xq^{\frac12},\mathbf{u});(q^{-\frac{1}{2}},\mathbf{s}^{(h)})
		; \mathbf{v};\mathbf{s}^{(v)}\right)
		\ssp
		\big| 0, \mathbf{V} \big\rangle
		=
		\langle \emptyset |\ssp \mathfrak{Q}^{\mathrm{qBos}}
		(\mathbf{u};\mathbf{y};\mathbf{s}^{(v)})
		\ssp
		| \mathbf{V} \rangle,
	\end{equation}
	where
	$\mathfrak{Q}^{\mathrm{qBos}}(\mathbf{u};\mathbf{y})$ is defined before the proposition.

	Arguing as in the proof of
	\Cref{prop:queue_mASEP_stationary},
	we can take the limit as $\epsilon\to0$
	simultaneously in the queue vertex model and in the straight
	cylinder Markov operator.
	Before the limit, these operators satisfy the general stationarity
	relation
	of \Cref{thm:straight_st}.
	By
	\Cref{prop:limit_to_cqboson},
	the straight cylinder Markov operator
	converges as $\epsilon\to0$
	(in the Poisson-type continuous-time limit)
	to the Markov semigroup
	$\mathfrak{P}_{\mathrm{qBos}}(t)$.
	The limit of the general stationarity relation
	yields
 \begin{align*}
 \langle \emptyset |\ssp  \mathfrak{Q}^{\mathrm{qBos}}
		(\mathbf{u};\mathbf{y}; &  \mathbf{s}^{(v)})  \mathfrak{P}_{\mathrm{qBos}}(t)
		\ssp
		| \mathbf{V} \rangle \\
  &=   \lim
		_{\epsilon\to0}\,
		\big\langle 0,\emptyset \big|
		\ssp
		\mathfrak{Q}\left( (xq^{\frac12},\mathbf{u});(q^{-\frac{1}{2}},\mathbf{s}^{(h)})
		; \mathbf{v};\mathbf{s}^{(v)}\right)
		\ssp
		\mathfrak{S}(-1,\mathbf{u};(\epsilon,
		\ldots,\epsilon ))^{\lfloor t/\epsilon \rfloor}
		\ssp
		|0, \mathbf{V}\rangle \\
		&=
		\lim
		_{\epsilon\to0}\,
		\big\langle 0,\emptyset \big|
		\ssp
		\mathfrak{Q}\left( (xq^{\frac12},\mathbf{u});(q^{-\frac{1}{2}},\mathbf{s}^{(h)}); \mathbf{v};\mathbf{s}^{(v)}\right)
		|0, \mathbf{V}\rangle
		\\
&= \langle \emptyset |\ssp \mathfrak{Q}^{\mathrm{qBos}}
		(\mathbf{u};\mathbf{y};  \mathbf{s}^{(v)})
		| \mathbf{V} \rangle .
\end{align*}

\noindent Here, the first equality holds by \Cref{prop:limit_to_cqboson} and equation \eqref{eq:going_from_Nplus1_to_N_in_the_model}; the second holds by \Cref{thm:straight_st}; and the third holds again by \eqref{eq:going_from_Nplus1_to_N_in_the_model}.

This completes the proof.
\end{proof}

\begin{remark}
While the quantities in the right-hand side of
\eqref{eq:qBoson_stationary_proposition}
seem to depend on~$\mathbf{y}$ and~$\mathbf{s}^{(v)}$,
\Cref{prop:qBos_stationary}
implies that they are independent of these extra parameters.
This observation is parallel to the mASEP situation
(see \Cref{rmk:queue_mASEP_stationary_remark_independent_of_parameters}).
\end{remark}

\subsection{Matching to multiline queues}
\label{sub:qBoson_multiline}

In \cite[Section~8]{ayyer2022modified}, a multiline queue model for the stationary distribution of the colored $q$-Boson process is presented. Let us match this model to our queue vertex model $\mathfrak{Q}^{\mathrm{qBos}}$ on the cylinder, specialized to $s_m^{(v)}=0$, $1\le m\le n$ (that is, with the simpler product-form weights~\eqref{eq:q_boson_weights_degeneration_vertex}).

As in \cite{ayyer2022modified},
we restrict our attention to the simpler \emph{strict} case when, by definition, there is at
most one particle of each color.
First, we recall the definition of a $q$-Boson multiline queue and its weight.
We replace the parameter $t$ from \cite{ayyer2022modified} by our
$q$, and adjust the notation of integer indices, spectral parameters, and the direction of the ring
to match the conventions
used throughout our paper.

\begin{figure}[htpb]
	\centering
	\includegraphics[width=.4\textwidth]{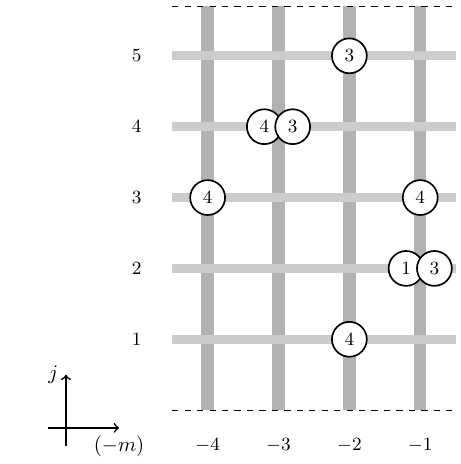}
	\caption{A multiline diagram (\Cref{def:mqueue_for_qBos}) with weight
	$q^{3}u_1u_2^2u_3^2u_4^2u_5$.
	Here the refusal statistic $3$ combines
	$R_{3}=1$ (label $3$ in column~$-2$ is ``between'' the positions of labels $4$ in columns~$-3$ and~$-2$, in the sense described after Equation~\eqref{eq:refusal_statistic_m}) and $R_{2}=2$
	(labels $1$ and $3$ in column~$-1$ are between the labels $4$ in columns~$-2$ and~$-1$).
	This is the same diagram as in examples in \cite[Section~8]{ayyer2022modified}, but
	rotated by $90^\circ$ and with the direction of the ring reversed (to match our vertex model).
	Here the size of the ring is $N=5$, and the number of colors is $n=4$.}
	\label{fig:mqueue_qboson}
\end{figure}

\begin{definition}[\cite{ayyer2022modified}]
	\label{def:mqueue_for_qBos}
	A multiline diagram is an assignment of
	the labels from $\{1,\ldots,n \}$ to
	the vertices of a cylinder
	$\{-n,\ldots,-1 \}\times(\mathbb{Z}/N\mathbb{Z})$,
	satisfying
	\begin{enumerate}[$\bullet$]
		\item Each vertex $(-m,j)$ is assigned a multiset of labels.
		\item In column $(-m)$, all labels are from $\{m,m+1,\ldots,n \}$.
		\item The combined multiset of all labels in column $(-m)$ is obtained from the multiset
			of labels in column $-(m+1)$, together with some new labels of type $m$.

		\item (\emph{strict condition}) Each label $m$, $1\le m\le n$, appears at most once
			in each of the columns
			$-m,-(m-1),\ldots,-1 $.
	\end{enumerate}
	The weight of a multiline diagram is, by definition,
	$q^\mathfrak{R} u_1^{c_1}\ldots u_N^{c_N}$,
	where $c_j$ is the total number of labels assigned to the row $j$, and $\mathfrak{R}$ is
	the \emph{refusal statistic} defined as follows.
	Let
	\begin{equation}
		\label{eq:refusal_statistic_m}
		R_{m}\coloneqq \sum_{m-1\le k<\ell\le n}
		\mathbf{1}_{p_\ell(-(m-1))> p_k(-(m-1))> p_\ell(-m)},
	\end{equation}
	where $p_r(-m)$ is the position
	of the label $r$ in column $-m$, and the event $a>b>c$ means that, reading along the ring
	in the downward direction (corresponding to decreasing positions $j$),
	the label $b$ is strictly between $a$ and $c$.
	This includes the case $a=c\ne b$;
    what this means for a corresponding term in the sum
    is~$p_{\ell}(-m) = p_\ell(-(m-1))$,
    and we think of this as~$p_\ell$ making a
    full loop around the ring to get to its position at~$-(m-1)$.
	Then we set $\mathfrak{R}\coloneqq \sum_{m=2}^{n}R_{m}$.
	See \Cref{fig:mqueue_qboson} for an illustration.
\end{definition}

Given a multiline diagram,
associate to it a path configuration
on the cylinder with vertex weights
$\WQ_{u_j}^{(-m),\mathrm{qBos}}$ at each vertex $(-m,j)$,
and such that the multiset of labels
at $(-m,j)$ is exactly the colors of the paths exiting this vertex.
Recall that we usually denote the latter multiset of colors by $\mathbf{D}\in \mathbb{Z}_{\ge0}^{n}$.
Knowing $\mathbf{D}$ at each vertex is enough to reconstruct the
whole path configuration on the cylinder,
up to unknown windings of paths around the cylinder.
In this way, one multiline diagram corresponds to
many
configurations of the queue vertex model
$\mathfrak{Q}^{\mathrm{qBos}}$ on the cylinder.

\begin{proposition}
	\label{prop:mqueue_to_qboson}
	Let there be exactly one particle of each color $m$, $m=1,\ldots,n$.
	Then
	the mapping between multiline diagrams and configurations of the queue vertex model
	$\mathfrak{Q}^{\mathrm{qBos}}(\mathbf{u};\mathbf{1};\mathbf{0})$ (that is, $y_i=1$ and
	$s_i^{(v)}=0$ for all~$i$)
	described before the proposition is weight-preserving. That is,
	the sum of weights of all vertex model configurations over the
	winding of the paths around the cylinder is proportional to
	the weight of the corresponding multiline diagram.
	The proportionality constant depends on the parameters of the model,
	but not on the particle configuration.
\end{proposition}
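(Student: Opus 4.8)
The plan is to fix the multiline diagram --- equivalently, to fix all of the output tuples $\mathbf{D}(m,j)$, $m=1,\dots,n$, $j=1,\dots,N$ --- and to sum the queue vertex model weights over the only remaining freedom, namely the windings of the paths around the cylinder. First I would observe that once the $\mathbf{D}(m,j)$ are prescribed, every horizontal edge of the cylinder is determined, since the left input at a vertex in column $-m$ equals the right output of the vertex in column $-(m+1)$, i.e.\ $\mathbf{B}(m,j)=\mathbf{D}(m+1,j)$. Consequently the only data not fixed by the diagram is the vertical winding $\mathbf{M}(-m)$ in each column, and these windings are independent across columns: the vertical profile in column $-m$ is recovered from $\mathbf{D}(m,\cdot),\mathbf{D}(m+1,\cdot)$ and $\mathbf{M}(-m)$ alone. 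Since the vertex weights factorize over vertices, the whole winding sum factorizes into a product over $m=1,\dots,n$ of per-column sums. This reduces the statement to a single identity for one column, and convergence of each such sum is guaranteed exactly as in \Cref{lemma:queue_vertex_well_defined}.

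Next I would analyze column $-m$ using the product-form weights \eqref{eq:q_boson_weights_degeneration_vertex} (with $y_m=1$, $s_m^{(v)}=0$). In the strict case exactly one path of each color $b\ge m$ is present: a color $b>m$ path enters at the row $p_b(-(m+1))$ where it left column $-(m+1)$, travels vertically upward, and exits at its row $p_b(-m)$ in column $-m$, winding $w_b\ge0$ times around; the color $m$ path is supplied by the infinite reservoir and exits once. The power of $u$ is immediately $\prod_{j}u_j^{|\mathbf{D}(m,j)|}$. The $q$-Pochhammer factors in \eqref{eq:q_boson_weights_degeneration_vertex} collapse in the strict case: the constraint $\mathbf{1}_{B_i\le C_i}$ forces each color $b>m$ entering from the left to continue upward, so pass-through and entry vertices each contribute a factor $1$, while the unique exit vertex of color $b$ contributes $\frac{1-q^{w_b+1}}{1-q}$ and the color-$m$ exit contributes $\frac{1}{1-q}$ through $(q;q)_{D_m}^{-1}$.

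Then I would treat the remaining exponent $q^{\sum_{m\le a<b}D_a(C_b-B_b)}$. Writing $C_b(m,j)$ for the number of color-$b$ paths on the vertical edge above row $j$, one has $C_b=w_b+\mathbf{1}_{\{j\text{ on the upward arc of }b\}}$, so the exponent is linear in each winding with $w_b$-coefficient $\sum_{m\le a<b}\sum_j D_a(m,j)=b-m$. Hence the column sum factorizes over colors into configuration-independent geometric sums $\sum_{w\ge0}q^{(b-m)w}\,\frac{1-q^{w+1}}{1-q}$, times $q$ to the power of the value of the exponent at $w\equiv0$. That value counts precisely the pairs $m\le a<b$ for which $p_a(-m)$ lies strictly between $p_b(-(m+1))$ and $p_b(-m)$ in the downward direction, which by \eqref{eq:refusal_statistic_m} is exactly $R_{m+1}$. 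Therefore the column-$(-m)$ sum equals a diagram-independent constant times $q^{R_{m+1}}\prod_j u_j^{|\mathbf{D}(m,j)|}$.

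Finally I would multiply over all columns. Since column $-n$ carries only one color and contributes $R_{n+1}=0$, the exponents sum to $\sum_{m=1}^{n}R_{m+1}=\sum_{m=2}^{n}R_m=\mathfrak{R}$, while $\prod_{m,j}u_j^{|\mathbf{D}(m,j)|}=\prod_j u_j^{c_j}$, and the product of per-column constants is the claimed configuration-independent proportionality factor. The main obstacle is the careful bookkeeping of which vertical edges are occupied as a function of the windings, together with the degenerate ``full loop'' case $p_b(-(m+1))=p_b(-m)$: there the constraint $\mathbf{1}_{B_i\le C_i}$ forces the minimal winding to be one rather than zero, shifting both the exit factor and the baseline exponent. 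Verifying that the refusal convention of \Cref{def:mqueue_for_qBos} (which treats $p_\ell(-m)=p_\ell(-(m-1))$ as a full loop) makes this shift reproduce the same geometric sum and the same power $q^{R_{m+1}}$ is the delicate point.
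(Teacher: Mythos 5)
Your proof is correct and takes essentially the same route as the paper's: you fix the diagram data (the $\mathbf{B},\mathbf{D}$ at each vertex) column by column, sum over windings (your $w_b$ are the paper's summation variables $a_j$, and your strictly-on-the-upward-arc indicator is the paper's $E_j^{(k)}$), extract configuration-independent geometric sums whose exponent coefficient is $b-m$, and match the baseline exponent to the refusal statistic \eqref{eq:refusal_statistic_m}, treating the full-loop case $p_b(-(m+1))=p_b(-m)$ via the indicator $\mathbf{1}_{B_i\le C_i}$ exactly as the paper does. Your explicit justification of the cross-column factorization and your labeling of the column-$(-m)$ contribution as $R_{m+1}$ (consistent with the convention in the paper's figure, where column $-2$ produces $R_3$ and column $-1$ produces $R_2$) are, if anything, slightly more careful than the paper's own write-up.
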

If there are no particles of some color, then the sum of the vertex model
weights might diverge, cf.~\Cref{rmk:queue_vertex_not_well_defined_for_not_full}.
On the other hand, we consider the multiline queues for
at most one particle of each color. This leads to the
restriction in \Cref{prop:mqueue_to_qboson}.

\begin{proof}[Proof of \Cref{prop:mqueue_to_qboson}]
	It suffices to fix $m$ and consider
	the behavior in the column $(-m)$.
	For a configuration of the queue vertex model in this column
	let the arrow configurations at each vertex $(-m,k)$ be
	$\mathbf{A}^{(k)},\mathbf{B}^{(k)},\mathbf{C}^{(k)},\mathbf{D}^{(k)}$.
	The corresponding multiline diagram contains information about
	$\mathbf{B}^{(k)},\mathbf{D}^{(k)}$, but not about
	$\mathbf{A}^{(k)},\mathbf{C}^{(k)}$. Let us
	fix $\mathbf{B}^{(k)},\mathbf{D}^{(k)}$ for all $k=1,\ldots,N$,
	and
	sum over
	$\mathbf{A}^{(k)},\mathbf{C}^{(k)}$,
	$k=1,\ldots,N$. The resulting sum must be equal to the weight of column~$-m$ in the
	corresponding multiline diagram.

	With this data fixed, out
	of all allowed configurations of the vertices in column~$(-m)$,
	there is one in which~$C_j^{(N)}$ is minimal for
	each~$j > m$. Fixing $\mathbf{C}^{(N)}$ allows one to reconstruct
	the whole vertex model configuration in column~$(-m)$ in a unique way.
	Denote this minimal configuration by~$\mathbf{C}^{(k), \min}$, and
	let~$E_j^{(k)} \coloneqq C_j^{(k), \min} -
	B_j^{(k)}$,~$j = m+1,\dots, n$.

	The product of the vertex weights
	\eqref{eq:q_boson_weights_degeneration_vertex}
	in column~$(-m)$, summed over all allowed configurations, is proportional to (using the fact that~$D_j^{(k)} \in \{0,1\}$ for all~$j, k$)
	\begin{equation}
		\label{eq:mqueue_qboson_proof}
		\begin{split}
			&
			\sum_{a_{m+1}=0}^\infty \cdots \sum_{a_n = 0}^{\infty}
			\prod_{k=1}^N \Biggl(\ssp
			u_k^{|\mathbf{D}^{(k)}|} \ssp
			q^{\sum_{m\le r < s\le n} D_r^{(k)}
			(E_s^{(k)}+a_s)}
			\prod_{j=m+1}^n
			\frac{(q;q)_{a_j+E_j^{(k)}+D^{(k)}_j}}
			{(q;q)_{a_j+E_j^{(k)}}(q;q)_{D^{(k)}_j}}
			\Biggr)
			\\
			&\hspace{5pt}=
			\Biggl(\ssp\prod_{k=1}^N u_k^{|\mathbf{D}^{(k)}|}
			\Biggr)
			\sum_{a_{m+1}=0}^\infty \cdots \sum_{a_n = 0}^{\infty}
			\Biggl(\ssp\prod_{k=1}^N
			q^{\sum_{m\le r < s\le n} D_r^{(k)}
			(E_s^{(k)}+a_s)}\Biggr)
			\Biggl(\ssp
			\prod_{j=m+1}^n
			\prod_{k\colon D_j^{(k)}=1}
			\frac{1-q^{1+a_j+E_j^{(k)}}}{1-q}
			\Biggr).
		\end{split}
	\end{equation}
	Observe that $B_j^{(k)}\le C_j^{(k)}$ for all $j$ and $k$
	(see \eqref{eq:q_boson_weights_degeneration_vertex}).
	This implies (by arrow conservation, since~$A_j^{(k)}, B_j^{(k)}, C_j^{(k)}, D_j^{(k)} \in \{0, 1\}$) that if $D_j^{(k)}=1$, then $E_j^{(k)}=0$.
	For each $j=m+1,\ldots,n $, we either have
	$D_j^{(k)}=0$ or $D_j^{(k)}=1$,
    and there exists exactly one~$k = k_j$ for which~$D_j^{(k)}=1$.
    Thus, inside the summations we have
    $$\Biggl(\ssp\prod_{k=1}^N
			\prod_{j=m+1}^n q^{\sum_{m\le r < j} D_r^{(k)}
			(E_j^{(k)}+a_j)}\Biggr)
			\Biggl(\ssp
			\prod_{j=m+1}^n
			\frac{1-q^{1+a_j}}{1-q}
			\Biggr).$$
	As a result, the sum over $a_j$ becomes
	\begin{equation*}
		q^{ \sum_{k=1}^N( D_m^{(k)}+\ldots+D_{j-1}^{(k)} )\ssp E_j^{(k)}}
		\sum_{a_j=0}^{\infty}
		q^{a_j \sum_{k=1}^N( D_m^{(k)}+\ldots+D_{j-1}^{(k)} )}
		\ssp
		\frac{1-q^{1+a_j}}{1-q}
		=
		C^{[j]}_{N_1,\ldots,N_n }\ssp
		q^{ \sum_{k=1}^N( D_m^{(k)}+\ldots+D_{j-1}^{(k)} )\ssp E_j^{(k)}}
		,
	\end{equation*}
	where $C^{[j]}_{N_1,\ldots,N_n }$
	does not depend on the particular multiline diagram
	but only on the type counts $(N_1,\ldots,N_n )$.
	Indeed, $\sum_{k=1}^N( D_m^{(k)}+\ldots+D_{j-1}^{(k)} )$
	is the total number of colors $i$, $m\le i\le j-1$,
	leaving column $(-m)$.
	Thus, we can continue
	\begin{equation}
		\label{eq:mqueue_qboson_proof_2}
		\eqref{eq:mqueue_qboson_proof}=
		C_{N_1,\ldots,N_n}
		\Biggl(\ssp\prod_{k=1}^N u_k^{|\mathbf{D}^{(k)}|}
		\Biggr)
		\prod_{k=1}^{N}
		q^{\sum_{m\le i<j\le n}D_i^{(k)}E_j^{(k)}},
	\end{equation}
	where $C_{N_1,\ldots,N_n}$ also depends only on the type counts.
	Note that
	$D_i^{(k)},
	E_j^{(k)}\in\left\{ 0,1 \right\}$.
	One can readily verify that
	each pair $m\le i<j\le n$ such that
	$D_i^{(k)}=
	E_j^{(k)}=1$
	corresponds to an indicator equal to one
	in the definition of $R_m$~\eqref{eq:refusal_statistic_m}.
	In particular, note that the indicator~$\mathbf{1}_{B_j \leq C_j}$ in the weights~\eqref{eq:q_boson_weights_degeneration_vertex} prevents a path from passing straight through without any winding. This behavior
	is accounted for in~\eqref{eq:mqueue_qboson_proof}, and corresponds to the fact that the case~$a = c \neq b$ counts towards the refusal statistic~$\mathfrak{R}$ (see the discussion after its definition~\eqref{eq:refusal_statistic_m}).
	Thus, the power of $q$ in
	\eqref{eq:mqueue_qboson_proof_2}
	is exactly the same as the component
	$R_m$ of the refusal statistic $\mathfrak{R}$.
	The powers of the $u_j$'s also match the ones for the multiline diagrams.
	This completes the proof.
\end{proof}

Let us make two final remarks in this section. First,
\cite{ayyer2022modified}
worked with the so-called tableau process instead of general
(not necessarily strict) multiline queue diagrams; as such,
they did not give explicit weights in these cases.
The general tableau process should correspond
to our vertex model as follows:

\begin{conjecture}
	\label{conj:multiline_qboson_AMM}
	In the general case with no restrictions on the number of particles of each color, consider the vertex model
	$\mathfrak{Q}^{\mathrm{qBos}}(\mathbf{u};\mathbf{1};\mathbf{0})$
	on the cylinder. Each vertex model configuration
	corresponds to a multiline diagram, though this mapping is
	many-to-one. The weight of a multiline diagram is defined
	as the sum of the weights of all corresponding vertex model
	configurations over the winding of paths around the cylinder.
	We conjecture that this weight is proportional to the one derived from the
	tableau process of
	\cite[Section~4]{ayyer2022modified}.
\end{conjecture}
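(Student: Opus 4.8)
The plan is to extend the column-by-column weight computation in the proof of \Cref{prop:mqueue_to_qboson} from the strict case (at most one particle of each color) to arbitrary multiplicities. As there, I would fix a color $m$ and work in the single column $(-m)$ of the queue vertex model $\mathfrak{Q}^{\mathrm{qBos}}(\mathbf{u};\mathbf{1};\mathbf{0})$. A multiline diagram records only the outgoing data $\mathbf{B}^{(k)},\mathbf{D}^{(k)}$ at each vertex $(-m,k)$, while a vertex model configuration additionally specifies the through-data $\mathbf{A}^{(k)},\mathbf{C}^{(k)}$, equivalently the winding numbers $a_{m+1},\dots,a_n$ of the higher colors around the cylinder. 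The content of the conjecture is then that, summing the product-form weights \eqref{eq:q_boson_weights_degeneration_vertex} over all windings with $\mathbf{B}^{(k)},\mathbf{D}^{(k)}$ held fixed, one obtains a factor depending only on the type counts $(N_1,\dots,N_n)$ times the weight the tableau process assigns to the corresponding diagram.

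The first task is to make the right-hand side explicit: unlike the strict multiline queues of \cite[Section~8]{ayyer2022modified}, the tableau process of \cite[Section~4]{ayyer2022modified} does not come with closed-form diagram weights, so I would first extract the weight it assigns to a general diagram from the stationary distribution of the tableau Markov chain (or, via the identification in \cite{ayyer2022modified}, from the associated modified Macdonald coefficients). Since \Cref{prop:qBos_stationary} already shows that our vertex model reproduces the full $q$-Boson stationary measure, the marginal over particle configurations $\mathbf{V}$ automatically agrees with that of the tableau process; hence the real content is the finer, diagram-by-diagram agreement, and I would formulate the target weight precisely at this level before attempting any matching.

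The key computation is the generalization of \eqref{eq:mqueue_qboson_proof}. With $D_j^{(k)}\in\mathbb{Z}_{\ge0}$ arbitrary, the $q$-binomial factors $\binom{a_j+E_j^{(k)}+D_j^{(k)}}{D_j^{(k)}}_q$ no longer collapse to the simple ratio $(1-q^{1+a_j+E_j^{(k)}})/(1-q)$ available in the strict case where $D_j^{(k)}\in\{0,1\}$. The windings of distinct colors remain coupled through the exponent $q^{\sum_{m\le r<s\le n} D_r^{(k)}(E_s^{(k)}+a_s)}$, so for each $s$ the sum over the winding $a_s$ is a single-variable $q$-series weighted by a \emph{product} of $N$ shifted $q$-binomials, one per row $k$. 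Evaluating such a product-weighted series is the crux: the single-factor case reduces to the negative $q$-binomial theorem $\sum_{a\ge0}\binom{a+k}{k}_q z^a=(z;q)_{k+1}^{-1}$, but the several factors sharing the index $a_s$ require a genuinely multi-term $q$-summation (a $q$-Chu--Vandermonde or $q$-Saalsch\"utz-type identity, applied after expanding the shifts $E_j^{(k)}$). I would evaluate these sums iteratively from $j=n$ downward, peeling off one winding variable at a time, and verify that the extracted power of $q$ equals a multiplicity-weighted generalization of the refusal statistic $\mathfrak{R}$ of \eqref{eq:refusal_statistic_m}, with the remaining $a$-independent prefactor depending only on the type counts.

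The main obstacle is twofold. First, identifying the correct multiplicity generalization of $\mathfrak{R}$: in the strict case each pair $m\le i<j\le n$ with $D_i^{(k)}=E_j^{(k)}=1$ contributes a single indicator, but with repeated labels one must show that the coupled winding sums redistribute the $q$-powers into a clean statistic rather than leaving residual $q$-binomial remainders, and it is precisely this that is not evident a priori. Second, controlling the interaction between columns, since the minimal configuration $\mathbf{C}^{(k),\min}$ and the excesses $E_j^{(k)}$ depend on the diagram in all higher columns. A potentially cleaner alternative, which I would pursue in parallel, is to bypass explicit weight matching and instead intertwine the vertex-model column transfer operator with the tableau-process update directly through the Yang--Baxter equation, in the spirit of \Cref{thm:straight_st}; if such an intertwiner can be exhibited, the proportionality would follow structurally and sidestep the combinatorial bookkeeping of multiplicities.
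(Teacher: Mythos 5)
The statement you are trying to prove is \Cref{conj:multiline_qboson_AMM}, which the paper states explicitly as a \emph{conjecture} and does not prove; the authors note that \cite{ayyer2022modified} worked with the tableau process and ``did not give explicit weights'' for general (non-strict) diagrams, and they leave the identification open. So there is no proof in the paper to compare against, and the relevant question is whether your proposal actually closes the conjecture. It does not: it is a research plan whose decisive steps remain open, as you yourself concede. Two of your preliminary observations are sound and worth keeping: the marginal over particle configurations $\mathbf{V}$ agrees automatically (by \Cref{prop:qBos_stationary}, the stationarity result of \cite{ayyer2022modified}, and uniqueness of the stationary distribution in each sector), so the genuine content is the finer diagram-by-diagram proportionality; and the conjecture cannot even be attacked until the target weight of a general diagram is extracted from the tableau process of \cite[Section~4]{ayyer2022modified}, which is itself a nontrivial unresolved task that your plan defers rather than solves.

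The computational core is where the gap lies. The strict-case proof of \Cref{prop:mqueue_to_qboson} hinges on two facts that fail for general multiplicities: for each color $j>m$ there is \emph{exactly one} row $k_j$ with $D_j^{(k)}=1$, so the sum over the winding variable $a_j$ in \eqref{eq:mqueue_qboson_proof} carries a single factor $(1-q^{1+a_j})/(1-q)$ and collapses to a geometric-type series whose value is a sector-dependent constant times a clean power of $q$; and arrow conservation with $0/1$ occupancies forces $E_j^{(k)}=0$ whenever $D_j^{(k)}=1$. With arbitrary $D_j^{(k)}\in\mathbb{Z}_{\ge 0}$, the sum over $a_j$ is weighted by a \emph{product} of shifted $q$-binomials from \eqref{eq:q_boson_weights_degeneration_vertex} over all rows $k$ with $D_j^{(k)}>0$, and you do not exhibit the $q$-summation identity (your suggested $q$-Chu--Vandermonde/$q$-Saalsch\"utz route is named but not carried out) that evaluates it, nor do you show that the answer factors as a type-count constant times $q$ raised to a well-defined multiplicity generalization of the refusal statistic \eqref{eq:refusal_statistic_m} — it is a priori possible that residual $q$-binomial remainders survive and that the correct statement requires reformulating the diagram weights altogether. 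The alternative route via a Yang--Baxter intertwiner between the column transfer operator and the tableau-process update is likewise only gestured at, not constructed. In short, your proposal is a sensible program whose status matches the paper's: the statement remains conjectural, and nothing in the proposal would, as written, resolve it.
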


Second, the Yang--Baxter equation for the queue vertex model (\Cref{prop:queue_YBE}) should allow us to directly show the symmetry of the stationary distribution in the parameters $u_j$. More precisely \cite[Proposition~7.2]{ayyer2022modified}, for any $K$, the distribution of the configuration at sites $\{1,\ldots,K \}$ of the ring is symmetric in the parameters $u_{K+1},\ldots,u_N$. Moreover, using the Yang--Baxter equation and couplings similarly to \cite{petrov2022rewriting}, it should be possible to establish the stronger symmetry of the distributions of the whole trajectories of the colored $q$-Boson system. This stronger property is proven only for $q=0$ \cite[Theorem~7.14]{ayyer2022modified}. We leave these two questions for future work.

\section{Colored $q$-PushTASEP from straight cylinder}
\label{sec:qPush}

This section considers another specialization of the straight cylinder Markov transition operator leading to the colored $q$-PushTASEP. We also present a vertex model on the cylinder producing its stationary distribution. Our argument here is very similar to \Cref{sec:ASEP_matrix_products,sec:qBoson} above.
The colored $q$-PushTASEP is a degeneration of the colored stochastic higher spin six-vertex model and was introduced in \cite[Section~12.5]{borodin_wheeler2018coloured}.

Throughout the section, we assume that $q\in (0,1)$ and fix a positive integer $\mathsf{P}$. As usual, let $N$ be the size of the ring, and $n$ be the number of colors. The colored $q$-PushTASEP depends on positive parameters
$\mathbf{u} = (u_1,\dots, u_N)$.

\begin{definition}
	\label{def:qPushTASEP}
	The state space of the colored $q$-PushTASEP is the set of particle configurations on the ring. At any site, there can be at most $\mathsf{P}$ particles. Particles of the same color are indistinguishable.
	Let $V_{\mathsf{P}}$ be the vector space with the basis
	$| \mathbf{V}\rangle$, where $\mathbf{V}\in \mathbb{Z}_{\ge0}^n$
	with $|\mathbf{V}|\le \mathsf{P}$. The states of the
	colored $q$-PushTASEP can be identified with the
	basis vectors of $V_{\mathsf{P}}^{\otimes N}$.
	The $q$-PushTASEP evolves in continuous time as follows.
	Let $\mathbf{A}\in \mathbb{Z}_{\ge0}^{n}$ be the
	configuration of particles at a site $k$.
	For each $j=1,\ldots,n $,
	a
	particle of type~$j$~\emph{activates} and instantaneously
	leaves the
	site~$k$ (moving toward~$k+1$) with the
	rate~$u_k^{-1}(q^{-A_j} - 1)\ssp
	q^{\mathsf{P}-A_{[j+1, n]} }$.

	The active particle triggers other instantaneous
	updates of the configuration according to the following
	rules.
	Let $\mathbf{B}\in \mathbb{Z}_{\ge0}^n$
	be the configuration of particles at a site~$k'$.
	Suppose that an activated particle of type~$c$
	arrives at~$k'$.
	Then the following happens:
		\begin{enumerate}[$\bullet$]
			\item It deactivates and stays at~$k'$
				with probability~$1- q^{\mathsf{P}-|\mathbf{B}|}$,
				then the update ends.
			\item
				It deactivates and stays at~$k'$, but causes the
				activation of exactly one particle from~$k'$ (which
				then moves towards~$k'+1$) of some type~$d < c$. The
				type $d$ of the activated particle is chosen with
				probability~$(q^{-B_d}-1) \ssp
				q^{\mathsf{P}-B_{[d+1, n]} }$ (the particles of the
				same type are indistinguishable).
				After the activated particle leaves the site $k'$,
				the update continues.
			\item
				It remains active and moves on to site~$k' + 1$ with probability~$q^{\mathsf{P}- B_{[c,n]} }$, and the update continues.
		\end{enumerate}
		Note that all probabilities in the three cases above sum to one.
		All particle moves from $j$ to $j+1$ are
		considered cyclically mod $N$.
		Denote the Markov semigroup of the colored $q$-PushTASEP by
		$\mathfrak{P}_{\mathrm{qPush}}(t)$, $t\in \mathbb{R}_{\ge0}$.
\end{definition}

Let us describe a limit of this model as $q\to 0$
(which corresponds to setting $q=\infty$ in \cite[Section~12.5]{borodin_wheeler2018coloured}).
There are several changes to \Cref{def:qPushTASEP} for $q=0$:
\begin{enumerate}[$\bullet$]
	\item The activation of a particle is only possible when the site $k$ contains the full
		number $\mathsf{P}$ of particles. Then only the particle with the minimal type
		is activated, at rate $u_k^{-1}$.
	\item If a particle of type $c$ arrives at a site $k'$ with $<\mathsf{P}$ particles, then it deactivates and stays
		there with probability $1$.
	\item
		If the site is full, then the particle of type $c$ deactivates and stays at $k'$,
		but activates the particle of the minimal available type $d$ at site $k'$.
		The activated particle moves to site $k'+1$, and the update continues.
\end{enumerate}

In particular, for $\mathsf{P}=1$, we get the following process which
appeared in another context:
\begin{remark}[Frog model]
	Set $q=0$, $\mathsf{P}=1$, and $u_k=1$ for all $k$ (the most
	simplified version of the colored $q$-PushTASEP).
	In this case, each particle at any site $k$ can be activated at rate $1$, and moves from $k$ to~$k+1$. Then the instantaneous update proceeds as follows:
	\begin{enumerate}[$\bullet$]
		\item
			If an active particle arrives at an empty site, it deactivates and stays there, and the update ends.
		\item
			If a type $c$ particle arrives at a site~$k'$ with an existing particle of smaller type $d$, then the
			type $c$ particle stops at $k'$, and displaces the type $d$ particle, which now becomes active.
		\item
			Finally, if a particle arrives at a site~$k'$ with an existing particle of larger or equal type, then
			it just moves through to the next site~$k'+1$ and stays active, so the update continues.
	\end{enumerate}
	This process is a particular case of the \emph{frog model}
	\cite{bukh2019periodic}
	related to the problem of the longest common subsequence of a
	random and a periodic word. Our particular case corresponds to the periodic word with all letters distinct.
	More general periodic words lead to the simultaneous activation of particles at several sites.
	The stationary distribution of the frog model was constructed (in the particular case of distinct letters)
	in \cite[Section~4]{bukh2019periodic}.
\end{remark}

As usual, by $(N_1,\ldots, N_n)$ we denote the type counts in the configuration, which are preserved by the $q$-PushTASEP dynamics.
For $q\ne 0$,
when restricted to a sector determined by
$(N_1,\ldots,N_n)$, the colored $q$-PushTASEP is an irreducible continuous-time Markov chain on a finite state space. Therefore, it admits a unique stationary distribution which we denote by
$\mathop{\mathrm{Prob}}^{\mathrm{qPush}}\nolimits_{N_1,\ldots,N_n }(\mathbf{V})$.

\begin{remark}
	\label{rmk:not_irreducible}
	When $q=0$, the colored $q$-PushTASEP
	on the ring
	is not irreducible. Indeed, one can
	check that on
	the ring with $N=2$ sites, $\mathsf{P}=2$, and
	three particles of the types $1,2,3$,
	not all states communicate. Indeed, there are no transitions
	into the states $(1\mid 23)$ and $(12\mid 3)$
	(in this notation, we view the particles as jumping to the right).
\end{remark}

The colored $q$-PushTASEP is a degeneration of the straight cylinder formal Markov operator. Thus, its stationary distribution is accessible through the corresponding limit transition from the queue vertex model on the cylinder. These limits are very similar to the $q$-Boson case (\Cref{sub:qBoson_process,sub:qBoson_stationary}), so we will only provide pictorial illustrations and brief explanations.

A queue vertex model leading to the $q$-PushTASEP stationary distribution must have finite spin rows (with the horizontal spin parameters $q^{-\mathsf{P}/2}$). We reverse the direction of the straight cylinder operator to match the direction of the particle jumps from $k$ to $k+1$ (opposite from the $q$-Boson case). That is, consider the queue vertex model
\begin{equation}
	\label{eq:qpush_stat_vertex_model_Q}
	\mathfrak{Q}\left( (u_1,\ldots,u_N,u_0); (q^{-\mathsf{P}/2},\ldots,q^{-\mathsf{P}/2},q^{-1/2} ); (v_1,\ldots,v_m );(s_1^{(v)},\ldots,
	s_n^{(v)} )\right)
\end{equation}
with $N+1$ sites on the ring indexed by $j=1,\ldots,N,0$. Let the distinguished auxiliary line with $j=0$ be at the bottom; see \Cref{fig:qpush_stat}. In \eqref{eq:qpush_stat_vertex_model_Q}, the vertex weights at the sites $(-m,j)$, $1\le j\le N$, and at $(-m,0)$ are, respectively,
\begin{equation}
	\label{eq:qpush_stat_vertex_model_Q_weights_2_cases}
	\WQ^{(-m)}_{q^{-\mathsf{P}/2},s_m^{(v)},u_j/v_m}
	\quad\textnormal{and}\quad
	\WQ^{(-m)}_{q^{-1/2},s_m^{(v)},u_0/v_m}.
\end{equation}
By the Yang--Baxter equation for the queue vertex model (\Cref{prop:queue_YBE}), the vertex weights of the straight cylinder Markov operator must be the fused stochastic weights $W_{q^{1/2-\mathsf{P}/2}u_j/u_0,\mathsf{P}, 1}$ from \Cref{sub:fusion_text}. They are given in \Cref{fig:L_horizontal_weights}.

\begin{remark}
	The weights
	in
	\Cref{fig:L_horizontal_weights}
	are matched to transition probabilities
	of a discrete time particle system on the ring as
	$$
	\begin{tikzpicture}[baseline=(current bounding box.center),scale=0.6]
		\draw[white!45!black,line width=1.5pt,->] (-5.2,-1) --++ (.4,2);
		\draw[white!45!black,line width=4pt,->] (-6,0) -- (-4,0);
		\node[left] at (-6,0) {\small $\mathbf{A}$};
		\node[right] at (-4,0) {\small $\mathbf{C}$};
		\node[above] at (-4.8,1) {\small $k$};
		\node[below] at (-5.2,-1) {\small $\ell$};
		\node[left] at (-2,0) {\small $=$};
		\draw[white!45!black,line width=1.5pt,->] (1,0) -- (-1,0);
		\draw[white!45!black,line width=4pt,->] (0,-1) -- (0,1);
		\node[left] at (-1,0) {\small $k$};
		\node[right] at (1,0) {\small $\ell$};
		\node[below] at (0,-1) {\small $\mathbf{A}$};
		\node[above] at (0,1) {\small $\mathbf{C}$};
		\node[above] at (2.2,0) { \phantom{.} };
	\end{tikzpicture}.
	$$
	The picture in the left-hand side represents vertices in \Cref{fig:qpush_stat}. In the right-hand side, the vertical direction corresponds to time, and the states $\mathbf{A},\mathbf{C}$ encode particle configurations at a given site $j \in \left\{ 1,\ldots,N  \right\}$ on the ring.

	On the right, the horizontal arrow points left because after rotating \Cref{fig:qpush_stat} by $90^\circ$ counterclockwise, the sites on the ring are cyclically ordered as $(N,N-1,\ldots,1 )$. Recall that under the $q$-PushTASEP, particles move in the direction of increasing $j$. This direction of the particle motion is opposite to the $q$-Boson situation, cf. the proof of \Cref{prop:limit_to_cqboson}.
\end{remark}

\begin{figure}[htpb]
	\centering
	\includegraphics[width=.75\textwidth]{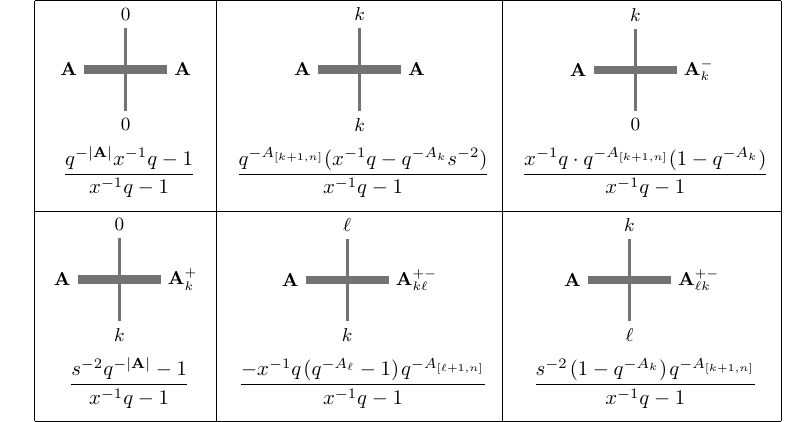}
	\caption{The vertex weights $W_{x,\mathsf{P}, 1}( \mathbf{e}, \mathbf{A}; \mathbf{e}', \mathbf{A}')|_{q^{\mathsf{P}} = s^{-2}}$. Here  $\mathbf{e},\mathbf{e}'$ are basis vectors corresponding to empty or one-particle configurations in $\mathbb{Z}_{\ge0}^n$, and $1\le k<\ell \le n$. Note that these weights can be obtained from the stochastic~$L$ weights (\Cref{fig:L_weights}) by reflecting the picture about the diagonal and setting~$s^2 \rightarrow s^{-2}, q \rightarrow q^{-1}, s x \rightarrow x^{-1} q$.}
	\label{fig:L_horizontal_weights}
\end{figure}

\begin{figure}[htbp]
	\centering
	\includegraphics[width=.75\textwidth]{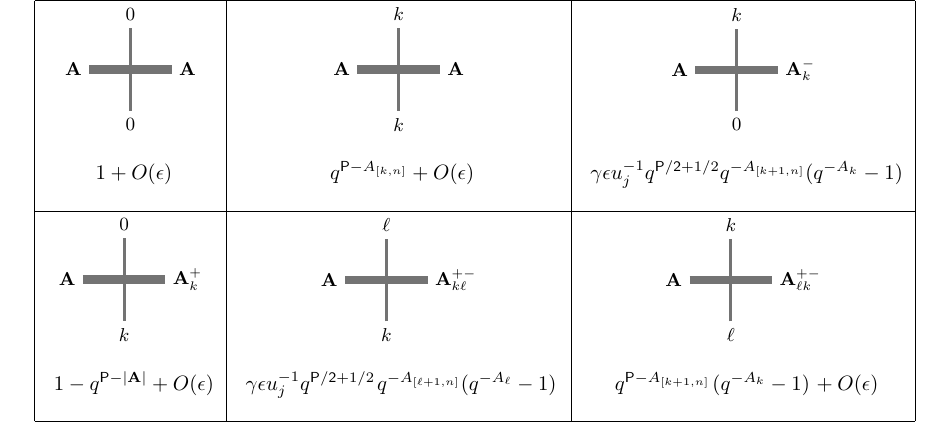}
	\caption{Small~$\epsilon$ expansion of the vertex weights
	$W_{q^{1/2-\mathsf{P}/2}u_j/u_0,\mathsf{P}, 1}$ with
	$u_0=\gamma \epsilon $.}
	\label{fig:hor_small_eps}
\end{figure}

Now let us pass to a Poisson-type continuous-time limit of
the straight cylinder Markov operator to get the continuous
time Markov semigroup of the colored $q$-PushTASEP. Set
\begin{equation}
	\label{eq:qpush_cont_lim_parameters}
	u_0\coloneqq\gamma \epsilon >0,\qquad
	j=1,\ldots,N;
	\qquad
	\gamma\coloneqq q^{\mathsf{P}/2-1/2},
\end{equation}
where $\epsilon>0$ is small. The $\epsilon\to0$ expansions of the vertex weights from \Cref{fig:L_horizontal_weights} are given in \Cref{fig:hor_small_eps}.

\medskip
These expansions imply the convergence as $\epsilon\to0$ of the straight cylinder Markov operators
$\mathfrak{S}(\mathbf{u}, \gamma\epsilon; (q^{-\mathsf{P}/2},\ldots,q^{-\mathsf{P}/2},q^{-1/2} ))^{\lfloor t/\epsilon \rfloor }$
to the $q$-PushTASEP semigroup
$\mathfrak{P}_{\mathrm{qPush}}(t)$, in the same way as for the $q$-Boson process
(\Cref{prop:limit_to_cqboson}). Indeed, the auxiliary spin~$1/2$ line becomes occupied at a given instant in time with probability~$O(\epsilon)$. Then, with high probability it becomes unoccupied within a finite number of discrete time steps, which corresponds to it becoming unoccupied instantaneously with respect to the macroscopic continuous time $t$.

The convergence of the straight cylinder Markov operators to the colored $q$-PushTASEP implies that the stationary distribution of the latter process can be represented as the partition function of a queue vertex model on the cylinder. More precisely, we have the following result:

\begin{proposition}
	\label{prop:qPushTASEP_stationary}
	Let $q\in[0,1)$, $\mathsf{P}\in \mathbb{Z}_{\ge1}$ and $u_1,\ldots,u_N>0 $.
	Fix the type counts $(N_1,\ldots,N_n)$ with $N_i\ge1$ for all $i$.
	For any $v_1,\ldots,v_n$ and
	$s_1^{(v)},\ldots,s_n^{(v)}$,
	the stationary measure of the
	colored $q$-PushTASEP process on the ring has the form
	\begin{equation}
		\label{eq:qPushTASEP_stationary_distribution_result}
		\mathop{\mathrm{Prob}}^{\mathrm{qPush}}\nolimits_{N_1,\ldots,N_n }(\mathbf{V})
		=
		\frac{\langle\emptyset | \ssp
			\mathfrak{Q}
			\bigl(
				 (u_1,\ldots,u_N); (q^{-\mathsf{P}/2},\ldots,q^{-\mathsf{P}/2} ); (v_1,\ldots,v_m );
				 (s_1^{(v)},\ldots,s_n^{(v)} )
			\bigr)
		\ssp | \mathbf{V}\rangle}{Z_{N_1,\ldots,N_n }^{\mathrm{qPush}}}.
	\end{equation}
\end{proposition}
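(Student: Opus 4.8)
The plan is to mirror the proof of \Cref{prop:qBos_stationary} step for step, replacing the $q$-Boson degeneration of the straight cylinder operator with the $q$-PushTASEP one introduced just above. The backbone is the general stationarity relation of \Cref{thm:straight_st}, applied to the $(N+1)$-site queue vertex model \eqref{eq:qpush_stat_vertex_model_Q}: it gives $\langle \emptyset | \mathfrak{Q}\, \mathfrak{S} = \langle \emptyset | \mathfrak{Q}$, where $\mathfrak{S}$ is assembled from the fused weights $W_{q^{1/2-\mathsf{P}/2}u_j/u_0,\mathsf{P},1}$ of \Cref{fig:L_horizontal_weights}. These are exactly the weights that the Yang--Baxter equation of \Cref{prop:queue_YBE} pairs with the two families of queue weights in \eqref{eq:qpush_stat_vertex_model_Q_weights_2_cases}, so \Cref{thm:straight_st} applies with the parameter matching dictated by \eqref{eq:qpush_stat_vertex_model_Q}.

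First I would establish the $q$-PushTASEP analog of \Cref{prop:limit_to_cqboson}. Setting $u_0 = \gamma\epsilon$ with $\gamma = q^{\mathsf{P}/2-1/2}$ as in \eqref{eq:qpush_cont_lim_parameters} and reading off the $O(\epsilon)$ expansion of \Cref{fig:hor_small_eps}, the order-$\epsilon$ off-diagonal transitions of $\mathfrak{S}$ reproduce precisely the activation-and-pushing moves of \Cref{def:qPushTASEP}, while the auxiliary spin-$\tfrac12$ row $j=0$ is occupied only with probability $O(\epsilon)$ and is vacated within finitely many discrete steps. Consequently, on states with empty auxiliary line, $\mathfrak{S}^{\lfloor t/\epsilon\rfloor} \to \mathfrak{P}_{\mathrm{qPush}}(t)$ in the Poisson-type limit, exactly as in the $q$-Boson case (with the ring orientation reversed so that activated particles travel from $k$ to $k+1$).

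Simultaneously I would check that the queue transfer matrix collapses from the $(N+1)$-cylinder to the $N$-cylinder. As $u_0 = \gamma\epsilon \to 0$, the auxiliary-row weights $\WQ^{(-m)}_{q^{-1/2},s^{(v)}_m,u_0/v_m}$ degenerate so that the empty-to-empty weight tends to $1$ and every weight injecting a path into row $0$ tends to $0$; this is the verbatim analog of the two displayed limits in the proof of \Cref{prop:qBos_stationary}. Hence configurations occupying the auxiliary line contribute nothing in the limit, and one obtains the $q$-PushTASEP counterpart of \eqref{eq:going_from_Nplus1_to_N_in_the_model}, whose right-hand side is exactly the $N$-cylinder partition function appearing in \eqref{eq:qPushTASEP_stationary_distribution_result}. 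Passing to the limit inside the stationarity relation --- and justifying the exchange of $\epsilon\to0$ with the winding summations by the uniform geometric bound of (the $q$-PushTASEP version of) \Cref{lemma:queue_vertex_well_defined}, valid because all $N_i\ge1$ --- yields $\langle \emptyset | \mathfrak{Q}\,\mathfrak{P}_{\mathrm{qPush}}(t) = \langle \emptyset | \mathfrak{Q}$ for the $N$-cylinder. Thus, restricted to a sector, $\langle \emptyset | \mathfrak{Q}$ is a left eigenvector of $\mathfrak{P}_{\mathrm{qPush}}(t)$ with eigenvalue $1$; for $q\in(0,1)$ the chain is irreducible on the finite sector, eigenvalue $1$ is simple by Perron--Frobenius, and the (nonzero, for generic parameters) partition function is therefore proportional to $\mathop{\mathrm{Prob}}^{\mathrm{qPush}}_{N_1,\dots,N_n}$, giving \eqref{eq:qPushTASEP_stationary_distribution_result}. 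As a byproduct, the independence of the normalized partition function from $\mathbf{v}$ and $\mathbf{s}^{(v)}$ falls out, exactly as in \Cref{rmk:queue_mASEP_stationary_remark_independent_of_parameters}.

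The step I expect to demand the most care is verifying that the $\epsilon$-expansion of the fused weights $W_{q^{1/2-\mathsf{P}/2}u_j/u_0,\mathsf{P},1}$ reproduces the full cascade of activation, deactivation, and pushing rules of \Cref{def:qPushTASEP} --- this is the genuinely model-specific computation, more intricate than the $q$-Boson zero-range rule because of the multi-step pushing. A secondary subtlety is the case $q=0$: by \Cref{rmk:not_irreducible} the chain is no longer irreducible, so Perron--Frobenius does not single out a unique eigenvector; there one only concludes that the normalized partition function is \emph{a} stationary measure, and identifies it with the intended distribution either by continuity as $q\downarrow 0$ or by a direct check.
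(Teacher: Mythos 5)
Your proposal takes essentially the same route as the paper's proof, which likewise sets $u_0=\gamma\epsilon$, uses the expansions of \Cref{fig:hor_small_eps} in place of your $q$-PushTASEP analog of \Cref{prop:limit_to_cqboson}, checks that the auxiliary-line queue weights $\WQ^{(-m)}_{q^{-1/2},s_m^{(v)},\gamma\epsilon}$ keep that line empty as $\epsilon\to0$, and passes from the $(N+1)$-row to the $N$-row cylinder exactly as in \eqref{eq:going_from_Nplus1_to_N_in_the_model}; the one point of precision is that \Cref{thm:straight_st} is invoked in its reversed-orientation form (the commutation of \Cref{fig:qpush_stat} via \Cref{prop:queue_YBE} with the cross weights $W_{q^{1/2-\mathsf{P}/2}u_j/u_0,\mathsf{P},1}$), which you correctly flag. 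Your remark on the $q=0$ non-irreducibility (\Cref{rmk:not_irreducible}) is a sound observation that the paper's outline leaves implicit.
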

\begin{proof}[Proof outline]
	This is proven in the same way as
	\Cref{prop:qBos_stationary}.
	The queue vertex model
	\eqref{eq:qPushTASEP_stationary_distribution_result}
	on the cylinder
	has the weights
	of the first type in~\eqref{eq:qpush_stat_vertex_model_Q_weights_2_cases}.
	The weights of the second type have the parameter
	$u_0 = \gamma \epsilon$.
	One can check that as $\epsilon\to0$, we have
	\begin{equation*}
		\WQ^{(-m)}_{q^{-1/2},s_m^{(v)},\gamma \epsilon}
		(\mathbf{A},0;\mathbf{A},0)=
		1+O(\epsilon),
		\qquad
		\WQ^{(-m)}_{q^{-1/2},s_m^{(v)},\gamma \epsilon}
		(\mathbf{A},0 ;\mathbf{A}_k^{-},k)=
		O(\epsilon).
	\end{equation*}
	This means that in the limit $\epsilon\to0$, the auxiliary line is unoccupied with probability going to $1$. Thus, the weights of the second type in \eqref{eq:qpush_stat_vertex_model_Q_weights_2_cases} do not contribute to the queue vertex model, and we may pass from the model
	\eqref{eq:qpush_stat_vertex_model_Q} on the cylinder with $N+1$ rows to
	\eqref{eq:qPushTASEP_stationary_distribution_result}
	with $N$ rows, in the same way as in \eqref{eq:going_from_Nplus1_to_N_in_the_model}.
	This completes the proof.
\end{proof}

\begin{figure}
	\centering
	\includegraphics[width=\textwidth]{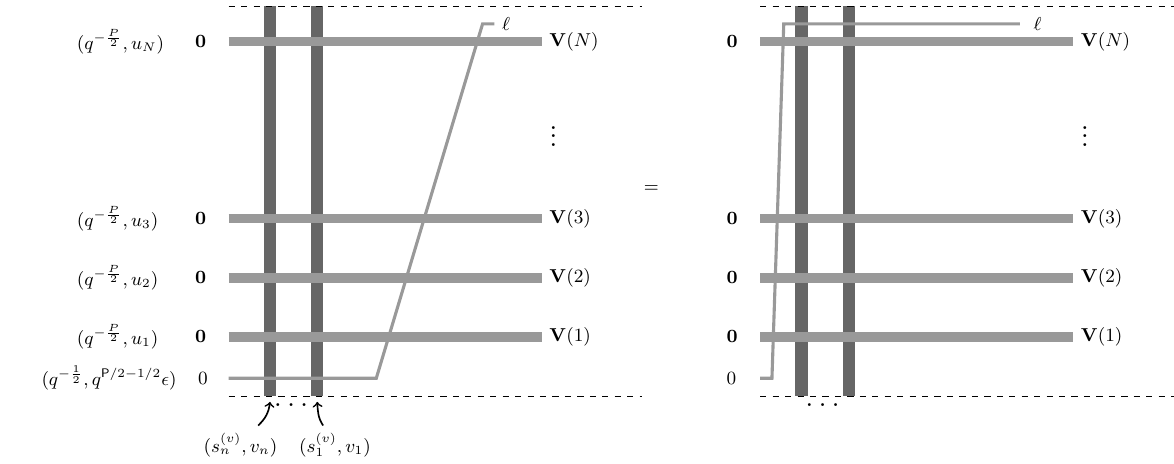}
	\caption{Illustration of the
	$q$-PushTASEP stationarity for finite~$\epsilon$.}
	\label{fig:qpush_stat}
\end{figure}

Let us now discuss the nonnegativity of the individual vertex weights in the queue vertex model in
\eqref{eq:qPushTASEP_stationary_distribution_result}.
Note that the normalized partition functions are positive as
components of the
Perron--Frobenius eigenvector of
$\mathfrak{P}_{\mathrm{qPush}}(t)$.

Define
\begin{equation}
	\label{eq:qPushTASEP_stationary_distribution_positive_weights}
	\WQ^{(-m),\mathrm{qPush}(\mathsf{P})_+}_{s,u}(\mathbf{A},\mathbf{B};\mathbf{C},\mathbf{D})
	\coloneqq
	(-1/s)^{|\mathbf{D}|}\ssp
	\WQ^{(-m)}_{q^{-\mathsf{P}/2},s,u}(\mathbf{A},\mathbf{B};\mathbf{C},\mathbf{D}),
	\qquad
	\mathbf{A},\mathbf{B},\mathbf{C},\mathbf{D}\in \mathbb{Z}_{\ge0}^n.
\end{equation}
Note that $|\mathbf{B}|,|\mathbf{D}|\le \mathsf{P}$
due to the finite-spin reduction
(see \Cref{rmk:finite_spin,rmk:finite_spin_reduction_fully_fused}).
The multiplication by
$(-1/s_m^{(v)})^{|\mathbf{D}|}$
in each column of the queue vertex model
\eqref{eq:qPushTASEP_stationary_distribution_result}
can be absorbed into the normalizing constant, and thus
does not affect the normalized partition functions.
In other words, we can use the weights
\eqref{eq:qPushTASEP_stationary_distribution_positive_weights}
to represent the stationary distribution of the
colored $q$-PushTASEP.

The weights
\eqref{eq:qPushTASEP_stationary_distribution_positive_weights}
arise from the mASEP queue weights
\eqref{eq:ASEP_gauge}
by \emph{fusion}. That is, each
weight
\eqref{eq:qPushTASEP_stationary_distribution_positive_weights}
is a certain sum of $\mathsf{P}$-fold
products of the weights
$\WQ_{q^{-1/2},s,uq^i}^{(-m),\mathrm{mASEP}_+}$,
where $i=0,1,\ldots,\mathsf{P}-1 $.
We refer to
\cite[Appendix~B]{borodin_wheeler2018coloured}
and
\cite[Theorem 8.5]{borodin2019shift}
for details.
This implies the following nonnegativity of
\eqref{eq:qPushTASEP_stationary_distribution_positive_weights}:
\begin{proposition}
	\label{prop:qpush_weights_nonnegative}
	Let
	\begin{equation*}
		0\le s_m^{(v)}< \frac{u_j}{v_m}\ssp q^{\mathsf{P}-1/2}
		<
		\frac{u_j}{v_m}\ssp q^{-1/2}
		\le 1
	\end{equation*}
	for
	all $1\le m\le n$, $1\le j\le N$.
	Then the vertex weights
	$\WQ^{(-m),\mathrm{qPush}(\mathsf{P})_+}_{s_m^{(v)},u_j/v_m}$
	\eqref{eq:qPushTASEP_stationary_distribution_positive_weights}
	(entering the queue vertex model on the cylinder
	representing the stationary distribution of the
	$q$-PushTASEP) are nonnegative.
\end{proposition}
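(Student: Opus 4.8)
The plan is to deduce nonnegativity from the already-established nonnegativity of the mASEP gauge weights (\Cref{prop:positivity_parameter_dependence}) by means of horizontal fusion, exactly as anticipated in the discussion preceding the proposition. Recall that for integer $\mathsf{P}$ the capacity-$\mathsf{P}$ queue weight $\WQ^{(-m),\mathrm{qPush}(\mathsf{P})_+}_{s,u}$ is obtained from the spin-$\tfrac12$ mASEP gauge weights $\WQ_{q^{-1/2},s,uq^{i}}^{(-m),\mathrm{mASEP}_+}$ of \eqref{eq:ASEP_gauge} by fusing $\mathsf{P}$ horizontal lines; this is the queue specialization of the fusion procedure of \cite[Appendix~B]{borodin_wheeler2018coloured} and \cite[Theorem~8.5]{borodin2019shift}. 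Concretely, I would write $\WQ^{(-m),\mathrm{qPush}(\mathsf{P})_+}_{s,u}(\mathbf{A},\mathbf{B};\mathbf{C},\mathbf{D})$ as a sum, over all ways of distributing the horizontal input $\mathbf{B}$ and output $\mathbf{D}$ (recall $|\mathbf{B}|,|\mathbf{D}|\le\mathsf{P}$) among the $\mathsf{P}$ spin-$\tfrac12$ edges, as $\mathbf{B}=\sum_i\beta_i$, $\mathbf{D}=\sum_i\delta_i$ with $|\beta_i|,|\delta_i|\le 1$, and over intermediate vertical states $\mathbf{A}=\mathbf{A}^{(0)},\dots,\mathbf{A}^{(\mathsf{P})}=\mathbf{C}$, of a product $\prod_{i=0}^{\mathsf{P}-1}\WQ_{q^{-1/2},s,uq^{i}}^{(-m),\mathrm{mASEP}_+}(\mathbf{A}^{(i)},\beta_i;\mathbf{A}^{(i+1)},\delta_i)$ multiplied by a fusion coefficient of $\Phi$-type (a ratio of $q$-Pochhammer symbols as in \eqref{eq:capital_Phi_coloreq_sqW}). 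Because $|\mathbf{D}|=\sum_i|\delta_i|$, the gauge prefactor $(-1/s)^{|\mathbf{D}|}$ in \eqref{eq:qPushTASEP_stationary_distribution_positive_weights} splits as $\prod_i(-1/s)^{|\delta_i|}$ and is absorbed into the individual factors, turning each of them into precisely a mASEP gauge weight. The $\Phi$-type fusion coefficients are nonnegative for $q\in[0,1)$, and each constituent weight has a well-defined queue limit by \Cref{lemma:queue_spec_exists}, so the decomposition passes termwise to the queue specialization.

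Granting this decomposition, the remaining task is to propagate the parameter inequalities. Writing $w\coloneqq u_j/v_m$ and $s\coloneqq s_m^{(v)}$, the hypotheses read $0\le s<wq^{\mathsf{P}-1/2}$ and $wq^{-1/2}\le 1$. I would check that each constituent factor $\WQ_{q^{-1/2},s,wq^{i}}^{(-m),\mathrm{mASEP}_+}$, $i=0,\dots,\mathsf{P}-1$, satisfies the mASEP nonnegativity conditions \eqref{eq:positivity_parameter_dependence}, which for third argument $wq^{i}$ read $s\ssp q^{1/2}<wq^{i}$ and $s\ssp wq^{i}<q^{1/2}$. The first inequality is tightest at $i=\mathsf{P}-1$ and follows from $s<wq^{\mathsf{P}-1/2}<wq^{\mathsf{P}-3/2}$ (using $q<1$); the second is tightest at $i=0$ and follows from $s\ssp w<w^{2}q^{\mathsf{P}-1/2}\le q^{\mathsf{P}+1/2}<q^{1/2}$, using $w\le q^{1/2}$ and $\mathsf{P}\ge1$. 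Hence \Cref{prop:positivity_parameter_dependence} — more precisely, the nonnegativity of the mASEP gauge weights established within its proof — applies to every factor in the decomposition.

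Assembling the pieces, $\WQ^{(-m),\mathrm{qPush}(\mathsf{P})_+}_{s_m^{(v)},u_j/v_m}$ is a sum of products of nonnegative mASEP gauge weights with nonnegative fusion coefficients, hence nonnegative. The inequality bookkeeping above is routine; the one genuinely delicate point I expect is the first step — establishing the fusion decomposition directly for the \emph{gauge} (non-stochastic) queue weights, with the correct splitting of the $(-1/s)^{|\mathbf{D}|}$ factor and with manifestly nonnegative coefficients, and confirming that this identity survives the queue-specialization limit $A_m,C_m\to+\infty$. I would handle this by first proving the fusion identity for the unspecialized fully fused weights $W_{x,\mathsf{P},\mathsf{M}}$ (where the nonnegativity of the $\Phi$ coefficients and the telescoping of the vertical through-flow are transparent) and then invoking \Cref{lemma:queue_spec_exists} to take the limit term by term.
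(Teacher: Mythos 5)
Your proposal is correct and takes essentially the same route as the paper: its (two-sentence) proof likewise checks that the spin-$\frac12$ gauge weights $\WQ_{q^{-1/2},\ssp s_m^{(v)},\ssp q^{i}u_j/v_m}^{(-m),\mathrm{mASEP}_+}$, $i=0,\dots,\mathsf{P}-1$, satisfy the hypotheses of \Cref{prop:positivity_parameter_dependence}, and then invokes the fusion decomposition with nonnegative coefficients stated just before the proposition. Your explicit inequality bookkeeping (tightest cases $i=\mathsf{P}-1$ and $i=0$), the splitting of the gauge factor $(-1/s)^{|\mathbf{D}|}$, and the termwise passage of the fusion identity through the queue specialization are all correct details that the paper delegates to \cite{borodin_wheeler2018coloured} and \cite{borodin2019shift}.
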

\begin{proof}
	Under the hypotheses, the weights $\WQ_{q^{-1/2},s_m^{(v)},q^i u_j/v_m }^{(-m),\mathrm{mASEP}_+}$, where $i$ runs from $0$ to $\mathsf{P}-1$, are all nonnegative; see \Cref{prop:positivity_parameter_dependence}. Together with fusion, this implies
	the desired nonnegativity of
 	the weights
	$\WQ^{(-m),\mathrm{qPush}(\mathsf{P})_+}_{s_m^{(v)},u_j/v_m}$.
\end{proof}

\begin{remark}
	\label{rmk:ASEP_vs_qPush_stationarity}
	The stationary distribution for the colored $q$-PushTASEP with equal parameters $u_j = u$, $1\le j\le N$, and with $\mathsf{P}=1$, is the same as for the mASEP. Indeed, this follows by matching the vertex weights (see the discussion before \Cref{prop:qpush_weights_nonnegative}). On the other hand, the proofs of the stationarity for mASEP and for the $q$-PushTASEP require different Markov operators on the cylinder (the twisted and the straight ones, respectively).
\end{remark}

\section{Stationarity in the quarter plane and on the line}
\label{sec:quarter_plane}

Here we explain how the queue vertex models on the cylinder
from \Cref{sec:ASEP_matrix_products,sec:qBoson,sec:qPush}
can be used to construct the stationary distributions for mASEP, the colored $q$-Boson and the colored $q$-PushTASEP
on the line $\mathbb{Z}$. Instead of passing to the limit as
the size of the ring goes to infinity (as in, e.g.,
\cite[Section~5]{martin2020stationary}), our proof of the stationarity on
the line passes through applying the Yang--Baxter equation
in the quarter plane, which may be viewed as a \emph{colored
generalization of Burke's theorem} for stochastic vertex models.
Applications of the
latter to single-color stochastic integrable systems were
the subject of, e.g., \cite{OConnellYor2001} (semi-discrete
Brownian polymer), \cite{Seppalainen2012} (log-gamma
polymer).
Particular cases of the colored Burke's theorem
(in the language of queues) appeared previously
in
\cite{FerrariMartin2005},
\cite{ferrari2009multiclass}.

\begin{remark}
	\label{rmk:only_homogeneous_processes}
	We only consider \emph{space-homogeneous} systems on the
	line ($u_j=u$ for all $j$ for the $q$-Boson and the
	$q$-PushTASEP; there are no known space-inhomogeneous
	integrable deformations of the ASEP). In contrast with the ring,
	stationarity of space-inhomogeneous systems on $\mathbb{Z}$
	is much more delicate. If the inhomogeneity is smooth in
	space, we may locally model stationary distributions by the
	homogeneous ones. In the non-smooth case, inhomogeneity in
	the $q$-Boson system may lead to infinite stacks of
	particles, separating the whole system into independent
	components. Out-of-equilibrium single-color inhomogeneous
	models (featuring both smooth and non-smooth inhomogeneity)
	were considered in, e.g., \cite{BorodinPetrov2016Exp},
	\cite{basu2017invariant}, \cite{SaenzKnizelPetrov2018},
	\cite{Petrov2017push}, and we refer to those works for
	further details.
\end{remark}

A special case $\mathsf{P}=1$ of the space-homogeneous colored $q$-PushTASEP is the same as the colored six-vertex model. Therefore, our constructions immediately produce stationary measures for the colored stochastic six-vertex model on the line. Furthermore, taking the Poisson type limit of the latter model along the diagonal (as described on the ring in \Cref{prop:mASEP_from_R}), we get the mASEP on the line. This implies that the stationary measures for the $q$-PushTASEP with $\mathsf{P}=1$ coincide with those of the mASEP. In \Cref{rmk:ASEP_vs_qPush_stationarity} we already noticed this connection on the ring. Therefore, in describing stationary measures on the line, we may restrict attention to the $q$-Boson and the $q$-PushTASEP.

\subsection{Queue steady state}
\label{sub:stationary_queue_regime}

Let $n\ge1$ be the number of colors,
and fix $1\le m\le n$.
Fix parameters $\alpha,\nu$
such that
\begin{equation}
	\label{eq:alpha_nu_nonneg}
	0\le \nu\le \alpha.
\end{equation}
Consider the
queue vertex weights
\begin{equation}
	\label{eq:WZ_queue_weights}
	\WQ^{(-m),\mathrm{line}}_{\alpha,\nu}
	(\mathbf{A},k;\mathbf{B},\ell)
	\coloneqq
	\WQ^{(-m)}_{q^{-1/2},s,z}
	(\mathbf{A},k;\mathbf{B},\ell),
	\qquad
	\alpha=-szq^{-1/2},\qquad \nu=-s^2.
\end{equation}
These weights are given in
\Cref{fig:ASEP_weights} with $z=uq^{1/2}$, and for convenience we
reproduce them with the parameters $\alpha,\nu$ in
\Cref{fig:steady_state_weights}.
The next statement is straightforward
from these expressions and
the sum-to-one property
\eqref{eq:fully_fused_stochastic_weights_sum_to_one}:
\begin{lemma}
	\label{lemma:weights_positive}
	The weights
	\eqref{eq:WZ_queue_weights}
	sum to one over $(\mathbf{B},\ell)$ for any fixed $(\mathbf{A},k)$.
	Moreover, under \eqref{eq:alpha_nu_nonneg}, we have
	$\WQ^{(-m),\mathrm{line}}_{\alpha,\nu}(\mathbf{A},k;\mathbf{B},\ell)\ge0$
	for all $\mathbf{A},\mathbf{B}\in \mathbb{Z}_{\ge0}^n$ and
	$k,\ell\in \left\{ 0,1,\ldots,n  \right\}$.
\end{lemma}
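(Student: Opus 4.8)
The plan is to obtain both claims as the queue limit of the corresponding statements in \Cref{prop:stochastic} for the spin-$\tfrac12$ horizontal weights $L_{s,x}$, rather than by expanding the general queue formula \eqref{eq:queue_spec_fully_fused}. First I would fix the parameter dictionary. By \eqref{eq:WZ_queue_weights}, $\WQ^{(-m),\mathrm{line}}_{\alpha,\nu}=\WQ^{(-m)}_{q^{-1/2},s,z}$ is the queue specialization with $s_1=q^{-1/2}$ (so $q^{-\mathsf{L}}=s_1^2=q^{-1}$, i.e.\ horizontal capacity $\mathsf{L}=1$), with $s_2=s$, and spectral parameter $z$. By the reduction \eqref{eq:fully_fused_reduction_to_L}, for $\mathsf{L}=1$ the fused weight equals the higher-spin weight $L_{s,x}$ with $x=q^{-1/2}z$; hence, by \Cref{lemma:queue_spec_exists}, $\WQ^{(-m),\mathrm{line}}_{\alpha,\nu}(\mathbf{A},k;\mathbf{B},\ell)$ is precisely the $A_m,B_m\to+\infty$ limit of $L_{s,\,q^{-1/2}z}(\mathbf{A},k;\mathbf{B},\ell)$, with the remaining coordinates and $\ell$ held fixed. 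This identification is the only preparation required.

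For the sum-to-one, I would invoke \eqref{eq:sum_to_one}, which gives $\sum_{\mathbf{B},\ell}L_{s,q^{-1/2}z}(\mathbf{A},k;\mathbf{B},\ell)=1$ for each fixed $(\mathbf{A},k)$. The essential observation is that arrow conservation pins down $\mathbf{B}=\mathbf{A}+\mathbf{e}_k\mathbf{1}_{k\ge1}-\mathbf{e}_\ell\mathbf{1}_{\ell\ge1}$, so this double sum is really a finite sum over $\ell\in\{0,1,\dots,n\}$. Being finite, it commutes with the queue limit of \Cref{lemma:queue_spec_exists}, and the lemma's sum-to-one follows. The one point deserving a sentence is that the $\ell=m$ term persists in the limit: a color-$m$ arrow can be emitted to the right out of the infinite reservoir $A_m=B_m=+\infty$ while respecting conservation, cf.\ \Cref{rmk:queue_weights_independence_of_1_to_m_1}.

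For nonnegativity, I would translate the hypotheses \eqref{eq:nonnegativity_of_vertex_weights_condition} of \Cref{prop:stochastic} for $L_{s,x}$ with $x=q^{-1/2}z$ into the variables $\alpha=-sx=-szq^{-1/2}$ and $\nu=-s^2$. Since $-sx+s^2=\alpha-\nu$, the three requirements $-sx>0$, $-s^2\ge0$, $-sx+s^2\ge0$ become exactly $\alpha>0$, $\nu\ge0$, $\alpha-\nu\ge0$, that is, $0\le\nu\le\alpha$ of \eqref{eq:alpha_nu_nonneg}. Thus under \eqref{eq:alpha_nu_nonneg} every weight $L_{s,q^{-1/2}z}$ is nonnegative by \Cref{prop:stochastic}, and nonnegativity is preserved under the pointwise limit of \Cref{lemma:queue_spec_exists}; this yields $\WQ^{(-m),\mathrm{line}}_{\alpha,\nu}\ge0$.

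I expect no serious obstacle: the content is entirely inherited from \Cref{prop:stochastic}, and the only care needed is the bookkeeping of the dictionary $x=q^{-1/2}z$, $\alpha=-sx$, $\nu=-s^2$, together with the remark that the relevant sum is finite so that limits and sums commute. As a cross-check, and the route signalled by the lemma's reference to \eqref{eq:fully_fused_stochastic_weights_sum_to_one}, one may instead read the finitely many weights off \Cref{fig:steady_state_weights} and verify nonnegativity directly; in the $\mathsf{L}=1$ case the infinite-product prefactor of \eqref{eq:queue_spec_fully_fused} collapses to $1/(1+\alpha)>0$, and each weight is a ratio with positive denominator whose numerator is a nonnegative combination of factors such as $\alpha$, $\nu$, $q^{j}$, $1-q^{j}$, $1+\nu q^{j}$, and $\alpha-\nu q^{j}$, the last being $\ge0$ precisely because $\nu q^{j}\le\nu\le\alpha$. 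I would present the $L$-weight reduction as the main argument and this inspection only as confirmation.
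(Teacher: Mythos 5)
Your proof is correct, and it reaches both claims by a route that is close in spirit to, but organized differently from, the paper's. The paper's own proof is a one-line inspection of the limiting weights: nonnegativity under \eqref{eq:alpha_nu_nonneg} is read directly off the explicit table in \Cref{fig:steady_state_weights}, and sum-to-one is inherited from \eqref{eq:fully_fused_stochastic_weights_sum_to_one}. You instead work on the pre-limit side: you correctly identify $\WQ^{(-m),\mathrm{line}}_{\alpha,\nu}$ as the $A_m,B_m\to+\infty$ limit of $L_{s,\,q^{-1/2}z}$ via \eqref{eq:fully_fused_reduction_to_L} and \Cref{lemma:queue_spec_exists}, translate the conditions \eqref{eq:nonnegativity_of_vertex_weights_condition} into exactly $0\le\nu\le\alpha$ through the dictionary $\alpha=-sx$, $\nu=-s^2$, $-sx+s^2=\alpha-\nu$, and then pass both properties through the limit. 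Your key observation --- that arrow conservation pins down $\mathbf{B}$ given $\ell$, so the sum in \eqref{eq:sum_to_one} is a finite sum over $\ell\in\{0,1,\dots,n\}$ that commutes with the limit, with the $\ell<m$, $\ell\ge1$ terms converging to $0$ consistently with the indicator $\mathbf{1}_{D_1=\cdots=D_{m-1}=0}$ in \eqref{eq:queue_spec_fully_fused} --- is exactly what makes the spin-$\frac12$ case painless compared to the higher-spin situation of \Cref{prop:queue_positivity}, where the paper must argue exponential decay. What your route buys is that nothing about the limiting table needs recomputation; what it costs is one hair you should acknowledge: \Cref{prop:stochastic} requires $-sx>0$ strictly, and the boundary cases $\nu=0<\alpha$ and $\alpha=0$ permitted by \eqref{eq:alpha_nu_nonneg} are not realized by any genuine pair $(s,x)$ (for $\nu=0$ one is forced to $s=0$, whence $-sx=0\ne\alpha$). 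Since the weights depend on $(\alpha,\nu)$ rationally (the paper itself notes that $L_{s,x}$ contains $-sx$ and $s^2$ as independent parameters), this closes by continuity --- or by your table cross-check, which is in fact the paper's entire argument.
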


\begin{figure}[htpb]
	\centering
	\includegraphics[width=.8\textwidth]{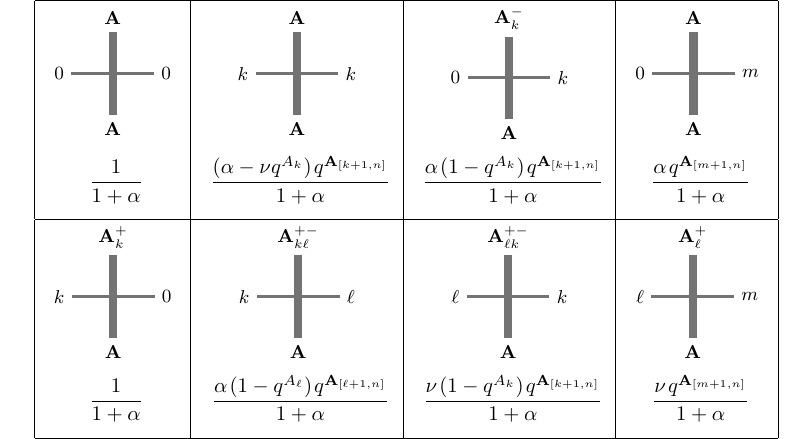}
	\caption{The weights $\WQ^{(-m),\mathrm{line}}_{\alpha,\nu}$
		\eqref{eq:WZ_queue_weights},
	where $m<k<\ell\le n$, and $A_m=+\infty$.}
	\label{fig:steady_state_weights}
\end{figure}

\begin{remark}
	On the ring the stochasticity of the queue vertex weights is not essential, and we multiplied them by $(-1/s)^{\mathbf{1}_{\ell >0}}$ to be nonnegative for $s\ge0$; see \eqref{eq:ASEP_gauge}
	and \Cref{prop:positivity_parameter_dependence}. The factors $(-1/s)^{\mathbf{1}_{\ell >0}}$ were absorbed into the normalizing constant of the stationary distribution on the ring. On the line, this absorption is not possible, so we need to deal with a different range of the parameters $s,z$
	as in \eqref{eq:alpha_nu_nonneg} and \eqref{eq:WZ_queue_weights}.
\end{remark}

Fix parameters
\begin{equation}
	\label{eq:alpha_nu_parameters_for_steady_queue}
	\alpha_1>\alpha_2> \ldots> \alpha_n> 0,\qquad
	\nu_i\in[0,\alpha_i],\quad
	i=1,\ldots,n .
\end{equation}
Our first step is to construct a certain
queue steady state vertex model. Fix $K\in \mathbb{Z}_{\ge1}$ and
consider the
rectangle
\begin{equation}
	\label{eq:rectangle_RK}
	\mathrm{R}_{K}\coloneqq
	\{-n,-n+1,\ldots,-1 \}\times \left\{ -K,-K+1,\ldots,0  \right\}
\end{equation}
(formed by the bottom $K+1$ rows and the left $n$
columns, see \Cref{fig:qp_stationary}, left).
In this rectangle, define a stochastic vertex model
with empty inputs from the bottom and the left, and
vertex weights $\WQ^{(-m),\mathrm{line}}_{\alpha_m,\nu_m}$
at each $(-m,-j)\in \mathrm{R}_K$.
Denote the outgoing arrow configuration
at the top by $\mathbf{M}_K=(\mathbf{M}_K(-n),\ldots, \mathbf{M}_K(-1))$,
where $\mathbf{M}_K(-m)\in \mathbb{Z}_{\ge0}^n$,
and the outgoing arrow configuration on the right by
$\mathbf{d}_K=(d_K(0),d_K(-1),\ldots,d_K(-K) )$,
where $d_K(-j)\in \left\{ 0,1,\ldots,n  \right\}$.

\begin{proposition}
	\label{prop:steady_state_queue}
	Fix arbitrary $c\in \mathbb{Z}_{\ge0}$.
	As $K\to+\infty$, the random tuples
	\begin{equation*}
		\mathbf{M}_K\quad \textnormal{and}
		\quad
		(d_K(0),d_K(-1),
		\ldots,d_K(-c))
	\end{equation*}
	converge
	in joint distribution to random tuples
	$\mathbf{M}$ and $\mathbf{d}^{[-c]}$.
\end{proposition}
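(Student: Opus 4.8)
The plan is to reinterpret the stochastic vertex model on $\mathrm{R}_K$ as a time-homogeneous Markov chain run for $K+1$ steps, and then to prove convergence to equilibrium. Read the rectangle row by row from the bottom (row $-K$) to the top (row $0$), treating the row index as discrete time. Because the left boundary is empty, a single row can be sampled by processing its vertices from the leftmost column $-n$ to the rightmost column $-1$, at each vertex drawing the outgoing pair $(\mathbf{B},\ell)$ from the stochastic weights \eqref{eq:WZ_queue_weights} (stochasticity and nonnegativity are guaranteed by \Cref{lemma:weights_positive} together with \eqref{eq:alpha_nu_parameters_for_steady_queue}) and feeding the right output $\ell$ as the left input of the next column. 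Thus one row implements a Markov transition on the vertical state $\sigma=(\sigma(-n),\dots,\sigma(-1))$, where $\sigma(-m)\in\mathbb{Z}_{\ge0}^n$ records the vertical occupation of column $-m$; by \Cref{lemma:queue_spec_exists} this state always carries infinitely many color-$m$ arrows and no arrows of color $<m$ in column $-m$, so the only genuine degrees of freedom are the finite counts of colors $>m$. The weights do not depend on the row, so the chain is time-homogeneous; $\mathbf{M}_K$ is its state after $K+1$ steps started from the empty state, while $d_K(-j)$ is the right output produced at row $-j$. Since $\mathbf{M}_K$ and the top outputs $d_K(0),\dots,d_K(-c)$ are all measurable functions of the state $\sigma_{-c}$ entering row $-c$ (itself the state after $K-c$ steps from empty) and the fixed internal randomness of the last $c+1$ rows, the whole statement reduces to showing that, started from the empty state, the chain converges in distribution to a unique stationary law $\pi$ as the number of steps tends to infinity.

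The key structural feature I would exploit is that the chain is \emph{feed-forward} across columns. Column $-n$ has empty left input and a frozen vertical state (only color $n$, of which there are infinitely many), so its sequence of right outputs over the rows is i.i.d.; this is the base case. For $m<n$, column $-m$ is a queue whose left-input sequence is exactly the right-output sequence of column $-(m+1)$, and whose vertical state tracks only the finitely many arrows of colors $>m$. I would therefore induct on $m=n,n-1,\dots,1$, proving at each step that (i) driven by a stationary input sequence the single-column chain is positive recurrent with a unique stationary law and mixes geometrically, and (ii) started from the empty vertical state it converges to that law, with its own right-output sequence converging, read from the top downward, to a stationary process that serves as the input to column $-(m-1)$.

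For the single-column analysis I would argue stability from the strict ordering $\alpha_1>\dots>\alpha_n$ in \eqref{eq:alpha_nu_parameters_for_steady_queue}, which makes each queue's service faster than its arrivals, together with the $q$-geometric tails visible in the weights \eqref{eq:WZ_queue_weights} (the factors $q^{\sum_{m\le i<j\le n} D_i(C_j-P_j)}$ suppress large occupations). These should give a uniform-in-$K$ exponential moment bound, hence tightness of $\{\mathbf{M}_K\}$, and a drift estimate yielding positive recurrence and geometric ergodicity of the driven chain. The nontrivial point in the inductive step is that for finite $K$ the input to column $-m$ (the output of column $-(m+1)$) is only approximately stationary near the top of the rectangle; I would resolve this by a coupling argument, constructing the finite-$K$ and the limiting stationary input processes on a common space so that they agree on a long top window with probability tending to one, and then using the geometric forgetting of the single-column chain to show that its top state and top outputs depend negligibly on the far-bottom portion of the input. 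Assembling the columns gives $\sigma_{-c}\Rightarrow\pi$, and applying the fixed kernel of the top $c+1$ rows yields the claimed joint convergence of $\mathbf{M}_K$ and $(d_K(0),\dots,d_K(-c))$ to $\mathbf{M}$ and $\mathbf{d}^{[-c]}$.

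The main obstacle I anticipate is precisely the stability and ergodicity of the driven single-column queues: one must verify non-explosion of the higher-color occupation counts under \eqref{eq:alpha_nu_parameters_for_steady_queue} and turn the qualitative ``queues in tandem'' picture into a quantitative drift estimate, and then carry the approximate-stationarity coupling cleanly through the induction over columns. Alternatively, if the queue weights \eqref{eq:WZ_queue_weights} can be shown to be attractive, one could replace the mixing argument by a monotone coupling showing that $\mathbf{M}_K$ is stochastically increasing in $K$ and bounded by the tightness estimate, so that the limit exists and, by the feed-forward uniqueness across columns, equals the unique stationary law.
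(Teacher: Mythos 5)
Your overall skeleton coincides with the paper's: read the rectangle $\mathrm{R}_K$ bottom-to-top as discrete time, view the $n$ columns as queues in tandem, and deduce convergence from stability guaranteed by the ordering $\alpha_1>\cdots>\alpha_n$ in \eqref{eq:alpha_nu_parameters_for_steady_queue}. The difference is in how the stability of the driven columns is obtained, and this is where your proposal has a real gap. You defer to a program of drift estimates, geometric ergodicity of each column driven by an ``approximately stationary'' input, and a coupling to control the finite-$K$ boundary effect; you yourself flag this as the main obstacle. The paper eliminates the obstacle entirely with one exact observation that your plan misses: by the explicit weights \eqref{eq:WZ_queue_weights} (see \Cref{fig:steady_state_weights}),
\begin{equation*}
	\WQ^{(-m),\mathrm{line}}_{\alpha_m,\nu_m}(\mathbf{A},k;\mathbf{A}_k^+,0)
	=\WQ^{(-m),\mathrm{line}}_{\alpha_m,\nu_m}(\mathbf{A},0;\mathbf{A},0)
	=\frac{1}{1+\alpha_m},
\end{equation*}
that is, the probability that column $(-m)$ emits \emph{no} arrow at a given row is independent of its vertical state and of its left input. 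Hence the color-blind departure process of \emph{every} column is exactly an i.i.d.\ Bernoulli sequence with success probability $\alpha_m/(1+\alpha_m)$ at all finite times, not just asymptotically; your base case for column $-n$ (frozen state, i.i.d.\ output) is merely the special instance where this is obvious. Feeding this into column $(-m)$, the merged occupation count $A_{[m+1,n]}(\mathrm{t})$ is an \emph{autonomous} birth--death chain on $\mathbb{Z}_{\ge0}$ with explicit up/down probabilities
\begin{equation*}
	\frac{\alpha_{m+1}}{1+\alpha_{m+1}}\cdot
	\frac{1+\nu_m q^{A_{[m+1,n]}(\mathrm{t})}}{1+\alpha_m}
	\quad\textnormal{and}\quad
	\frac{1}{1+\alpha_{m+1}}\cdot
	\frac{\alpha_m\bigl(1-q^{A_{[m+1,n]}(\mathrm{t})}\bigr)}{1+\alpha_m},
\end{equation*}
and $\alpha_{m+1}<\alpha_m$ gives negative drift at large occupations in one line, hence recurrence of each column and of the joint chain of all columns, whose steady state is $(\mathbf{M},\mathbf{d})$.

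The concrete gap in your version is the inductive step. Your stability claim ``each queue's service is faster than its arrivals'' presupposes that the arrival rate into column $(-m)$ equals $\alpha_{m+1}/(1+\alpha_{m+1})$; in your framework this is only available after the induction hypothesis has been established in a quantitative (coupled) form, yet the coupling you sketch relies on the geometric forgetting of the driven column, whose proof in turn requires the drift estimate --- a circularity as stated. Moreover, a single column driven by a non-i.i.d.\ input is not Markov on its own, so a Foster--Lyapunov drift argument would have to be run on the joint chain of columns $-n,\dots,-m$, substantially enlarging the analysis. None of this is fatal --- the program could likely be completed --- but the state-independent emission probability is precisely the lemma that collapses it: arrivals are \emph{exactly} i.i.d.\ Bernoulli for every $K$ and every row, so no approximate-stationarity coupling, no driven-chain ergodic theory, and no induction over columns is needed. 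Had you pushed your base-case reasoning for column $-n$ to all columns, you would have recovered the paper's proof.
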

We refer to the $K\to+\infty$ limit of the model in $\mathrm{R}_{K}$
as to the \emph{queue vertex model in steady state}.
The limiting tuples $\mathbf{d}^{[-c]}$ are compatible for $c\ge0$.
Let us denote the corresponding infinite tuple by $\mathbf{d}= (d(0),d(-1),d(-2),\ldots )$.

\begin{proof}[Proof of \Cref{prop:steady_state_queue}]
	View the vertical coordinate in $\mathrm{R}_K$ as
	discrete time $\mathrm{t}\in\{-K,-K+1,\ldots,0\}$.
	Observe that
	\begin{equation*}
		\WQ^{(-m),\mathrm{line}}_{\alpha_m,\nu_m}
		(\mathbf{A},k;\mathbf{A}_k^+,0)=
		\WQ^{(-m),\mathrm{line}}_{\alpha_m,\nu_m}
		(\mathbf{A},0;\mathbf{A},0)=
		\frac{1}{1+\alpha_m}.
	\end{equation*}
	Therefore, if we do not distinguish the colors $\ge m$, then the
	arrows leaving the column
	$(-m)$ form a Bernoulli process (in discrete time $\mathrm{t}$)
	with probability of success $\alpha_m/(1+\alpha_m)$.
	This implies that
	the combined number of arrows of colors $m+1,\ldots,n $
	in the column $(-m)$
	evolves as a birth and death Markov chain
	on $\mathbb{Z}_{\ge0}$
	starting from $0$, which makes jumps by $-1$, $0$, and $+1$.
	Denote this chain by
	$A_{[m+1,n]}(\mathrm{t})$.
	The
	jumps by $+1$ and $-1$
	have probabilities, respectively,
	\begin{equation*}
		\frac{\alpha_{m+1}}{1+\alpha_{m+1}}\cdot
		\frac{1+\nu_m q^{A_{[m+1,n]}(\mathrm{t})}}{1+\alpha_m}
		\quad\textnormal{and}
		\quad
		\frac{1}{1+\alpha_{m+1}}\cdot
		\frac{\alpha_m(1- q^{A_{[m+1,n]}(\mathrm{t})})}{1+\alpha_m}.
	\end{equation*}
	The jump by $0$ occurs with the complementary probability.

	Since $\alpha_{m+1}<\alpha_m$, for large $A_{[m+1,n]}(\mathrm{t})$
	the probability to go down is strictly larger, which implies that the
	birth and death chain on $\mathbb{Z}_{\ge0}$ is recurrent.
	Thus, in each column $(-m)$,
	$1\le m\le n$, the
	number of arrows of color $>m$ does not
	grow to infinity.

	We conclude that the (colored)
	configurations of arrows in all columns $(-m)$,
	$1\le m\le n$,
	jointly form a recurrent Markov chain~\cite[Chapter 5]{durrett2019probability} ---
	a system of $n$ queues
	in tandem.
	The limiting random tuple $\mathbf{M}$ is its steady state.
	The limiting configuration $\mathbf{d}=(d(0),d(-1),\dots )$
	is the steady state (colored) departure process, with time running from $-\infty$ to $0$.
	This completes the proof.
\end{proof}

\begin{remark}
	For $\nu_m=0$, $1\le m\le n$, the system of $n$ queues in tandem
	in the proof of \Cref{prop:steady_state_queue}
	was considered
	in \cite[Sections~3.4 and~5]{martin2020stationary}.
\end{remark}

\subsection{Colored Burke's theorem via Yang--Baxter equation}
\label{sub:YBE_quarter_plane}

Let us first discuss the general application of the Yang--Baxter equation in the quarter plane without specifying the weights leading to the concrete model.
We discuss specializations to our colored stochastic particle systems in
\Cref{sub:qboson_and_density_app,sub:qpush_and_density_app} below.
Assume that there exist stochastic, nonnegative vertex weights
\begin{equation}
	\label{eq:abstract_weights_for_qp_stat}
	\WQ^{(-m),\mathrm{queue}}_{\xi,\alpha,\nu}(\mathbf{A},\mathbf{B};\mathbf{C},\mathbf{D})
	\quad\textnormal{and}\quad
	W^{\mathrm{qp}}_{\xi}(k,\mathbf{B};\ell,\mathbf{D})
\end{equation}
which
together with the weights
$\WQ^{(-m),\mathrm{line}}_{\alpha,\nu}(\mathbf{A},k;\mathbf{B},\ell)$
\eqref{eq:alpha_nu_nonneg}
satisfy the Yang--Baxter equation given in \Cref{fig:abstract_YBE}.

\begin{figure}[htpb]
	\centering
	\includegraphics[width=\textwidth]{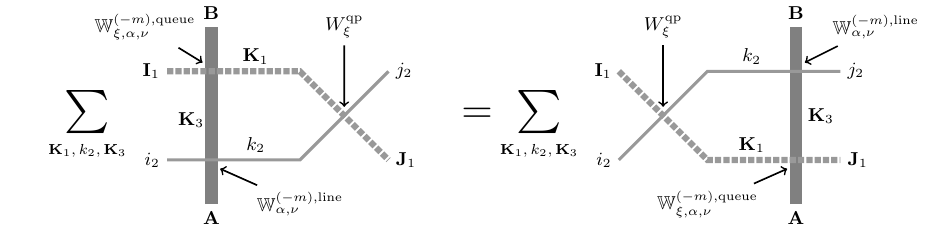}
	\caption{Elementary Yang--Baxter equation for the quarter plane.
	Here $\mathbf{I}_1,\mathbf{A},\mathbf{J}_1,\mathbf{B}\in \mathbb{Z}_{\ge0}^n$
	and $i_2,j_2\in \left\{ 0,1,\ldots,n  \right\}$ are fixed,
	and the sums in both sides are over the internal arrow configurations
	$\mathbf{K}_1,\mathbf{K}_3\in \mathbb{Z}_{\ge0}^n$ and $k_2\in \left\{ 0,1,\ldots,n  \right\}$.}
	\label{fig:abstract_YBE}
\end{figure}

In \Cref{sub:stationary_queue_regime}
above we
used the weights $\WQ^{(-m),\mathrm{line}}_{\alpha_m,\nu_m}$
to
construct the
queue steady state vertex model in $\left\{ -n,\ldots,-1
\right\}\times \mathbb{Z}_{\le 0}$
depending on the parameters
\eqref{eq:alpha_nu_parameters_for_steady_queue}.
As the output, the steady state model produces
the random state $\mathbf{M}=(\mathbf{M}(-n),\ldots,\mathbf{M}(-1))$
at the top, and
the random departure process $\mathbf{d}=(d(0),d(-1),\ldots )$ on the right.
Let us define two more stochastic vertex models
(see the left side of \Cref{fig:qp_stationary}, for an illustration):
\begin{enumerate}[$\bullet$]
	\item A queue vertex model in
		$\left\{ -n,\ldots,1  \right\}\times \left\{ 1,2,\ldots  \right\}$,
		with the weight
		$\WQ^{(-m),\mathrm{queue}}_{\xi,\alpha_m,\nu_m}$
		at each vertex $(-m,j)$.
		This model has
		no incoming arrows from the left, and the
		incoming arrow configuration $\mathbf{M}$
		from the bottom.
		Denote by $\mathbf{V}(j)$, $j\in \mathbb{Z}_{\ge1}$,
		the outgoing arrow configuration from the
		$j$-th horizontal line of this model.
	\item A vertex model in the quadrant
		$\mathbb{Z}_{\ge0}\times \mathbb{Z}_{\ge1}$.
		Let each vertex $(i,j)$ in the quadrant have weight
		$W^{\mathrm{qp}}_{\xi}$.
		Let the incoming
		arrow configurations for this model be
		$\mathbf{V}(1),\mathbf{V}(2),\ldots $
		from the left, and
		$d(0),d(-1),\ldots $
		from the bottom.
\end{enumerate}

Let $(i,j)\in \mathbb{Z}_{\ge0}\times \mathbb{Z}_{\ge1}$. Denote by $\mathbf{V}'(j)\in \mathbb{Z}_{\ge0}^{n}$ the arrow configuration at the horizontal edge $(0,j)-(1,j)$, and by $d'(-i)\in\left\{ 0,1,\ldots,n  \right\}$ the color of the vertical edge $(i,1)-(i,2)$.

\begin{theorem}[Colored Burke's theorem]
	\label{thm:colored_Burke}
	We have equalities of joint distributions:
	\begin{equation*}
		\Bigl\{
			\bigl( d(-i) \bigr)_{i\ge0},
			\bigl( \mathbf{V}(j) \bigr)_{j \ge1}
		\Bigr\}
		\stackrel{d}{=}
		\Bigl\{
			\bigl( d'(-i) \bigr)_{i\ge1},
			\bigl( \mathbf{V}(j) \bigr)_{j \ge1}
		\Bigr\}
		\stackrel{d}{=}
		\Bigl\{
			\bigl( d(-i) \bigr)_{i\ge0},
			\bigl( \mathbf{V}'(j) \bigr)_{j \ge2}
		\Bigr\}.
	\end{equation*}
\end{theorem}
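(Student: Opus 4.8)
The proof will rest entirely on the elementary Yang--Baxter equation of \Cref{fig:abstract_YBE}, read not as an identity of partition functions but as an identity between \emph{Markov kernels}. Since the cross weights $\WQ^{(-m),\mathrm{line}}_{\alpha,\nu}$ are stochastic and nonnegative (\Cref{lemma:weights_positive}), and the abstract weights in \eqref{eq:abstract_weights_for_qp_stat} are assumed stochastic as well, the two sides of \Cref{fig:abstract_YBE} describe two ways of composing the same pair of random maps on the internal edges; hence they induce the same pushforward law on the fixed boundary arrows. The plan is to use this single move repeatedly to \emph{sweep a rapidity line across the L-shaped model} (steady-state region, queue model, and quadrant), peeling off one partial row or one partial column of the quadrant at a time, and to absorb the swept line into the steady-state region using the shift-invariance of the tandem queue supplied by \Cref{prop:steady_state_queue}. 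It suffices to show that each of the two shifted boundary families has the same joint law as the unshifted family $\{(d(-i))_{i\ge 0},(\mathbf{V}(j))_{j\ge 1}\}$; the stated chain of equalities then follows by transitivity.

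For the equality involving $(d'(-i))_{i\ge 1}$ (peeling the partial first row $\{(i,1):i\ge 1\}$ of the quadrant), I would commute the first-row line of $W^{\mathrm{qp}}_\xi$ vertices around the corner $(0,1)$ onto the right edge of the steady-state region and then downward through it, applying the elementary Yang--Baxter equation once per column crossing. The crossings with the cross vertices $\WQ^{(-m),\mathrm{line}}_{\alpha_m,\nu_m}$ and with the queue vertices $\WQ^{(-m),\mathrm{queue}}_{\xi,\alpha_m,\nu_m}$ are exactly those governed by \Cref{fig:abstract_YBE}. Geometrically this realizes the action of the quadrant's first row on the departure process $\mathbf{d}$ as one additional ``service step'' appended to the steady-state tandem queue, whose joint law with the internal state $\mathbf{M}$ is invariant by the stationarity established in \Cref{prop:steady_state_queue}. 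Tracking the boundary data through the sweep, the bottom input advances from $(d(-i))_{i\ge 0}$ to $(d'(-i))_{i\ge 1}$, while the left inputs $(\mathbf{V}(j))_{j\ge 1}$ are untouched, being outputs of the queue model in the columns $-n,\dots,-1$, which the sweep does not alter.

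The equality involving $(\mathbf{V}'(j))_{j\ge 2}$ (peeling the partial first column $\{(0,j):j\ge 2\}$) would be proved by the same argument run in the perpendicular direction: one sweeps a vertical rapidity line across the model, commuting the first column of the quadrant through the queue vertices and into the steady-state region, again invoking \Cref{prop:steady_state_queue}. Now the bottom inputs $(d(-i))_{i\ge 0}$ remain fixed and the left inputs $(\mathbf{V}(j))_{j\ge 1}$ are replaced by their one-column advance $(\mathbf{V}'(j))_{j\ge 2}$. The two sweeps, applied together, realize the full shift of the quadrant depicted in \Cref{fig:qp_stationary_intro}.

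The main obstacle is rigor at infinity. The configuration involves the infinite steady-state region and an infinite quadrant, so the kernel-composition identities must first be established for the finite truncations $\mathrm{R}_K$ of \eqref{eq:rectangle_RK} (where a Yang--Baxter sweep is an exact, finite computation) and then passed to the limit $K\to\infty$ by the convergence of $(\mathbf{M}_K,\mathbf{d}_K)$ to $(\mathbf{M},\mathbf{d})$ from \Cref{prop:steady_state_queue}. Justifying this interchange will require the recurrence of the tandem queue proved there, which controls the windings and provides tightness of the relevant marginals. A secondary, purely combinatorial point that must be handled carefully but is not deep is the behavior at the corner vertex $(0,1)$: it serves as the anchor of each sweep, and accounts for the index shifts $i\ge 0\mapsto i\ge 1$ and $j\ge 1\mapsto j\ge 2$ in the two shifted boundary families.
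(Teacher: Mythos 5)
Your proposal is correct and is essentially the paper's own argument: the paper likewise proves the theorem by repeatedly applying the elementary Yang--Baxter equation of \Cref{fig:abstract_YBE} to sweep lines across the coupled three-model configuration (dragging the crosses to the empty left boundary where they have weight one and can be removed), and absorbing the swept line into the steady-state region by moving it to infinity, where its effect vanishes because the tandem-queue steady state of \Cref{prop:steady_state_queue} is preserved by the commuting transfer matrix on that line. Your truncation-at-$\mathrm{R}_K$ limiting argument and the corner bookkeeping at $(0,1)$ (which produces the index shifts $i\ge0\mapsto i\ge1$, $j\ge1\mapsto j\ge2$) are precisely the points the paper compresses into its one-paragraph proof and the illustrations in \Cref{fig:qp_stationary}.
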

In words, the joint distribution of the horizontal and the vertical arrow configurations along the boundary of an arbitrarily shifted quadrant $\mathbb{Z}_{\ge I}\times \mathbb{Z}_{\ge J+1} $ (where $I,J\ge0$) is the same as for the original quadrant $\mathbb{Z}_{\ge0}\times \mathbb{Z}_{\ge1}$.
Since by our assumption the weights $W^{\mathrm{qp}}_{\xi}$ are stochastic, \Cref{thm:colored_Burke} can be viewed as the statement that the boundary data given by $(\mathbf{V}, \mathbf{d})$ is stationary for the stochastic vertex model in the quadrant with the weights
$W^{\mathrm{qp}}_{\xi}$.
\begin{proof}[Proof of \Cref{thm:colored_Burke}]
	The result follows by repeatedly applying the Yang--Baxter equation from \Cref{fig:abstract_YBE} to the combination of the three vertex models in the left side of \Cref{fig:qp_stationary}.
	Indeed, to shift the index in
	$\bigl(\mathbf{V}(j)\bigr)_{j\ge1}$ up by one, one needs to
	drag the crosses from the right to the left, and move the
	dotted line given by~$\{y=1\}$ down to minus infinity. See the top two
	pictures in the right
    side of \Cref{fig:qp_stationary}. The cross
	vertices on the left boundary are empty, have probability
	weight~$1$, and thus can be removed. In the limit as the
	dotted line goes down to minus infinity, the
	distribution of the outputs $\mathbf{M}$ and $\mathbf{d}$
	of the queue steady state model becomes the same; in fact, the transition matrix for the system of queues in tandem commutes with the transition matrix for the queues obtained from the stochastic vertex weights on the dotted line, and thus, the dotted line preserves~$(\mathbf{M},\mathbf{d})$ if~$(\mathbf{M},\mathbf{d})$ is stationary.
	The index shift in
	$\bigl( d(-i) \bigr)_{i\ge0}$
	is performed similarly, but now we need to move the turning line
	which carried $d(0)$ up to positive infinity (see the bottom picture in the right side of \Cref{fig:qp_stationary}
 	for an illustration). This completes the proof.
\end{proof}

\begin{figure}[p]
	\centering
	\includegraphics[width=.94\textwidth]{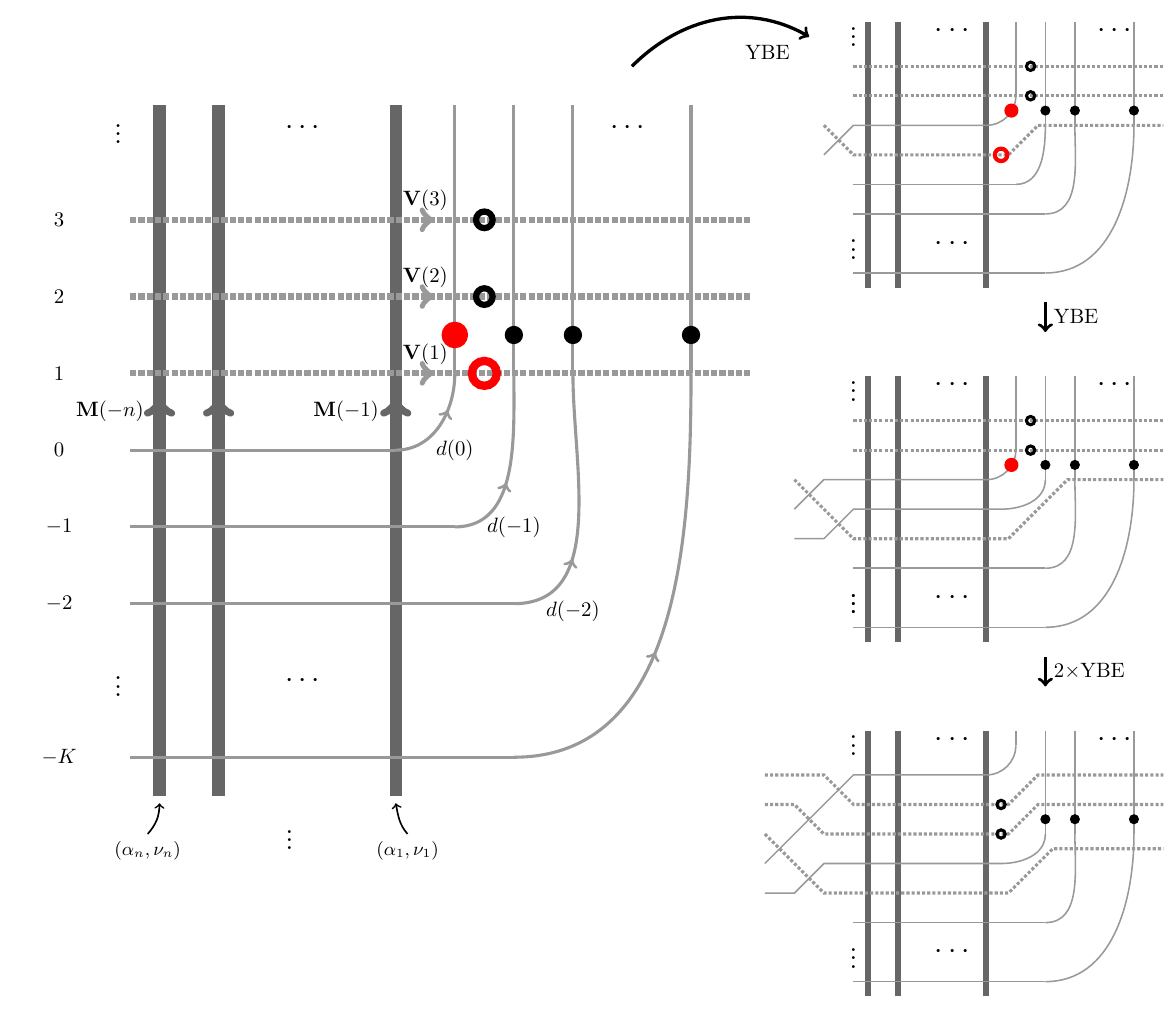}
	\caption{Left: The queue steady state vertex model in
	$\left\{ -n,\ldots,-1  \right\}\times \mathbb{Z}_{\le0}$
	produces the random state
	$\mathbf{M}(-n),\ldots,\mathbf{M}(-1) $ and the
	departure process $\mathbf{d}=(d(0),d(-1),\ldots )$ (see
		\Cref{prop:steady_state_queue} which involves the region $\mathrm{R}_K$
	illustrated in the figure). The queue vertex model has the empty
	incoming configuration from the left. On top of it, we
	put a queue vertex model in $\left\{ -n,\ldots,-1
	\right\}\times \mathbb{Z}_{\ge1}$ (also with no incoming
	arrows from the left; the choice of the queue weights
	depends on whether we work with the colored $q$-Boson or $q$-PushTASEP). Denote
	the random output of this model by
	$\mathbf{V}=(\mathbf{V}(1),\mathbf{V}(2),\ldots )$.
	\medskip\\
	The outputs $\mathbf{V}$ and $\mathbf{d}$ form the left
	and bottom inputs into a third stochastic vertex model in
	the quadrant.
	In a
	limit when the one of the lattice directions turns into the
	continuous time,
	the model in the quadrant converges
	to either the colored $q$-Boson or the colored
	$q$-PushTASEP stochastic particle system.
	\medskip\\
	Right:
	Consecutive applications of the Yang--Baxter equation to the
	vertex models on the left which lead to the shift of the
	quadrant by $(1,1)$. The cross vertices on the left boundary
	are empty and can be removed. The black dots on the edges in all pictures
	represent arrow configurations $d'(-i)$ (solid) and
	$\mathbf{V}'(j)$ (circled), whose joint distributions are
	the same in all pictures. Throughout applying the
	Yang--Baxter equation, the larger (also marked red in the colored version) dots representing $d'(0)$ and $\mathbf{V}'(1)$ disappear,
	which corresponds to shifting of the indices in the formulation of \Cref{thm:colored_Burke}.
	\medskip\\
	Note that topologically, the configuration in the left is the same as in
	\Cref{fig:qp_stationary_intro} from the Introduction, but
	this figure contains more of the essential details.}
	\label{fig:qp_stationary}
\end{figure}

\begin{remark}
	\label{rmk:arbitrary_shifted_xi}
	The stochastic vertex model in the quadrant
	can be made inhomogeneous by
	letting the parameters $\alpha_m$
	and $\xi$ in the queue vertex model part
	depend on the vertical coordinate $j\in \mathbb{Z}$.
	This would lead to the weights $W^{\mathrm{qp}}_{v_i\xi_j}$
	at each vertex $(i,j)\in\mathbb{Z}_{\ge0}\times \mathbb{Z}_{\ge1}$.
	One can readily formulate an extension of
	\Cref{thm:colored_Burke}
	for this situation, but for simplicity we
	only discussed the homogeneous setting.
	See also
	\Cref{rmk:only_homogeneous_processes}
	on stationarity in the presence of
	space inhomogeneity.
\end{remark}

Let us record a property of the steady state
$\mathbf{M}$ which will be useful in \Cref{sec:merging_of_colors} below:
\begin{lemma}
	\label{lemma:steady_state_property}
	Let
	$\mathbf{M}=(\mathbf{M}(-n),\ldots,\mathbf{M}(-1))$
	be the steady state of the $n$-column queue vertex model
	with the weights
	$\WQ^{(-m),\mathrm{line}}_{\alpha_m,\nu_m}$ (the bottom $n$ columns in
	\Cref{fig:qp_stationary}, left). Take a
	one-row vertex model
	in $\{-n,\ldots,-1\}\times \{0\}$ with the weight
	$\WQ^{(-m),\mathrm{queue}}_{\xi,\alpha_m,\nu_m}$ at
	each vertex $(-m,0)$.
	If there are no arrows incoming from the left, and the configuration
	$\mathbf{M}$ of arrows is incoming from below into this one-row model, then the
	distribution of the outgoing arrows from the top
	is the same as that of $\mathbf{M}$.

	In short, the distribution of $\mathbf{M}$ is preserved by the horizontal
	action of the $n$-column queue vertex model with the weights
	$\WQ^{(-m),\mathrm{queue}}_{\xi,\alpha_m,\nu_m}$.
\end{lemma}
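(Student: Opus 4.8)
The plan is to recognize two commuting Markov operators on the space of vertical (column) configurations and to transfer stationarity from one to the other. I regard the vertical coordinate of the queue steady state model as discrete time, so that one row of the line weights $\WQ^{(-m),\mathrm{line}}_{\alpha_m,\nu_m}$, read across the columns $-n,\ldots,-1$ with empty input from the left and the departure marginalized on the right, defines a stochastic operator $\mathcal{T}_{\mathrm{v}}$ on column configurations $\mathbf{M}=(\mathbf{M}(-n),\ldots,\mathbf{M}(-1))$ (stochasticity and nonnegativity come from \Cref{lemma:weights_positive}). This is exactly the one-step transition of the system of $n$ queues in tandem from the proof of \Cref{prop:steady_state_queue}; by that proposition the chain is irreducible and (positive) recurrent, so it has a \emph{unique} stationary distribution, namely the steady state $\mathbf{M}$. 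Likewise, one row of the queue weights $\WQ^{(-m),\mathrm{queue}}_{\xi,\alpha_m,\nu_m}$, again empty from the left with the horizontal output marginalized, defines a stochastic operator $\mathcal{T}_{\mathrm{h}}$ on the same space, using the sum-to-one property of the queue weights. The statement to be proved is precisely $\mathbf{M}\,\mathcal{T}_{\mathrm{h}}=\mathbf{M}$.

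First I would prove the commutation $\mathcal{T}_{\mathrm{v}}\,\mathcal{T}_{\mathrm{h}}=\mathcal{T}_{\mathrm{h}}\,\mathcal{T}_{\mathrm{v}}$ by the standard railroad argument built on the elementary Yang--Baxter equation of \Cref{fig:abstract_YBE} (equivalently \Cref{prop:queue_YBE}). I stack the two rows over the columns $-n,\ldots,-1$, with the line row and the queue row as the two horizontal lines and the cross weight $W^{\mathrm{qp}}_\xi$ playing the role of the $R$-matrix between their auxiliary spaces. On the right end one may insert a cross $W^{\mathrm{qp}}_\xi$ joining the two horizontal outputs: since this weight is stochastic, marginalizing after the cross equals marginalizing the two outputs directly, so inserting it does not change the operator. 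Dragging the cross leftward through all $n$ columns via \Cref{fig:abstract_YBE} interchanges the two rows; at the left end the cross meets two empty inputs, and since an $R$-type cross weight sends empty inputs to empty outputs with weight $1$, it can be deleted. The net effect is to swap the order of the two rows, giving $\mathcal{T}_{\mathrm{v}}\,\mathcal{T}_{\mathrm{h}}=\mathcal{T}_{\mathrm{h}}\,\mathcal{T}_{\mathrm{v}}$.

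With commutation in hand, the conclusion is immediate: $\mathbf{M}\,\mathcal{T}_{\mathrm{h}}$ is a probability distribution (as $\mathcal{T}_{\mathrm{h}}$ is a Markov kernel), and it is stationary for $\mathcal{T}_{\mathrm{v}}$, since $(\mathbf{M}\,\mathcal{T}_{\mathrm{h}})\,\mathcal{T}_{\mathrm{v}}=\mathbf{M}\,\mathcal{T}_{\mathrm{v}}\,\mathcal{T}_{\mathrm{h}}=\mathbf{M}\,\mathcal{T}_{\mathrm{h}}$, using $\mathbf{M}\,\mathcal{T}_{\mathrm{v}}=\mathbf{M}$. By uniqueness of the stationary distribution of $\mathcal{T}_{\mathrm{v}}$ we obtain $\mathbf{M}\,\mathcal{T}_{\mathrm{h}}=\mathbf{M}$, which is the assertion of the lemma.

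The main obstacle is the rigorous handling of the commutation argument on the infinite column state space: each column $-m$ with $m<n$ may carry arbitrarily many arrows of colors $>m$, so one must ensure that all sums over internal horizontal arrows and winding multiplicities remain absolutely convergent while the cross is dragged across (so that the Yang--Baxter equation may be applied term by term), and that no probability mass escapes to infinity when forming $\mathbf{M}\,\mathcal{T}_{\mathrm{h}}$. These points are controlled by the same geometric decay used in \Cref{lemma:queue_vertex_well_defined} and by the positive recurrence established in \Cref{prop:steady_state_queue}; alternatively one can carry out the commutation inside the finite rectangle $\mathrm{R}_K$ of \eqref{eq:rectangle_RK} and pass to the limit $K\to\infty$, which avoids the convergence subtleties at the cost of tracking the boundary contributions.
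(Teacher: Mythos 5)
Your proposal is correct and takes essentially the same route as the paper: both establish that the queue-weight row commutes with the line-weight rows by attaching stochastic crosses $W^{\mathrm{qp}}_{\xi}$ on the right (removable on the left, where the empty cross has weight $1$), dragging them through via the Yang--Baxter equation of \Cref{fig:abstract_YBE}, and then concluding via the ergodic behavior of the tandem-queue chain from \Cref{prop:steady_state_queue}. The only difference is cosmetic and sits in the last step: you commute a single cross and invoke uniqueness of the stationary distribution of $\mathcal{T}_{\mathrm{v}}$, while the paper drags $K$ crosses through the finite rectangle $\mathrm{R}_K$ and lets $K\to\infty$, using that the chain forgets the post-queue-row distribution $\mathbf{M}''$ --- indeed, the finite-rectangle fallback you sketch in your final paragraph is precisely the paper's own argument.
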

\begin{proof}
	Consider the rectangle $\mathrm{R}_K$ \eqref{eq:rectangle_RK},
	see \Cref{fig:qp_stationary}, left.
	Put
	a single $n$-column horizontal layer with the weights
	$\WQ^{(-m),\mathrm{queue}}_{\xi,\alpha_m,\nu_m}$
	at the horizontal coordinate $0$.
	Below it, let us put
	$K$ layers of the
	weights $\WQ^{(-m),\mathrm{line}}_{\alpha_m,\nu_m}$.
	In this way, we obtain the right (straight lattice) part of the
	configuration in \Cref{fig:M_is_stationary}, left.

	Assume that the incoming arrow configuration at the left boundary
	is empty, while at the bottom
	let the input be the random steady state
	$\mathbf{M}=(\mathbf{M}(-n),\ldots,\mathbf{M}(-1))$.
	Denote the random output at the top by
	$\mathbf{M}'=(\mathbf{M}'(-n),\ldots,\mathbf{M}'(-1))$.
	See \Cref{fig:M_is_stationary}, left, for an illustration.
	Since the solid horizontal lines preserve the distribution of $\mathbf{M}$
	(this is the steady state property),
	it remains to show that $\mathbf{M}'$ and $\mathbf{M}$ have the same distribution.

	Attach $K$ cross vertices
	with the stochastic weights
	$W^{\mathrm{qp}}_{\xi}$
	to the rectangle on the right.
	This does not change the distribution of $\mathbf{M}'$.
	Then apply the Yang--Baxter equation
	(as in \Cref{fig:abstract_YBE})
	to move these cross vertices to the left.
	On the left, there are no incoming arrows, so the cross vertices
	in \Cref{fig:M_is_stationary}, right,
	can be removed.

	Denote by $\mathbf{M}''$ the random configuration which
	arises from $\mathbf{M}$ after the single dashed line,
	that is, the line with the weights
	$\WQ^{(-m),\mathrm{queue}}_{\xi,\alpha_m,\nu_m}$. The
	Yang--Baxter equation implies that $\mathbf{M}'$
	on the left and on the right in \Cref{fig:M_is_stationary}
	have the same distribution. However, we now can take the limit
	as $K\to+\infty$ on the right. By the steady state property,
	this limit forgets the distribution
	of $\mathbf{M}''$, and thus $\mathbf{M}'$ would converge
	to the steady state $\mathbf{M}$.
	Note that on the other hand, for any $K$, the distribution
	of $\mathbf{M}'$ is the same. Therefore,
	$\mathbf{M}'$ and $\mathbf{M}$ have the same distribution, as desired.
\end{proof}
\begin{figure}[htpb]
	\centering
	\includegraphics[width=.9\textwidth]{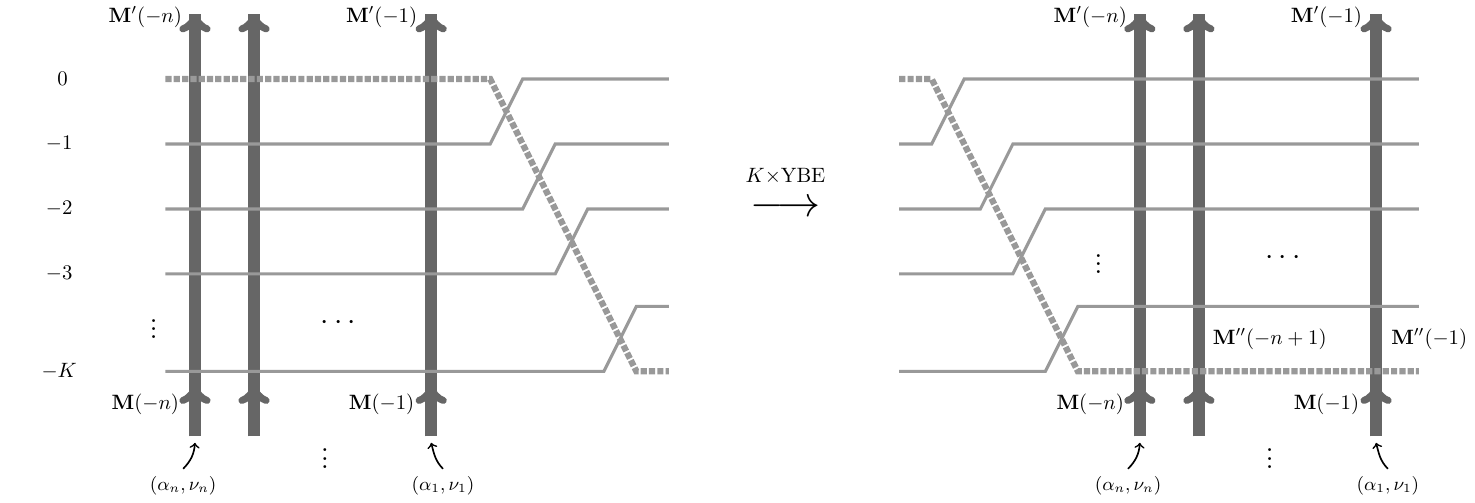}
	\caption{Application of the Yang--Baxter equation in the proof of \Cref{lemma:steady_state_property}.
	The dashed and the solid horizontal lines correspond to the vertex weights
	$\WQ^{(-m),\mathrm{queue}}_{\xi,\alpha_m,\nu_m}$
	and
	$\WQ^{(-m),\mathrm{line}}_{\alpha_m,\nu_m}$, respectively.}
	\label{fig:M_is_stationary}
\end{figure}

From \Cref{thm:colored_Burke}, we can construct a stationary random configuration of the vertex model $W^{\mathrm{qp}}_{\xi}$ in the whole plane $\mathbb{Z}^{2}$. The model in the plane is characterized as follows (a similar single-color construction appeared in \cite[Section~A.2]{Amol2016Stationary}). For any $J\ge 1$, define the map
\begin{equation*}
	\tau_J\colon\mathbb{Z}_{\ge 0}\times \mathbb{Z}_{\ge 1}\to
	\mathbb{Z}^{2},\qquad
	\tau_J\colon (i,j)\mapsto (i-J,j-J).
\end{equation*}
By \Cref{thm:colored_Burke}, the random configurations of
colored paths in parts of $\mathbb{Z}^2$ coming from
shifting the configuration in~$\mathbb{Z}_{\geq 0} \times
\mathbb{Z}_{\geq 1}$ by $\tau_J$, $J\in \mathbb{Z}_{\ge1}$,
are compatible for different $J$ (that is, the restriction
of the shifted configuration to the
original subset has the same distribution as the unshifted configuration).
Therefore, by the Kolmogorov
extension theorem, there exists a distribution on path
configurations in the whole plane $\mathbb{Z}^{2}$, which is
translation invariant (that is, stationary under the
stochastic vertex model $W^{\mathrm{qp}}_{\xi}$).

The parameters $(\alpha_m,\nu_m)$ of the queue vertex models attached to the quadrant determine the densities of various types of colors (the ``colored slope'') of the resulting translation invariant model in $\mathbb{Z}^{2}$. We explore the exact connection between these parameters and the densities of various colors for $q$-Boson and $q$-PushTASEP in \Cref{sub:qboson_and_density_app,sub:qpush_and_density_app} below.

\subsection{Specialization to stochastic six-vertex model and $q$-PushTASEP}
\label{sub:qpush_and_density_app}

Let us specialize \Cref{thm:colored_Burke} to the colored $q$-PushTASEP (defined on the line in the same way as in \Cref{def:qPushTASEP}, but with the homogeneous parameters $u_k= u$, $k\in \mathbb{Z}$). Take the weights
\eqref{eq:WZ_queue_weights}, \eqref{eq:abstract_weights_for_qp_stat} to be
\begin{equation}
	\label{eq:qPushTASEP_weights_line_steady}
	\begin{split}
		\WQ^{(-m),\mathrm{line}}_{\alpha_m,\nu_m}
		(\mathbf{A},k;\mathbf{B},\ell)
		&=
		\WQ^{(-m)}_{q^{-1/2},s_m^{(v)},z/v_m}
		(\mathbf{A},k;\mathbf{B},\ell),
		\\
		\WQ^{(-m),\mathrm{queue}}_{\xi,\alpha_m,\nu_m}
		(\mathbf{A},\mathbf{B};\mathbf{C},\mathbf{D})
		&=
		\WQ^{(-m)}_{q^{-\mathsf{P}/2},s_m^{(v)},u/v_m}
		(\mathbf{A},\mathbf{B};\mathbf{C},\mathbf{D})
		,
		\\
		W^{\mathrm{qp}}_{\xi}(k,\mathbf{B};\ell,\mathbf{D})
		&=
		W_{q^{1/2-\mathsf{P}/2}u/z,\mathsf{P},1}
		( \mathbf{e}_k\mathbf{1}_{k\ge1}
			,\mathbf{B};
			\mathbf{e}_\ell\mathbf{1}_{\ell\ge1},\mathbf{D}),
		\\
		\alpha_m\coloneqq -s_m^{(v)}q^{-1/2}
		\ssp\frac{z}{v_m},
		\qquad
		\nu_m &\coloneqq -(s_m^{(v)})^2,
		\qquad
		\xi\coloneqq u/z.
	\end{split}
\end{equation}
Here $\mathsf{P}\in \mathbb{Z}_{\ge1}$, and
$\alpha_m,\nu_m$
must satisfy \eqref{eq:alpha_nu_parameters_for_steady_queue}.
The weights in the right-hand sides of \eqref{eq:qPushTASEP_weights_line_steady}
are given, respectively, in \Cref{fig:ASEP_weights},
formula \eqref{eq:queue_spec_fully_fused}, and
\Cref{fig:L_horizontal_weights}.

One can check that the weights
\eqref{eq:qPushTASEP_weights_line_steady}
are all nonnegative if
\begin{equation}
	\label{eq:qPushTASEP_steady_state_parameters_nonnegativity}
	\nu_i\in[0,\alpha_i q^{\mathsf{P}-1}],\quad i=1,\ldots,n,
	\qquad
	\xi\ge q^{1/2-\mathsf{P}/2}.
\end{equation}
Indeed, the first condition corresponds to the fact that the
weights $\WQ^{(-m),\mathrm{queue}}_{\xi,\alpha_m,\nu_m}$
come from the fusion of
$\WQ^{(-m),\mathrm{line}}_{\alpha_m,\nu_m}$ (see
\Cref{prop:qpush_weights_nonnegative} for a related
nonnegativity property), and the second condition involving
the weights $W^{\mathrm{qp}}_{\xi}$ is read off from their
explicit form given in \Cref{fig:L_horizontal_weights}.

\medskip

Let us first consider the case $\mathsf{P}=1$.
Then the vertex model in the quadrant with
the weights $W^{\mathrm{qp}}_{\xi}$ becomes the colored
stochastic six-vertex model (see
\cite[Figure~1]{borodin_wheeler2018coloured} for a
simulation with non-stationary boundary conditions).
\Cref{thm:colored_Burke} and the shifting argument after it
allows to construct a translation invariant (stationary)
colored stochastic six-vertex model in the full plane
$\mathbb{Z}^2$.
By analogy with
\cite{Neergard1995},
\cite{aggarwal2020nonexistence},
let us call this path configuration in $\mathbb{Z}^2$
the \emph{colored KPZ pure phase} of the stochastic six-vertex model.
The colored KPZ pure phase has a finite number
$n$ of colors.

From the weights in
\Cref{fig:steady_state_weights}
and the matching of the parameters
\eqref{eq:qPushTASEP_weights_line_steady},
we see that the
probability that no paths leave column $(-m)$ under
the stochastic weights
$\WQ^{(-m),\mathrm{queue}}_{\xi,\alpha_m,\nu_m}$
and
$\WQ^{(-m),\mathrm{line}}_{\alpha_m,\nu_m}$,
respectively, is equal to
\begin{equation}
	\label{eq:density_computation_simplest_part}
	\frac{1}{1+\alpha_m\xi q^{-1/2}}
	\quad\textnormal{and} \quad
	\frac{1}{1+\alpha_m q^{-1/2}}.
\end{equation}
By \Cref{prop:steady_state_queue},
the arrow configurations in the columns
$-n,\ldots,-1 $
are in steady state. This means that the
number of colors $i>m$ in the each column $(-m)$
does not grow to infinity. Thus, once a path
of color $m$ originates at the column $(-m)$,
it must exit the column $(-1)$
at a bounded (random) distance from where it originated.
This means that the combined density of
paths of colors $\ge m$ exiting the column
$(-1)$ (in either the bottom or the
top part of the configuration of $n$ vertical columns,
see \Cref{fig:qp_stationary}, left) is equal to the
complementary probability
to \eqref{eq:density_computation_simplest_part}.

That is, define
$\rho_m^{(h)}$ (respectively, $\rho_m^{(v)}$)
to be the probability at a horizontal
(respectively, vertical)
edge
contains an arrow of color $m$. In the stationary translation
invariant regime, it does not matter which edge we consider. By
ergodicity, the quantities $\rho_m^{(h)}$ and $\rho_m^{(v)}$ also represent the densities of colors $m$ in the horizontal and vertical directions, respectively.

We conclude that
in the colored KPZ pure phase determined by the parameters
$\{\alpha_m\}_{m=1}^n$,
the
quantities $\rho_m^{(h)}$ and $\rho_m^{(v)}$
are given by
\begin{equation}
	\label{eq:rho_h_v_for_colored_s6v}
	\rho_m^{(h)}=
	\frac{\alpha_m\xi q^{-1/2}}{1+\alpha_m\xi q^{-1/2}}-
	\frac{\alpha_{m+1}\xi q^{-1/2}}{1+\alpha_{m+1}\xi q^{-1/2}}
	,
	\qquad
	\rho_m^{(v)}=
	\frac{\alpha_m q^{-1/2}}{1+\alpha_m q^{-1/2}}-
	\frac{\alpha_{m+1} q^{-1/2}}{1+\alpha_{m+1} q^{-1/2}}
	.
\end{equation}
Here $\alpha_{n+1}=0$, by agreement.
When $n=1$, we can solve for $\alpha_1$, and get
\begin{equation*}
	\rho_1^{(h)}
	=
	\frac{\rho_1^{(v)} \xi}{1 +(\xi-1) \rho_1^{(v)} },
\end{equation*}
which agrees with the slope relation $\rho_1^{(h)}=\varphi( \rho_1^{(v)} )$ in the single-color KPZ phase (for example, see \cite[(2.6)]{aggarwal2020nonexistence}). For general $n$,
solving for the $\alpha_i$'s in
\eqref{eq:rho_h_v_for_colored_s6v} yields the following \emph{colored slope relations}:
\begin{equation}
	\label{eq:colored_KPZ_phase_densities}
	\rho_m^{(h)}
	=
	\frac{\rho_m^{(v)}\xi}
	{
		\bigl( 1+(\xi-1)\rho^{(v)}_{[m,n]}   \bigr)
		\bigl( 1+(\xi-1)\rho^{(v)}_{[m+1,n]} \bigr)
	},
	\qquad m=1,\ldots,n,
\end{equation}
where $\rho^{(v)}_{[a,b]}=\rho^{(v)}_a+
\rho^{(v)}_{a+1}+\ldots+\rho^{(v)}_b $.

Under the colored KPZ pure phase, the
colors occupying the vertical edges
along a given horizontal line
induce a
random configuration of colors on $\mathbb{Z}$.
This random configuration is a stationary
distribution for the mASEP.

\begin{remark}
	\label{rmk:uniq_ASEP}
	As shown in
	\cite{ferrari1991microscopic},
	for given color densities, a translation invariant
	stationary distribution for mASEP on $\mathbb{Z}$ is unique.
	We believe that a similar uniqueness holds for the colored stochastic six-vertex model, but this statement does not seem to be present in the existing literature.
\end{remark}

Let us now return to the case of general $\mathsf{P}$, and
take a continuous-time limit to the colored $q$-PushTASEP.
The limit is achieved by setting
$z=q^{\mathsf{P}/2-1/2}\epsilon$, and letting
$\epsilon\to0$.
In this limit, the horizontal direction in the quarter plane, scaled by $1/\epsilon$, turns into
continuous time $t \in \mathbb{R}_{\ge0}$.
See \Cref{fig:hor_small_eps} for the $\epsilon\to0$
expansions of the weights $W^{\mathrm{qp}}_{\xi}$.
The
continuous-time Markov process coming from
$W^{\mathrm{qp}}_{\xi}$ lives on configurations on
$\mathbb{Z}_{\ge1}$ (where at each site there
are at most $\mathsf{P}$ particles), and coincides with the
colored $q$-PushTASEP (\Cref{def:qPushTASEP}) with
homogeneous parameters $u_k\equiv u$.

After the rescaling, let us further set $\nu_m=0$,
which implies nonnegativity of the
remaining weights
$\WQ^{(-m),\mathrm{queue}}_{\xi,\alpha_m,0}$
and
$\WQ^{(-m),\mathrm{line}}_{\alpha_m,0}$.
Indeed, note that as $\alpha_m$ (containing $z$ as a factor)
goes to zero,
conditions \eqref{eq:qPushTASEP_steady_state_parameters_nonnegativity}
cannot hold unless $\nu_m=0$.

\begin{remark}
	\label{rmk:qpush_line_nonnegativity}
	In the constructions on the line, we need to set $\nu_m=0$, $m=1,\ldots,n$, from the beginning, to ensure the nonnegativity of the queue vertex weights and the corresponding jump rates
	in the queue columns (occurring as $\epsilon\to0$).
	The nonnegativity is required to ensure the existence of the steady state in the queue columns (see \Cref{prop:steady_state_queue}).

	This should be contrasted to the ring case, where we could initially work with negative probabilities and jump rates formally. Then, when the commutation relation
	between the queue vertex model transfer matrix and the straight cylinder
	transfer matrix is established, we can renormalize the queue vertex model
	on the cylinder to get nonnegative probabilities under the stationary distribution.
	Thus, we have a whole family of vertex models on the cylinder
	(depending on the $\nu_m$'s) which produce the same stationary measure,
	and on the line we have to set $\nu_m=0$ for all $m$.
\end{remark}

In the remaining vertex models in
\Cref{fig:qp_stationary}, the weights
$\WQ^{(-m),\mathrm{queue}}_{\xi,\alpha_m,0}$
in the top part
do not depend on $z$ (see \eqref{eq:qPushTASEP_weights_line_steady}) and thus do not change in the limit.
In the weights
$\WQ^{(-m),\mathrm{line}}_{\alpha_m,0}$ in the
bottom part (given in \Cref{fig:steady_state_weights}),
we have
\begin{equation}
	\label{eq:definition_of_y_m_for_qpushTASEP}
	\alpha_m=
	-\frac{s_m^{(v)}}{v_m}
	q^{\mathsf{P}/2-1}\epsilon
	\eqqcolon y_m \epsilon\to0.
\end{equation}
Here $y_1> \ldots > y_n>0$
are the new parameters of the continuous-time colored
$q$-PushTASEP.
The fact that the $\alpha_m$'s are proportional to $\epsilon$ corresponds to the scaling of the
bottom columns
$\{-n,\ldots,-1 \}\times \mathbb{Z}_{\le 0}$
in \Cref{fig:qp_stationary}
to continuous ones,
$\{-n,\ldots,-1 \}\times \mathbb{R}_{\le 0}$.
Note that this scaling does not affect the weights
$\WQ^{(-m),\mathrm{queue}}_{\xi,\alpha_m,\nu_m}$ since they
do not depend on $z$.
The resulting scaled queue steady state model in
$\{-n,\ldots,-1 \}\times \mathbb{R}_{\le 0}$
runs in continuous time.
Let
$\mathbf{M}=(\mathbf{M}(-n),\ldots,\mathbf{M}(-1) )$
be the steady state of these continuous-time
queues in tandem, and let
$\mathbf{d}(t)$, $t\le 0$,
be the (continuous-time) departure process.
Using $\mathbf{d}(t)$ and the output $\mathbf{V}$
of the top columns
$\{-n,\ldots,-1 \}\times \mathbb{Z}_{\ge1}$,
one can use the
Burke's theorem (\Cref{thm:colored_Burke}) to
construct
a stationary version of the colored $q$-PushTASEP on the
whole line.

Let us compute the
densities of the colors under the stationary measure
for the colored $q$-PushTASEP.
Since more than one arrow may leave the column $(-m)$,
we need to take the expectation of the number of arrows.
For this expectation, we do not need to distinguish the colors.
Employing the color merging discussed in
\Cref{sec:merging_of_colors},
we may assume that $m=n=1$.
By a specialization of \eqref{eq:queue_spec_fully_fused},
one can check that the weights have the form
(with $a=c=\infty$):
\begin{equation*}
	\begin{split}
		\WQ^{(-1),\mathrm{queue}}_{\xi,\alpha_m,0}
		(a,b;c,d)&=
		\frac{\mathbf{1}_{a+b=c+d}}{(q^{-\mathsf{P}/2}s_m^{(v)}u/v_m;q)_{\mathsf{P}}}
		\ssp
		\frac{(q^{\mathsf{P}/2}s_m^{(v)}u/v_m)^d \ssp (q^{-\mathsf{P}};q)_d}{(q;q)_d}
		\\&=
		\frac{\mathbf{1}_{a+b=c+d}}{(-q^{1-\mathsf{P}}u y_m;q)_{\mathsf{P}}}
		\ssp
		\frac{(-qu y_m)^d \ssp (q^{-\mathsf{P}};q)_d}{(q;q)_d}
		.
	\end{split}
\end{equation*}
Note that these weights sum to $1$ over $0\le d\le \mathsf{P}$ by the $q$-binomial theorem
\cite[(1.3.2)]{GasperRahman}.
The expected number of arrows is expressed through
the function
\begin{equation}
	\label{eq:q_digamma}
	\phi(\zeta)\coloneqq
	\sum_{k=0}^{\infty} \frac{\zeta q^{k}}{1-\zeta q^{k}}
\end{equation}
(up to a change of variables and a linear transformation,
this is the $q$-digamma function).
We have
\begin{equation*}
	\begin{split}
		\sum_{d=0}^{\mathsf{P}}\frac{\zeta^d(q^{-\mathsf{P}};q)_d}{(q;q)_d}&=
		\frac{(q^{-\mathsf{P}}\zeta;q)_{\infty}}{(\zeta;q)_{\infty}}=
		(q^{-\mathsf{P}}\zeta;q)_{\mathsf{P}},
		\\
		\frac{1}{(q^{-\mathsf{P}}\zeta;q)_{\mathsf{P}}}
		\sum_{d=0}^{\mathsf{P}}
		d\cdot
		\frac{\zeta^d(q^{-\mathsf{P}};q)_d}{(q;q)_d}&=
		\zeta\frac{\partial}{\partial \zeta}\log (q^{-\mathsf{P}}\zeta;q)_{\mathsf{P}}
		=-\sum_{i=0}^{\mathsf{P}-1}
		\frac{q^{i-\mathsf{P}}\zeta}{1-q^{i-\mathsf{P}}\zeta},
	\end{split}
\end{equation*}
and the latter sum is a difference of two functions of the form \eqref{eq:q_digamma} with the arguments differing by the factor $q^{-\mathsf{P}}$.
Therefore,
\begin{equation}
	\label{eq:qpush_qp_expectation}
	\sum_{d=0}^{\mathsf{P}}
	d\cdot
	\frac{1}{(-q^{1-\mathsf{P}}uy_m;q)_{\mathsf{P}}}
	\ssp
	\frac{(-qu y_m)^d \ssp (q^{-\mathsf{P}};q)_d}{(q;q)_d}
	=
	\phi(-qu y_m)-
	\phi(-q^{1-\mathsf{P}}u y_m).
\end{equation}
Thus, the horizontal density of the $m$-th color is the difference
of the above expressions:
\begin{equation}
	\label{eq:h_current_qpush}
	\rho_m^{(h)}=
	\phi(-qu y_m)-
	\phi(-q^{1-\mathsf{P}}u y_m)-
	\phi(-qu y_{m+1})+
	\phi(-q^{1-\mathsf{P}}u y_{m+1})
	,
\end{equation}
where, by agreement, $y_{n+1}=0$.
This follows similarly to the case of the colored
six-vertex model: since the queues are in steady state,
a color $m$ cannot accummulate in any column except $(-m)$.
This implies that the expectation \eqref{eq:qpush_qp_expectation}
is equal to
$\rho^{(h)}_{[m,n]}=\rho^{(h)}_{m}+\ldots+\rho^{(h)}_{n}$,
which yields
\eqref{eq:h_current_qpush}.

We can also compute the currents of the colored
$q$-PushTASEP in stationarity (that is, the vertical densities
of the colors), using
\begin{equation*}
	\WQ^{(-m),\mathrm{line}}_{\alpha_m,0}
	(\mathbf{A},k;\mathbf{B},0)=
	\frac{1}{1+\alpha_m}=
	1-y_m \epsilon+O(\epsilon^2),
\end{equation*}
where $\mathbf{B}=\mathbf{A}+\mathbf{e}_k\mathbf{1}_{k\ge1}$.
Therefore, in the continuous-time limit, we have
\begin{equation}
	\label{eq:v_current_qpush}
	\rho_m^{(v)}=
	y_{m} - y_{m+1}.
\end{equation}
Expressing the colored currents
$(\rho_1^{(v)}, \ldots, \rho_n^{(v)})$ in terms of
the colored densities
$(\rho_1^{(h)}, \ldots, \rho_n^{(h)})$
for general $\mathsf{P}$ would require finding the
$y_m$'s from \eqref{eq:h_current_qpush},
which is not explicit for general $\mathsf{P}$.
However, a reverse expression is essentially given by
\eqref{eq:h_current_qpush}--\eqref{eq:v_current_qpush}:
\begin{equation*}
	\rho_m^{(h)}=
	\phi\bigl(-qu \rho^{(v)}_{[m,n]}\bigr)-
	\phi\bigl(-q^{1-\mathsf{P}}u \rho^{(v)}_{[m,n]}\bigr)-
	\phi\bigl(-qu \rho^{(v)}_{[m+1,n]}\bigr)+
	\phi\bigl(-q^{1-\mathsf{P}}u \rho^{(v)}_{[m+1,n]}\bigr)
	.
\end{equation*}

\begin{remark}
	\label{rmk:uniq_push}
	We believe that for any $\mathsf{P}$,
	a translation invariant stationary distribution for the
	$q$-PushTASEP on $\mathbb{Z}$ with parameter $\mathsf{P}$ and with given
	densities of the colors is unique.
	However,
	this statement does not seem to be present in the existing literature.
\end{remark}

\subsection{Specialization to $q$-Boson}
\label{sub:qboson_and_density_app}

Let us specialize \Cref{thm:colored_Burke} to the stochastic colored $q$-Boson process. It is defined the same way on the line as on the ring (\Cref{def:cqBoson}), but we reverse the direction of the particle movement. That is, a particle of color $i$ jumps from $k$ to $k+1$, $k \in \mathbb{Z}$,
at the homogeneous rate
$u^{-1}(1-q^{\mathbf{V}(k)_i})\ssp q^{\mathbf{V}(k)_{[i+1,n]}}$, where $\mathbf{V}(k)\in \mathbb{Z}_{\ge0}^n$ is the arrow configuration at site $k$.

To obtain the $q$-Boson process together with its stationary measure from the vertex models in \Cref{fig:qp_stationary}, is it convenient to take horizontally fused weights (meaning multiple paths can occupy a horizontal edge, as compared to \Cref{sub:qpush_and_density_app}, when~$\WQ^{(-m),\mathrm{line}}$ were weights for which at most one path can occupy a horizontal edge) in the bottom part $\{-n,\ldots,-1 \}\times \mathbb{Z}_{\le 0}$. That is, let us take the following pre-limit weights depending
on $\epsilon>0$:
\begin{equation}
	\label{eq:qBoson_weights_line_steady}
	\begin{split}
		\WQ^{(-m),\mathrm{line}}_{\xi,\alpha_m,\nu_m}
		(\mathbf{A},\mathbf{B};\mathbf{C},\mathbf{D})
		&=
		\WQ^{(-m)}_{\epsilon,s,u \epsilon y_m/s}
		(\mathbf{A},\mathbf{B};\mathbf{C},\mathbf{D})
		,
		\\
		\WQ^{(-m),\mathrm{queue}}_{\alpha_m,\nu_m}
		(\mathbf{A},k;\mathbf{B},\ell)
		&=
		\WQ^{(-m)}_{q^{-1/2},s,-\epsilon q^{1/2}y_m/s}
		(\mathbf{A},k;\mathbf{B},\ell)
		,
		\\
		W^{\mathrm{qp}}_{\xi,\epsilon}
		(\mathbf{A},k;\mathbf{B},\ell)
		&=
		W_{-1 /(u \epsilon),1,\mathsf{N}}
		(\mathbf{A},\mathbf{e}_k\mathbf{1}_{k\ge1};\mathbf{B},\mathbf{e}_{\ell}
		\mathbf{1}_{\ell\ge1})
		,
		\\
		\alpha_m
		\coloneqq
		\epsilon
		y_m/s
		,
		\qquad
		\nu_m &\coloneqq
		-s^2
		,
		\qquad
		\xi\coloneqq u,
		\qquad
		q^{-\mathsf{N}^\epsilon} \coloneqq \epsilon^2
		.
	\end{split}
\end{equation}
Note that here we placed the $\xi$-dependence into the bottom
part of
the left $n$ columns in
\Cref{fig:qp_stationary}, left, instead of the top one.

Let us take $\epsilon\to0$ and simultaneously rescale the
vertical coordinate of the quadrant by $1/\epsilon$.
This turns the vertical coordinate into the continuous time $t\in \mathbb{R}_{\ge0}$.
After that, set $s=0$, which would imply the nonnegativity of the jump rates
(the restriction $s=0$ is parallel to the case of the
$q$-PushTASEP; see \Cref{rmk:qpush_line_nonnegativity}). The results of \Cref{sub:qBoson_process,sub:qBoson_stationary}
imply that as $\epsilon\to0$ and
$s=0$,
the weights
\eqref{eq:qBoson_weights_line_steady} become
\begin{equation}
	\label{eq:qBoson_weights_line_steady_limiting}
	\begin{split}
		\WQ^{(-m),\mathrm{line}}_{\xi,\alpha_m,\nu_m}
		&\to
		(uy_m;q)_\infty\ssp
		\WQ^{(-m),\mathrm{qBos}}_{0,uy_m}
		,
		\\
		\WQ^{(-m),\mathrm{queue}}_{\alpha_m,\nu_m}
		&\to
		\textnormal{jump rates in a continuous-time
		queue vertex model (\Cref{fig:qboson_cont_queue_model_s7})}
		,
		\\
		W^{\mathrm{qp}}_{\xi,\epsilon}
		&\to
		\textnormal{colored $q$-Boson jump rates in
		\Cref{fig:qboson_specialization}},
	\end{split}
\end{equation}
where $\WQ^{(-m),\mathrm{qBos}}_{0,uy_m}$
is given by the right-hand side of
\eqref{eq:q_boson_weights_degeneration_vertex},
and the
limits of
$\WQ^{(-m),\mathrm{queue}}_{\alpha_m,\nu_m}$
to the continuous vertical direction
are read off from
\Cref{fig:ASEP_weights}.
These limits are given in \Cref{fig:qboson_cont_queue_model_s7}.
\begin{figure}[htpb]
	\centering
	\includegraphics[width=.8\textwidth]{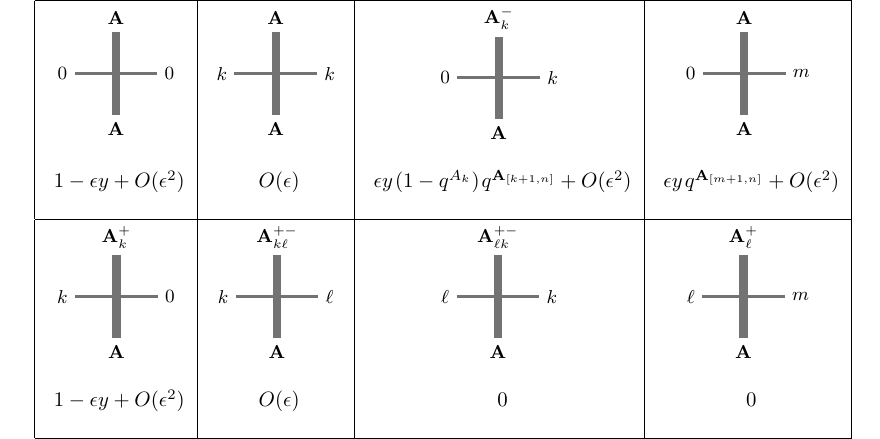}
	\caption{Expansions
	of the weights
	$\WQ^{(-m)}_{q^{-1/2},s,-\epsilon q^{1/2}y/s}\Big\vert_{s=0}$
	as $\epsilon\to0$. Here
	$m < k < \ell \le n$.
	The vertices of the types $(\mathbf{A},k;\mathbf{A},k)$
	and $(\mathbf{A},k;\mathbf{A}^{+-}_{k\ell},\ell)$
	with probabilities of order $\epsilon$ do not occur in the queue vertex model.
	Indeed, to get a nonempty
	input from the left, another event of probability $O(\epsilon)$
	should have occurred at the same instance of the continuous time.}
	\label{fig:qboson_cont_queue_model_s7}
\end{figure}

One readily sees that the vertex weights
\eqref{eq:qBoson_weights_line_steady_limiting}
define
Markov processes in discrete and continuous time with nonnegative transitions.
Similarly to the proof of
\Cref{prop:steady_state_queue},
for
\begin{equation*}
	y_1 > y_2 > \ldots > y_n > 0
\end{equation*}
one can verify that the
queue vertex model
with the weights
$(uy_m;q)_\infty\ssp
\WQ^{(-m),\mathrm{qBos}}_{0,uy_m}$
produces a steady state
(that is, it does not run off to infinity).
Then, following the proof of \Cref{thm:colored_Burke} and the shifting argument after it, one can construct a stationary version of the colored $q$-Boson process on the whole line. We omit the details of the construction as they are similar to the ones in \Cref{sub:qpush_and_density_app}.

\begin{remark}
	\label{rmk:uniq_boson}
	The results of \cite{amir2021tazrp} imply that for $q=0$, a
	translation invariant stationary distribution for the
	$q$-Boson process on the whole line with given densities of
	the colors is unique. We believe that a similar result
	should hold for general $q\in (0,1)$, but its proof does not
	seem present in the existing literature.
\end{remark}

Arguing similarly to the case of the $q$-PushTASEP
(\Cref{sub:qpush_and_density_app}),
we
can compute the colored densities
and currents in terms of the parameters
$y_1>\ldots>y_n>0$.
The colored densities
are the vertical densities $\rho_m^{(v)}$ under the steady state
vertex model
with probability weights $(uy_m;q)_\infty\ssp
\WQ^{(-m),\mathrm{qBos}}_{0,uy_m}$
\eqref{eq:q_boson_weights_degeneration_vertex}.
Let us compute the expected number of arrows leaving the column
$(-m)$.
By color merging described in \Cref{sec:merging_of_colors},
we do not need to distinguish the colors and may assume that $m=n=1$. We have
\begin{equation*}
	(uy_m;q)_\infty \WQ^{(-m),\mathrm{qBos}}_{0,uy_m}
	(a,b;c,d)=
	(uy_m;q)_\infty
	\cdot\frac{(uy_m)^d}{(q;q)_d}
	,
	\qquad
	d\in \mathbb{Z}_{\ge0}.
\end{equation*}
The expected number of arrows is
expressed through the function
\eqref{eq:q_digamma}:
\begin{equation*}
	\sum_{d=0}^{\infty}
	d\cdot
	(uy_m;q)_\infty
	\cdot\frac{(uy_m)^d}{(q;q)_d}
	=
	\phi(uy_m).
\end{equation*}
Thus, we have
\begin{equation}
	\label{eq:q_boson_rho_v}
	\rho^{(v)}_m=
	\phi(uy_m)-\phi(uy_{m+1}).
\end{equation}
Here $y_{n+1}=0$, by agreement.
The colored currents are determined from the
continuous-time queue
vertex model in \Cref{fig:qboson_cont_queue_model_s7},
and have the form
\begin{equation*}
	\rho_m^{(h)}=y_m-y_{m+1}.
\end{equation*}
Similarly to the $q$-PushTASEP,
expressing the currents in terms of the densities is not
an explicit operation.
On the other hand,
a reverse expression is
readily
available. It is obtained from
\eqref{eq:q_boson_rho_v}
by replacing each~$y_j$ with~$\rho^{(h)}_{[j,n]}$.

\appendix

\section{Merging of colors in stationary measures on the line}
\label{sec:merging_of_colors}

Take a stochastic
particle system (mASEP, the $q$-Boson process or the
$q$-PushTASEP) with $n$ colors on the line $\mathbb{Z}$.
By agreement, holes are viewed as particles of color $0$.
If we declare that for some $m=0,1,\ldots,n $,
all particles of colors $m$ and $m+1$
are identified, then we get a stochastic process with $n-1$
colors.
This operation of identifying two colors is a particular case of \emph{color merging}
(see \Cref{def:color_merge_def} below).

A similar color merging can be performed for a stationary distribution of an $n$-colored
particle system, and as a result, we should get a stationary distribution
of a system with $n-1$ colors and modified densities of the colors.
We call this the \emph{color merging property of the stationary distributions}.
Here we explain how one can get this property
on the line $\mathbb{Z}$ using our queue vertex model constructions
from \Cref{sec:quarter_plane}.

\begin{remark}
	\label{rmk:uniqueness_and_ring_for_color_merging}
	For some (but not all) of our particle systems, it is
	proven that a translation invariant stationary
	distribution with given densities of colors is unique
	(see~Remarks~\ref{rmk:uniq_ASEP}, \ref{rmk:uniq_push}, and~\ref{rmk:uniq_boson}).  When this uniqueness is available,
	it implies the color merging property of the stationary
	distributions directly, without reference to queue vertex
	models.
\end{remark}
\begin{remark}[Color merging on the ring]
	\label{rmk:color_merging_ring}
	For all colored particle systems on the ring (mASEP, the
	$q$-Boson process, and the $q$-PushTASEP), we readily have
	uniqueness of the stationary distribution in any given sector
	determined by the number of particles of each color.
	Thus, on the ring the color merging property
	holds automatically.

	However, it is not clear if this color
	merging can be seen at the level of
	queue vertex models on the cylinder. One reason for this is that queue vertex models
	on a cylinder are \emph{not}
	stochastic because they involve summing over input and also output path
	configurations at vertices (see~\Cref{sub:mqueue_states} for more discussion).
	At the same time, color merging involves summing over outputs only, and the
	two summations are not readily compatible.
	In the remainder of \Cref{sec:merging_of_colors},
	we focus only on systems on the line.
\end{remark}

\begin{definition}[Color merging]
	\label{def:color_merge_def}
    Suppose we have a partition of~$\{0,1,\dots,n\}$ into~$k$ disjoint intervals~$I_0,\dots, I_k \subset \{0,1,\dots, n\}$ which are contiguous (that is,~$\max(I_j) = \min(I_{j+1})-1$ for all~$j$).
	The \emph{color merging projection}
	$\pi = \pi_{I_0,\dots, I_k}$
	applied to a state $i\in \{0,1,\ldots,n\}$
	or $\mathbf{A}\in \mathbb{Z}_{\ge0}^n$
	maps it into
	$\pi(i)\in\{ 0,1,\ldots,k\}$ or
	$\pi(\mathbf{A})\in \mathbb{Z}_{\ge0}^k$,
	respectively,
	by assigning to all particles (or~arrows) with colors in each interval $I_j$
	the new
	color $j$.

	For a probability measure $\mu$ on
	$n$-color configurations on $\mathbb{Z}$
	(where the maximal number of particles at a site
	is $1$, $\mathsf{P}$, or $\infty$, depending on the particle system),
	denote by
	$\pi_* \mu$ the pushforward of
	$\mu$ under the color merging projection $\pi$.
\end{definition}

Fix a partition $\left\{ 0,1,\ldots,n  \right\}= I_0\sqcup \ldots \sqcup I_k$
as in \Cref{def:color_merge_def}. For the densities
$\rho_1,\ldots,\rho_n$ of the old colors, denote
by
\begin{equation}
	\label{eq:rho_j_prime_new_densities}
	\rho_j'\coloneqq \sum_{i\in I_j}\rho_i, \qquad  j=1,\ldots,k,
\end{equation}
the densities of the new colors.
In \Cref{sub:qpush_and_density_app,sub:qboson_and_density_app}, we showed that the densities $(\rho_1,\ldots,\rho_n )$ are in one-to-one correspondence with ordered $n$-tuples $y_1>\ldots>y_n>0 $.
More precisely, $y_m$ parametrizes
$\sum_{i\ge m}\rho_i$
(the exact parametrization is different for the $q$-PushTASEP and the $q$-Boson process, but for color merging we do not need these exact formulas).
Therefore,
the new densities
$(\rho_1',\ldots,\rho_k' )$
correspond to the ordered $k$-tuple
\begin{equation}
	\label{eq:y_j_prime_new_densities}
	y'=(y_{\min(I_1)},\dots, y_{\min(I_k)}).
\end{equation}
Here, by agreement, if $I_1$ contains the color $0$ (corresponding to the hole), then we need to remove the parameter $y_{\min(I_1)}=y_0$ from the tuple $y'$.

Denote by $\mu_y$ and $\mu_{y'}$ the stationary measures, respectively, for the $n$- and the $k$-colored particle systems on the whole line $\mathbb{Z}$.

\medskip

Let us record the color merging properties of the
vertex weights
which appeared in \Cref{sec:colored_YBE}. We have:
\begin{align}
	\sum_{i_2, j_2 \colon \pi(i_2) = i_2', \pi(j_2) = j_2'} R_z(i_1, j_1; i_2, j_2) &= R_z(\pi(i_1), \pi(j_1); i_2', j_2') \label{eq:R_merge} ,\\
	\sum_{\mathbf{B}, \ell \colon \pi(\mathbf{B}) = \mathbf{B}', \pi(\ell) = \ell'} L_{s,x}(\mathbf{A}, k; \mathbf{B}, \ell) &= L_{s,x}(\pi(\mathbf{A}), \pi(k); \mathbf{B}', \ell') \label{eq:L_merge} ,\\
	\sum_{\mathbf{C}, \mathbf{D} \colon \pi(\mathbf{C})= \mathbf{C}', \pi(\mathbf{D}) = \mathbf{D}'} W_{x,\mathsf{L},\mathsf{M}}(\mathbf{A},\mathbf{B};\mathbf{C}, \mathbf{D})
	&= W_{x,\mathsf{L},\mathsf{M}}(\pi(\mathbf{A}),\pi(\mathbf{B});\mathbf{C}',\mathbf{D}') \label{eq:W_merge} ,\\
	\sum_{\mathbf{C}, \mathbf{D} \colon \pi(\mathbf{C})= \mathbf{C}', \pi(\mathbf{D}) = \mathbf{D}'} \WQ_{s_1,s_2,u}^{(-m)}(\mathbf{A},\mathbf{B};\mathbf{C}, \mathbf{D})
	&= \WQ_{s_1,s_2,u}^{(-\pi(m))}(\pi(\mathbf{A}),\pi(\mathbf{B});\mathbf{C}',\mathbf{D}'). \label{eq:WQ_merge}
\end{align}
Identity \eqref{eq:R_merge} is immediate. Vertical fusion or a direct verification leads to \eqref{eq:L_merge}. Then by horizontal fusion, \eqref{eq:L_merge} leads to \eqref{eq:W_merge}. Finally, we get \eqref{eq:WQ_merge} from \eqref{eq:W_merge} by the queue specialization defined in \Cref{sub:mqueue_spec}. Note that in \eqref{eq:WQ_merge}, both $m$ and $\pi(m)$ must be strictly positive.

In probabilistic language, identity, say, \eqref{eq:WQ_merge},
can be interpreted as follows. Starting from $(\pi(\mathbf{A}),\pi(\mathbf{B}))$,
to sample $(\mathbf{C}',\mathbf{D}')$
under the $k$-color stochastic weight $\WQ_{s_1,s_2,u}^{(-\pi(m))}$, we may choose any representatives
$(\mathbf{A},\mathbf{B})$ for the input, sample $(\mathbf{C},\mathbf{D})$
under the $n$-color stochastic weight $\WQ_{s_1,s_2,u}^{(-m)}$, and then
project the output $(\mathbf{C},\mathbf{D})$ back to $k$ colors using $\pi$.
The projection $\pi$ ``forgets'' some of the information about the colors, and this
operation is the same as the summation in the left-hand side of \eqref{eq:WQ_merge}.
The other identities \eqref{eq:R_merge}--\eqref{eq:W_merge} have a similar
probabilistic interpretation.

\medskip

Stacking vertices vertically or horizontally results in a Markov mapping which commutes with the projection~$\pi$ in the same way as described in \eqref{eq:R_merge}--\eqref{eq:WQ_merge}.
We need an instance of this stacking for queue vertex models on the whole line.
Let us take a queue vertex model
in $\left\{ -n,\ldots,-1  \right\}\times \mathbb{Z}$
with
empty input from the left and
vertex weights
$\WQ^{(-m)}_{s_1,s_2,u}$ in the column $-m$, $m=1,\ldots,n $.
Assume that the parameters $s_1,s_2,u$ make these weights nonnegative.
More precisely, we assume that the weights are of the form
$\WQ^{(-m)}_{q^{-\mathsf{P}/2},s_m^{(v)},u/v_m}$ for the
$q$-PushTASEP \eqref{eq:qPushTASEP_weights_line_steady},
or $(uy_m;q)_\infty\ssp
\WQ^{(-m),\mathrm{qBos}}_{0,uy_m}$
for the $q$-Boson process \eqref{eq:qBoson_weights_line_steady_limiting}.
In both cases, we can define the steady state distribution of this model
as in \Cref{prop:steady_state_queue}. Denote the corresponding random
tuple by~$\mathbf{M}=(\mathbf{M}(-n),\ldots,\mathbf{M}(-1) )$.

\begin{lemma}
	\label{lemma:quad_merge}
	Fix a partition $\left\{ 0,1,\ldots,n  \right\}= I_0\sqcup \ldots \sqcup I_k$
	as in \Cref{def:color_merge_def}, and let $\pi$ be the corresponding projection.

	Let the random configurations~$\mathbf{V} = (\mathbf{V}(1),\mathbf{V}(2),\dots)$ and~$\tilde{\mathbf{V}} =  (\tilde{\mathbf{V}}(1),\tilde{\mathbf{V}}(2),\dots)$ be sampled from the vertex models shown
	on the left and right in
	\Cref{fig:stacking_for_merge}, respectively.
	That is, $\mathbf{V}$ is the output of the original $n$-color queue vertex model
	run in steady state. The model for
	$\tilde{\mathbf{V}}$ has input $\pi(\mathbf{M})$,
	and $k$-color queue vertex weights $\WQ^{(-\pi(m))}_{s_1,s_2,u}$
	in the column $-m$, $m=1,\ldots,n $.
	Then
	\begin{equation*}
		\pi(\mathbf{V}) \stackrel{d}{=} \tilde{\mathbf{V}} .
	\end{equation*}
	Moreover, the distribution of the $n$-tuple $\pi(\mathbf{M})$
	is the steady state for the system of $n$
	$k$-colored queues in tandem in the right side of
	\Cref{fig:stacking_for_merge}.
\end{lemma}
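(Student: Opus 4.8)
The plan is to recognize the projection $\pi$ as a \emph{lumping} (Markov function) of the column dynamics and to reduce the whole statement to the single-vertex color-merging identity \eqref{eq:WQ_merge}, composed across one row. First I would set up the relevant Markov chain: reading the strip $\{-n,\dots,-1\}\times\mathbb{Z}$ row by row in the vertical direction, the winding configuration $\mathbf{M}=(\mathbf{M}(-n),\dots,\mathbf{M}(-1))$ is the state of the $n$ queues in tandem, while the horizontal right-output of each row (the departure $d$, respectively the emitted tuple $\mathbf{V}(j)$) is the observed quantity. Let $K$ denote the single-row transition kernel of the $n$-color model, whose input is the bottom winding and whose output is the top winding together with the emitted right-output; it is obtained by composing the weights $\WQ^{(-m)}_{s_1,s_2,u}$ across columns $-n,\dots,-1$ with empty left input. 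Let $\tilde K$ be the analogous $k$-color kernel built from $\WQ^{(-\pi(m))}_{s_1,s_2,u}$. The entire lemma will follow once I show that $\pi$ lumps $K$ to $\tilde K$.

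The core step is the single-row lumping identity
\begin{equation*}
	\sum_{(\mathbf{C},d)\in\pi^{-1}(\mathbf{C}',d')} K\bigl(\mathbf{A}\to(\mathbf{C},d)\bigr)=\tilde K\bigl(\pi(\mathbf{A})\to(\mathbf{C}',d')\bigr),
\end{equation*}
which I would establish by induction on the columns, processed from the rightmost column $-1$ toward the leftmost column $-n$. The base case is exactly \eqref{eq:WQ_merge} for the single column $-1$: summing $\WQ^{(-1)}$ over the $\pi$-fibers of its top output $\mathbf{C}(-1)$ and its right output $d=\mathbf{D}(-1)$ collapses it to $\WQ^{(-\pi(1))}$ evaluated at the projected left and bottom inputs, and in particular the result depends on the left input only through its projection. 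For the inductive step at column $-m$, the crucial point is that, by the inductive hypothesis, the fiber-summed partition function of the sub-row $-(m-1),\dots,-1$ depends on its left input, which equals the internal edge $\mathbf{D}(-m)=\mathbf{B}(-(m-1))$, only through $\pi(\mathbf{D}(-m))$. This allows me to regroup the sum over the shared horizontal edge $\mathbf{D}(-m)$ by its projection and then apply \eqref{eq:WQ_merge} at column $-m$, producing the $k$-color sub-row $-m,\dots,-1$ while preserving the property that the total depends on $\mathbf{B}(-m)$ only through $\pi(\mathbf{B}(-m))$. This is essentially the only genuine computation, and it is the main obstacle: the coupling of adjacent columns through the shared horizontal edge is precisely what defeats a naive column-by-column factorization and forces the right-to-left induction driven by the inductive hypothesis on the projected left input.

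With the lumping identity in hand, both assertions follow from the standard Markov-function theorem. For $\pi(\mathbf{V})\stackrel{d}{=}\tilde{\mathbf{V}}$, I would iterate the identity over the rows of the upper strip $\{-n,\dots,-1\}\times\mathbb{Z}_{\ge1}$: since the initial winding state $\mathbf{M}$ projects to $\pi(\mathbf{M})$ and each row lumps, the joint law of the projected emitted outputs $(\pi(\mathbf{V}(1)),\pi(\mathbf{V}(2)),\dots)$ coincides with that of $(\tilde{\mathbf{V}}(1),\tilde{\mathbf{V}}(2),\dots)$ generated by $\tilde K$ from $\pi(\mathbf{M})$. For the \emph{moreover} claim, marginalizing out the emitted right-output turns the lumping identity into the intertwining of the winding-only kernels, so applying $\pi$ to the $K$-invariant law of $\mathbf{M}$ shows that $\pi(\mathbf{M})$ is invariant for the $k$-color queues in tandem; by the recurrence of that system (verified exactly as in \Cref{prop:steady_state_queue}, giving uniqueness of the invariant distribution) it is the steady state. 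Finally I would record the minor bookkeeping that when $\pi(m)=0$, i.e.\ a positive color is merged into holes, the corresponding column no longer carries a tracked color and trivializes, so that only the columns with $\pi(m)\ge1$ contribute, consistently with the restriction noted after \eqref{eq:WQ_merge}.
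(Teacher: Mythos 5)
Your core argument is correct and is essentially the paper's proof written out in full: the paper disposes of the lemma in one line, ``both claims follow from an inductive application of the color merging property \eqref{eq:WQ_merge},'' and your right-to-left induction across a row (using that the fiber-summed sub-row depends on its left input only through its projection, which is exactly what the right-hand side of \eqref{eq:WQ_merge} provides) is the correct way to organize that induction, since the shared horizontal edges are what make a naive column-by-column factorization fail. The interchange of the infinite sums over internal horizontal edges is justified by nonnegativity of the weights, and iterating the row-lumping identity over rows gives the first claim as you say.

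Two remarks. First, your closing ``bookkeeping'' sentence about columns with $\pi(m)=0$ is wrong and should be deleted: the paper explicitly notes after \eqref{eq:WQ_merge} that both $m$ and $\pi(m)$ must be strictly positive, so the lemma (whose merged model carries the weights $\WQ^{(-\pi(m))}_{s_1,s_2,u}$, undefined for $\pi(m)=0$) implicitly assumes no positive color is merged into holes. When a positive color \emph{is} merged with color $0$, the corresponding column does not trivialize --- by \Cref{lemma:merging_1_and_0_in_first_column} it turns into the nontrivial fused stochastic weight \eqref{eq:prelimit_from_merging}, and the paper treats that case separately in the proof of \Cref{prop:color_merging_property}; fortunately this remark is not load-bearing for the lemma as stated. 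Second, for the ``moreover'' claim your route (intertwining of the winding-only kernels plus uniqueness of the invariant distribution) is fine but requires irreducibility and positive recurrence of the merged tandem system to get uniqueness, which is implicit rather than stated in \Cref{prop:steady_state_queue}; a slightly cleaner route matching the paper's inductive spirit is to apply the lumping identity to the finite rectangles $\mathrm{R}_K$ with empty inputs (noting $\pi(\emptyset)=\emptyset$), obtaining $\pi(\mathbf{M}_K)\stackrel{d}{=}\tilde{\mathbf{M}}_K$ for each $K$, and then let $K\to\infty$ so that $\pi(\mathbf{M})$ is the merged steady state by the definition in \Cref{prop:steady_state_queue}, with no uniqueness input needed.
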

\begin{proof}
	Both claims follow from an inductive application of the color
	merging property \eqref{eq:WQ_merge}.
\end{proof}

\begin{figure}[htpb]
	\centering
	\includegraphics[scale=.9]{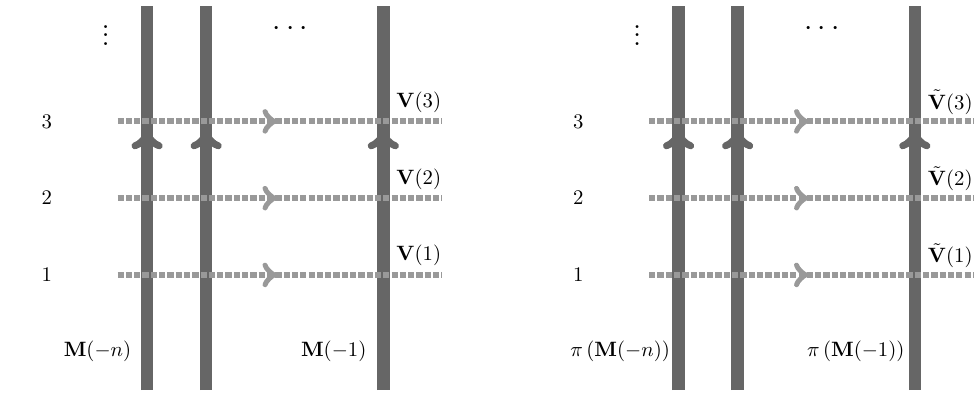}
	\caption{Color merging applied to queue vertex models; see
	\Cref{lemma:quad_merge}. There are $n$ columns in both
	figures.}
	\label{fig:stacking_for_merge}
\end{figure}

\begin{lemma}
	\label{lemma:merging_1_and_0_in_first_column}
	Let $\pi$ be a
	projection
	which merges colors $1$ and $0$.
	Under $\pi$,
	the $n$-color
	queue vertex weights
	$\WQ^{(-1)}_{s_1,s_2,u}(\mathbf{A},\mathbf{B};\mathbf{C},\mathbf{D})$
	in the rightmost column turn into
	the $(n-1)$-color fused stochastic weights
	\eqref{eq:fully_fused_stochastic_weights}
	\begin{equation}
		\label{eq:prelimit_from_merging}
		W
		_{s_1s_2^{-1}u,\mathsf{L},\mathsf{M}}
		(\mathbf{A}',\mathbf{B}';\mathbf{C}',\mathbf{D}')
		\big\vert_{q^{-\mathsf{L}}=s_1^2,\ q^{-\mathsf{M}}=s_2^{2}}
		,
	\end{equation}
	where
	$\mathbf{A}'=(A_2,\ldots,A_n )\in \mathbb{Z}_{\ge0}^{n-1}$,
	and similarly $\mathbf{B}',\mathbf{C}',\mathbf{D}'$,
	and
	\begin{equation*}
		\mathbf{A}=(\infty,\mathbf{A}'),
		\qquad
		\mathbf{B}=(0,\mathbf{B}'),
		\qquad
		\mathbf{C}=(\infty,\mathbf{C}'),
		\qquad
		\mathbf{D}=(D_1,\mathbf{D}').
	\end{equation*}
	Here
	$B_1=0$ because in a queue vertex model, no arrows of color $1$ can enter the column $(-1)$.
\end{lemma}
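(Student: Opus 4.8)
The plan is to read this lemma as the boundary analog of the color-merging identity \eqref{eq:WQ_merge} in the case it explicitly excludes, namely $\pi(m)=0$, and to deduce it from the already-established merging identity \eqref{eq:W_merge} for the fused weights together with the queue limit of \Cref{lemma:queue_spec_exists}. First I would fix the meaning of ``under $\pi$''. Since $\pi$ merges colors $0$ and $1$, it forgets the first coordinate of every arrow tuple; applied to the output $(\mathbf{C},\mathbf{D})$ of the queue vertex it fixes $\mathbf{C}'=(C_2,\dots,C_n)$ and $\mathbf{D}'=(D_2,\dots,D_n)$ while leaving $C_1=+\infty$ (forced in a queue column) and $D_1$ free. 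Because $B_1=0$ forces $P_1=0$ in \eqref{eq:queue_spec_fully_fused}, the projected weight is
\begin{equation*}
	\sum_{D_1=0}^{\infty}\WQ^{(-1)}_{s_1,s_2,u}\bigl((\infty,\mathbf{A}'),(0,\mathbf{B}');(\infty,\mathbf{C}'),(D_1,\mathbf{D}')\bigr),
\end{equation*}
and the claim to be proved is that this equals $W_{s_1s_2^{-1}u,\mathsf{L},\mathsf{M}}(\mathbf{A}',\mathbf{B}';\mathbf{C}',\mathbf{D}')$ with $q^{-\mathsf{L}}=s_1^2$, $q^{-\mathsf{M}}=s_2^2$. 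Conceptually, this is exactly what one expects: merging color $1$ into the holes dissolves the infinite color-$1$ background that defined the queue specialization in column $(-1)$, leaving an ordinary fused vertex in the remaining $n-1$ colors.

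The main route uses the representation of the queue weight as a limit of fused weights, writing $W^{(n)}$ and $W^{(n-1)}$ for the fused weight $W_{x,\mathsf{L},\mathsf{M}}$ in $n$ and $n-1$ colors respectively. By \Cref{lemma:queue_spec_exists}, with $x=s_1s_2^{-1}u$ and $q^{-\mathsf{L}}=s_1^2$, $q^{-\mathsf{M}}=s_2^2$, and using arrow conservation $A_1+0=C_1+D_1$ to set $C_1=A_1-D_1$,
\begin{equation*}
	\WQ^{(-1)}_{s_1,s_2,u}\bigl((\infty,\mathbf{A}'),(0,\mathbf{B}');(\infty,\mathbf{C}'),(D_1,\mathbf{D}')\bigr)=\lim_{A_1\to+\infty}W^{(n)}_{x,\mathsf{L},\mathsf{M}}\bigl((A_1,\mathbf{A}'),(0,\mathbf{B}');(A_1-D_1,\mathbf{C}'),(D_1,\mathbf{D}')\bigr).
\end{equation*}
For each fixed finite $A_1$, the merging identity \eqref{eq:W_merge} applied to the interval partition merging $\{0,1\}$ sums the $n$-color fused weight over the $\pi$-preimages and produces the $(n-1)$-color fused weight:
\begin{equation*}
	\sum_{D_1=0}^{A_1}W^{(n)}_{x,\mathsf{L},\mathsf{M}}\bigl((A_1,\mathbf{A}'),(0,\mathbf{B}');(A_1-D_1,\mathbf{C}'),(D_1,\mathbf{D}')\bigr)=W^{(n-1)}_{x,\mathsf{L},\mathsf{M}}(\mathbf{A}',\mathbf{B}';\mathbf{C}',\mathbf{D}').
\end{equation*}
The decisive feature is that the right-hand side does not depend on $A_1$. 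Letting $A_1\to+\infty$ and interchanging the limit with the sum over $D_1$ then converts the left-hand side term-by-term into the projected queue weight displayed above, yielding the desired equality.

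The step I expect to be the main obstacle is justifying the interchange of the $A_1\to+\infty$ limit with the infinite summation over $D_1$. I would handle it by exploiting the geometric decay in $|\mathbf{D}|$ carried by the factor $(us_2/s_1)^{|\mathbf{D}|}$ in \eqref{eq:queue_spec_fully_fused}, which is the same mechanism used in the proof of \Cref{prop:queue_positivity}: since $|\mathbf{D}|=D_1+|\mathbf{D}'|$, this furnishes a bound of the form $\bigl|W^{(n)}_{x,\mathsf{L},\mathsf{M}}(\cdots)\bigr|\le C\,r^{D_1}$ with $r<1$ and $C$ uniform in $A_1$, so dominated convergence applies and gives equality directly. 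Under the nonnegativity conditions of \eqref{eq:queue_general_nonnegative_conditions} one may instead argue by Fatou together with the constancy of the right-hand side in $A_1$. Alternatively, the interchange can be bypassed altogether by a direct computation: substitute \eqref{eq:queue_spec_fully_fused} with $m=1$ and $P_1=0$, collect the $D_1$-dependent factors $(us_2/s_1)^{D_1}q^{D_1(|\mathbf{C}'|-|\mathbf{P}'|)}(s_1^2q^{|\mathbf{D}'|};q)_{D_1}/(q;q)_{D_1}$, evaluate the sum over $D_1$ by the $q$-binomial theorem \cite[(1.3.2)]{GasperRahman}, and match the resulting closed form against \eqref{eq:fully_fused_stochastic_weights} for $n-1$ colors. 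I would present the limit argument as the conceptual proof and record the direct computation as a self-contained alternative.
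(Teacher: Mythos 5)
Your proposal is correct, and your ``self-contained alternative'' is in fact precisely the paper's proof: the paper substitutes \eqref{eq:queue_spec_fully_fused} with $m=1$ (forcing $P_1=0$ since $P_1\le B_1=0$), isolates the $D_1$-dependent factors, evaluates the sum over $D_1$ by the $q$-binomial theorem, cancels the infinite $q$-Pochhammer symbols, and matches the result against \eqref{eq:fully_fused_stochastic_weights} in $n-1$ colors. Your primary route is genuinely different: it reads the lemma as the $\pi(m)=0$ boundary case excluded from \eqref{eq:WQ_merge}, applies the finite-color merging identity \eqref{eq:W_merge} at each finite $A_1$ (where the sum over $D_1$ is finite and the right-hand side is manifestly independent of $A_1$), and then passes to the queue limit of \Cref{lemma:queue_spec_exists}. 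This is conceptually more transparent --- it explains \emph{why} merging color $1$ into the holes dissolves the infinite color-$1$ background and restores an ordinary fused vertex --- but it trades the paper's purely algebraic manipulation for an analytic interchange of limit and sum. Your justification of that interchange via a bound $C\ssp r^{D_1}$ uniform in $A_1$, coming from the power $(us_2/s_1)^{|\mathbf{D}|}$, is sound and is the same decay mechanism the paper itself invokes in the proof of \Cref{prop:queue_positivity}; note, however, that it establishes the identity only on a parameter region with $|us_2/s_1|<1$, after which one should extend by analyticity in the parameters, a step worth stating explicitly since the lemma is formulated for generic $s_1,s_2,u$. One small caveat: the Fatou aside does not work as written --- for nonnegative weights Fatou yields only $\sum_{D_1}\lim_{A_1\to\infty} f_{A_1}(D_1)\le W^{(n-1)}_{x,\mathsf{L},\mathsf{M}}(\mathbf{A}',\mathbf{B}';\mathbf{C}',\mathbf{D}')$, and the constancy of the right-hand side in $A_1$ does not supply the reverse inequality without a uniform tail estimate, which brings you back to domination. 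Since that remark is only an aside, and both your dominated-convergence argument and the direct $q$-binomial computation are complete, this does not affect the correctness of the proposal.
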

\begin{proof}
	We use the explicit expression
	\eqref{eq:queue_spec_fully_fused}
	for
	$\WQ^{(-1)}_{s_1,s_2,u}$.
	Let
	$\mathbf{P}'=(P_2,\ldots,P_n )$,
	which is a part of the summation index
	in $\WQ^{(-1)}_{s_1,s_2,u}$. We have
	$P_1=0$ because $P_1\le B_1$.
	The projection $\pi$ involves the summation over
	$D_1$ from $0$ to $\infty$. The latter reduces to the $q$-binomial theorem
	\cite[(1.3.2)]{GasperRahman}:
	\begin{align*}
			&\sum_{D_1=0}^{\infty}
			\WQ^{(-1)}_{s_1,s_2,u}(\mathbf{A},\mathbf{B};\mathbf{C},\mathbf{D})
			=
			\frac{(s_1^{-1}s_2 u; q)_{\infty}}{(s_1s_2u ; q)_{\infty}}
		\sum_{\mathbf{P}'}
		\frac{(s_1s_2/u ; q)_{|\mathbf{P}'|}
		(s_1u/s_2 ; q)_{|\mathbf{B}'-\mathbf{P}'|}}
		{(s_1^2 ; q)_{|\mathbf{B}'|}}\ssp
		\\&\hspace{10pt}\times
		q^{\sum_{2\le i<j\le n}\left(B_i-P_i\right) P_j}
		(s_1s_2u^{-1})^{|\mathbf{B}'|-|\mathbf{P}'|}
		(s_1^{-1}s_2u)^{|\mathbf{D}'|}
		\prod_{i=2}^n
		\frac{(q;q)_{B_i}}{(q;q)_{P_i}(q;q)_{B_i-P_i}}
		\frac{(q;q)_{C_i-P_i+D_i}}{(q;q)_{C_i-P_i}(q;q)_{D_i}}
		\\&\hspace{10pt}\times
		\ssp
		q^{\sum_{2\le i<j\le n} D_i (C_j-P_j)}
		(s_1^2 ; q)_{|\mathbf{D}'|}
		\sum_{D_1=0}^{\infty}
		q^{D_1 (|\mathbf{C}'|-|\mathbf{P}'|)}
		(us_2/s_1)^{D_1}
		\frac{
		(s_1^2 q^{|\mathbf{D}'|} ; q)_{D_1}}
		{(q;q)_{D_1}}
		\\&
		=
		\frac{(s_1^{-1}s_2 u; q)_{\infty}}{(s_1s_2u ; q)_{\infty}}
		\sum_{\mathbf{P}'}
		\frac{(s_1s_2/u ; q)_{|\mathbf{P}'|}
		(s_1u/s_2 ; q)_{|\mathbf{B}'-\mathbf{P}'|}}
		{(s_1^2 ; q)_{|\mathbf{B}'|}}\ssp
		\\&\hspace{10pt}\times
		q^{\sum_{2\le i<j\le n}\left(B_i-P_i\right) P_j}
		(s_1s_2u^{-1})^{|\mathbf{B}'|-|\mathbf{P}'|}
		(s_1^{-1}s_2u)^{|\mathbf{D}'|}
		\prod_{i=2}^n
		\frac{(q;q)_{B_i}}{(q;q)_{P_i}(q;q)_{B_i-P_i}}
		\frac{(q;q)_{C_i-P_i+D_i}}{(q;q)_{C_i-P_i}(q;q)_{D_i}}
		\\&\hspace{10pt}\times
		\ssp
		q^{\sum_{2\le i<j\le n} D_i (C_j-P_j)}
		(s_1^2 ; q)_{|\mathbf{D}'|}
		\frac{(s_1s_2u q^{|\mathbf{C}'|-|\mathbf{P}'|+|\mathbf{D}'|};
		q)_{\infty}}
		{(s_1^{-1}s_2u q^{|\mathbf{C}'|-|\mathbf{P}'|};q)_{\infty}}.
	\end{align*}
	Canceling out the infinite $q$-Pochhammer symbols, we
	continue the computation as follows:
	\begin{align*}
		&=
		\sum_{\mathbf{P}'}
		\frac{(s_1^{-1}s_2 u; q)_{|\mathbf{C}'|-|\mathbf{P}'|}
		(s_1^2 ; q)_{|\mathbf{D}'|}}
		{(s_1s_2u ; q)_{|\mathbf{C}'|-|\mathbf{P}'|+|\mathbf{D}'|}}
		\frac{(s_1s_2/u ; q)_{|\mathbf{P}'|}
		(s_1u/s_2 ; q)_{|\mathbf{B}'-\mathbf{P}'|}}
		{(s_1^2 ; q)_{|\mathbf{B}'|}}
		\ssp
		(s_1s_2u^{-1})^{|\mathbf{B}'|-|\mathbf{P}'|}
		(s_1^{-1}s_2u)^{|\mathbf{D}'|}
		\ssp
		\\&\hspace{40pt}\times
		q^{\sum_{2\le i<j\le n}\left(B_i-P_i\right) P_j}
		q^{\sum_{2\le i<j\le n} D_i (C_j-P_j)}
		\prod_{i=2}^n
		\frac{(q;q)_{B_i}}{(q;q)_{P_i}(q;q)_{B_i-P_i}}
		\frac{(q;q)_{C_i-P_i+D_i}}{(q;q)_{C_i-P_i}(q;q)_{D_i}}
		.
	\end{align*}
	From \eqref{eq:fully_fused_stochastic_weights},
	one readily sees that the resulting expression matches
	\eqref{eq:prelimit_from_merging}, and we are done.
\end{proof}

Now, we can formulate and prove our main statement about the color merging property of the stationary distributions.
\begin{proposition}
	\label{prop:color_merging_property}
	We have
	$\pi_* \mu_y = \mu_{y'}$.
	Here $\mu_y$ is the stationary distribution of the $n$-colored $q$-PushTASEP or the $q$-Boson process on $\mathbb{Z}$
	with the densities of the colors depending
	on the parameters $y_1>\ldots>y_n>0$
	via
	\eqref{eq:h_current_qpush} or \eqref{eq:q_boson_rho_v},
	respectively. The distribution
	$\mu_{y'}$ is stationary for the corresponding $k$-colored
	system, and the parameters $y'$ are obtained from $y$ by merging
	as in \eqref{eq:y_j_prime_new_densities}.
\end{proposition}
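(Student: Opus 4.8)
The plan is to prove the identity at the level of the quarter-plane construction of \Cref{sub:YBE_quarter_plane}, which realizes both $\mu_y$ and $\mu_{y'}$ as the translation-invariant law obtained by feeding the queue columns (bottom ones in steady state, top ones producing $\mathbf{V}$) into the stochastic quadrant model with weights $W^{\mathrm{qp}}_{\xi}$. Since an arbitrary merging projection $\pi=\pi_{I_0,\dots,I_k}$ is a composition of elementary merges of two adjacent colors, it suffices to treat the case where $\pi$ identifies colors $m$ and $m+1$ for a single $m\in\{0,1,\dots,n-1\}$; the general statement then follows by iteration, the interleaving being justified by the commutation of $\pi$ with every vertex weight in \eqref{eq:R_merge}--\eqref{eq:WQ_merge}.

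First I would push $\pi$ through each ingredient of the configuration in \Cref{fig:qp_stationary}. By \eqref{eq:W_merge}, the $k$-color quadrant dynamics is a Markov projection (lumping) of the $n$-color one under $\pi$: projecting the inputs and applying the $k$-color $W^{\mathrm{qp}}_{\xi}$ agrees with applying the $n$-color weights and then projecting the output. By \Cref{lemma:quad_merge} together with \eqref{eq:WQ_merge}, projecting the outputs of the bottom and top queue columns (both $\mathbf{V}$ and the departure process $\mathbf{d}$) yields the outputs of the $\pi$-weighted $n$-column queue model, whose bottom steady state is $\pi(\mathbf{M})$ and is again a genuine steady state of the corresponding tandem queues. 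Combining these, $\pi_*\mu_y$ is exactly the translation-invariant law produced by feeding the $W^{\mathrm{qp}}_{\xi}$ quadrant with the outputs of an $n$-column model carrying the weights $\WQ^{(-\pi(m))}$.

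The remaining step is to collapse this $n$-column $\pi$-weighted model onto the standard $(n-1)$-column construction that defines $\mu_{y'}$, and here the two cases of an elementary merge behave differently. If $m\ge 1$, columns $-m$ and $-(m+1)$ both originate the merged color $\pi(m)$; since no color strictly smaller than $\pi(m)$ enters column $-m$ from the left (such colors originate strictly to the right), \Cref{lemma:queue_indep_of_Bm} shows that column $-m$ is independent of the merged-color paths arriving from column $-(m+1)$, so the latter is redundant and may be deleted, leaving a single merged column with spectral data $\alpha_m$, i.e. the parameter $y_{\min(I_r)}$ predicted by \eqref{eq:y_j_prime_new_densities}. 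If instead $m=0$, the merge sends color $1$ to a hole, and \Cref{lemma:merging_1_and_0_in_first_column} converts the rightmost queue column $-1$ into the fused stochastic weight \eqref{eq:prelimit_from_merging}, which is precisely a $W^{\mathrm{qp}}$-type layer; absorbing it into the quadrant leaves the $(n-1)$-column queue model on top of an enlarged quadrant. Iterating these reductions over the partition $I_0\sqcup\cdots\sqcup I_k$ produces the $k$-color construction with parameters $y'=(y_{\min(I_1)},\dots,y_{\min(I_k)})$, whose color densities are $\rho'_j=\sum_{i\in I_j}\rho_i$ by \eqref{eq:rho_j_prime_new_densities} and the density formulas \eqref{eq:h_current_qpush} and \eqref{eq:q_boson_rho_v}; by \Cref{thm:colored_Burke} this configuration is exactly $\mu_{y'}$.

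The hardest part will be the collapsing step for $m\ge 1$: one must check that deleting the redundant column $-(m+1)$ leaves the joint law of all strictly higher colors, and of their interaction with the merged color, unchanged, rather than merely the marginal of the merged color. This is where \Cref{lemma:queue_indep_of_Bm} has to be combined with the fact that the higher-color refusal contributions through two consecutive same-color columns telescope; I would verify this telescoping directly from the explicit weights \eqref{eq:queue_spec_fully_fused}, or alternatively deduce equality of the full configurations from the steady-state uniqueness of the tandem queues established in \Cref{prop:steady_state_queue}. Some care is also needed to ensure that the hole-absorption step ($m=0$) and the collapsing step ($m\ge 1$) can be interleaved in the order dictated by the partition without affecting the final law, which again reduces to the commutation relations \eqref{eq:R_merge}--\eqref{eq:WQ_merge}.
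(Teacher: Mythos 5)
Your reduction of $\pi_*\mu_y$ to the $\pi$-weighted $n$-column queue model via \eqref{eq:R_merge}--\eqref{eq:WQ_merge} and \Cref{lemma:quad_merge} is sound and matches the paper's setup, but the collapsing step for $m\ge1$ — the part you yourself flag as hardest — contains a genuine gap, and the two repair strategies you sketch do not close it. \Cref{lemma:queue_indep_of_Bm} only says that column $-m$ ignores the merged-color arrows arriving from the left; it says nothing about the effect of column $-(m+1)$ on the colors strictly above $m+1$. That column, carrying spectral data $y_{m+1}$, genuinely rearranges the higher colors through its queueing dynamics, so deleting it is a distributional statement valid only in steady state, not a local weight identity. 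In particular the ``telescoping'' you propose cannot hold at the level of the explicit weights \eqref{eq:queue_spec_fully_fused}: a composite of two columns with distinct parameters $y_{m+1}\ne y_m$ is not locally equal to a single column with parameter $y_m$ — only their steady-state outputs agree, which is the thing to be proved. The paper avoids this entirely with a different reduction: since by \Cref{lemma:queue_indep_of_Bm} column $(-m)$'s action is unchanged if the incoming merged-color arrows are replaced by holes, merging color $m+1$ into $m$ equals first merging color $m+1$ into \emph{holes} (the $m=0$ case) and then letting column $(-m)$ regenerate the merged color by the very definition of the queue vertex model. This one-line trick is what your proposal is missing, and without it your $m\ge1$ case reduces to an unproved claim.

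Your $m=0$ case is closer to the paper's but still incomplete. After \Cref{lemma:merging_1_and_0_in_first_column} converts column $(-1)$ into the fused stochastic weights \eqref{eq:prelimit_from_merging}, ``absorbing it into the quadrant'' does not finish the argument: the resulting quadrant is inhomogeneous (the extra line carries $y_1$-dependent parameters), whereas $\mu_{y'}$ is defined by the homogeneous $W^{\mathrm{qp}}_{\xi}$ construction, so one must actually \emph{remove} the converted line. The paper does this by dragging it leftward with the Yang--Baxter equation until it exits empty (its Step 4), and — crucially — by first verifying that the bottom steady state with merged first coordinate, $\bigl(\mathbf{M}(-n),\ldots,\mathbf{M}(-2),\pi(\mathbf{M}(-1))\bigr)$, coincides in law with the $(n-1)$-color steady state passed through one auxiliary row (its Step 3, a $K\to\infty$ argument resting on \Cref{lemma:steady_state_property}). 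Your proposal addresses neither the bottom-boundary identification nor the removal of the line; both are needed before \Cref{thm:colored_Burke} can be invoked to conclude the output is $\mu_{y'}$. You have correctly assembled the key lemmas, but the proof as proposed would stall at exactly the two places where the paper deploys its Yang--Baxter dragging argument and the hole-replacement reduction.
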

\begin{proof}
	Arguing inductively, it suffices to consider the merging any two consecutive colors
	$m$ and $m+1$. Case (1) with $m=0$ is special, we treat it separately first.
	For $m\ge1$, we need to show that the output of the column $-(m+2)$ (distributed as $\mu_{(y_{m+2},\ldots,y_n)}$), passed through two consecutive columns with infinitely many arrows of color $m$ and parameters $y_{m+1}$ and $y_m$, is distributed as $\mu_{(y_{m},\ldots,y_n)}$. After that, we can pass the output of the column $(-m)$ into further columns, and the final output will be distributed as $\mu_{y'}$ by the very definition. Thus, in Case (2) it suffices to take $m=1$ and merge the colors $1$ and $2$.

	\medskip\noindent
	\textbf{Case (1). Step 1.}
	Consider the system of $n$ columns of queue vertex models (``queues in tandem'') which produces the $n$-color stationary distribution $\mu_y$. Let us denote the output of this system by $\mathbf{V}=(\mathbf{V}(1),\mathbf{V}(2),\ldots )$. We know that the color merged output $\pi(\mathbf{V})$ is the same as the output of $n$ queues with $n-1$ colors, where the columns $-2,\ldots,-n $ do not change, and $\pi$ is applied to the rightmost column $(-1)$. Indeed, $\pi$ affects only the column $(-1)$, and erases the color $1$ which does not appear in columns $(-m)$, $m\ge2$. It thus suffices to show that $\pi(\mathbf{V})$ is distributed as $\mu_{y'}$, that is, it is stationary for the $(n-1)$-color interacting particle system.

	\smallskip\noindent
	\textbf{Case (1). Step 2.}
	Applying \Cref{lemma:merging_1_and_0_in_first_column}, we see that the weights in the rightmost column $(-1)$ become the $q$-PushTASEP or $q$-Boson specializations of the $(n-1)$-color weights \eqref{eq:prelimit_from_merging}. More precisely, for the $q$-PushTASEP, we have the following weights in the column $(-2)$ and the column $(-1)$:
	\begin{equation}
		\label{eq:qpush_line_merge_2_weights}
		\WQ^{(-2)}_{q^{-\mathsf{P}/2},s,-q^{1-\mathsf{P}/2}s^{-1} uy_2}\Big\vert_{s=0}
		\quad
		\textnormal{and}
		\quad
		W_{-q^{1-\mathsf{P}}s^{-2}uy_1,\mathsf{P},\mathsf{M}}
		\Big\vert_{s=0},
	\end{equation}
	where $q^{-\mathsf{M}}=s^2$ (see \Cref{sub:qpush_and_density_app}).
	Note in particular that for the
	$q$-PushTASEP, the queue weights
	$\WQ^{(-m),\mathrm{queue}}_{\xi,\alpha_m,\nu_m}$ (see \eqref{eq:qPushTASEP_weights_line_steady})
	do not scale with
	$\epsilon$ which entered the weights $\WQ^{(-m),\mathrm{line}}_{\alpha_m,\nu_m}$ in the quadrant through the scaling \eqref{eq:definition_of_y_m_for_qpushTASEP}.

	For the $q$-Boson process, these weights are, respectively,
	\begin{equation}
		\label{eq:qBos_line_merge_2_weights}
		\WQ^{(-2)}_{\epsilon,s,\epsilon s^{-1}uy_2}\Big\vert_{\epsilon\to0 \text{ then } s=0}
		\quad
		\textnormal{and}
		\quad
		W_{(\epsilon/s)^2 uy_1,\mathsf{L},\mathsf{M}}\Big\vert_{\epsilon\to0 \text{ then } s=0},
	\end{equation}
	where $q^{-\mathsf{L}}=\epsilon^2$
	and $q^{-\mathsf{M}}=s^2$ (see \Cref{sub:qboson_and_density_app}).

	Let us now choose the auxiliary weights with which \eqref{eq:qpush_line_merge_2_weights} or \eqref{eq:qBos_line_merge_2_weights}, respectively, satisfy the Yang--Baxter equation. They are found from \Cref{prop:queue_YBE}. The auxiliary weights for the $q$-PushTASEP and the $q$-Boson are exactly the same, and they have the form
	\begin{equation}
		\label{eq:aux_weights_qpush_qboson}
		\WQ^{(-2)}_{s,s,y_2/y_1}\Big\vert_{s=0}.
	\end{equation}
	One can check that the weights
	\eqref{eq:qpush_line_merge_2_weights},
	\eqref{eq:qBos_line_merge_2_weights}, and
	\eqref{eq:aux_weights_qpush_qboson}
	are nonnegative under our restrictions on the parameters (in particular, recall that $y_1>y_2>0$).

	\begin{figure}
	\centering
	\includegraphics[width=.85\textwidth]{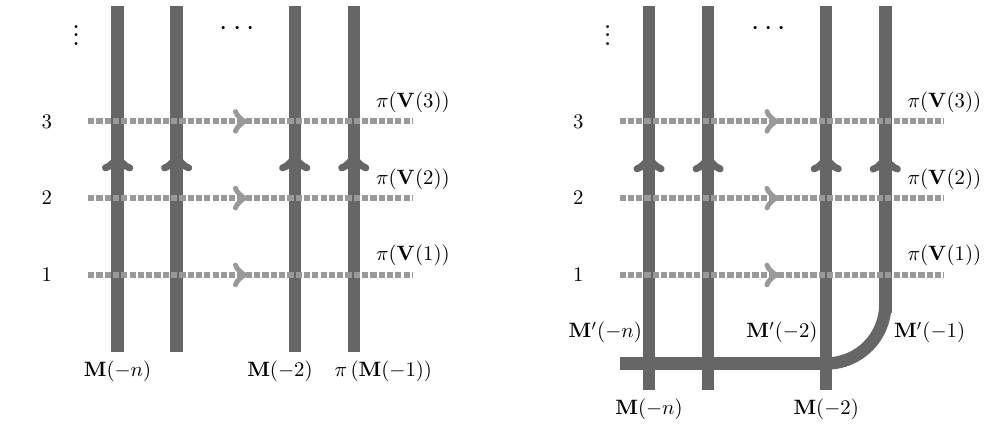}
	\caption{Two ways to sample the random configuration $\pi(\mathbf{V})$ in Case (1) in the proof of \Cref{prop:color_merging_property}.
	Left: $\pi(\mathbf{V})$ is the output of a system of $n$ queues with $(n-1)$ colors in the steady state, where $\pi$ is applied only in the rightmost column. Right: We add auxiliary vertices with the weights $\WQ^{(-2)}_{0,0,y_m/y_1}$ at the bottom of each column $(-m)$, $m=2,\ldots,n $. The incoming arrow configurations are empty on the left, and the $(n-1)$-color steady state $(\mathbf{M}(-n),\ldots,\mathbf{M}(-2))$ at the bottom. The partition function on the right satisfies the Yang--Baxter equation at the triangular intersection of the lines.}
	\label{fig:stat_2}
	\end{figure}

	\smallskip\noindent
	\textbf{Case (1). Step 3.}
	Let us show that the random output $\pi(\mathbf{V})$ may be sampled using another vertex model which is displayed in \Cref{fig:stat_2}, right. We claim that
	\begin{equation}
		\label{eq:M_pi_M_stationarity}
		\bigl(\mathbf{M}(-n),\ldots,\mathbf{M}(-2),\pi(\mathbf{M}(-1)) \bigr)
		\stackrel{d}{=}
		\bigl(\mathbf{M}'(-n),\ldots,\mathbf{M}'(-2),\mathbf{M}'(-1) \bigr).
	\end{equation}
	By \Cref{lemma:steady_state_property}, the distribution of $(\mathbf{M}(-n),\ldots,\mathbf{M}(-2))$ in \Cref{fig:stat_2}, right, can be generated by infinitely many dashed horizontal lines below the picture (carrying the corresponding queue vertex weights). Using the Yang--Baxter equation, we may bring an arbitrary number, say, $K$, of these dashed horizontal lines into the space between the auxiliary line (solid horizontal line) and the output $(\mathbf{M}'(-n),\ldots,\mathbf{M}'(-2),\mathbf{M}'(-1) )$. See
	\Cref{fig:step3_merging} for an illustration.
	Taking the limit as $K\to\infty$, we get \eqref{eq:M_pi_M_stationarity} because its left-hand side is the steady state of the system of $n$ queues with $(n-1)$ colors.

	\begin{figure}[htpb]
		\centering
		\includegraphics[width=.7\textwidth]{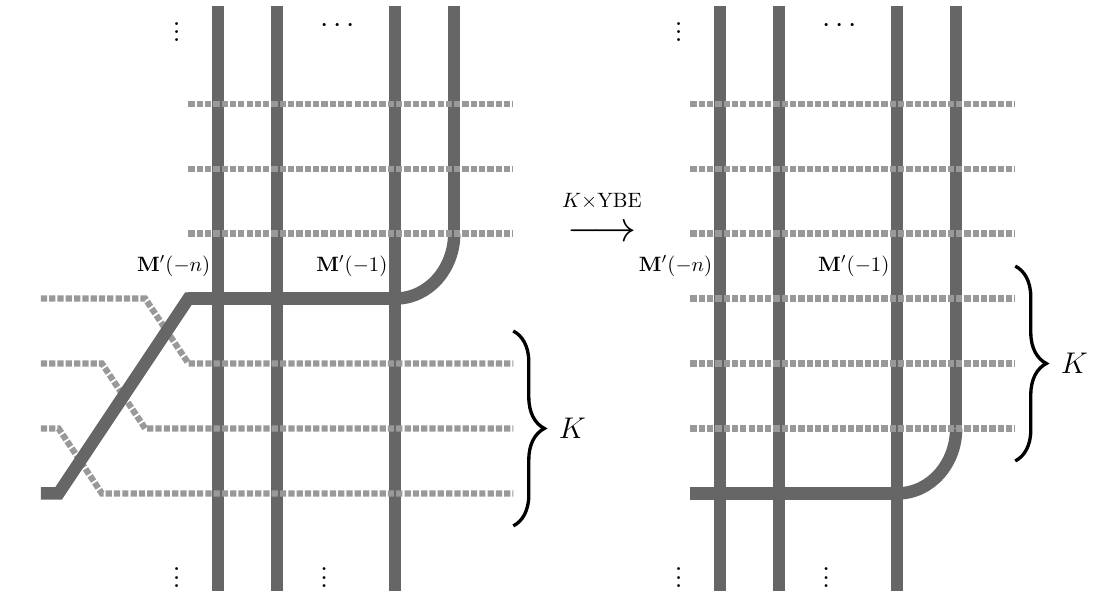}
		\caption{Moving of $K$ dashed horizontal lines in step 3
		of the proof of \Cref{prop:color_merging_property}. In the picture, $K=3$. We add $K$ empty cross
		vertices to the left, and drag them through to the right. In the limit as $K\to+\infty$,
		the distribution of $\mathbf{M}'$ becomes the same as of $\mathbf{M}$.}
		\label{fig:step3_merging}
	\end{figure}

	\smallskip\noindent
	\textbf{Case (1). Step 4.}
	Now, using the Yang--Baxter equation, we may move the rightmost vertical line in \Cref{fig:stat_2}, right, all the way to the left of the column $-n$. There, this vertical line can be removed because it carries no arrows.
	This application of the Yang--Baxter equation transforms the lattice
	from the right panel of \Cref{fig:step3_merging}
	to the left one.
	The resulting output configuration from the $(n-1)$-column system
	is distributed as $\pi(\mathbf{V})$, and we are done.
	This shows that $\pi(\mathbf{V})$ has the same distribution as the output of the system of $n-1$ queues with $(n-1)$ colors and parameters $y_2> \ldots > y_n>0$. Thus, $\pi(\mathbf{V})$ is distributed as
	$\mu_{(y_2,\ldots,y_n)}=\mu_{y'}$, as desired.

	The applications of the Yang--Baxter equation in Steps \textbf{3} and \textbf{4} are
	similar to what we used in the colored Burke's theorem
	in \Cref{sub:YBE_quarter_plane}.

	\medskip\noindent
	\textbf{Case (2).} Now let us consider the merging of colors $1$ and $2$. Consider the
	system of $n-1$ queues in tandem, which have $n-1$ colors, and parameters
	$y_2>\ldots>y_n $. Denote its output by $\mathbf{V}'=(\mathbf{V}'(1),\mathbf{V}'(2),\ldots )$;
	it is distributed as the $(n-1)$-colored stationary distribution $\mu_{(y_2,\ldots,y_n )}$.
	We need to replace the color $2$ by $1$ in $\mathbf{V}'$ and pass it as an input into the column $(-1)$ with parameter $y_1$.
	To complete the proof, it suffices to show that the output $\mathbf{V}$
	of the column $(-1)$ in this scenario has the
	distribution $\mu_{(y_1,y_3,\ldots,y_n )}$, see \eqref{eq:y_j_prime_new_densities}.

	Notice that by \Cref{lemma:queue_indep_of_Bm},
	the queue vertex weights in the column $(-1)$ do not depend on the
	arrows of color $1$ incoming from the left. Therefore, instead of
	replacing the color $2$ by $1$ in~$\mathbf{V}'$, we may replace
	the color $2$ by $0$, and pass the result into the column $(-1)$.
	Denote by $\pi^{\circ}$
	the projection which merges the colors $2$ and $0$.
	By Case (1), $\pi^{\circ}(\mathbf{V}')$ has
	$n-2$ colors $3,\ldots,n$ and is distributed as
	$\mu_{(y_3,\ldots,y_n )}$.
	Passing $\pi^{\circ}(\mathbf{V}')$ through the column $(-1)$ with the parameter $y_1$
	adds the color~$1$ and,
	by the very definition of the queue vertex model,
	produces $\mathbf{V}$ with the distribution $\mu_{(y_1,y_3,\ldots,y_n )}$.
	This completes the proof.
\end{proof}

\bibliographystyle{alpha}
\bibliography{bib}

@article{Angel_Ayyer_Martin_2023_progress,
	author = {Angel, O. and Ayyer, A. and Martin, J.},
	title = {In preparation},
	year = {2023}}

@article{Corteel_Keating_2023_progress,
	author = {Corteel, S. and Keating, D.},
	date-added = {2023-09-06 13:21:50 -0400},
	date-modified = {2023-09-06 13:24:30 -0400},
	title = {In preparation},
	year = {2023}}

@article{bazhanov1985trigonometric,
    author = {Bazhanov, V.},
    title = {{Trigonometric solutions of triangle equations and classical Lie algebras}},
    journal = {Phys. Lett. B},
    volume = {159},
    issue = {4–6},
    pages = {321–324},
    year = {1985},
}

@article{corteel2022vertex,
	author = {Corteel, S. and Gitlin, A. and Keating, D. and Meza, J.},
	doi = {10.1093/imrn/rnab165},
	issue = {20},
	journal = {Intern. Math. Research Notices},
	pages = {15869--15931},
	title = {{{A Vertex Model for LLT Polynomials}}},
	volume = {2022},
	year = {2022},
	bdsk-url-1 = {https://doi.org/10.1093/imrn/rnab165}}

@article{lascoux1997ribbon,
	author = {Lascoux, A. and Leclerc, B. and Thibon, J.-Y.},
	journal = {J. Math. Phys.},
	note = {arXiv:q-alg/9512031},
	number = {2},
	pages = {1041--1068},
	title = {{{Ribbon Tableaux, Hall-Littlewood Functions, Quantum Affine Algebras and Unipotent Varieties}}},
	volume = {38},
	year = {1997}}

@article{marshall1999symmetric,
	author = {Marshall, D.},
	journal = {Annals of Combinatorics},
	pages = {385--415},
	title = {{{Symmetric and nonsymmetric Macdonald polynomials}}},
	volume = {3},
	year = {1999}}

@article{Jimbo:1985ua,
	author = {Jimbo, M.},
	doi = {10.1007/BF01221646},
	journal = {Commun. Math. Phys.},
	pages = {537-547},
	reportnumber = {RIMS-506},
	title = {{Quantum R matrix for the generalized Toda system}},
	volume = {102},
	year = {1986},
	bdsk-url-1 = {https://doi.org/10.1007/BF01221646}}

@book{durrett2019probability,
	author = {Durrett, R.},
	date-modified = {2023-08-30 14:58:56 -0400},
	publisher = {Cambridge university press},
	title = {{Probability: theory and examples}},
	volume = {49},
	year = {2019}}

@article{ferrari1991microscopic,
	author = {Ferrari, P.A. and Kipnis, C. and Saada, E.},
	doi = {10.1214/aop/1176990542},
	journal = {Ann. Probab.},
	number = {1},
	pages = {226-244},
	title = {{Microscopic Structure of Travelling Waves in the Asymmetric Simple Exclusion Process}},
	volume = {19},
	year = {1991},
	bdsk-url-1 = {https://doi.org/10.1214/aop/1176990542}}

@article{amir2021tazrp,
	author = {Amir, G. and Busani, O. and Gon\c{c}alves, P. and Martin, J.B.},
	doi = {10.1214/20-AIHP1117},
	journal = {Ann. Inst. H. Poincar{\'e} Probab. Statist.},
	note = {arXiv:1911.06504 [math.PR]},
	number = {3},
	pages = {1281-1305},
	title = {{The TAZRP speed process}},
	volume = {57},
	year = {2021},
	bdsk-url-1 = {https://doi.org/10.1214/20-AIHP1117}}

@article{kuniba2016multispecies,
	author = {Kuniba, A. and Maruyama, S. and Okado, M.},
	journal = {Journal of Integrable Systems},
	note = {arXiv:1511.09168 [math-ph]},
	number = {1},
	pages = {xyw002},
	publisher = {Oxford University Press},
	title = {{Multispecies totally asymmetric zero range process: I.~Multiline process and combinatorial R}},
	volume = {1},
	year = {2016}}

@article{Neergard1995,
	author = {Neergard, J. and den Nijs, M.},
	journal = {Phys. Rev. Lett.},
	note = {arXiv:cond-mat/9406086},
	pages = {730},
	title = {{Crossover Scaling Functions in One Dimensional Dynamic Growth Crystals}},
	volume = {74},
	year = {1995}}

@article{petrov2022rewriting,
	author = {Petrov, L. and Saenz, A.},
	journal = {arXiv preprint},
	note = {arXiv:2212.01643 [math.PR]},
	title = {{Rewriting History in Integrable Stochastic Particle Systems}},
	year = {2022}}

@article{Takeyama2015,
	author = {Takeyama, Y.},
	journal = {arXiv preprint},
	note = {arXiv:1507.02033 [math-ph]},
	title = {{Algebraic construction of multi-species q-Boson system}},
	year = {2015}}

@article{MallickMallickRajewsky1999,
	author = {Mallick, K. and Mallick, S. and Rajewsky, N.},
	doi = {10.1088/0305-4470/32/48/303},
	journal = {J. Phys. A},
	note = {arXiv:cond-mat/9903248 [cond-mat.stat-mech]},
	number = {48},
	pages = {8399},
	title = {{Exact solution of an exclusion process with three classes of particles and vacancies}},
	volume = {32},
	year = {1999},
	bdsk-url-1 = {https://doi.org/10.1088/0305-4470/32/48/303}}

@article{BlytheEvansSolverGuide2007,
	author = {Blythe, R.A. and Evans, M.R.},
	doi = {10.1088/1751-8113/40/46/R01},
	journal = {J. Phys. A},
	note = {arXiv:0706.1678 [cond-mat.stat-mech]},
	number = {46},
	pages = {R333},
	title = {{Nonequilibrium steady states of matrix-product form: a solver's guide}},
	volume = {40},
	year = {2007},
	bdsk-url-1 = {https://doi.org/10.1088/1751-8113/40/46/R01}}

@article{Derrida1993shock,
	author = {Derrida, B. and Janowsky, S.A. and Lebowitz, J.L. and Speer, E.R.},
	doi = {10.1007/BF01052811},
	journal = {Jour. Stat. Phys.},
	pages = {813--842},
	title = {{Exact solution of the totally asymmetric simple exclusion process: Shock profiles}},
	volume = {73},
	year = {1993},
	bdsk-url-1 = {https://doi.org/10.1007/BF01052811}}

@article{Derrida1993solution,
	author = {Derrida, B. and Evans, M.R. and Hakim, V. and Pasquier, V.},
	doi = {10.1088/0305-4470/26/7/011},
	journal = {J. Phys. A},
	number = {7},
	pages = {1493},
	title = {{Exact solution of a 1D asymmetric exclusion model using a matrix formulation}},
	volume = {26},
	year = {1993},
	bdsk-url-1 = {https://doi.org/10.1088/0305-4470/26/7/011}}

@article{Aggarwal2017convergence,
	author = {Aggarwal, A.},
	journal = {Math. Phys. Anal. Geom.},
	note = {arXiv:1607.08683 [math.PR]},
	number = {3},
	title = {{Convergence of the Stochastic Six-Vertex Model to the ASEP}},
	volume = {20},
	year = {2017}}

@article{angel2006stationary,
	author = {Angel, O.},
	date-modified = {2023-07-25 22:04:32 +0100},
	journal = {J. Comb. Theo. A},
	number = {4},
	pages = {625--635},
	publisher = {Elsevier},
	title = {{The Stationary Measure of a 2-type Totally Asymmetric Exclusion Process}},
	volume = {113},
	year = {2006}}

@article{Ferrari_2007,
	author = {Ferrari, P.A. and Martin, J.B.},
	doi = {10.1214/009117906000000944},
	journal = {The Annals of Probability},
	number = {3},
	publisher = {Institute of Mathematical Statistics},
	title = {Stationary distributions of multi-type totally asymmetric exclusion processes},
	volume = {35},
	year = {2007},
	bdsk-url-1 = {https://doi.org/10.1214%2F009117906000000944},
	bdsk-url-2 = {https://doi.org/10.1214/009117906000000944}}

@article{Ayyer_2023,
	author = {Ayyer, A. and Mandelshtam, O. and Martin, J.B.},
	date-modified = {2023-08-04 12:02:47 -0400},
	doi = {10.5802/alco.248},
	journal = {Algebr. Comb.},
	note = {arXiv:2011.06117 [math.CO]},
	number = {1},
	pages = {243--284},
	publisher = {Cellule {MathDoc}/{CEDRAM}},
	title = {{Modified Macdonald polynomials and the multispecies zero-range process:~I}},
	volume = {6},
	year = 2023,
	bdsk-url-1 = {https://doi.org/10.5802%2Falco.248},
	bdsk-url-2 = {https://doi.org/10.5802/alco.248}}

@article{ayyer2022modified,
	author = {Ayyer, A. and Mandelshtam, O. and Martin, J.B.},
	journal = {arXiv preprint},
	note = {arXiv:2209.09859 [math.CO]},
	title = {{Modified Macdonald polynomials and the multispecies zero range process:~II}},
	year = {2022}}

@article{Prolhac_2009,
	author = {Prolhac, S. and Evans, M.R. and Mallick, K.},
	journal = {Jour. Phys. A},
	number = {16},
	pages = {165004},
	publisher = {{IOP} Publishing},
	title = {{The matrix product solution of the multispecies partially asymmetric exclusion process}},
	volume = {42},
	year = {2009}}

@article{FerrariMartin2005,
	author = {Ferrari, P.A. and Martin, J.B.},
	date-added = {2021-02-17 12:10:43 -0500},
	date-modified = {2021-02-17 12:11:19 -0500},
	journal = {arXiv preprint},
	note = {arXiv:math-ph/0509045},
	title = {{Multiclass processes, dual points and M/M/1 queues}},
	year = {2005}}

@article{ferrari2009multiclass,
	author = {P. A. Ferrari and J. B. Martin},
	doi = {10.48550/arXiv.0707.4202},
	journal = {Annales de l'Institut Henri Poincare (B) Probability and Statistics},
	note = {arXiv:0707.4202 [math.PR]},
	number = {1},
	pages = {250-265},
	title = {{{Multiclass Hammersley-Aldous-Diaconis process and multiclass-customer queues}}},
	volume = {45},
	year = {2009},
	bdsk-url-1 = {https://doi.org/10.48550/arXiv.0707.4202}}

@article{agg-bor-wh2020-sl1n,
	author = {Aggarwal, A. and Borodin, A. and Wheeler, M.},
	date-added = {2020-10-29 09:21:11 -0400},
	date-modified = {2021-01-06 13:59:57 -0500},
	journal = {arXiv preprint},
	note = {arXiv:2101.01605 [math.CO]},
	title = {{Colored Fermionic Vertex Models and Symmetric Functions}},
	year = {2021}}

@article{borodin2019shift,
	author = {Borodin, A. and Gorin, V. and Wheeler, M.},
	doi = {10.1112/plms.12427},
	issue = {2},
	journal = {Proc. Lond. Math. Soc.},
	note = {arXiv:1912.02957 [math.PR]},
	pages = {182-299},
	title = {Shift-invariance for vertex models and polymers},
	url = {https://doi.org/10.1112/plms.12427},
	volume = {124},
	year = {2022},
	bdsk-url-1 = {https://doi.org/10.1112/plms.12427}}

@article{bukh2019periodic,
	author = {Bukh, B. and Cox, C.},
	doi = {10.1214/21-AAP1709},
	journal = {Ann. Appl. Probab.},
	note = {arXiv:1912.03510 [math.PR]},
	number = {2},
	pages = {1295--1332},
	title = {Periodic words, common subsequences and frogs},
	volume = {32},
	year = {2022},
	bdsk-url-1 = {https://doi.org/10.1214/21-AAP1709}}

@article{corteel2018multiline,
	author = {S. Corteel and O. Mandelshtam and L. Williams},
	date-modified = {2023-09-07 05:16:51 -0400},
	doi = {10.1353/ajm.2022.0007},
	journal = {American Journal of Mathematics},
	note = {arXiv:1811.01024 [math.CO]},
	number = {2},
	pages = {395--436},
	publisher = {Johns Hopkins University Press},
	title = {{{From multiline queues to Macdonald polynomials via the exclusion process}}},
	volume = {144},
	year = {2022},
	bdsk-url-1 = {https://doi.org/10.1353/ajm.2022.0007}}

@article{martin2020stationary,
	author = {Martin, J.B.},
	journal = {Electronic Journal of Probability},
	note = {arXiv:1810.10650 [math.PR]},
	number = {43},
	pages = {1--41},
	title = {{Stationary distributions of the multi-type ASEP}},
	volume = {25},
	year = {2020}}

@article{cantini2015matrix,
	author = {Cantini, L. and de Gier, J. and Wheeler, M.},
	journal = {Jour. Phys. A},
	note = {arXiv:1505.00287 [math-ph]},
	number = {38},
	pages = {384001},
	publisher = {IOP Publishing},
	title = {{Matrix product formula for Macdonald polynomials}},
	volume = {48},
	year = {2015}}

@article{liu2019integral,
	author = {Liu, Z. and Saenz, A. and Wang, D.},
	date-added = {2019-08-21 10:08:56 -0400},
	date-modified = {2021-09-16 16:43:52 -0700},
	journal = {Comm. Math. Phys.},
	note = {arXiv:1905.02987 [math.PR]},
	pages = {261--325},
	title = {{Integral formulas of ASEP and $q$-TAZRP on a Ring}},
	volume = {379},
	year = {2020}}

@article{borodin2019nonsymmetric,
	author = {A. Borodin and M. Wheeler},
	journal = {Trans. Amer. Math. Soc.},
	note = {arXiv:1904.06804 [math-ph]},
	pages = {8353--8397},
	title = {{Nonsymmetric Macdonald polynomials via integrable vertex models}},
	volume = {375},
	year = {2022}}

@article{CorwinPetrov2015arXiv,
	author = {Corwin, I. and Petrov, L.},
	date-added = {2019-05-20 10:45:47 -0400},
	date-modified = {2020-07-21 08:52:35 -0400},
	journal = {Commun. Math. Phys.},
	note = {Updated version including erratum. Available at \url{https://arxiv.org/abs/1502.07374v2}},
	number = {2},
	pages = {651-700},
	title = {{Stochastic higher spin vertex models on the line}},
	volume = {343},
	year = {2016}}

@article{borodin_wheeler2018coloured,
	author = {A. Borodin and M. Wheeler},
	journal = {Ast\'erisque},
	note = {arXiv:1808.01866 [math.PR]},
	title = {{{Colored stochastic vertex models and their spectral theory}}},
	volume = {437},
	year = {2022}}

@article{SaenzKnizelPetrov2018,
	author = {Knizel, A. and Petrov, L. and Saenz, A.},
	date-added = {2018-01-21 04:58:49 +0000},
	date-modified = {2020-10-22 22:00:53 -0400},
	journal = {Commun. Math. Phys.},
	note = {arXiv:1808.09855 [math.PR].},
	pages = {797-864},
	title = {{Generalizations of TASEP in discrete and continuous inhomogeneous space}},
	url = {https://doi.org/10.1007/s00220-019-03495-44},
	volume = {372},
	year = {2019},
	bdsk-url-1 = {https://doi.org/10.1007/s00220-019-03495-4}}

@article{basu2017invariant,
	author = {Basu, R. and Sarkar, S. and Sly, A.},
	date-added = {2018-01-20 10:22:03 +0000},
	date-modified = {2018-01-20 10:22:23 +0000},
	journal = {arXiv preprint},
	note = {arXiv:1704.07799},
	title = {{Invariant Measures for TASEP with a Slow Bond}},
	year = {2017}}

@article{Petrov2017push,
	author = {Petrov, L.},
	date-added = {2017-11-07 16:35:29 +0000},
	date-modified = {2020-09-22 21:03:25 -0400},
	journal = {Electron. J. Probab.},
	note = {arXiv:1910.08994 [math.PR]},
	number = {114},
	pages = {1-25},
	title = {{PushTASEP in inhomogeneous space}},
	volume = {25},
	year = {2020}}

@article{Amol2016Stationary,
	author = {Aggarwal, A.},
	date-added = {2016-08-18 04:22:09 +0000},
	date-modified = {2018-06-20 18:48:58 +0000},
	journal = {{Duke Math J.}},
	note = {arXiv:1608.04726 [math.PR]},
	number = {2},
	pages = {269-384},
	title = {{Current Fluctuations of the Stationary ASEP and Six-Vertex Model}},
	volume = {167},
	year = {2018}}

@article{BorodinPetrov2016Exp,
	author = {Borodin, A. and Petrov, L.},
	date-added = {2016-06-16 12:05:52 +0000},
	date-modified = {2018-09-08 20:02:00 -0400},
	journal = {Probab. Theory Relat. Fields},
	note = {arXiv:1703.03857 [math.PR]},
	pages = {323-385},
	title = {Inhomogeneous exponential jump model},
	volume = {172},
	year = {2018}}

@article{deGierWheeler2016,
	author = {Garbali, A. and de Gier, J. and Wheeler, M.},
	date-added = {2016-06-15 00:47:37 +0000},
	date-modified = {2019-04-14 17:36:46 -0400},
	journal = {Comm. Math. Phys},
	note = {arXiv:1605.07200 [math-ph]},
	number = {2},
	pages = {773-804},
	title = {{A new generalisation of Macdonald polynomials}},
	volume = {352},
	year = {2017}}

@article{BorodinPetrov2016inhom,
	author = {Borodin, A. and Petrov, L.},
	date-added = {2016-01-17 15:23:09 +0000},
	date-modified = {2018-04-04 23:01:25 +0000},
	journal = {Selecta Math.},
	note = {arXiv:1601.05770 [math.PR]},
	number = {2},
	pages = {751--874},
	title = {Higher spin six vertex model and symmetric rational functions},
	volume = {24},
	year = {2018}}

@article{Wang2015inhomqTASEP,
	author = {Wang, D. and Waugh, D.},
	date-added = {2015-12-08 02:16:57 +0000},
	date-modified = {2015-12-08 02:17:40 +0000},
	journal = {SIGMA. Symmetry, Integrability and Geometry: Methods and Applications},
	note = {arXiv:1512.01612 [math.PR]},
	number = {037},
	title = {{The transition probability of the q-TAZRP (q-Bosons) with inhomogeneous jump rates}},
	volume = {12},
	year = {2016},
	bdsk-url-1 = {http://arxiv.org/abs/1512.01612}}

@article{barraquand2015phase,
	author = {Barraquand, G.},
	date-added = {2015-10-24 18:02:40 +0000},
	date-modified = {2015-10-24 18:03:16 +0000},
	journal = {Stochastic Processes and their Applications},
	note = {arXiv:1404.7409 [math.PR]},
	number = {7},
	pages = {2674--2699},
	title = {{A phase transition for q-TASEP with a few slower particles}},
	volume = {125},
	year = {2015}}

@article{KulishReshSkl1981yang,
	author = {Kulish, P. and Reshetikhin, N. and Sklyanin, E.},
	date-added = {2015-09-16 10:42:04 +0000},
	date-modified = {2015-09-16 10:42:30 +0000},
	journal = {Letters in Mathematical Physics},
	number = {5},
	pages = {393--403},
	publisher = {Springer},
	title = {{Yang-Baxter equation and representation theory: I}},
	volume = {5},
	year = {1981}}

@article{kunibaMangazeev2016stochastic,
	author = {Kuniba, A. and Mangazeev, V. and Maruyama, S. and Okado, M.},
	journal = {Nuclear Physics B},
	note = {arXiv:1604.08304 [math.QA]},
	pages = {248--277},
	title = {{Stochastic $ R $ matrix for $ U_q (A_n^{(1)}) $}},
	url = {https://doi.org/10.1016/j.nuclphysb.2016.09.020},
	volume = {913},
	year = {2016},
	bdsk-url-1 = {https://doi.org/10.1016/j.nuclphysb.2016.09.020}}

@article{BCG6V,
	author = {Borodin, A. and Corwin, I. and Gorin, V.},
	date-added = {2014-10-11 11:26:41 +0000},
	date-modified = {2016-05-04 16:34:24 +0000},
	journal = {Duke J. Math.},
	note = {arXiv:1407.6729 [math.PR]},
	number = {3},
	pages = {563-624},
	title = {Stochastic six-vertex model},
	volume = {165},
	year = {2016}}

@article{aggarwal2020nonexistence,
	author = {A. Aggarwal},
	doi = {10.1112/plms.12430},
	journal = {Proc. Lond. Math. Soc.},
	note = {arXiv:2004.13272 [math.PR]},
	title = {{{Nonexistence and uniqueness for pure states of ferroelectric six-vertex models}}},
	year = {2022},
	bdsk-url-1 = {https://doi.org/10.1112/plms.12430}}

@article{BosnjakMangazeev2016,
	author = {Bosnjak, G. and Mangazeev, V. V.},
	journal = {J. Phys. A},
	note = {arXiv:1607.07968 [math-ph]},
	pages = {495204},
	title = {{Construction of $R$-matrices for symmetric tensor representations related to $U_{q}(\widehat{sl_{n}})$}},
	volume = {49},
	year = {2016}}

@article{Kuan2018,
	author = {Kuan, J.},
	journal = {Comm. Math. Phys.},
	note = {arXiv:1701.04468 [math.PR]},
	pages = {121--187},
	title = {{An Algebraic Construction of Duality Functions for the Stochastic $U_q(A_n^{(1)})$ Vertex Model and Its Degenerations}},
	volume = {359},
	year = {2018}}

@article{BorodinCorwin2011Macdonald,
	author = {Borodin, A. and Corwin, I.},
	date-added = {2012-09-17 02:00:24 +0000},
	date-modified = {2018-02-28 19:38:39 +0000},
	journal = {Probab. Theory Relat. Fields},
	note = {arXiv:1111.4408 [math.PR]},
	pages = {225-400},
	title = {Macdonald processes},
	volume = {158},
	year = {2014},
	bdsk-url-1 = {http://arxiv.org/abs/1111.4408}}

@article{BorodinCorwinPetrovSasamoto2013,
	author = {Borodin, A. and Corwin, I. and Petrov, L. and Sasamoto, T.},
	date-added = {2013-08-04 14:20:36 +0000},
	date-modified = {2015-02-12 14:35:09 +0000},
	journal = {Compos. Math.},
	note = {arXiv:1308.3475 [math-ph]},
	number = {1},
	pages = {1-67},
	title = {{Spectral theory for the q-Boson particle system}},
	volume = {151},
	year = {2015}}

@article{BorodinCorwinSasamoto2012,
	author = {Borodin, A. and Corwin, I. and Sasamoto, T.},
	date-added = {2013-02-06 21:16:35 +0000},
	date-modified = {2015-02-12 14:36:05 +0000},
	journal = {Ann. Probab.},
	note = {arXiv:1207.5035 [math.PR]},
	number = {6},
	pages = {2314-2382},
	title = {{From duality to determinants for q-TASEP and ASEP}},
	volume = {42},
	year = {2014}}

@book{GasperRahman,
	author = {Gasper, G. and Rahman, M.},
	date-added = {2013-10-13 20:19:47 +0000},
	date-modified = {2013-10-13 20:20:23 +0000},
	publisher = {Cambridge University Press},
	title = {{Basic hypergeometric series}},
	year = {2004}}

@book{Liggett1985,
	address = {Berlin},
	author = {Liggett, T.},
	date-added = {2013-05-28 18:04:25 +0000},
	date-modified = {2014-10-12 20:38:39 +0000},
	publisher = {Springer-Verlag},
	title = {{Interacting Particle Systems}},
	year = {2005}}

@book{Macdonald1995,
	author = {Macdonald, I.G.},
	edition = {2nd},
	owner = {leo},
	publisher = {Oxford University Press},
	timestamp = {2009.03.14},
	title = {Symmetric functions and {H}all polynomials},
	year = {1995}}

@article{OConnellYor2001,
	author = {O'Connell, N. and Yor, M.},
	date-added = {2013-05-02 14:18:21 +0000},
	date-modified = {2013-05-02 14:18:56 +0000},
	journal = {Stochastic Processes and their Applications},
	number = {2},
	pages = {285-304},
	title = {{Brownian analogues of Burke's theorem}},
	volume = {96},
	year = {2001}}

@article{SasamotoWadati1998,
	author = {Sasamoto, T. and Wadati, M.},
	date-added = {2013-05-20 17:14:31 +0000},
	date-modified = {2013-05-20 17:15:18 +0000},
	journal = {J. Phys. A},
	pages = {6057--6071},
	title = {{Exact results for one-dimensional totally asymmetric diffusion models}},
	volume = {31},
	year = {1998}}

@article{Seppalainen2012,
	author = {Sepp{\"a}l{\"a}inen, T.},
	date-modified = {2015-04-01 16:33:52 +0000},
	journal = {{Ann. Probab.}},
	note = {arXiv:0911.2446 [math.PR]},
	pages = {19-73},
	title = {{Scaling for a one-dimensional directed polymer with boundary conditions}},
	volume = {40(1)},
	year = {2012}}

\medskip

\textsc{A. Aggarwal, Columbia University, New York, NY, USA and Clay Mathematics Institute, Denver, CO, USA}

E-mail: \texttt{amolagga@gmail.com}

\medskip

\textsc{M. Nicoletti, Massachusetts Institute of Technology, Cambridge, MA}

E-mail: \texttt{mnicolet@mit.edu}

\medskip

\textsc{L. Petrov, University of Virginia, Charlottesville, VA}

E-mail: \texttt{lenia.petrov@gmail.com}

\end{document}